\theoremstyle{plain}
\newtheorem{theorem}{Theorem}[section]
\newtheorem{lemma}[theorem]{Lemma}
\newtheorem{proposition}[theorem]{Proposition}
\newtheorem{claim}[theorem]{Claim}
\theoremstyle{definition}
\newtheorem{definition}[theorem]{Definition}
\newtheorem{example}[theorem]{Example}
\newtheorem{remark}[theorem]{Remark}
\newtheorem{question}[theorem]{Question}
\DeclareMathOperator*{\argmin}{arg\;min}
\DeclareMathOperator*{\argmax}{arg\;max}
\newcommand{\showsupplement}[1]{}
\newcommand{\customresize}[1]{\resizebox{\textwidth}{!}{#1}}
\author[1]{\normalsize \textbf{Hiroki Waida}}
\author[1,2]{\normalsize \textbf{Takafumi Kanamori}}
\date{}
\title{\Large \textbf{Statistical learnability of smooth boundaries via pairwise binary classification with deep ReLU networks}}
\affil[1]{\small Institute of Science Tokyo}
\affil[2]{\small RIKEN AIP}
\begin{document}

\maketitle



\begin{abstract}
The topic of nonparametric estimation of smooth boundaries is extensively studied in the conventional setting where pairs of single covariate and response variable are observed.
However, this traditional setting often suffers from the cost of data collection.
Recent years have witnessed the consistent development of learning algorithms for binary classification problems where one can instead observe paired covariates and binary variable representing the statistical relationship between the covariates.
In this work, we theoretically study the learnability of ordered multiple smooth boundaries under a pairwise binary classification setting.
One of the challenging problems is the non-identifiability issue on the order of smooth subsets, which yields the gap between the generalizability and the learnability of smooth boundaries in the pairwise binary classification setting.
To deal with the challenges due to this non-identifiability directly, we develop a proof method using a localization argument of the given vector-valued function class.
Consequently, we prove that some ordered multiple smooth boundaries are learnable via a pairwise binary classification algorithm defined with a localized class of deep ReLU networks.
\end{abstract}



\section{Introduction}
\label{sec:introduction}

Let $(\mathcal{X},\mathcal{B}(\mathcal{X}),\mu)$ be the measure space for which $\mathcal{X}=[0,1]^{K}$, $\mathcal{B}(\mathcal{X})$ is the Borel $\sigma$-algebra, and $\mu$ is the Lebesgue measure, following the common setting in the field of nonparametric statistics (see, e.g.,~\citep{schmidt-hieber2020nonparametric,suzuki2018adaptivity,kim2021fast,bos2022convergence,imaizumi2022advantage}).
Let $\{\mathcal{K}_{i}\}_{i=1}^{d_{1}}$ be a disjoint partition of $\mathcal{X}$ for some $d_{1}\in\mathbb{N}$.
Namely, $\{\mathcal{K}_{i}\}_{i=1}^{d_{1}}$ is a sequence of disjoint subsets for which $\bigcup_{i=1}^{d_{1}}\mathcal{K}_{i}=\mathcal{X}$ holds.
A classical class of subsets with H\"{o}lder continuous boundaries is introduced by~\citep{dudley1974metric}, and this class has been commonly employed in the literature~\citep{mammen1995asymptotical,mammen1999smooth,petersen2018optimal,imaizumi2019deep,imaizumi2022advantage}.
In the current work, we study the estimation problem of a disjoint partition $\{\mathcal{K}_{i}\}_{i=1}^{d_{1}}$ that belongs to a general class of H\"{o}lder continuous boundaries introduced in~\citep{imaizumi2022advantage}.

Given a partition $\{\mathcal{K}_{i}\}_{i=1}^{d_{1}}$ for which every subset is characterized with Dudley's boundary class~\citep{dudley1974metric} in some sense, nonparametric estimation or learning of the indicator functions $\mathds{1}_{\mathcal{K}_{1}},\cdots,\mathds{1}_{\mathcal{K}_{d_{1}}}$ is extensively studied in many topics such as set estimation~\citep{mammen1995asymptotical}, discriminant analysis~\citep{mammen1999smooth}, classification~\citep{tsybakov2004optimal,tarigan2008moment,kim2021fast,meyer2023optimal,caragea2023neural}, and regression~\citep{imaizumi2019deep,imaizumi2022advantage}, under the conventional supervised learning setting.
Here, the conventional supervised learning refers to the setting where a dataset is commonly defined as pairs of independently and identically distributed (i.i.d.) random variables on a probability space $(\Omega,\Sigma,Q)$, which are drawn from the joint distribution of a $\mathcal{X}$-valued covariate $X$ and a label $Z$, namely
\begin{align*}
(X_{1},Z_{1}),\cdots,(X_{n},Z_{n})\sim_{i.i.d.} Q\circ (X,Z)^{-1}.
\end{align*}
However, in practice it is often costly to collect labels $Z_{1},\cdots,Z_{n}$ corresponding to the given covariates $X_{1},\cdots,X_{n}$.

In the present work, we investigate the topic in another setting where some statistical relationship between covariates $X$ and $X'$ is instead available.
More precisely, we consider a dataset
\begin{align}
\label{eq:self-supervised dataset}
(X_{1},X_{1}',Y_{1}),\cdots,(X_{n},X_{n}',Y_{n})\sim_{i.i.d.} P,
\end{align}
where $X_{i},X_{i}':\Omega\to\mathcal{X}$ and $Y_{i}:\Omega\to\mathcal{Y}=\{1,-1\}$ are random variables.
Here, $\mathcal{Y}$ is equipped with the counting measure, and $P$ is a Borel probability measure in $\mathcal{X}^{2}\times\mathcal{Y}$.
This problem setting is extensively studied in the context of ranking problems (see, e.g.,~\citep{robbiano2013upper}), similarity learning (see, e.g.,~\citep{chen2009similarity,jin2009regularized,cao2015generalization,bao2022pairwise}), and self-supervised learning (see, e.g.,~\citep{arora2019theoretical,tosh2021contrastivejmlr,tosh2021contrastive,chen2020simple,tsai2020neural,chuang2022robust,zhai2023sigmoid,balestriero2023cookbook}), to improve the efficiency of learning procedures.
Indeed, $Y$ always takes either $Y=1$ (i.e., $X$ and $X'$ are \emph{similar}) or $Y=-1$ (i.e., $X$ and $X'$ are \emph{dissimilar}).

Besides the aspect on data utilization, boundary estimation based on a pairwise similarity might be reasonable from some statistical learning viewpoints, provided that some additional assumption on subsets is introduced in the data space.
For instance, the standard goal of binary classification is to obtain a hypersurface that classifies the observed covariates consistently in terms of the similarity determined by the Bayes classifier (see, e.g.,~\citep{hastie2009elements}).
Similarly in the case where one can observe pairwise data $(X,X',Y)$, a similar intuition might be still valid if one can observe some suitable pairwise similarity.
In this context, \citet{bao2022pairwise} show that the generalization error of a conventional binary classification problem characterized by a single decision boundary is upper bounded by that of a similarity learning problem where $Y$ is defined as $Y=ZZ'$ for the given independent, $\mathcal{X}\times\{-1,1\}$-valued supervised data $(X,Z),(X',Z')$.
However, their method deals with a single decision boundary, and they use supervised data in the formalization of $Y$.
Furthermore, they assume that $X$ and $X'$ are independent.
Usually, similar covariates in a given pairwise data share some common latent factors~\citep{arora2019theoretical,haochen2021provable,vonkugelgenself2021,parulekar2023infonce}.
Hence, it is natural to consider the setting where $X$ and $X'$ can be dependent to each other.

In the field of self-supervised learning, it is shown by~\citet{haochen2021provable} that the learnability in a multiclass classification problem is guaranteed when both paired covariates and supervised data are used in the definition of estimators.
Nevertheless, to the best of our knowledge, the learnability of multiple smooth boundaries is not proven when one can use only pairwise data.
In the context of nonparametric statistics, it is shown by~\citet{kim2021fast} and~\citet{imaizumi2019deep,imaizumi2022advantage} that statistical learning of multiple boundaries using deep neural networks is possible under the conventional settings of supervised learning.
However, the arguments developed in~\citep{kim2021fast,imaizumi2019deep,imaizumi2022advantage} cannot directly apply to our problem setting because of a gap between the conventional and pairwise settings (see Section~\ref{subsubsec:outline}).

Therefore, we study the following problem under a general setting where nonparametric estimation of multiple smooth boundaries is considered:
\begin{question}
\label{question:main question}
Given a partition $\{\mathcal{K}_{i}\}_{i=1}^{d_{1}}$ of $\mathcal{X}$, if every subset $\mathcal{K}_{i}$ has a smooth boundary, then is it possible to estimate both the boundaries and the order of the subsets, using some learning algorithm that requires only samples generated in a pairwise binary classification setting~\eqref{eq:self-supervised dataset}?
\end{question}
Note that the estimation of both subsets and the order is frequently studied in the context of statistical classification (see, e.g.,~\citep{tsybakov2004optimal,tarigan2008moment,kim2021fast,meyer2023optimal,bao2022pairwise,haochen2021provable}).

\paragraph{Pairwise data.}
To investigate the problem, we need to ask what kind of pairwise relation is reasonable to be used in boundary estimation.
In essence, our approach builds on the pairwise relation introduced by~\citet{tsai2020neural}.
More precisely, \citet{tsai2020neural} introduce the density function of a probability measure $P$ in $\mathcal{X}^{2}\times\mathcal{Y}$ satisfying
\begin{align}
\label{eq:condition of tsai et al}
\begin{cases}
p(x,x',1)&=p_{Y}(1)q(x,x'),\\
p(x,x',-1)&=p_{Y}(-1)p_{X}(x)p_{X'}(x'),
\end{cases}
\end{align}
where $q(x,x')$ denotes a probability density function on $\mathcal{X}^{2}$, $p_{X}(x)$ and $p_{X'}(x')$ are the marginal distributions of $q(x,x')$, and $p_{Y}(y)$ is a probability function on $\mathcal{Y}$.
This relation is useful in the following two points:
First, the pairwise relation introduced in~\citep{tsai2020neural} uses the statistical independence, which is simple and flexible.
Also, the statistical independence can be checked by applying statistical independence tests (see, e.g.,~\citep{albert2022adaptive}), which is reasonable from practical aspects.
Second, this pairwise relation is commonly used in the context of contrastive learning (see, e.g.,~\citep{arora2019theoretical,chen2020simple,tosh2021contrastive,tosh2021contrastivejmlr,haochen2021provable,chuang2022robust}).
Here, contrastive learning is known as a learning method that can handle large-scaled problems (see, e.g.,~\citep{gutmann2010noise,oord2018representation,arora2019theoretical,henaff2020data,he2020momentum,chen2020simple,haochen2021provable,dwibedi2021with}).
Note that \citet{tsai2020neural} use the pairwise relation~\eqref{eq:condition of tsai et al} to study mutual information estimation and some applications including contrastive learning.
In the current work we use this pairwise relation to study nonparametric boundary estimation, different from the purpose of~\citep{tsai2020neural}.

\paragraph{Problem setting.}
We formalize a problem setting by using the indicator functions $\{\mathds{1}_{\mathcal{K}_{i}}\}_{i=1}^{d_{1}}$, as in~\citep{tsybakov2004optimal,kim2021fast,meyer2023optimal,imaizumi2019deep,imaizumi2022advantage}.
For convenience, we informally introduce the following notions, where the formal definitions are deferred to the later sections:
\begin{itemize}
\item \emph{H\"{o}lder continuous partitions.} We say that the given disjoint partition $\{\mathcal{K}_{i}\}_{i=1}^{d_{1}}$ of $\mathcal{X}$ is $\alpha$-H\"{o}lder continuous if the topological boundary of every subset in the partition is defined with some $\alpha$-H\"{o}lder continuous functions on $[0,1]^{K-1}$, following~\citep{imaizumi2022advantage} (see Definition~\ref{def:class of smooth partitions} for the formal definition).
\item \emph{Classes of probability distributions.} Given $\alpha>0$, a parameter $\tau\geq 1$ of the Tsybakov noise condition~\citep{mammen1999smooth,tsybakov2004optimal} (see Section~\ref{subsec:noise condition and smooth boundaries}), and a hyperparameter $\xi\in \Xi$ including variables $K$ and $d_{1}$, we introduce a new class $\mathcal{P}_{\alpha,\tau,\xi}$ of Borel probability measures in $\mathcal{X}^{2}\times\mathcal{Y}$ for which each probability density function satisfies the property~\eqref{eq:condition of tsai et al} due to~\citep{tsai2020neural} and is characterized by some disjoint, $\alpha$-H\"{o}lder continuous partition $\{\mathcal{K}_{i}\}_{i=1}^{d_{1}}$ of $\mathcal{X}$ (see Definition~\ref{def:main assumption}).
Roughly speaking, this class is regarded as an extension of the conventional settings (see, e.g.,~\citep{tsybakov2004optimal,kim2021fast}) to a pairwise setting.
\item \emph{$L^{2}$-risk.} Similarly to~\citep{tsybakov2004optimal,imaizumi2019deep,imaizumi2022advantage,meyer2023optimal}, for the given distribution $P\in\mathcal{P}_{\alpha,\tau,\xi}$ that is defined with some disjoint, $\alpha$-H\"{o}lder continuous partition $\{\mathcal{K}_{i}\}_{i=1}^{d_{1}}$ of $\mathcal{X}$, we consider the $L^{2}$-risk
\begin{align*}
\mathcal{R}(\widehat{g}_{n};P)=\mathbb{E}\left[\sum_{i=1}^{d_{1}}\|\widehat{g}_{n,i}(U_{1},\cdots,U_{n})-\mathds{1}_{\mathcal{K}_{i}}\|_{L^{2}(\mathcal{X},P_{X})}^{2}\right],
\end{align*}
which measures the gap between the given estimator $\widehat{g}_{n}=(\widehat{g}_{n,1},\cdots,\widehat{g}_{n,d_{1}})$ and the indicator functions $\mathds{1}_{\mathcal{K}_{1}},\cdots,\mathds{1}_{\mathcal{K}_{d_{1}}}$, where $U_{1},\cdots,U_{n}:\Omega\to\mathcal{X}^{2}\times\mathcal{Y}$ are i.i.d. random variables drawn from $P$, and $P_{X}$ is the marginal distribution of $P$ (see Definition~\ref{def:l2 risk}).
Note that under this risk function, the order of the subsets is arbitrary fixed and is to be estimated simultaneously.
This setting is consistent with Question~\ref{question:main question}.
\end{itemize}

\paragraph{Outline of the main results.}
As discussed in Section~\ref{subsec:general upper bound of main result}, we notice that the mathematical relation between the generalizability and the learnability of decision boundaries in a conventional classification problem, which is proven in~\citep[Proposition~1]{lecue2007optimal}, is not directly applicable to the pairwise setting (see also the next paragraph for an overview).
This observation is due to the non-identifiability issue of the estimation problem: namely, given any sequence $\{\mathcal{K}_{i}\}_{i=1}^{d_{1}}$ of smooth subsets and any permutation $\pi$ on $\{1,\cdots,d_{1}\}$ except the identity map, $\{\mathcal{K}_{i}\}_{i=1}^{d_{1}}$ and $\{\mathcal{K}_{\pi(i)}\}_{i=1}^{d_{1}}$ are distinct parameters in terms of the indices, although this difference cannot be incorporated in the pairwise similarity, since no supervised data is available.
To address this issue directly, we develop a proof method using a \emph{local} estimator $\widehat{g}_{n}^{\textup{local}}$, which is a map from $\mathcal{P}_{\alpha,\tau,\xi}$ to the set of all estimators.
This is a generalization of the standard \emph{global} estimator (see Definition~\ref{def:definitions of local and global estimators}).
The following statement is an informal version of the main theorem for the local estimators:
\begin{theorem}[Informal, see Theorem~\ref{thm:estimation error for deep relu networks}]
\label{thm:informal version of local theorem}
Given any $\alpha>0$, $\tau\geq 1$, $\xi\in \Xi$, and $n\in\mathbb{N}$, there are a function class $\mathcal{F}$ of deep ReLU networks, a local estimator $\widehat{g}_{n}^{\textup{local}}$ on $\mathcal{P}_{\alpha,\tau,\xi}$ whose range of $\widehat{g}_{n,P}^{\textup{local}}:=\widehat{g}_{n}^{\textup{local}}(P)$ is a set of probability-simplex-valued functions defined with $\mathcal{F}$ for each $P\in\mathcal{P}_{\alpha,\tau,\xi}$, and a constant $C>0$ independent of $n$, such that when $n$ is sufficiently large, we have
\begin{align*}
\sup_{P\in\mathcal{P}_{\alpha,\tau,\xi}}\mathcal{R}(\widehat{g}_{n,P}^{\textup{local}};P)\leq Cn^{-\frac{\alpha}{(2\tau-1)\alpha+\tau(K-1)}}\log^{3\tau^{-1}+1}{n}.
\end{align*}
\end{theorem}
This theorem implies that the smooth boundaries are learnable via the pairwise binary classification problem using a local estimator.
We construct an estimator using an Empirical Risk Minimization (ERM) algorithm, based on similarity learning~\citep{jin2009regularized,chen2009similarity,cao2015generalization,bao2022pairwise} and contrastive learning~\citep{arora2019theoretical,awasthi2022do,tsai2020neural,wang2020understanding,chen2021large} (see Section~\ref{subsec:loss function continued} and Definition~\ref{def:formal definition of erm}).

\begin{remark}
\label{rem:remarks in introduction}
We employ a learning model defined with deep ReLU networks because we can check the sufficient conditions of the excess risk bound proven in~\citep{park2009convergence,kim2021fast}, using some facts shown in~\citep{petersen2018optimal,nakada2020adaptive,imaizumi2019deep,imaizumi2022advantage,bos2022convergence} (see Appendix~B.6.4\showsupplement{in}).
Hence, one needs not to assume that the learning model must be defined with deep ReLU networks.
Meanwhile, since deep ReLU networks are widely used in the field of theoretical statistics (see, e.g.,~\citep{schmidt-hieber2020nonparametric,suzuki2018adaptivity,kim2021fast,imaizumi2019deep,bos2022convergence,meyer2023optimal}), it is reasonable to focus on deep ReLU networks to study Question~\ref{question:main question}.
\end{remark}

We provide the discussion of local estimators in Section~\ref{subsec:discussion of the formal version of the main theorem} in detail.

\paragraph{Proof method.}
We summarize the main idea in the proof method of Theorem~\ref{thm:informal version of local theorem} (see Section~\ref{subsec:general upper bound of main result}), which may be the primary, technical contribution of the current work.

The generalizability of a binary classification problem is usually quantified by the excess risk (see, e.g.,~\citep{mohri2018foundations}):
\begin{align*}
\mathbb{E}[P_{X_{0},Z}(\widehat{g}_{n}(x)\neq z)]-P_{X_{0},Z}(g^{*}(x)\neq z),
\end{align*}
where $P_{X_{0},Z}$ is a distribution on a measurable space $\mathcal{X}_{0}\times\{0,1\}$, $\widehat{g}_{n}$ is an estimator using $n$ samples, $g^{*}$ is the Bayes classifier, and the expectation is taken over the distribution of the samples.
Meanwhile, in~\citep{tsybakov2004optimal,meyer2023optimal}, the learnability of smooth decision boundaries is formalized with the risk function
\begin{align*}
\mathbb{E}[\|\mathds{1}_{\{x\in\mathcal{X}_{0}\;|\; \widehat{g}_{n}(x)=1\}}-\mathds{1}_{\{x\in\mathcal{X}_{0}\;|\; g^{*}(x)=1\}}\|_{L^{2}(\mathcal{X}_{0},P_{X_{0}})}^{2}],
\end{align*}
where $P_{X_{0}}$ is the marginal distribution of $P_{X_{0},Z}$ in $\mathcal{X}_{0}$.
It is proven in~\citep[Proposition~1]{tsybakov2004optimal} that in a conventional binary classification problem, under some condition on the conditional probability, the generalizability of a classifier directly implies the learnability of the decision boundary.
Since we use hinge loss to develop an algorithm in the current work, we focus on the property proven in~\citep[Proposition~1]{lecue2007optimal}, which is a variant of~\citep[Proposition~1]{tsybakov2004optimal} for hinge loss.

However, we find that the mathematical relation proven in~\citep[Proposition~1]{lecue2007optimal} is not directly applicable to some pairwise binary classification problem (see Example~\ref{example:not good representation}).
Due to this non-identifiability issue, some localization argument of vector-valued function classes is required (see Question~\ref{question:how to derive a lower bound}).
Note that this difficulty can also be observed when the problem setting of~\citet{bao2022pairwise} is employed, as discussed in Appendix~A.2\showsupplement{in}.
Thus, this observation is not particular to our problem setting.

While one can partially bypass this issue by transforming the problem in Question~\ref{question:main question} into an identifiable setting where a permutation-invariant $L^{2}$-risk is employed to estimate only smooth subsets, it is still challenging to derive upper bounds of such risk functions, as we discuss in Section~\ref{subsec:comparison with other methods}.
We overcome these technical difficulties by developing a localization argument (see Theorem~\ref{thm:main result}).
The method consists of two steps:
We first utilize the pairwise setting to evaluate the $L^{2}$-risk, using the sum of quantities defined with some subsets of a regular simplex (see Lemma~\ref{lem:step 3 lemma 0}).
Then, for each subset of the regular simplex, we derive a lower bound of the $L^{1}$-risk of a classifier to apply~\citep[Proposition~1]{lecue2007optimal} (see Lemma~\ref{lem:step 3 lemma 1}).

For the detailed comparison with other approaches developed in~\citep{bao2022pairwise,haochen2021provable,ge2024on}, see Section~\ref{subsec:comparison with other methods}.

\paragraph{Organization of the paper.}
The rest of this paper consists of the following sections.
In Section~\ref{sec:problem setting}, we define the notation used in this paper.
In Section~\ref{sec:main results}, we formalize the problem setting and present the main theorem of the current work.
In Section~\ref{subsec:general upper bound of main result}, we present the key ideas used in the proof of the main theorem in Section~\ref{sec:main results}.
In Section~\ref{sec:discussion of the main results}, we discuss the main theorem and its proof method in detail.
In Section~\ref{subsec:discussion of the main theorem}, we review the related literature.
In Section~\ref{sec:discussion}, we present some discussion and future work.

\section{Preliminaries}
\label{sec:problem setting}

In Section~\ref{subsec:notations}, we introduce some basic notation.
In Section~\ref{subsec:noise condition and smooth boundaries}, we review the noise condition introduced in~\citep{mammen1999smooth,tsybakov2004optimal} and the class of smooth boundaries studied in~\citep{imaizumi2022advantage}.
In Section~\ref{subsec:learning models}, we define several classes of learning models using deep ReLU networks, as mentioned in Remark~\ref{rem:remarks in introduction}.

\subsection{Notation}
\label{subsec:notations}

As in Section~\ref{sec:introduction}, we define $\mathcal{X}=[0,1]^{K}$ for $K\in\mathbb{N}$ and endow $\mathcal{X}$ with the Borel $\sigma$-algebra $\mathcal{B}(\mathcal{X})$ and the Lebesgue measure $\mu$.
Let $\mathcal{Y}=\{-1,1\}$.
Throughout this work, $K$ and $d_{1}$ are natural numbers representing the dimension of $\mathcal{X}$ and the number of subsets, respectively.
We need some additional notation to present the problem setting to study Question~\ref{question:main question}.
For some basic mathematical notions, we refer the reader to~\citep{steinwart2008support}.
Several notation lists can be found in \hyperref[appsec:notation list]{Appendix}.

For a topological space $\mathcal{A}$, let $\mathcal{B}(\mathcal{A})$ denote the Borel $\sigma$-algebra.
Given $s\in(0,\infty]$ and a non-negative, $\sigma$-finite measure $\nu$ on a measurable space $\mathcal{A}$, we define the $L^{s}(\mathcal{A},\nu)$-norm as
$\|g\|_{L^{s}(\mathcal{A},\nu)}=(\int_{\mathcal{A}}|g(x)|^{s}\nu(dx))^{1/s}$ if $s<\infty$, and $\|g\|_{L^{\infty}(\mathcal{A},\nu)}=\inf\{t\geq 0\;|\;\nu(\{x\in\mathcal{A}\;|\; |g(x)|>t\})=0\}$ if $s=\infty$.
Given a measurable, $\mathbb{R}^{t}$-valued function $f$ on $\mathcal{A}$, let $\|f\|_{\mathcal{A},\nu,s}:=\|\|f\|_{s}\|_{L^{s}(\mathcal{A},\nu)}$, where $\|\cdot\|_{s}$ denotes the $s$-norm in the Euclidean space.
Note that in the case where the Lebesgue measure $\mu$ is used, we often abbreviate as $\|g\|_{L^{s}(\mathcal{X})}:=\|g\|_{L^{s}(\mathcal{X},\mu)}$ and $\|f\|_{\mathcal{A},s}:=\|f\|_{\mathcal{A},\mu,s}$ for any given $s\in[1,\infty]$, any $g:\mathcal{A}\to\mathbb{R}$, and any function $f:\mathcal{A}\to\mathbb{R}^{t}$.
Given a vector-valued function $g:\mathcal{A}\to \mathbb{R}^{s}$ on a set $\mathcal{A}$, we often write as $g=(g_{1},\cdots,g_{s})$ with $g_{1},\cdots,g_{s}:\mathcal{A}\to\mathbb{R}$, namely, $g(x)=(g_{1}(x),\cdots,g_{s}(x))$ for each $x\in\mathcal{A}$.
We remark that any vector $b\in\mathbb{R}^{s}$ is written as $b=(b_{j}):=(b_{1},\cdots,b_{s})$.
Note also that given $s,t\in\mathbb{N}$, $\mathbb{R}^{s\times t}$ denotes the set of all linear operators from $\mathbb{R}^{t}$ to $\mathbb{R}^{s}$.
Throughout the paper, any linear operator $W\in \mathbb{R}^{s\times t}$ is identified with the corresponding matrix and is written in matrix notation, namely, $W=(W_{j_{1},j_{2}})$.
We endow any finite set with the discrete topology and consider the measurable space equipped with the Borel $\sigma$-algebra.
Also, the cardinality of a finite set $\mathcal{A}$ is denoted by $|\mathcal{A}|$.

Given a Borel probability measure $P$ in $\mathcal{X}^{2}\times\mathcal{Y}$ that is absolutely continuous for the product measure $\mu\otimes\mu\otimes\chi$, let $p(x,x',y)$ be the probability density function on $\mathcal{X}^{2}\times\mathcal{Y}$, where $\chi$ denotes the counting measure in $\mathcal{Y}$.
Define the function $p_{X,X'}(x,x')=p(x,x',1)+p(x,x',-1)$.
Let $p_{X}(x)=\int_{\mathcal{X}}p_{X,X'}(x,x')\mu(dx')$ and $p_{X'}(x')=\int_{\mathcal{X}}p_{X,X'}(x,x')\mu(dx)$.
Denote by $P_{X}$ and $P_{X'}$, the probability measures whose Lebesgue densities are $p_{X}$ and $p_{X'}$, respectively.
Let $p_{Y}(y)=\int_{\mathcal{X}^{2}}p(x,x',y)(\mu\otimes\mu)(dx,dx')$.
We define $q(x,x')=p(x,x'|y=1)$, following the pairwise relation~\eqref{eq:condition of tsai et al} due to~\citep{tsai2020neural}.
The conditional probability $p(y=1|x,x')=p(x,x',1)/p_{X,X'}(x,x')$ is denoted by $\eta(x,x'):=p(y=1|x,x')$.
Denote by $P_{X,X'}$, the probability measure whose Lebesgue density is $p_{X,X'}$.

We define the sign function as $\textup{sign}(s)=1$ if $s\geq 0$ and $\textup{sign}(s)=-1$ if $s<0$.
The domain of any random variable defined in this work is the probability space $(\Omega,\Sigma,Q)$.
For any $s_{1},s_{2}\in\mathbb{R}$, the notation $s_{1}\lesssim s_{2}$ means that there is a constant $C>0$ independent of the sample size $n$ such that $s_{1}\leq Cs_{2}$, unless otherwise specified.
The notation $s_{1}\gtrsim s_{2}$ means that $-s_{1}\lesssim -s_{2}$.
Given $s\in\mathbb{R}$, we define $\lfloor s\rfloor =\max\{t\in\mathbb{Z}\;|\; t\leq s\}$ and $\lceil s\rceil = \min\{t\in\mathbb{Z}\;|\; s\leq t\}$.
Given $s,t\in\mathbb{R}$, let $s\vee t=\max\{s,t\}$ and $s\wedge t=\min\{s,t\}$.
Given a set $\mathcal{A}$, the indicator function of $\mathcal{A}$ is denoted by $\mathds{1}_{\mathcal{A}}$.

\subsection{Noise Condition and Smooth Boundaries}
\label{subsec:noise condition and smooth boundaries}

\paragraph{Noise condition.}
Let $\mathcal{X}_{0}$ be a measurable space, and let $\nu$ be a non-negative, $\sigma$-finite measure in $\mathcal{X}_{0}$.
Let $P$ be a probability measure in $\mathcal{X}_{0}\times\mathcal{Y}$ for which it has a probability density function $p(x,y)$ on $\mathcal{X}_{0}\times\mathcal{Y}$ with respect to the product measure $\nu\otimes \chi$.
Note that the marginal distribution of $P$ in $\mathcal{X}_{0}$ is denoted by $P_{X_{0}}$.
The Tsybakov noise condition introduced by~\citet{tsybakov2004optimal} is an assumption requiring that there are a threshold $s_{0}\in (0,1]$, a parameter $\tau\geq 1$, and a constant $c>0$, such that for every $s\in (0,s_{0}]$ it holds that
\begin{align}
\label{eq:margin assumption}
P_{X_{0}}(\{x\in\mathcal{X}_{0}\;|\;|2p(y=1|x)-1|\leq s\})\leq c\cdot s^{\frac{1}{\tau-1}}.
\end{align}
See also~\citep{mammen1999smooth} for a more general condition.
The Tsybakov noise condition is commonly used in statistical learning theory (see, e.g.,~\citep{bartlett2006convexity,audibert2007fast,lecue2007optimal}).
In the current work, we consider the following setting:
\begin{definition}[$\tau$-(NC) with $\theta_{\textup{NC}}$]
\label{def:definition of noise condition}
In the case where $\mathcal{X}_{0}=\mathcal{X}^{2}$ and $\nu=\mu\otimes\mu$, we say that a probability measure $P$ in $\mathcal{X}^{2}\times\mathcal{Y}$ that is absolutely continuous for $\mu\otimes\mu\otimes \chi$ satisfies $\tau$-(NC) with $\theta_{\textup{NC}}\in (0,1]$ if either of the following conditions is satisfied:
\begin{itemize}
\item $\tau>1$, and there is a constant $c\in [1,\theta_{\textup{NC}}^{-1}]$ such that $P$ satisfies the Tsybakov noise condition~\eqref{eq:margin assumption} with $s_{0}=1$, $\tau$, and $c$.
\item $\tau=1$, and there is a threshold $s_{0}\in [\theta_{\textup{NC}},1]$ such that $P$ satisfies the Tsybakov noise condition~\eqref{eq:margin assumption} with $s_{0}$, $\tau=1$, and any $c\geq 1$.
\end{itemize}
\end{definition}
Note that we need $\theta_{\textup{NC}}$ to apply Proposition~1 in~\citep{lecue2007optimal} in the proof method (see Section~\ref{subsubsec:outline}).
Note also that 1-(NC) with any $\theta_{\textup{NC}}\in (0,1]$ is an instance of the condition of~\citet{massart2006risk}.

\paragraph{Smooth boundaries.}
We recall the definition of smooth boundaries introduced by~\citet{imaizumi2022advantage}.
For any $\alpha> 0$, $R\geq 0$, and $K\in\mathbb{N}$, denote by $\mathcal{C}^{\alpha,K-1}_{R}$, the ball of the $\alpha$-H\"{o}lder space on $[0,1]^{K-1}$ with radius $R$, namely
\begin{align*}
\mathcal{C}_{R}^{\alpha,K-1}=
\left\{
\begin{array}{c|l}
\multirow{2}{*}{$h:[0,1]^{K-1}\to\mathbb{R}$} & h\textup{ is }\lceil \alpha-1\rceil\textup{-times differentiable,}\\
& \|h\|_{\mathcal{C}^{\alpha,K-1}}\leq R
\end{array}
\right\}.
\end{align*}
Here, let $\bm{s}\in\mathbb{N}_{0}^{K-1}:=(\mathbb{N}\cup\{0\})^{K-1}$ be the multi-index, let $\partial_{\bm{s}}$ be the differential operator, and let $\|h\|_{\infty}$ and $\|x\|_{\infty}$ be respectively the uniform norms for real-valued functions and vectors.
Then, the functional $\|\cdot\|_{\mathcal{C}^{\alpha,K-1}}$ is defined as
\begin{align*}
\|h\|_{\mathcal{C}^{\alpha,K-1}}
=\sum_{\substack{\bm{s}\in (\mathbb{N}_{0})^{K-1}:\\\|\bm{s}\|_{1}\leq \lceil \alpha-1\rceil}}\|\partial_{\bm{s}}h\|_{\infty}+\sum_{\substack{\bm{s}\in (\mathbb{N}_{0})^{K-1}:\\ \|\bm{s}\|_{1}=\lceil \alpha -1\rceil}}\sup_{\substack{x,x'\in [0,1]^{K-1},\\ x\neq x'}}\frac{|\partial_{\bm{s}}h(x)-\partial_{\bm{s}}h(x')|}{\|x-x'\|_{\infty}^{\alpha-\lceil\alpha -1\rceil}}.
\end{align*}

\begin{figure}
    \centering
    \includegraphics[width=0.65\linewidth]{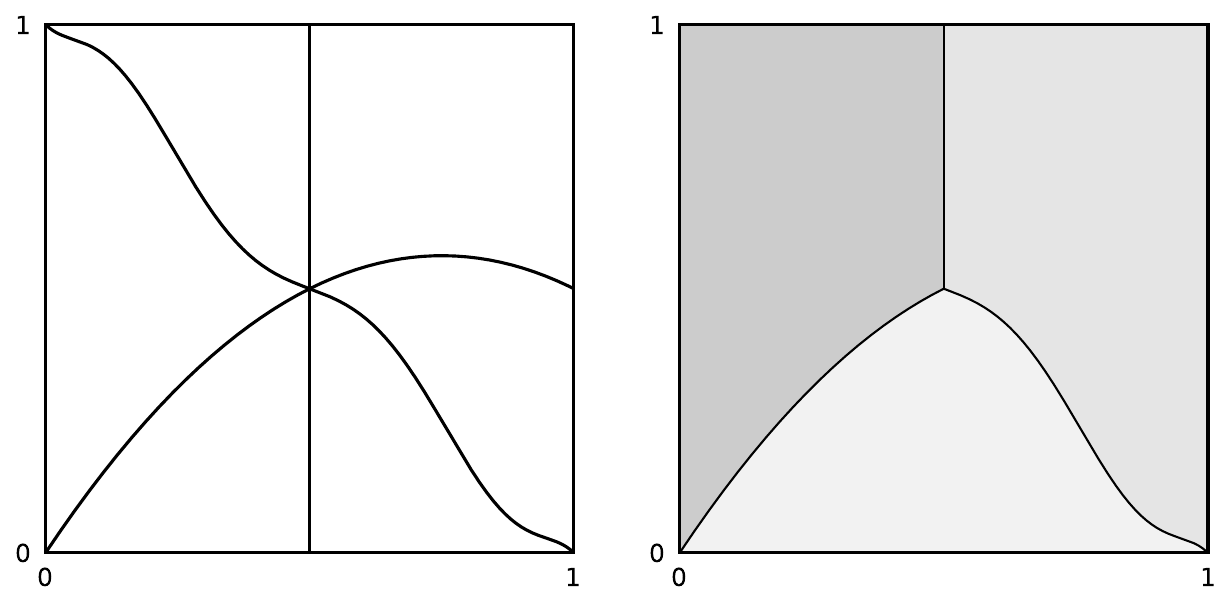}
    \caption{An example of Definition~\ref{def:class of smooth partitions}, namely the boundary class due to~\citep{imaizumi2022advantage}. In the left panel, three H\"{o}lder continuous functions divide the space $\mathcal{X}=[0,1]^{K}$ ($K=2$) into eight subsets $\{\bigcap_{k=1}^{3}\mathcal{L}_{s_{k},h_{k},j_{k}}\}_{(s_{1},s_{2},s_{3})\in \{-1,1\}^{3}}$, where note that this family includes two subsets that are empty sets. In the right panel, $\mathcal{X}$ is divided into three subsets $\{\mathcal{K}_{i}\}_{i=1}^{3}$ that are defined with $\{\bigcap_{k=1}^{3}\mathcal{L}_{s_{k},h_{k},j_{k}}\}_{(s_{1},s_{2},s_{3})\in\{-1,1\}^{3}}$ of the left panel.}
    \label{fig:smooth boundaries}
\end{figure}

\begin{definition}[Class $\mathscr{P}_{\alpha,R}^{K,d_{1},E}$]
\label{def:class of smooth partitions}
Given any $\alpha>0$, $R\geq 0$, $K,d_{1},E\in\mathbb{N}$ for which $2^{E}\geq d_{1}$ is satisfied, the class $\mathscr{P}_{\alpha,R}^{K,d_{1},E}$ is defined as
\begin{align*}
\mathscr{P}_{\alpha,R}^{K,d_{1},E}=
\left\{
\begin{array}{c|l}
\multirow{2}{*}{$\mathscr{S}$}
& \textup{The sequence }\mathscr{S}=\{\mathcal{K}_{i}\}_{i=1}^{d_{1}}\textup{ is a disjoint partition of }\mathcal{X} \\
& \textup{such that (P1) is satisfied with }\alpha,R,K,d_{1},\textup{and }E
\end{array}
\right\},
\end{align*}
where the following condition (P1) is due to~\citep{imaizumi2022advantage} (see Remark~\ref{rem:the property p2}):
\begin{enumerate}
\item[(P1)] Given $\mathscr{S}=\{\mathcal{K}_{i}\}_{i=1}^{d_{1}}$, there are a disjoint partition $\{\mathcal{I}_{i}\}_{i=1}^{d_{1}}$ of $\{1,-1\}^{E}$, functions $h_{1},\cdots,h_{E}\in\mathcal{C}_{R}^{\alpha,K-1}$, and indices $j_{1},\cdots,j_{E}\in\{1,\cdots,K\}$ such that for every $i=1,\cdots,d_{1}$,
\begin{align*}
\mathcal{K}_{i}&=\bigcup_{(s_{1},\cdots,s_{E})\in\mathcal{I}_{i}}\bigcap_{k=1}^{E}\mathcal{L}_{s_{k},h_{k},j_{k}},\\
\mathcal{L}_{s_{k},h_{k},j_{k}}&=
\begin{cases}
\{x\in\mathcal{X}\;|\; x_{j_{k}}\geq h_{k}(x_{\setminus j_{k}})\}&\quad\textup{ if }s_{k}=1,\\
\{x\in\mathcal{X}\;|\; x_{j_{k}}<h_{k}(x_{\setminus j_{k}})\}&\quad\textup{ if }s_{k}=-1,
\end{cases}
\end{align*}
where $x_{\setminus j}=(x_{1},\cdots,x_{j-1},x_{j+1},\cdots,x_{K})$.
\end{enumerate}
\end{definition}
\begin{remark}
\label{rem:the property p2}
Let $\{\mathcal{K}_{i}\}_{i=1}^{d_{1}}$ be any sequence of disjoint subsets that satisfies (P1), and let $\mathcal{K}_{i}=\bigcup_{(s_{1},\cdots,s_{E})\in\mathcal{I}_{i}}\bigcap_{k=1}^{E}\mathcal{L}_{s_{k},h_{k},j_{k}}$ for each $i=1,\cdots,d_{1}$.
In~\citep{imaizumi2022advantage}, a partition is defined as $\{\bigcup_{(s_{1},\cdots,s_{E})\in\mathcal{I}_{i}}\bigcap_{k=1}^{E}\{x\in\mathcal{X}\;|\; s_{k}x_{j_{k}}\geq s_{k}h_{k}(x_{\setminus j_{k}})\}\}_{i=1}^{d_{1}}$.
To define $\{\mathcal{K}_{i}\}_{i=1}^{d_{1}}$ as a sequence of disjoint subsets, we slightly simplify the definition of~\citep{imaizumi2022advantage}.
Note also that we often use the following useful property, which is indeed equivalent to the claim in~\citep[p.8]{imaizumi2022advantage}:
\begin{enumerate}
\item[(P2)] Given any $E\in\mathbb{N}$, $h_{1},\cdots,h_{E}\in\mathcal{C}_{R}^{\alpha,K-1}$, and $j_{1},\cdots,j_{E}\in \{1,\cdots,K\}$, it holds that $\mu(\{x\in\mathcal{X}\;|\; x_{j_{k}}=h_{k}(x_{\setminus j_{k}})\})=0$ for every $k\in\{1,\cdots,E\}$.
\end{enumerate}
In other words, for any given partition $\{\mathcal{K}_{i}\}_{i=1}^{d_{1}}\in\mathscr{P}_{\alpha,R}^{K,d_{1},E}$ and any pair $(i,j)$ of distinct indices, $\textup{cl}(\mathcal{K}_{i})\cap\textup{cl}(\mathcal{K}_{j})$ has Lebesgue measure zero, where $\textup{cl}(\cdot)$ denotes the closure in $\mathcal{X}$.
This property is an immediate consequence of Fubini's theorem.
Similarly to the analysis in~\citep{imaizumi2022advantage}, the subsequent analyses are still valid even if the original definition of~\citep{imaizumi2022advantage} is employed instead, thanks to property (P2).
\end{remark}
\begin{remark}
We provide some background of the class $\mathscr{P}_{\alpha,R}^{K,d_{1},E}$:
\label{rem:remarks on smooth boundaries}
\begin{enumerate}
\item[(i)] The set $\mathcal{L}_{s_{k},h_{k},j_{k}}$ is either the epigraph of an $\alpha$-H\"{o}lder continuous function or its complement and is usually referred to as ``boundary fragment''~\citep{mammen1999smooth}.
Note that this set is originally considered in~\citep{dudley1974metric} and is also studied in~\citep{petersen2018optimal}.
The subset $\bigcap_{k=1}^{E}\mathcal{L}_{s_{k},h_{k},j_{k}}$ may have a piecewise smooth boundary and some corners, and its statistical property is studied in~\citep{imaizumi2019deep}.
The set $\mathcal{K}_{i}$ is defined as the union of the subsets in $\{\bigcap_{k=1}^{E}\mathcal{L}_{s_{k},h_{k},j_{k}}\}_{\bm{s}\in \mathcal{I}_{i}}$ (see Figure~\ref{fig:smooth boundaries}).
As discussed in Remark~\ref{rem:the property p2}, $\{\mathcal{K}_{i}\}_{i=1}^{d_{1}}$ is a partition of $\mathcal{X}$.
In the context of binary classification, a similar definition of smooth partitions is also employed in~\citep{kim2021fast}.
\item[(ii)] While some regression problems defined with smooth boundaries are studied in~\citep{imaizumi2019deep,imaizumi2022advantage}, several classes of smooth subsets are usually considered in the context of classification problems (see, e.g.,~\citep{tsybakov2004optimal,petersen2018optimal,caragea2023neural,kim2021fast,meyer2023optimal}).
Some detailed review and discussion can be found in Appendix~A.3\showsupplement{of}.
\end{enumerate}
\end{remark}

\subsection{Learning Models Using Deep ReLU Networks}
\label{subsec:learning models}

In this section, suppose that natural numbers $K$ and $d_{1}$ are arbitrary fixed.
The purpose of this section is to introduce two classes of vector-valued functions, which are essential to present the main theorem of this work.
The vector-valued functions are defined with a regular simplex, following~\citep{awasthi2022do}.
Such functions are often considered in the context of machine learning~\citep{liu2021learning,awasthi2022do,graf2021dissecting,zhu2022balanced,chen2022perfectly,lee2024analysis,koromilas2024bridging}.
In the current work, the regular simplex plays some crucial roles in establishing the convergence rates (see Section~\ref{subsec:loss function continued} for the details).

Similarly to~\citep{awasthi2022do}, we make use of a regular simplex in the learning algorithm.
Define $d=d(d_{1}):=d_{1}-1$, for simplicity.
Denote by $\mathcal{S}^{d-1}$, the unit hypersphere in $\mathbb{R}^{d}$ centered at the origin.
Let $v_{1},\cdots,v_{d_{1}}\in\mathcal{S}^{d-1}$ be vectors satisfying $\sum_{i=1}^{d_{1}}v_{i}=\bm{0}$ and the condition that $\|v_{i}-v_{j}\|_{2}=\|v_{i'}-v_{j'}\|_{2}$ for any $i,j,i',j'\in\{1,\cdots,d_{1}\}$ such that $i\neq j$ and $i'\neq j'$ (see, e.g.,~\citep[Corollary~2.6]{conn2009introduction}).
Then, a regular simplex $\Delta^{d}$ is commonly defined as 
\begin{align*}
\Delta^{d}=\left\{c_{1}v_{1}+\cdots+c_{d_{1}}v_{d_{1}}\;|\;\bm{c}=(c_{1},\cdots,c_{d_{1}})\in[0,1]^{d_{1}}, \|\bm{c}\|_{1}=1\right\}.
\end{align*}
We endow $\Delta^{d}$ with the subspace topology from $\mathbb{R}^{d}$.
We also define 
\begin{align*}
D_{\Delta^{d}}=\max_{z,z'\in\Delta^{d}}\|z-z'\|_{2}.
\end{align*}
For some basic properties of a regular simplex used in the proofs, see Appendix~B.1\showsupplement{in}.
Then, we define the following function classes:

\paragraph{The whole class.}
Given a function $f:\mathcal{X}\to\Delta^{d}$, by the definition of $\Delta^{d}$, there are some functions $g_{1},\cdots,g_{d_{1}}:\mathcal{X}\to\mathbb{R}$ such that $f=\sum_{i=1}^{d_{1}}g_{i}v_{i}$.
Note that such functions $g_{1},\cdots,g_{d_{1}}$ are uniquely determined for each $f$, since $v_{1},\cdots,v_{d_{1}}$ are affinely independent (see, e.g.,~\citep[pp.102--104]{hatcher2002algebraic}).
Define
\begin{align*}
\mathcal{F}_{0}=\left\{f:\mathcal{X}\to\Delta^{d}\;\Big|\; f=\sum_{i=1}^{d_{1}}g_{i}v_{i},\; g_{1},\cdots,g_{d_{1}}\textup{ are measurable}\right\}.
\end{align*}

\paragraph{ReLU networks.}
Let $L\in\mathbb{N}$, $d_{\textup{NN},0},\cdots,d_{\textup{NN},L}\in\mathbb{N}$, and $\bm{d}=(d_{\textup{NN},0},\cdots,d_{\textup{NN},L})$.
Denote the ReLU function by $\sigma_{\textup{ReLU}}:\mathbb{R}\to\mathbb{R}$, $\sigma_{\textup{ReLU}}(s)=s\vee 0$.
Given $\bm{W}=(W_{1},\cdots,W_{L})\in \prod_{i=1}^{L}\mathbb{R}^{d_{\textup{NN},i}\times d_{\textup{NN},i-1}}$ and $\bm{b}=(b_{1},\cdots,b_{L})\in\prod_{i=1}^{L}\mathbb{R}^{d_{\textup{NN},i}}$, we define ReLU networks as 
\begin{align}
\label{eq:definition of relu networks}
g_{\bm{W},\bm{b}}=A_{L}\circ \bm{\sigma}_{\textup{ReLU},d_{\textup{NN},L-1}} \circ A_{L-1}\circ \cdots\circ \bm{\sigma}_{\textup{ReLU},d_{\textup{NN},1}} \circ A_{1},
\end{align}
where $A_{1},\cdots,A_{L}$ and $\bm{\sigma}_{\textup{ReLU},d_{\textup{NN},1}},\cdots,\bm{\sigma}_{\textup{ReLU},d_{\textup{NN},L-1}}$ are defined as
\begin{align*}
A_{i}(z)=W_{i}z+b_{i} \quad \textup{for every }z\in\mathbb{R}^{d_{\textup{NN},i-1}},
\end{align*}
and
\begin{align*}
\bm{\sigma}_{\textup{ReLU},d_{\textup{NN},i}}(z)=(\sigma_{\textup{ReLU}}(z_{1}),\cdots,\sigma_{\textup{ReLU}}(z_{d_{\textup{NN},i}})) \quad\textup{for every } z\in\mathbb{R}^{d_{\textup{NN},i}}.
\end{align*}
Given any $L\in\mathbb{N}$, $J,S,M\geq 0$, and $\bm{d}=(d_{\textup{NN},0},\cdots,d_{\textup{NN},L})\in\mathbb{N}^{L+1}$, we employ the following standard class $\mathcal{F}_{L,J,S,M,\bm{d}}^{\textup{NN}}$ of ReLU networks studied in~\citep{nakada2020adaptive,imaizumi2019deep,imaizumi2022advantage}:
\begin{align*}
&\mathcal{F}_{L,J,S,M,\bm{d}}^{\textup{NN}}\\
&=\left\{
\begin{array}{@{}l@{}|l@{}}
\multirow{3}{*}{$g_{\bm{W},\bm{b}}:[0,1]^{d_{\textup{NN},0}}\to\mathbb{R}^{d_{\textup{NN},L}}$}
&\bm{W}\in\prod_{i=1}^{L}\mathbb{R}^{d_{\textup{NN},i}\times d_{\textup{NN},i-1}},\bm{b}\in\prod_{i=1}^{L}\mathbb{R}^{d_{\textup{NN},i}},\\
&\|\bm{W}\|_{\infty}\vee\|\bm{b}\|_{\infty}\leq J,\|\bm{W}\|_{0}+\|\bm{b}\|_{0}\leq S,\\
&\|g_{\bm{W},\bm{b}}\|_{[0,1]^{d_{\textup{NN},0}},\infty}\leq M
\end{array}
\right\},
\end{align*}
where in this definition, $\|\bm{W}\|_{\infty}$ denotes the uniform norm of $\bm{W}$, and $\|\bm{W}\|_{0}$ and $\|\bm{b}\|_{0}$ denote the number of non-zero entries in $\bm{W}$ and $\bm{b}$, respectively.
The constraints in $\mathcal{F}_{L,J,S,M,\bm{d}}^{\textup{NN}}$ are commonly used (see, e.g.,~\citep{schmidt-hieber2020nonparametric,petersen2018optimal,bos2022convergence,nakada2020adaptive,imaizumi2019deep,imaizumi2022advantage}).
In the current work, we employ this class due to the following reasons: (i) We can use a covering number bound shown in~\citep{nakada2020adaptive}. (ii) We can apply the approximation theory of indicator functions developed in~\citep{petersen2018optimal,imaizumi2019deep,imaizumi2022advantage}.

\paragraph{Classes of $\Delta^{d}$-valued ReLU networks.}
Since the range of every vector-valued function in $\mathcal{F}_{0}$ is restricted to $\Delta^{d}$, we need to introduce additional notation of $\Delta^{d}$-valued ReLU networks.

Let $H=(H_{1},\cdots,H_{d_{1}}):\mathbb{R}^{d_{1}}\to\mathbb{R}^{d_{1}}$ be the softmax function, namely, for any $i\in\{1,\cdots,d_{1}\}$ we define $H_{i}(z)=\exp(z_{i})/(\sum_{j=1}^{d_{1}}\exp(z_{j}))$.
Given $L\in\mathbb{N}$, $J,S,M\geq 0$, $\bm{d}=(d_{\textup{NN},0},\cdots,d_{\textup{NN},L-1},d_{1})\in \mathbb{N}^{L+1}$, we define the class of $\Delta^{d}$-valued ReLU networks as
\begin{align*}
\mathcal{F}_{L,J,S,M,\bm{d}}^{\Delta^{d}\textup{-NN}}=\left\{f_{\bm{W},\bm{b}}=\sum_{i=1}^{d_{1}} (H_{i}\circ g_{\bm{W},\bm{b}})v_{i}\;\bigg|\;g_{\bm{W},\bm{b}}\in\mathcal{F}_{L,J,S,M,\bm{d}}^{\textup{NN}}\right\}.
\end{align*}
In the main results, we follow~\citep{petersen2018optimal,imaizumi2019deep,imaizumi2022advantage,bos2022convergence} to determine $L,J,S,M$, and $\bm{d}$.

\begin{remark}
\label{rem:the dimension is known}
We develop an algorithm that requires the value of $d_{1}$ to be known in advance (see Section~\ref{subsec:loss function continued}).
This condition is mild both in the contexts of nonparametric boundary estimation and machine learning:
Firstly, the definition of partitions introduced by~\citet{imaizumi2022advantage}, which is also used in this work, implicitly assumes that the number of subsets should be less than or equal to $2^{E}$.
Besides, in the context of contrastive learning, some similar assumptions on the number of subsets are considered in~\citep{haochen2021provable,awasthi2022do,parulekar2023infonce}.
\end{remark}

\section{Formalization and Main Results}
\label{sec:main results}

In this section, we present the formal statement of Theorem~\ref{thm:informal version of local theorem}.

\subsection{Problem Setting}
\label{subsec:assumptions}

\paragraph{Assumptions.}
We introduce a new class $\mathcal{P}_{\alpha,\tau,\xi}$ of probability distributions in $\mathcal{X}^{2}\times\mathcal{Y}$.
This class is parameterized with the set $\mathscr{P}_{\alpha,R}^{K,d_{1},E}$ (see Definition~\ref{def:class of smooth partitions}).
While one can interpret this class as an extension of the standard setting of binary classification (see, e.g.,~\citep[p.142]{tsybakov2004optimal} and \cite[Theorem~3.1]{kim2021fast}), the main difference is that the decision boundary is defined with the union of disjoint subsets in $\mathcal{X}^{2}$, which enables us to formalize the problem with multiple boundaries (see Remark~\ref{rem:some remarks on the main assumptions}--(iii)).
\begin{definition}[Class $\mathcal{P}_{\alpha,\tau,\xi}$]
\label{def:main assumption}
Given any $\alpha> 0$, $\tau\geq 1$, $R\geq 1$, $K,d_{1},E\in\mathbb{N}$ for which $2^{E}\geq d_{1}$, $\theta_{\textup{NC}}\in (0,1]$, $\theta_{1}\geq 1$, $0<\theta_{2}\leq \frac{1}{2}$, and $\frac{1}{2} \leq \theta_{3}<1$, we define
\begin{align*}
\mathcal{P}_{\alpha,\tau,\xi}
=\left\{
\begin{array}{c|l}
\multirow{3}{*}{$P$}&P\textup{ is a Borel probability measure in }\mathcal{X}^{2}\times\mathcal{Y}\\
& \textup{such that all of (A1) -- (A4) are satisfied with}\\
& \alpha, \tau, \textup{ and } \xi=(R,K,d_{1},E,\theta_{\textup{NC}},\theta_{1},\theta_{2},\theta_{3})
\end{array}
\right\},
\end{align*}
where conditions (A1) -- (A4) are defined as follows:
\begin{enumerate}
\item[(A1)] $P$ is absolutely continuous for the product measure $\mu\otimes\mu\otimes\chi$. The density $p(x,x',y)$ of $P$ on $\mathcal{X}^{2}\times\mathcal{Y}$ satisfies condition~\eqref{eq:condition of tsai et al} due to~\citep{tsai2020neural}.
\item[(A2)] $P$ satisfies $\tau$-(NC) with $\theta_{\textup{NC}}$ (see Definition~\ref{def:definition of noise condition}).
\item[(A3)] $q(x,x')$ is a symmetric function satisfying that $\|q\|_{L^{\infty}(\mathcal{X}^{2})}\leq \theta_{1}^{2}$. Also, $p_{X}(x),p_{X'}(x')$ are positive and continuous at every $x,x'\in\mathcal{X}$, and it holds that $\|p_{X}\|_{L^{\infty}(\mathcal{X})}\vee \|p_{X'}\|_{L^{\infty}(\mathcal{X})} \leq \theta_{1}$.
In addition, $p_{Y}(-1)\in [\theta_{2},1)$.
\item[(A4)] There is a sequence $\mathscr{S}=\{\mathcal{K}_{i}\}_{i=1}^{d_{1}}\in\mathscr{P}_{\alpha,R}^{K,d_{1},E}$ such that $P_{X}(\mathcal{K}_{i})\leq \theta_{3}$ for every $i\in\{1,\cdots,d_{1}\}$, and we have
\begin{align}
\label{eq:statistical partition of unity}
\left\{(x,x')\in\mathcal{X}^{2}\;\Big|\;\eta(x,x')\geq\frac{1}{2}\right\}=\bigcup_{i=1}^{d_{1}}\mathcal{K}_{i}\times\mathcal{K}_{i}.
\end{align}
\end{enumerate}
\end{definition}
\begin{definition}
\label{def:auxiliary notions used in the theorem}
We also introduce the following notions related to Definition~\ref{def:main assumption}:
\begin{itemize}
\item The set $\Xi$ of hyperparameters is defined as
\begin{align*}
\Xi=
\left\{
\begin{array}{@{}l|l@{}}
\multirow{3}{*}{$(R,K,d_{1},E,\theta_{\textup{NC}},\theta_{1},\theta_{2},\theta_{3})$} & R\geq 1, K,d_{1},E\in\mathbb{N} \textup{ for which } \\
& 2^{E}\geq d_{1}, \theta_{\textup{NC}}\in (0,1], \textup{ and} \\
& 0<\theta_{2}\leq \frac{1}{2} \leq \theta_{3}<1\leq \theta_{1}
\end{array}
\right\}.
\end{align*}
\item Given $\tau\geq 1$ and $\xi\in\Xi$, we define
\begin{align*}
\mathcal{P}_{\tau,\xi}=\bigcup_{\alpha>0}\mathcal{P}_{\alpha,\tau,\xi},\;\;\textup{and}\quad \mathcal{P}_{\xi}=\bigcup_{\tau\geq 1}\mathcal{P}_{\tau,\xi}.
\end{align*}
\item Given $\xi=(R,K,d_{1},E,\theta_{\textup{NC}},\theta_{1},\theta_{2},\theta_{3})\in \Xi$, let $\mathscr{S}_{P}$ denote the map
\begin{align*}
\mathcal{P}_{\xi}\ni P\mapsto \mathscr{S}_{P}\in\bigcup_{\alpha>0}\mathscr{P}_{\alpha,R}^{K,d_{1},E},
\end{align*}
for which for each $P\in\mathcal{P}_{\xi}$, some $\alpha>0$ and $\tau\geq 1$ satisfying that $P\in\mathcal{P}_{\alpha,\tau,\xi}$, and $\{\mathcal{K}_{i}\}_{i=1}^{d_{1}}\in\mathscr{P}_{\alpha,R}^{K,d_{1},E}$ satisfying condition (A4) for $P$, it holds that $\mathscr{S}_{P}=\{\mathcal{K}_{i}\}_{i=1}^{d_{1}}$.
\end{itemize}
\end{definition}

\begin{figure}
    \centering
    \includegraphics[width=0.4\linewidth]{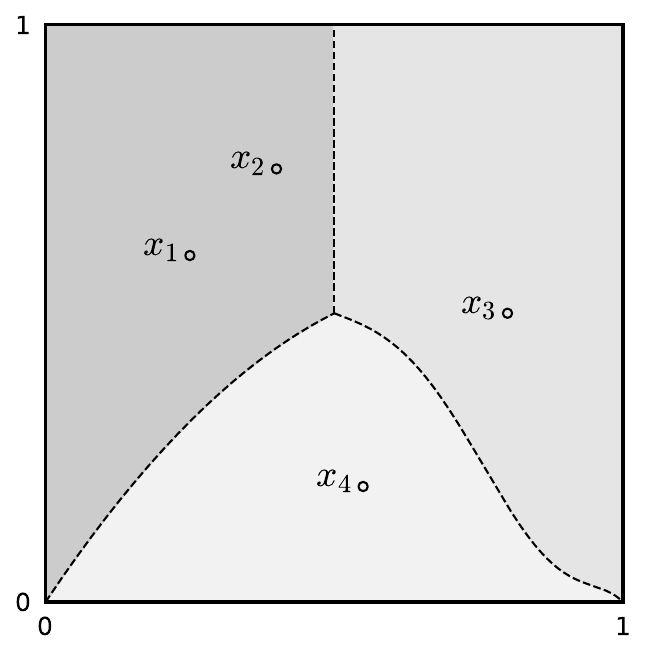}
    \caption{An illustration of condition~\eqref{eq:statistical partition of unity}, where we use the same boundaries as those in~Figure~\ref{fig:smooth boundaries}. See also Remark~\ref{rem:some remarks on the main assumptions}--(iii) for the discussion of~\eqref{eq:statistical partition of unity}. By~\eqref{eq:statistical partition of unity}, we assume that the pair $(x_{1},x_{2})$ satisfies $\eta(x_{1},x_{2})\geq \frac{1}{2}$ since these points belong to the same subset. In addition, we assume that any $(x,x')\in\{(x_{1},x_{3}),(x_{1},x_{4}),(x_{2},x_{3}),(x_{2},x_{4}),(x_{3},x_{4})\}$ satisfies $\eta(x,x')<\frac{1}{2}$. In the current work we investigate the learnability of unknown boundaries from such pairwise relations.}
    \label{fig:definition of similarity}
\end{figure}

\begin{remark}
\label{rem:some remarks on the main assumptions}
We provide additional discussion of the main assumptions:
\begin{enumerate}
\item[(i)] In condition (A3), we assume that $q(x,x')$ is symmetric, implying that $p_{X}=p_{X'}$.
This assumption is reasonable since the purpose of the current work is to study the learnability of smooth boundaries in a single partition $\{\mathcal{K}_{i}\}_{i=1}^{d_{1}}$, as in the literature~\citep{tsybakov2004optimal,kim2021fast,imaizumi2019deep,imaizumi2022advantage,meyer2023optimal}.
\item[(ii)] The thresholds $\theta_{\textup{NC}}$, $\theta_{1},\theta_{2}$, and $\theta_{3}$ are required to analyze the supremum of the risk function.
The variable $\theta_{\textup{NC}}$ is introduced in Definition~\ref{def:definition of noise condition}.
The conditions $\|q\|_{L^{\infty}(\mathcal{X}^{2})}\leq \theta_{1}^{2}$ and $\|p_{X}\|_{L^{\infty}(\mathcal{X})}\vee \|p_{X'}\|_{L^{\infty}(\mathcal{X})}\leq \theta_{1}$ are useful when evaluating the approximation errors, similarly to~\citep[Theorem~7]{imaizumi2022advantage} and~\citep[Lemma~A.3]{kim2021fast}.
Meanwhile, the conditions $p_{Y}(-1)\geq \theta_{2}$ and $P_{X}(\mathcal{K}_{i})\leq \theta_{3}$ are tailored to our proof method and are used in the proofs of Theorem~\ref{thm:main result} and Lemma~\ref{lem:step 3 lemma 0}, respectively (see Appendix~B.5 and Appendix~B.3\showsupplement{in}).
See Remark~\ref{remark:condition on theorem for local erm}--(iii) for the result under the setting where all the conditions using either $\theta_{2}$ or $\theta_{3}$ are relaxed.
\item[(iii)] For each sequence $\mathscr{S}=\{\mathcal{K}_{i}\}_{i=1}^{d_{1}}\in\mathscr{P}_{\alpha,R}^{K,d_{1},E}$ satisfying condition (A4) in Definition~\ref{def:main assumption} for some $P\in\mathcal{P}_{\alpha,\tau,\xi}$, and any $x,x'\in\mathcal{X}$, condition \eqref{eq:statistical partition of unity} means that
\begin{align*}
p(y=1|x,x')\geq p(y=-1|x,x')\;\;\iff\;\; \exists i\in\{1,\cdots,d_{1}\} \textup{ such that }x,x'\in\mathcal{K}_{i}.
\end{align*}
Intuitively, we can interpret that $x$ and $x'$ are similar points if $\eta(x,x')\geq \frac{1}{2}$.
In other words, $x$ and $x'$ are dissimilar if $\eta(x,x')<\frac{1}{2}$.
Thus, condition~\eqref{eq:statistical partition of unity} provides a connection between the pairwise relation and the smooth subsets.
See also Figure~\ref{fig:definition of similarity} for an illustration of condition~\eqref{eq:statistical partition of unity}.
Note that for any random variable $(X,X',Y)\sim P\in\mathcal{P}_{\xi}$, even if it holds that $\eta(X(\omega),X'(\omega))\geq \frac{1}{2}$ for some $\omega\in\Omega$, $Y(\omega)$ is not necessarily equal to $1$.
Note also that condition~\eqref{eq:statistical partition of unity} has some mathematical relations to some other conditions introduced by~\citep{awasthi2022do,waida2023towards,parulekar2023infonce} in the context of contrastive learning (see Section~\ref{appsec:comparison with related mathematical notions}).
\item[(iv)] By condition~\eqref{eq:statistical partition of unity}, the map $P\mapsto \mathscr{S}_{P}$ is well-defined.
Given any hyperparameter $\xi=(R,K,d_{1},E,\theta_{\textup{NC}},\theta_{1},\theta_{2},\theta_{3})\in\Xi$, this map identifies each $P\in\mathcal{P}_{\xi}$ with some parameter $\{\mathcal{K}_{i}\}_{i=1}^{d_{1}}\in\bigcup_{\alpha>0}\mathcal{P}_{\alpha,R}^{K,d_{1},E}$.
See Appendix~A.1\showsupplement{in} for a property of the map $\mathscr{S}_{P}$.
\end{enumerate}
\end{remark}

\paragraph{Risk functions.}
Let $\widehat{g}_{n}:(\mathcal{X}^{2}\times\mathcal{Y})^{n}\to\mathcal{G}_{0}$ be a map called \emph{estimator} (see Definition~\ref{def:definitions of local and global estimators}), where
\begin{align*}
\mathcal{G}_{0}=
\left\{
\begin{array}{c|l}
\multirow{2}{*}{$g:\mathcal{X}\to [0,1]^{d_{1}}$} & g=(g_{1},\cdots,g_{d_{1}}), g_{1},\cdots,g_{d_{1}}:\mathcal{X}\to[0,1]\textup{ are}\\
& \textup{measurable, and } \sum_{i=1}^{d_{1}}g_{i}(x)=1\;\textup{for each }x\in\mathcal{X}
\end{array}
\right\}.
\end{align*}
Given $\alpha>0$, $\tau\geq 1$, and $\xi\in\Xi$, let $P\in\mathcal{P}_{\alpha,\tau,\xi}$.
Then, we aim at estimating the sets in the sequence $\mathscr{S}_{P}=\{\mathcal{K}_{i}\}_{i=1}^{d_{1}}$.
Similarly to~\citep{tsybakov2004optimal,imaizumi2019deep,imaizumi2022advantage,meyer2023optimal}, we analyze the convergence rates of estimators in terms of the $L^{2}$-risk\footnote{Note that for any given estimator $\widehat{g}_{n}:(\mathcal{X}^{2}\times\mathcal{Y})^{n}\to\mathcal{G}_{0}$, we can write as $\widehat{g}_{n}=(\widehat{g}_{n,1},\cdots,\widehat{g}_{n,d_{1}})$ in the sense that $\widehat{g}_{n}(u_{1}^{n})=(\widehat{g}_{n,1}(u_{1}^{n}),\cdots,\widehat{g}_{n,d_{1}}(u_{1}^{n}))$ for any $u_{1}^{n}:=(u_{1},\cdots,u_{n})\in (\mathcal{X}^{2}\times\mathcal{Y})^{n}$.}:
\begin{definition}[$L^{2}$-risk]
\label{def:l2 risk}
Given any $\alpha>0$, $\tau\geq 1$, and $\xi=(R,K,d_{1},E,\theta_{\textup{NC}},\theta_{1},\theta_{2},\theta_{3})\in \Xi$, any probability distribution $P\in\mathcal{P}_{\alpha,\tau,\xi}$, and any estimator $\widehat{g}_{n}:(\mathcal{X}^{2}\times\mathcal{Y})^{n}\to\mathcal{G}_{0}$, the $L^{2}$-risk is defined as
\begin{align}
\label{eq:sup l2 risk}
\mathcal{R}(\widehat{g}_{n};P)=\mathbb{E}\left[\sum_{i=1}^{d_{1}}\|\widehat{g}_{n,i}(U_{1},\cdots,U_{n})-\mathds{1}_{\mathcal{K}_{i}}\|_{L^{2}(\mathcal{X},P_{X})}^{2}\right],
\end{align}
where $\widehat{g}_{n}=(\widehat{g}_{n,1},\cdots,\widehat{g}_{n,d_{1}})$, $\mathscr{S}_{P}=\{\mathcal{K}_{i}\}_{i=1}^{d_{1}}$, and for any sequence of i.i.d. $(\mathcal{X}^{2}\times\mathcal{Y})$-valued random variables $U_{1}=(X_{1},X_{1}',Y_{1}),\cdots,U_{n}=(X_{n},X_{n}',Y_{n})$ drawn from the distribution $P$, the expectation in the right-hand side of~\eqref{eq:sup l2 risk} is taken in terms of the distribution of $U_{1}^{n}:=(U_{1},\cdots,U_{n})$.
\end{definition}
Note that the order of subsets is estimated simultaneously, as in the standard settings of conventional classification problems~\citep{tsybakov2004optimal,tarigan2008moment,kim2021fast,meyer2023optimal}.

\subsection{Key Notions}
\label{subsec:core notations}

We find that some proof method is required to bypass a technical difficulty in the analysis of the $L^{2}$-risk in Definition~\ref{def:l2 risk}.
To maintain the readability, we postpone the details until Section~\ref{subsec:general upper bound of main result} and introduce several notions that will be used in the main theorem.

We define a contrastive function, which is slightly generalized from Definition~3.7 in~\citep{awasthi2022do}.
This notion identifies the parameter $\mathscr{S}_{P}$ of the given $P\in\mathcal{P}_{\xi}$ with some $\Delta^{d}$-valued function.
\begin{definition}[Contrastive function]
\label{def:contrastive representations}
For $\xi=(R,K,d_{1},E,\theta_{\textup{NC}},\theta_{1},\theta_{2},\theta_{3})\in \Xi$, $P\in\mathcal{P}_{\xi}$, and $\mathscr{S}_{P}=\{\mathcal{K}_{i}\}_{i=1}^{d_{1}}$, the \emph{contrastive function} $f^{*}:\mathcal{X}\to\Delta^{d}$ of $P$ is defined as
\begin{align*}
f^{*}(x)=\sum_{i=1}^{d_{1}}\mathds{1}_{\mathcal{K}_{i}}(x)v_{i}.
\end{align*}
\end{definition}
In~\citep[Definition~3.7]{awasthi2022do}, the function $\sum_{i=1}^{d'}\mathds{1}_{\mathcal{K}_{i}'}v_{i}$ is also employed, where $d'\leq d_{1}$, and $\{\mathcal{K}_{i}'\}_{i=1}^{d'}$ is a sequence of disjoint subsets parameterizing the distribution introduced in~\citep{arora2019theoretical} in the sense of~\citep[Assumption~3.1]{awasthi2022do}.
The difference from Definition~3.7 in~\citep{awasthi2022do} is that condition~\eqref{eq:statistical partition of unity} in Definition~\ref{def:main assumption} is weaker than the setting considered in~\citep{awasthi2022do}, as discussed in Section~\ref{appsec:comparison with related mathematical notions}.
In the context of contrastive learning, some different types of simplex-valued functions are considered in~\citep{haochen2021provable,lee2024analysis,koromilas2024bridging}, and also in~\citep{graf2021dissecting,zhu2022balanced,chen2022perfectly} under a supervised metric learning setting of~\citep{khosla2020supervised}.
Hence, in the current work we use the terminology \emph{contrastive function}.

Using the notion of contrastive function, we introduce a subclass of $\mathcal{F}_{0}$.
In a nutshell, we use this notion to address the technical difficulty due to the non-identifiability issue of the problem setting in Section~\ref{subsec:assumptions} (see Section~\ref{subsubsec:outline}).
\begin{definition}[$(\beta,\beta_{0},P)$-localized subclass]
\label{def:concentrated subclass}
Given any $\beta>0$, $\beta_{0}\geq 0$, hyperparameter $\xi=(R,K,d_{1},E,\theta_{\textup{NC}},\theta_{1},\theta_{2},\theta_{3})\in \Xi$, $P\in\mathcal{P}_{\xi}$, and the contrastive function $f^{*}$ of $P$, the $(\beta,\beta_{0},P)$-\emph{localized subclass} $\mathscr{F}_{\beta,\beta_{0},P}(\mathcal{F})$ of a set $\mathcal{F}\subset\mathcal{F}_{0}$ is defined as
\begin{align*}
\mathscr{F}_{\beta,\beta_{0},P}(\mathcal{F})=\left\{f\in\mathcal{F}\;|\; P_{X}\left(\{x\in\mathcal{X}\;|\;\|f(x)-f^{*}(x)\|_{2}<\beta\}\right)\geq 1-\beta_{0}\right\}.
\end{align*}
\end{definition}
\begin{remark}
\label{rem:remarks about localized subclass}
We provide several remarks of Definition~\ref{def:concentrated subclass}:
\begin{enumerate}
\item[(i)] Let $D_{\textup{proj}}$ denote the distance between $v_{1}$ and the simplex that does not contain $v_{1}$, namely
\begin{align*}
D_{\textup{proj}}=\inf
\left\{
\|z-v_{1}\|_{2} \;|\; z=0\cdot v_{1}+\sum_{i=2}^{d_{1}}c_{i}v_{i}\in \Delta^{d}
\right\}.
\end{align*}
In particular, we often consider the parametrization $\mathscr{F}_{\beta,\beta^{-1}\varepsilon,P}$, where $\beta\in (0,D_{\textup{proj}})$, $\varepsilon>0$, $\xi\in\Xi$, and $P\in\mathcal{P}_{\xi}$.
Note that if $\beta^{-1}\varepsilon\geq 1$, then $\mathscr{F}_{\beta,\beta^{-1}\varepsilon,P}(\mathcal{F})=\mathcal{F}$ for any $\mathcal{F}\subset\mathcal{F}_{0}$.
\item[(ii)] For any $\xi\in\Xi$, $P\in\mathcal{P}_{\xi}$, $\beta\in(0,D_{\textup{proj}})$ and $\mathcal{F}\subset\mathcal{F}_{0}$, it holds that $\mathscr{F}_{\beta,\beta^{-1}\varepsilon,P}(\mathcal{F})\subseteq \mathscr{F}_{\beta,\beta^{-1}\varepsilon',P}(\mathcal{F})$ for any $0\leq \varepsilon\leq \varepsilon'$.
Moreover, for the contrastive function $f^{*}$ of $P$ and any decreasing positive sequence $\{\varepsilon_{n}\}_{n\in\mathbb{N}}$ such that $\varepsilon_{n}\to 0$ as $n\to\infty$, the definition directly implies that
\begin{align*}
\bigcup_{n'=n}^{\infty}\bigcap_{k=n'}^{\infty}\mathscr{F}_{\beta,\beta^{-1}\varepsilon_{k},P}(\mathcal{F})\supset\{f\in\mathcal{F}\;|\; \|f-f^{*}\|_{2}<\beta\;\; P_{X}\textup{-almost surely}\}.
\end{align*}
Thus, $\mathscr{F}_{\beta,\beta^{-1}\varepsilon_{n},P}(\mathcal{F})$ contains a neighborhood of $f^{*}$ in the space $\mathcal{F}_{0}$ endowed with the topological structure induced by the semi-norm $\|\cdot\|_{\mathcal{X},P_{X},\infty}$.
\end{enumerate}
\end{remark}

\begin{figure}
    \centering
    \includegraphics[width=0.9\linewidth]{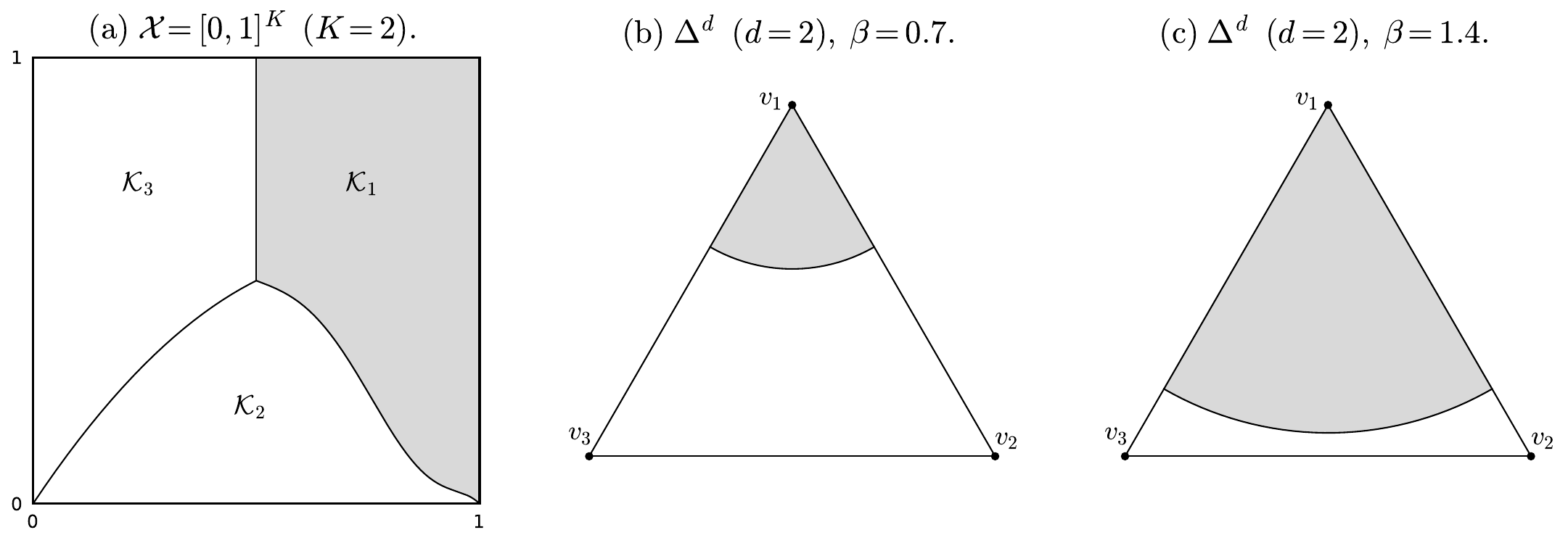}
    \caption{An illustration of the constraint in the definition of localized subclasses (see Definition~\ref{def:concentrated subclass}). In the left panel, we consider the same boundaries as those in Figure~\ref{fig:smooth boundaries}. For instance, the constraint requires the given function $f:\mathcal{X}\to\Delta^{d}$ to map points belonging to the subset $\mathcal{K}_{1}$ of the left panel in the shaded subset of $\Delta^{d}$ with probability at least $1-\beta_{0}$. Here, in the middle or right panel, the shaded area shows the subset $\{z\in\Delta^{d}\;|\; \|z-v_{1}\|_{2}<\beta\}$, where note that the contrastive function $f^{*}$ satisfies that $f^{*}(x)\in\{v_{1},\cdots,v_{d_{1}}\}$ for any $x\in\mathcal{X}$. Similar requirements apply to the subsets $\mathcal{K}_{2}$ and $\mathcal{K}_{3}$ in the left panel, with different vertices.}
    \label{fig:localized subclass}
\end{figure}

In addition to Remark~\ref{rem:remarks about localized subclass}--(ii), the localized subclass also contains vector-valued functions that may be pointwisely far apart from the true function $f^{*}$ since $\beta$ can take any value in $(0,D_{\textup{proj}})$ (see Figure~\ref{fig:localized subclass}).
For instance, given $\beta\in(1,D_{\textup{proj}})$, $\beta_{0}\geq 0$, a subset $\mathcal{A}\in\mathcal{B}(\mathcal{X})$ for which $P_{X}(\mathcal{A})\geq 1-\beta_{0}$ is satisfied, and the function $f_{0}\in\mathcal{F}_{0}$ satisfying $f_{0}(x)=\bm{0}$ if $x\in\mathcal{A}$ and $f_{0}(x)=v_{1}$ otherwise, we have $f_{0}\in\mathscr{F}_{\beta,\beta_{0},P}(\mathcal{F}_{0})$.
To estimate the boundaries, we need to construct an algorithm that outputs a function close to $f^{*}$, not $f_{0}$.

Some discussion of the comparison to several similar notions introduced in~\citep{schiebinger2015geometry,trillos2021geometric,mendelson2015learning,mendelson2017local} in some different contexts can be found in Section~\ref{appsec:comparison with related mathematical notions}.
In addition, we provide some interpretations of the ERM algorithm using a localized subclass in Section~\ref{subsec:discussion of the formal version of the main theorem}.

We formally define \emph{local} and \emph{global} estimators:
\begin{definition}
\label{def:definitions of local and global estimators}
A map $\widehat{f}_{n}:(\mathcal{X}^{2}\times\mathcal{Y})^{n}\to\mathcal{F}_{0}$ is called \emph{estimator}.
Given any estimator $\widehat{f}_{n}$, the map $\widehat{g}_{n}:(\mathcal{X}^{2}\times\mathcal{Y})^{n}\to \mathcal{G}_{0}$ satisfying $\widehat{f}_{n}=\sum_{i=1}^{d_{1}}\widehat{g}_{n,i}v_{i}$ with $\widehat{g}_{n}=(\widehat{g}_{n,1},\cdots,\widehat{g}_{n,d_{1}})$ is also called \emph{estimator} and is uniquely determined since $v_{1},\cdots,v_{d_{1}}$ are affinely independent.
In particular, we consider the following classes of maps:
\begin{itemize}
    \item Let $\xi\in\Xi$, and let $\mathcal{P}\subset \mathcal{P}_{\xi}$ be arbitrary. Given an estimator $\widehat{f}_{n,P}^{\textup{local}}:(\mathcal{X}^{2}\times\mathcal{Y})^{n}\to\mathcal{F}_{0}$ defined for each $P\in\mathcal{P}$, the \emph{local estimator} $\widehat{f}_{n}^{\textup{local}}$ of the class $\mathcal{P}$ is defined as the map
    \begin{align*}
    \mathcal{P}\ni P\mapsto \widehat{f}_{n}^{\textup{local}}(P):=\widehat{f}_{n,P}^{\textup{local}}.
    \end{align*}
    The local estimator $\widehat{g}_{n}^{\textup{local}}$, namely the map
    \begin{align*}
    \mathcal{P}\ni P\mapsto \widehat{g}_{n}^{\textup{local}}(P):=\widehat{g}_{n,P}^{\textup{local}}=(\widehat{g}_{n,P,1}^{\textup{local}},\cdots,\widehat{g}_{n,P,d_{1}}^{\textup{local}}):(\mathcal{X}^{2}\times\mathcal{Y})^{n}\to\mathcal{G}_{0},
    \end{align*}
    is defined via $\widehat{f}_{n,P}^{\textup{local}}=\sum_{i=1}^{d_{1}}\widehat{g}_{n,P,i}^{\textup{local}}v_{i}$ for each $P\in\mathcal{P}$.
    \item A \emph{global estimator} $\widehat{f}_{n}:(\mathcal{X}^{2}\times\mathcal{Y})^{n}\to \mathcal{F}_{0}$ is simply defined as an estimator.
    The global estimator $\widehat{g}_{n}:(\mathcal{X}^{2}\times\mathcal{Y})^{n}\to\mathcal{G}_{0}$ is the estimator satisfying $\widehat{f}_{n}=\sum_{i=1}^{d_{1}}\widehat{g}_{n,i}v_{i}$ with $\widehat{g}_{n}=(\widehat{g}_{n,1},\cdots,\widehat{g}_{n,d_{1}})$.
\end{itemize}
\end{definition}
\begin{remark}
\label{rem:remark of the global estimators}
Any global estimator is identified with a local estimator defined as a constant map.
Note that the definition of global estimators is the same as that of estimators.
Hence, the definition of global estimators in Definition~\ref{def:definitions of local and global estimators} might be a bit redundant.
Nevertheless, we employ this terminology because we also consider some estimator $\widehat{g}_{n,P}^{\textup{local}}$ defined for each fixed $P\in\mathcal{P}_{\xi}$.
While each $\widehat{g}_{n,P}^{\textup{local}}$ defined with $P$ is a global estimator by Definition~\ref{def:definitions of local and global estimators}, the purpose is to define a local estimator.
Hereafter, we refer to both $\widehat{f}_{n,P}^{\textup{local}}$ and $\widehat{g}_{n,P}^{\textup{local}}$ as \emph{estimators defined for each} $P$, for convenience.
\end{remark}
Given $\mathcal{F}\subset\mathcal{F}_{0}$, we often consider the local estimator $\widehat{f}_{n}^{\textup{local}}$ of estimator $\widehat{f}_{n,P}^{\textup{local}}:(\mathcal{X}^{2}\times\mathcal{Y})^{n}\to \mathscr{F}_{\beta,\beta_{0},P}(\mathcal{F})\subset \mathcal{F}_{0}$ defined for each $P\in\mathcal{P}_{\xi}$.
Note that throughout this paper, the symbols of local and global estimators are distinguished by the existence of the superscript: for instance, $\widehat{f}_{n}^{\textup{local}}$ and $\widehat{f}_{n}$ denote local and global estimators, respectively.

\subsection{Main Theorem}
\label{subsec:examples}

We are now in a position to state the main theorem of this work, which is the formal statement of Theorem~\ref{thm:informal version of local theorem}.

\begin{theorem}
\label{thm:estimation error for deep relu networks}
For any $\alpha> 0$, $\tau\geq 1$, $\xi=(R,K,d_{1},E,\theta_{\textup{NC}},\theta_{1},\theta_{2},\theta_{3})\in \Xi$, $\beta\in(0,D_{{\textup{proj}}})$, and $n\in\mathbb{N}\setminus\{1,2\}$ satisfying $\varepsilon_{n}=n^{-\tau\alpha/((2\tau-1)\alpha+\tau(K-1))}<2^{-1}$, there are
\begin{enumerate}
\item[(i)] constants $C^{*}>0$ and $N\in\mathbb{N}$ that are independent of $n$,
\item[(ii)] some $c>0$, $L^{*}\in\mathbb{N}$, $J^{*},S^{*},M^{*}\geq 0$, and $\bm{d}^{*}\in\mathbb{N}^{L^{*}+1}$ satisfying the conditions $L^{*}\lesssim \log_{2}{\varepsilon_{n}^{-1}}$, $J^{*}\lesssim \varepsilon_{n}^{-c}$, $S^{*}\lesssim\varepsilon_{n}^{-(K-1)/\alpha}\log_{2}{\varepsilon_{n}^{-1}}$, $M^{*}\lesssim |\log(4d_{1}^{-2}\theta_{1}^{-1}\varepsilon_{n})|\vee 1$, and $\bm{d}^{*}=(K,d_{\textup{NN},1}^{*},\cdots,d_{\textup{NN},L^{*}-1}^{*},d_{1})$, and
\item[(iii)] a local estimator $\widehat{f}_{n}^{\textup{local}}$ of the class $\mathcal{P}_{\alpha,\tau,\xi}$ for which $\widehat{f}_{n}^{\textup{local}}(P):=\widehat{f}_{n,P}^{\textup{local}}$ is defined with $\widehat{f}_{n,P}^{\textup{local}}:(\mathcal{X}^{2}\times\mathcal{Y})^{n}\to \mathscr{F}_{\beta,\beta^{-1}\varepsilon_{n},P}(\mathcal{F}_{L^{*},J^{*},S^{*},M^{*},\bm{d}^{*}}^{\Delta^{d}\textup{-NN}})\subset \mathcal{F}_{0}$ for each $P\in\mathcal{P}_{\alpha,\tau,\xi}$,
\end{enumerate}
such that for the estimator $\widehat{g}_{n,P}^{\textup{local}}=(\widehat{g}_{n,P,1}^{\textup{local}},\cdots,\widehat{g}_{n,P,d_{1}}^{\textup{local}}):(\mathcal{X}^{2}\times\mathcal{Y})^{n}\to\mathcal{G}_{0}$ defined by the identity $\widehat{f}_{n,P}^{\textup{local}}=\sum_{i=1}^{d_{1}}\widehat{g}_{n,P,i}^{\textup{local}}v_{i}$ for each $P\in\mathcal{P}_{\alpha,\tau,\xi}$, if $n\geq N$, then we have
\begin{align*}
\sup_{P\in\mathcal{P}_{\alpha,\tau,\xi}}\mathcal{R}(\widehat{g}_{n,P}^{\textup{local}};P)\leq C^{*}n^{-\frac{\alpha}{(2\tau-1)\alpha+\tau(K-1)}}\log^{3\tau^{-1}+1}{n}.
\end{align*}
\end{theorem}
\begin{remark}
\label{remark:condition on theorem for local erm}
We provide several comments on Theorem~\ref{thm:estimation error for deep relu networks}:
\begin{enumerate}
\item[(i)] This theorem holds for any given $\beta\in (0,D_{\textup{proj}})$.
Let $C^{*}=C^{*}(\beta)$ be the constant defined in the proof of Theorem~\ref{thm:estimation error for deep relu networks} for each $\beta\in (0,D_{\textup{proj}})$ (see Appendix~B.6.4\showsupplement{in}).
While $C^{*}(\beta)<\infty$ if $\beta\in (0,D_{\textup{proj}})$, we have $\lim_{\beta\to 0}C^{*}(\beta)=\infty$ and $\lim_{\beta\to D_{\textup{proj}}}C^{*}(\beta)=\infty$.
\item[(ii)] Given $\beta\in (0,D_{\textup{proj}})$, if the natural number $n$ in Theorem~\ref{thm:estimation error for deep relu networks} satisfies the additional condition that $\beta^{-1}\varepsilon_{n}\geq 1$, then the local estimator in this theorem is a global estimator since it holds that $\mathscr{F}_{\beta,\beta^{-1}\varepsilon_{n},P}(\mathcal{F}_{L^{*},J^{*},S^{*},M^{*},\bm{d}^{*}}^{\Delta^{d}\textup{-NN}})=\mathcal{F}_{L^{*},J^{*},S^{*},M^{*},\bm{d}^{*}}^{\Delta^{d}\textup{-NN}}$ by Definition~\ref{def:concentrated subclass}.
\item[(iii)] Suppose that all the conditions using either $\theta_{2}$ or $\theta_{3}$ in Definition~\ref{def:main assumption} are removed, and the conditions $p_{Y}(-1)\in (0,1)$ and $\max_{i=1,\cdots,d_{1}}P_{X}(\mathcal{K}_{i})<1$ are added instead.
Under this setting, for every $P$ in this modified class, there is some constant $C_{P}^{*}$ and an estimator $\widehat{f}_{n,P}^{\textup{local}}$ such that for the estimator $\widehat{g}_{n,P}^{\textup{local}}$ satisfying that $\widehat{f}_{n,P}^{\textup{local}}=\sum_{i=1}^{d_{1}}\widehat{g}_{n,P,i}^{\textup{local}}v_{i}$, we have
\begin{align*}
\mathcal{R}(\widehat{g}_{n,P}^{\textup{local}};P)\leq C_{P}^{*}n^{-\frac{\alpha}{(2\tau-1)\alpha+\tau(K-1)}}\log^{3\tau^{-1}+1}{n},
\end{align*}
where $C_{P}^{*}$ should satisfy that $\sup_{P}C_{P}^{*}=\infty$.
The proof is almost the same as that of Theorem~\ref{thm:estimation error for deep relu networks} and is thus omitted.
\item[(iv)] The best convergence rate in Theorem~\ref{thm:estimation error for deep relu networks} is $n^{-\frac{\alpha}{\alpha+K-1}}\log^{4}{n}$ when $\tau=1$, namely when $P$ satisfies the condition of~\citet{massart2006risk}.
This observation shares some similarity with the analyses of~\citep{lecue2007optimal,alquier2019estimation} under conventional supervised learning settings, since we use~\citep[Proposition~1]{lecue2007optimal} in the proof (see Section~\ref{subsubsec:outline}).
\end{enumerate}
\end{remark}
This theorem indicates the learnability of smooth boundaries via some local pairwise binary classification algorithm.

\section{Proof Outline of Theorem~\ref{thm:estimation error for deep relu networks}}
\label{subsec:general upper bound of main result}

We give an outline of the proof of Theorem~\ref{thm:estimation error for deep relu networks}.
The proof consists of several steps.
In Section~\ref{subsec:loss function continued}, we introduce a learning algorithm.
In Section~\ref{subsubsec:outline}, we show the proof strategy of the main theorem.
In Section~\ref{subsubsec:an oracle inequality}, we present an estimation bound and the remained part of the proof.
All the proofs omitted here are deferred to Appendix~B\showsupplement{in}.

\subsection{Learning Algorithm}
\label{subsec:loss function continued}

In the estimation procedure, we consider to execute an algorithm based on contrastive learning, where contrastive learning is known as an efficient, tractable methodology to learn pairwise relation (see, e.g.,~\citep{gutmann2010noise,oord2018representation,arora2019theoretical,chen2020simple}).
Contrastive learning is originally developed by~\citet{gutmann2010noise} as a parametric estimation method, while recently it has been investigated as a methodology to learn the statistical relationship between covariates using some vector-valued functions~\citep{tsai2020neural,tosh2021contrastivejmlr,tosh2021contrastive,bao2022pairwise,chuang2022robust,zhai2023sigmoid}.

Before introducing the algorithm, some justification of using vector-valued functions is required.
The following fact justifies the usefulness of the vector-valued functions in $\mathcal{F}_{0}$:
\begin{proposition}
\label{prop:simple fact for risks}
We have the following properties:
\begin{enumerate}
\item[(i)] Let $d_{1}\in\mathbb{N}\setminus \{1\}$, and let $d=d_{1}-1$.
Given any $z=\sum_{i=1}^{d_{1}}c_{i}v_{i}\in \Delta^{d}$ and $z'=\sum_{i=1}^{d_{1}}c_{i}'v_{i}\in\Delta^{d}$, it holds that $\|z-z'\|_{2}^{2}=d_{1}d^{-1}\sum_{i=1}^{d_{1}}|c_{i}-c_{i}'|^{2}$.
\item[(ii)] Let $\xi=(R,K,d_{1},E,\theta_{\textup{NC}},\theta_{1},\theta_{2},\theta_{3})\in\Xi$, and let $d=d_{1}-1$. Let $f,f'\in\mathcal{F}_{0}$, and denote by $f=\sum_{i=1}^{d_{1}}g_{i}v_{i}$ and $f'=\sum_{i=1}^{d_{1}}g_{i}'v_{i}$.
Given any $P\in\mathcal{P}_{\xi}$, we have
\begin{align*}
\|f-f'\|_{\mathcal{X},P_{X},2}^{2}=\frac{d_{1}}{d}\sum_{i=1}^{d_{1}}\|g_{i}-g_{i}'\|_{L^{2}(\mathcal{X},P_{X})}^{2}.
\end{align*}
\end{enumerate}
\end{proposition}
By Proposition~\ref{prop:simple fact for risks}--(ii), for every $P\in\mathcal{P}_{\xi}$ we have
\begin{align*}
\mathcal{R}(\widehat{g}_{n,P}^{\textup{local}};P)=d_{1}^{-1}d\cdot \mathbb{E}[\|\widehat{f}_{n,P}^{\textup{local}}(U_{1},\cdots,U_{n})-f^{*}\|_{\mathcal{X},P_{X},2}^{2}],
\end{align*}
where $\widehat{f}_{n,P}^{\textup{local}}:(\mathcal{X}^{2}\times\mathcal{Y})^{n}\to\mathcal{F}_{0}$ is any estimator for which $\widehat{f}_{n,P}^{\textup{local}}=\sum_{i=1}^{d_{1}}\widehat{g}_{n,P,i}^{\textup{local}}v_{i}$ is satisfied for each $P\in\mathcal{P}_{\xi}$, and $f^{*}$ is the contrastive function of $P$.
Thus, it suffices to show an upper bound of the $L^{2}$-risk $\mathbb{E}[\|\widehat{f}_{n,P}^{\textup{local}}(U_{1},\cdots,U_{n})-f^{*}\|_{\mathcal{X},P_{X},2}^{2}]$.

We now introduce the loss function.
Following~\citep{chen2021large,wang2020understanding}, we consider a loss function defined with the squared Euclidean distance, namely, $\rho_{f}:\mathcal{X}\times\mathcal{X}\to\mathbb{R}$ defined as 
\begin{align*}
\rho_{f}(x,x')=\|f(x)-f(x')\|_{2}^{2},
\end{align*}
for each $f\in\mathcal{F}_{0}$.
Then, following the standard approach in similarity learning and metric learning~\citep{jin2009regularized,chen2009similarity,cao2015generalization,bao2022pairwise}, we define a hinge loss as follows:
\begin{definition}[Loss function]
\label{def:contrastive loss}
Let $\psi:\mathbb{R}\to\mathbb{R}$ be the function defined as
\begin{align*}
\psi(s)=1-2D_{\Delta^{d}}^{-2}s.
\end{align*}
Then, for every $f\in\mathcal{F}_{0}$, we define the hinge loss $\ell_{f}:\mathcal{X}^{2}\times\mathcal{Y}\to\mathbb{R}$ as 
\begin{align*}
\ell_{f}(x,x',y)=\max\{0,1-y\psi\circ\rho_{f}(x,x')\}.
\end{align*}
\end{definition}
\begin{remark}
We use the function $\psi\circ\rho_{f}$ as a classifier (see Lemma~B.2\showsupplement{in} for a basic property).
This loss function can be used in a contrastive learning algorithm, as it belongs to a general class of loss functions of contrastive learning proposed in~\citep{chen2021intriguing}.
In Section~\ref{appsec:comparison with related mathematical notions} we provide some discussion of related work~\citep{arora2019theoretical,li2021self,shah2022max,waida2023towards,ji2023power,jin2009regularized,cao2015generalization,zhou2024generalization,kim2021fast,imaizumi2019deep,imaizumi2022advantage,meyer2023optimal}.
\end{remark}

In our problem setting, the motivation of using the above loss function is due to the following property, which utilizes a fact shown in~\citep{lin2002support}.
\begin{proposition}
\label{thm:main result basic case}
Given any $\xi=(R,K,d_{1},E,\theta_{\textup{NC}},\theta_{1},\theta_{2},\theta_{3})\in \Xi$ and $P\in\mathcal{P}_{\xi}$, if $f^{*}$ is the contrastive function of $P$, then we have
\begin{align*}
\mathbb{E}_{P}[\ell_{f^{*}}]=\inf_{f\in\mathcal{F}_{0}}\mathbb{E}_{P}[\ell_{f}].
\end{align*}
\end{proposition}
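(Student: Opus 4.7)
The plan is to reduce the statement to a pointwise comparison of the hinge risk at each $(x,x')$, then invoke the classical Bayes-consistency of hinge loss from Lin (2002), and finally verify that the contrastive function realizes the pointwise optimum on the nose thanks to the geometry of the regular simplex.

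First I would observe that for every $f\in\mathcal{F}_0$ the surrogate classifier $\psi\circ\rho_f$ takes values in $[-1,1]$. Indeed, since $f(x),f(x')\in\Delta^d$ we have $\rho_f(x,x')\in[0,D_{\Delta^d}^2]$, and the definition of $\psi$ then gives $\psi\circ\rho_f(x,x')=1-2D_{\Delta^d}^{-2}\rho_f(x,x')\in[-1,1]$. In particular $1-y\psi\circ\rho_f(x,x')\geq 0$, so the $\max$ is inactive and $\ell_f(x,x',y)=1-y\psi\circ\rho_f(x,x')$ pointwise. Disintegrating with respect to $P_{X,X'}$ and using the definition of $\eta$, the expected loss becomes
\begin{align*}
\mathbb{E}_P[\ell_f]=1-\mathbb{E}_{P_{X,X'}}\bigl[(2\eta(X,X')-1)\,\psi\circ\rho_f(X,X')\bigr].
\end{align*}

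Next I would invoke the pointwise minimization argument of Lin (2002): for any measurable $h:\mathcal{X}^2\to[-1,1]$,
\begin{align*}
\mathbb{E}_{P_{X,X'}}[(2\eta-1)\,h(X,X')]\leq \mathbb{E}_{P_{X,X'}}[|2\eta(X,X')-1|],
\end{align*}
with equality whenever $h(x,x')=\mathrm{sign}(2\eta(x,x')-1)$ $P_{X,X'}$-a.e. on $\{\eta\neq 1/2\}$. Applied to $h=\psi\circ\rho_f$, this produces a lower bound on $\mathbb{E}_P[\ell_f]$ that is uniform over $f\in\mathcal{F}_0$, so it only remains to exhibit a member of $\mathcal{F}_0$ attaining it.

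That member is $f^*$. Because $\mathscr{S}_P=\{\mathcal{K}_i\}_{i=1}^{d_1}$ is a disjoint partition of $\mathcal{X}$, every $x\in\mathcal{X}$ belongs to a unique $\mathcal{K}_{i(x)}$, hence $f^*(x)=v_{i(x)}$. If $(x,x')\in\bigcup_i\mathcal{K}_i\times\mathcal{K}_i$ then $i(x)=i(x')$, so $\rho_{f^*}(x,x')=0$ and $\psi\circ\rho_{f^*}(x,x')=1$; otherwise $i(x)\neq i(x')$ and by \cite{alexander1977width} $\rho_{f^*}(x,x')=\|v_{i(x)}-v_{i(x')}\|_2^2=D_{\Delta^d}^2$, yielding $\psi\circ\rho_{f^*}(x,x')=-1$. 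By condition (A4) the first case is exactly $\{\eta\geq 1/2\}$ and the second is $\{\eta<1/2\}$, so $\psi\circ\rho_{f^*}(x,x')=\mathrm{sign}(2\eta(x,x')-1)$ pointwise (matching the convention $\mathrm{sign}(0)=1$ fixed in Section~\ref{subsec:notations}). This is precisely the pointwise optimizer of Lin, so $\mathbb{E}_P[\ell_{f^*}]$ meets the lower bound and the claim follows.

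The only subtle point I expect is the boundary set $\{\eta=1/2\}$: Lin's criterion leaves $h$ free there, and condition (A4) classifies $\eta=1/2$ into the ``similar'' side, which is compatible because at such points $(2\eta-1)\psi\circ\rho_{f^*}=0$ regardless of the value of $\psi\circ\rho_{f^*}$, so the contribution to the risk vanishes. Apart from this bookkeeping, every step reduces either to the range constraint enforced by $\Delta^d$ or to the disjointness of $\{\mathcal{K}_i\}_{i=1}^{d_1}$, both of which are built into the definitions.
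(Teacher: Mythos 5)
Your proposal is correct and follows essentially the same route as the paper's: both proofs verify, via Alexander's theorem and condition (A4), that $\psi\circ\rho_{f^*}$ coincides pointwise with the Bayes classifier $\mathrm{sign}\circ(2\eta-1)$, and then conclude by Lin's characterization of hinge-loss minimizers (the paper cites \citet{lin2002support} as a black box through Lemma~\ref{prop:constructability for F0}, whereas you unwind it into the explicit identity $\mathbb{E}_P[\ell_f]=1-\mathbb{E}_{P_{X,X'}}[(2\eta-1)\psi\circ\rho_f]$ and a pointwise bound). The only genuine addition in your write-up is the observation that $\psi\circ\rho_f\in[-1,1]$ makes the $\max$ in the hinge loss inactive, which is a clean way to make the reduction to Lin's argument self-contained.
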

The key point is that the codomain of any vector-valued function in $\mathcal{F}_{0}$ is restricted to $\Delta^{d}$.
This property might be natural since the relation between regular simplices and empirical risk minimization in representation learning has been shown in~\citep{liu2021learning,lee2024analysis,koromilas2024bridging}.
Note that this proposition slightly generalizes some part of Theorem~3.8 in~\citep{awasthi2022do} for hinge loss since we deal with a more general setting.
Note also that some relation between the probability simplex and the population risk minimizer of a similarity learning problem is proven in~\citep[Theorem~1]{zhou2024generalization}, while the setting in Proposition~\ref{thm:main result basic case} is based on a contrastive learning problem.

\subsection{Proof Strategy}
\label{subsubsec:outline}

It is proven in~\citep[Proposition~1]{lecue2007optimal} that under the Tsybakov noise condition~\citep{mammen1999smooth,tsybakov2004optimal}, the excess risk of hinge loss gives the following upper bound of the $L^{1}$-risk between classifiers:
\begin{lemma}[{Proposition~1 in~\citep{lecue2007optimal}}]
\label{citelem:lecue lemma}
Let $\mathcal{X}_{0}$ be a measurable space with a non-negative, $\sigma$-finite measure $\nu$.
Let $g:\mathcal{X}_{0}\to[-1,1]$ be a measurable function.
Let $P$ be a probability measure in $\mathcal{X}_{0}\times\mathcal{Y}$ that has a probability density function $p(x,y)$ on $\mathcal{X}_{0}\times\mathcal{Y}$ with respect to $\nu\otimes\chi$, where the marginal distribution of $P$ in $\mathcal{X}_{0}$ is denoted by $P_{X_{0}}$.
Also, denote by $g^{*}(x)=\textup{sign}(2p(y=1|x)-1)$, the Bayes classifier for $P$.
Suppose that either of the following conditions is satisfied:
\begin{itemize}
\item $P$ satisfies the Tsybakov noise condition~\eqref{eq:margin assumption} with $s_{0}=1$, $\tau> 1$, and $c>0$.
\item $P$ satisfies the Tsybakov noise condition~\eqref{eq:margin assumption} with $s_{0}\in (0,1]$, $\tau=1$, and $c>0$.
\end{itemize}
Then, for the coefficient $C_{0}=C_{0}(\tau,c):=((\tau-1)/(2c\tau))^{1-\tau}\tau s_{0}^{-1}$, it holds that
\begin{align*}
\mathbb{E}_{P_{X_{0}}}[|g-g^{*}|]^{\tau}\leq C_{0}(\mathbb{E}_{P}[\max\{0,1-yg(x)\}]-\mathbb{E}_{P}[\max\{0,1-yg^{*}(x)\}]).
\end{align*}
\end{lemma}

In general, the above property is known as \emph{Bernstein condition}~\citep{bartlett2005empirical} (see also~\citep{tsybakov2004optimal,steinwart2007fast,tarigan2008moment,alquier2019estimation}).
Here, note that any $P\in\mathcal{P}_{\tau,\xi}$ satisfies all the conditions in Lemma~\ref{citelem:lecue lemma}.
Applying Lemma~\ref{citelem:lecue lemma}, we can obtain an upper bound of the quantity $\mathbb{E}_{P_{X,X'}}[|\psi\circ\rho_{f}-\psi\circ\rho_{f^{*}}|]$ for any $f\in\mathcal{F}_{0}$, where $f^{*}$ is the contrastive function of $P$.
Hence, it might suffice to show a lower bound of this quantity.
In other words, we consider the following question:
\begin{question}
\label{question:inequality between l2 norm and l2 distance between classifiers}
For all $f\in\mathcal{F}_{0}$, is there a constant $c=c(f)>0$ such that for every $x,x'\in\mathcal{X}$ it holds that
\begin{align*}
\|f(x)-f^{*}(x)\|_{2}^{2}\leq c|\psi\circ\rho_{f}(x,x')-\psi\circ\rho_{f^{*}}(x,x')|\;?
\end{align*}
\end{question}

However, we can find a counterexample, which is due to the non-identifiability of the problem setting in Section~\ref{subsec:assumptions}:
\begin{example}
\label{example:not good representation}
Given any $\xi=(R,K,d_{1},E,\theta_{\textup{NC}},\theta_{1},\theta_{2},\theta_{3})\in \Xi$, $P\in\mathcal{P}_{\xi}$, and any permutation $\pi$ on $\{1,\cdots,d_{1}\}$ for which $\pi(i)\neq i$ for any $i\in\{1,\cdots,d_{1}\}$, we define the function $f_{\pi}^{*}:\mathcal{X}\to\Delta^{d}$ as $f_{\pi}^{*}=\sum_{i=1}^{d_{1}}g_{\pi(i)}^{*}v_{i}$, where $f^{*}=\sum_{i=1}^{d_{1}}g_{i}^{*}v_{i}$ is the contrastive function of $P$.
For instance, for the permutation $\pi_{1}$ satisfying that $\pi_{1}(i)=i-1$ for every $i\in\{2,\cdots,d_{1}\}$ and $\pi_{1}(1)=d_{1}$, it holds that
\begin{align*}
f_{\pi_{1}}^{*}(x)=
\begin{cases}
v_{i+1} &\textup{if }x\in\mathcal{K}_{i},\; i\in\{1,\cdots,d_{1}-1\},\\
v_{1} &\textup{if }x\in\mathcal{K}_{d_{1}}.
\end{cases}
\end{align*}
Then, we have $\|f_{\pi}^{*}(x)-f^{*}(x)\|_{2}^{2}>0$ for every $x\in\mathcal{X}$.
In contrast, by the definition of vertices $v_{1},\cdots,v_{d_{1}}$, we have that $\|v_{\pi(i)}-v_{\pi(j)}\|_{2}=\|v_{i}-v_{j}\|_{2}$ for any $i,j\in\{1,\cdots,d_{1}\}$.
This implies that $\|f_{\pi}^{*}(x)-f_{\pi}^{*}(x')\|_{2}=\|f^{*}(x)-f^{*}(x')\|_{2}$ for any $x,x'\in\mathcal{X}$, which is equivalent to the claim that $|\psi\circ\rho_{f_{\pi}^{*}}(x,x')-\psi\circ\rho_{f^{*}}(x,x')|=0$ for every $x,x'\in\mathcal{X}$.
\end{example}
This example implies that the order of subsets $\{\mathcal{K}_{i}\}_{i=1}^{d_{1}}$ causes the non-identifiability issue.
One can also observe this by Lemma~B.2--(iii)\showsupplement{in}.
See also Appendix~A.2\showsupplement{in} for a similar observation under a different, similarity learning setting of~\citet{bao2022pairwise}.
Thus, we need to ask the following question, instead of Question~\ref{question:inequality between l2 norm and l2 distance between classifiers}:
\begin{question}
\label{question:how to derive a lower bound}
Let $\mathcal{F}\subset\mathcal{F}_{0}$, $n\in\mathbb{N}$, $\xi\in\Xi$, $P\in\mathcal{P}_{\xi}$, and $f^{*}$ be the contrastive function of $P$.
\begin{enumerate}
    \item[(Q1)] Is there a sufficient condition on $\mathcal{F}$ such that for any given $f\in\mathcal{F}$, if it holds that $\mathbb{E}_{P_{X,X'}}[|\psi\circ\rho_{f}-\psi\circ\rho_{f^{*}}|]=0$, then $\|f-f^{*}\|_{\mathcal{X},P_{X},2}^{2}=0$?
    \item[(Q2)] Is there a function $\mathcal{U}_{n}:[0,\infty)\to [0,\infty)$ such that $\limsup_{n\to\infty}\mathcal{U}_{n}(0)=0$, and for any estimator $\widehat{f}_{n}:(\mathcal{X}^{2}\times\mathcal{Y})^{n}\to\mathcal{F}\subset \mathcal{F}_{0}$ it holds that
    \begin{align*}
    \mathbb{E}[\|\widehat{f}_{n}-f^{*}\|_{\mathcal{X},P_{X},2}^{2}]\leq \mathcal{U}_{n}(\mathbb{E}[\mathbb{E}_{P_{X,X'}}[|\psi\circ\rho_{\widehat{f}_{n}}-\psi\circ\rho_{f^{*}}|]]) \; ?
    \end{align*}
\end{enumerate}
\end{question}
Now, we first consider (Q1).
We notice that Example~\ref{example:not good representation} implies that some condition on vector-valued functions in $\mathcal{F}_{0}$ is required to exclude the functions introduced in the example.
Hence, we may assume that the given function $f\in\mathcal{F}$ should satisfy that $\|f(x)-f^{*}(x)\|_{2}<\beta$ for any $x\in\mathcal{X}$, where $\beta<D_{\textup{proj}}$, and $f^{*}$ is the contrastive function of the given $P\in\mathcal{P}_{\xi}$.
However, this condition is not suitable when $f^{*}$ is estimated using neural networks since we additionally need to consider the approximation errors.
To address this issue, we use the notion of localized subclasses defined in Definition~\ref{def:concentrated subclass}.

To address (Q2), we evaluate the gap $|\psi\circ\rho_{f}-\psi\circ\rho_{f^{*}}|$ in the next subsection.

\subsection{General Estimation Bounds and Further Analyses}
\label{subsubsec:an oracle inequality}

\paragraph{General estimation bounds.}
Given any $P\in\mathcal{P}_{\xi}$, define $P_{X,X'}^{-}$ as the probability measure whose Lebesgue density is $p_{X}(x)p_{X'}(x')$.
Note that by condition (A3), we have that $p_{X}(x)p_{X'}(x')=p_{X}(x)p_{X}(x')$ for any $x,x'\in\mathcal{X}$.
\begin{lemma}
\label{lem:step 3 lemma 2}
Let $\xi=(R,K,d_{1},E,\theta_{\textup{NC}},\theta_{1},\theta_{2},\theta_{3})\in \Xi$, $P\in\mathcal{P}_{\xi}$, and $\mathscr{S}_{P}=\{\mathcal{K}_{i}\}_{i=1}^{d_{1}}$.
Denote by $f^{*}$, the contrastive function of $P$.
Let $\beta\in(0,D_{{\textup{proj}}})$, $\beta_{0}\geq 0$, and $\mathcal{F}\subset\mathcal{F}_{0}$.
For any distinct $i,j\in\{1,\cdots,d_{1}\}$ and every $f\in\mathscr{F}_{\beta,\beta_{0},P}(\mathcal{F})$, we have 
\begin{align*}
P_{X,X'}^{-}((\{x\in\mathcal{X}\;|\;\|f(x)-f^{*}(x)\|_{2}\geq \beta\}\cap \mathcal{K}_{i})\times \mathcal{K}_{j})\leq \beta_{0}.
\end{align*}
\end{lemma}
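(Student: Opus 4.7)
The plan is to reduce the statement to a direct product-measure computation using the definition of the localized subclass $\mathscr{F}_{\beta,\beta_{0},P}(\mathcal{F})$. The lemma says nothing beyond combining two almost-trivial facts: (i) the measure $P_{X,X'}^{-}$ factorizes, and (ii) the ``bad set'' on which $f$ is $\beta$-far from $f^{*}$ has small $P_{X}$-mass by construction.

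First, I would verify that $P_{X,X'}^{-}$ is a product probability measure. By (A1), $q(x,x')$ is a probability density on $\mathcal{X}^{2}$, and $p_{X},p_{X'}$ are its marginals, hence both are probability densities on $\mathcal{X}$. Since $P_{X,X'}^{-}(dx,dx')=p_{X}(x)p_{X'}(x')\,\mu(dx)\,\mu(dx')$, this gives $P_{X,X'}^{-}=P_{X}\otimes P_{X'}$. Consequently, for any measurable $A,B\subset\mathcal{X}$,
\begin{equation*}
P_{X,X'}^{-}(A\times B)=P_{X}(A)\cdot P_{X'}(B).
\end{equation*}

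Second, I would set $A=\{x\in\mathcal{X}\;|\;\|f(x)-f^{*}(x)\|_{2}\geq\beta\}\cap \mathcal{K}_{i}$ and $B=\mathcal{K}_{j}$, and bound each factor separately. The factor $P_{X'}(B)=P_{X'}(\mathcal{K}_{j})\leq P_{X'}(\mathcal{X})=1$ is immediate. For $P_{X}(A)$, note that $A$ is contained in $\{x\in\mathcal{X}\;|\;\|f(x)-f^{*}(x)\|_{2}\geq\beta\}$, and Definition~\ref{def:concentrated subclass} (membership of $f$ in $\mathscr{F}_{\beta,\beta_{0},P}(\mathcal{F})$) gives $P_{X}(\{x\in\mathcal{X}\;|\;\|f(x)-f^{*}(x)\|_{2}<\beta\})\geq 1-\beta_{0}$, whose complement yields $P_{X}(A)\leq\beta_{0}$. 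Multiplying the two bounds proves the claim.

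There is no genuine obstacle in this lemma; once the product structure of $P_{X,X'}^{-}$ is recognized, the conclusion is a one-line consequence of the definition of the localized subclass. It is worth noting that neither the constraint $\beta<D_{\textup{proj}}$ nor the distinctness of $i,j$ is actually used in the proof, although both are essential in the downstream arguments where this bound is combined with the decomposition~\eqref{eq:statistical partition of unity} from (A4) to separate the diagonal terms $\mathcal{K}_{k}\times\mathcal{K}_{k}$ from the cross terms $\mathcal{K}_{i}\times\mathcal{K}_{j}$ ($i\neq j$) in the lower bound of $\mathbb{E}_{P_{X,X'}}[|\psi\circ\rho_{f}-\psi\circ\rho_{f^{*}}|]$.
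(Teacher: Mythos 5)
Your proof is correct and takes essentially the same approach as the paper, which dispatches the lemma in one line by invoking Definition~\ref{def:concentrated subclass} directly; you have merely spelled out the product-measure factorization and the complement bound that the paper leaves implicit. Your observation that neither $\beta<D_{\textup{proj}}$ nor $i\neq j$ is used here is accurate.
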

\begin{proof}
By the definition of localized subclasses (Definition~\ref{def:concentrated subclass}), we have the claim.
\end{proof}

The main idea of the proof is to construct a sequence of subsets in $\Delta^{d}$ to establish inequalities similar to that in Question~\ref{question:inequality between l2 norm and l2 distance between classifiers}.
The following lemma enables us to employ this idea:
\begin{lemma}
\label{lem:step 3 lemma 0}
Let $\xi=(R,K,d_{1},E,\theta_{\textup{NC}},\theta_{1},\theta_{2},\theta_{3})\in \Xi$, $\beta\in(0,D_{{\textup{proj}}})$, $\beta_{0}\geq 0$, $n\in\mathbb{N}\setminus\{1\}$, $\mathcal{F}\subset\mathcal{F}_{0}$, and $P\in\mathcal{P}_{\xi}$.
Let $f^{*}$ be the contrastive function of $P$, and let $\mathscr{S}_{P}=\{\mathcal{K}_{i}\}_{i=1}^{d_{1}}$.
Then, there is a constant $C>0$ independent of $n$ and $P$ such that for any $f\in\mathscr{F}_{\beta,\beta_{0},P}(\mathcal{F})$, we have
\begin{align}
\label{eq:step 3 eq 0}
&\mathbb{E}_{P_{X}}[\|f-f^{*}\|_{2}^{2}]\nonumber\\
&\leq C\left((D_{\Delta^{d}}-\beta)\beta_{0}+\sum_{i\neq j}\sum_{w=0}^{\lfloor\log_{2}{n}\rfloor}\left(\frac{1}{2}\right)^{2w+1}\beta^{2}P_{i,j}^{-}(2^{-(w+1)}\beta)+\frac{\beta^{2}}{n}\right),
\end{align}
where $P_{i,j}^{-}(r)=P_{X,X'}^{-}((\{x\in\mathcal{X}\;|\;\|f(x)-f^{*}(x)\|_{2}> r\}\cap \mathcal{K}_{i})\times \mathcal{K}_{j})$.
\end{lemma}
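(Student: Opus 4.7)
The plan is to bound $\mathbb{E}_{P_X}[\|f-f^*\|_2^2]$ by a layer-cake (Fubini) rewriting of the squared norm, followed by a dyadic decomposition in which the scale $\beta$ separates the ``localized'' regime from the ``outlier'' regime. Concretely, introduce the superlevel sets
\begin{align*}
A_t=\{x\in\mathcal{X}\,:\,\|f(x)-f^*(x)\|_{2}>t\},\qquad t\geq 0,
\end{align*}
and write
\begin{align*}
\mathbb{E}_{P_X}[\|f-f^*\|_{2}^{2}]=\int_{0}^{\infty}2t\,P_X(A_t)\,dt=\int_{0}^{D_{\Delta^{d}}}2t\,P_X(A_t)\,dt,
\end{align*}
using that $f,f^{*}$ take values in $\Delta^{d}$, so $\|f-f^{*}\|_{2}\leq D_{\Delta^{d}}$ pointwise.

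I would split the integral at $t=\beta$. For $t\in[\beta,D_{\Delta^{d}}]$, the localized subclass property from Definition~\ref{def:concentrated subclass} yields $P_X(A_t)\leq P_X(A_\beta)\leq \beta_{0}$, so
\begin{align*}
\int_{\beta}^{D_{\Delta^{d}}}2t\,P_X(A_t)\,dt\leq \beta_{0}(D_{\Delta^{d}}^{2}-\beta^{2})\leq 2D_{\Delta^{d}}\,\beta_{0}(D_{\Delta^{d}}-\beta),
\end{align*}
which matches the first term in the claimed bound up to an absolute constant. For $t\in[0,\beta]$ I would dyadically decompose $[0,\beta]=\bigcup_{w\geq 0}[2^{-(w+1)}\beta,2^{-w}\beta]$, bound $P_X(A_t)$ on each piece by its value at the left endpoint, and compute $\int_{2^{-(w+1)}\beta}^{2^{-w}\beta}2t\,dt=\tfrac{3}{4}\beta^{2}2^{-2w}$, which is comparable to $(1/2)^{2w+1}\beta^{2}$. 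Truncating the dyadic sum at $w=\lfloor\log_{2}n\rfloor$ leaves a residual tail $\int_{0}^{2^{-(W+1)}\beta}2t\,dt=2^{-2(W+1)}\beta^{2}\leq \beta^{2}/n$, giving exactly the last term in the lemma.

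The remaining step, which I expect to be the delicate point, is converting $P_X(A_t)$ into a sum over \emph{off-diagonal} pairs $P_{i,j}^{-}(t)$ with $i\neq j$. Since $\{\mathcal{K}_{i}\}_{i=1}^{d_{1}}$ partitions $\mathcal{X}$ and $P_{X,X'}^{-}$ is a product measure,
\begin{align*}
\sum_{j\neq i}P_{i,j}^{-}(t)=P_X(A_t\cap\mathcal{K}_i)\bigl(1-P_{X'}(\mathcal{K}_i)\bigr).
\end{align*}
By (A3), the symmetry of $q$ forces $p_{X}=p_{X'}$, hence $P_{X'}(\mathcal{K}_i)=P_X(\mathcal{K}_i)$, and by (A4) this is strictly less than one; since there are finitely many $i$, the constant $c_{\min}=\min_{i}(1-P_{X'}(\mathcal{K}_i))$ is strictly positive. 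Thus
\begin{align*}
P_X(A_t)=\sum_{i}P_X(A_t\cap\mathcal{K}_i)\leq \frac{1}{c_{\min}}\sum_{i\neq j}P_{i,j}^{-}(t),
\end{align*}
and inserting this into both the $t>\beta$ bound (which is already handled by $\beta_{0}$) and, more importantly, into each dyadic piece produces the sum $\sum_{i\neq j}\sum_{w=0}^{\lfloor\log_{2}n\rfloor}(1/2)^{2w+1}\beta^{2}P_{i,j}^{-}(2^{-(w+1)}\beta)$ in the statement.

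Combining the three estimates (outlier regime, dyadic regime, and residual tail) and setting $C_{2}=\max\bigl(2D_{\Delta^{d}},\tfrac{3}{2c_{\min}},1\bigr)$ yields the bound \eqref{eq:step 3 eq 0}. The main obstacle is the off-diagonal conversion step: one needs to notice both that $P_{X,X'}^{-}$ is a product measure (so that the sum over $j\neq i$ factors cleanly) and that (A3) plus (A4) are exactly what is needed to ensure the proportionality constant $c_{\min}^{-1}$ is finite; without either assumption, the reduction to off-diagonal terms breaks down. Everything else is routine layer-cake accounting.
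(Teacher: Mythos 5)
Your proof is correct and follows essentially the same route as the paper: layer-cake expansion of $\mathbb{E}_{P_X}[\|f-f^*\|_2^2]$, a split of the integral at $t=\beta$ (with the localized-subclass property supplying the $\beta_0$ bound above $\beta$), a truncated dyadic decomposition below $\beta$ with a residual tail of order $\beta^2/n$, and a reduction of $P_X(A_t)$ to the off-diagonal sum $\sum_{i\neq j}P_{i,j}^-(t)$. The only cosmetic difference is that you absorb the diagonal $(i=j)$ mass directly into the constant $c_{\min}^{-1}=(1-\max_i P_{X'}(\mathcal{K}_i))^{-1}$ via the product structure of $P_{X,X'}^-$ and $p_X=p_{X'}$, whereas the paper keeps the diagonal contribution and moves $\max_j P_{X'}(\mathcal{K}_j)\cdot\mathbb{E}_{P_X}[\|f-f^*\|_2^2]$ to the left-hand side---these are equivalent and produce the same constant.
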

In the right-hand side of~\eqref{eq:step 3 eq 0}, we truncate the infinite series to prevent it from diverging.
This argument makes it possible to proceed the subsequent analysis at the cost of additional factor $\log{n}$ in the final convergence rate.

Now, it remains to show an upper bound of $P_{i,j}^{-}(2^{-(w+1)}\beta)$, $i\neq j$, in~\eqref{eq:step 3 eq 0}.
For any $r\in(0,2^{-1}\beta]$ we decompose the subset $\{x\in\mathcal{X}\;|\;\|f(x)-f^{*}(x)\|_{2}>r\}$ as
\begin{align*}
\{x\in\mathcal{X}\;|\; r<\|f(x)-f^{*}(x)\|_{2}<\beta\}\cup\{x\in\mathcal{X}\;|\;\beta\leq \|f(x)-f^{*}(x)\|_{2}\leq D_{\Delta^{d}}\}.
\end{align*}
The probability of the latter subset is evaluated by Lemma~\ref{lem:step 3 lemma 2}.
Thus, we investigate the former subset.
\begin{lemma}
\label{lem:step 3 lemma 1}
Let $\xi=(R,K,d_{1},E,\theta_{\textup{NC}},\theta_{1},\theta_{2},\theta_{3})\in \Xi$, $\beta\in (0,D_{{\textup{proj}}})$, $\beta_{0}\geq 0$, $\mathcal{F}\subset\mathcal{F}_{0}$, and $P\in\mathcal{P}_{\xi}$.
Let $f^{*}$ be the contrastive function of $P$.
Let $\mathscr{S}_{P}=\{\mathcal{K}_{i}\}_{i=1}^{d_{1}}$.
For every $i,j\in\{1,\cdots,d_{1}\}$ such that $i\neq j$, any $r\in(0,2^{-1}\beta]$, and any $f\in\mathscr{F}_{\beta,\beta_{0},P}(\mathcal{F})$, there is a constant $C_{i,j}>0$ independent of $f$, $r$ and $P$ such that
\begin{align*}
&P_{X,X'}^{-}((\{x\in\mathcal{X}\;|\; r<\|f(x)-f^{*}(x)\|_{2}\leq \beta\}\cap \mathcal{K}_{i})\times \mathcal{K}_{j})\nonumber\\
&\leq C_{i,j}(r\wedge (D_{\Delta^{d}}(1-\beta/D_{{\textup{proj}}})))^{-2}\mathbb{E}_{P_{X,X'}^{-}}[|\psi\circ\rho_{f}-\psi\circ\rho_{f^{*}}|]+\beta_{0}.
\end{align*}
\end{lemma}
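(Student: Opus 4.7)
The plan is to split $\mathcal{K}_j$ into the region where $f(x')$ is close to $f^*(x')=v_j$ and its complement; the complement is controlled directly by the localization from Definition~\ref{def:concentrated subclass}, and on the main region I apply Markov's inequality after proving a pointwise lower bound on $|\psi\circ\rho_f-\psi\circ\rho_{f^*}|$. Set $B_{i,r}:=\{x:r<\|f(x)-f^*(x)\|_2\le\beta\}\cap\mathcal{K}_i$, $A_j:=\{x'\in\mathcal{K}_j:\|f(x')-f^*(x')\|_2<\beta\}$, and $\widetilde{A}_j:=\mathcal{K}_j\setminus A_j$. The membership $f\in\mathscr{F}_{\beta,\beta_0,P}(\mathcal{F})$ together with (A3), which yields $P_{X'}=P_X$, implies $P_{X'}(\widetilde{A}_j)\le\beta_0$, and since $P_{X,X'}^-$ is a product measure, $P_{X,X'}^-(B_{i,r}\times\widetilde{A}_j)\le\beta_0$.

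For the main region $B_{i,r}\times A_j$, I aim to prove the pointwise lower bound $|\psi\circ\rho_f(x,x')-\psi\circ\rho_{f^*}(x,x')|\ge D_{\Delta^d}^{-2}\bigl(r\wedge D_{\Delta^d}(1-\beta/D_{\textup{proj}})\bigr)^{2}$. Represent $f(x)=\sum_k c_k v_k$, $f(x')=\sum_k c'_k v_k$ in barycentric coordinates on $\Delta^d$. Using $\|v_k\|_2=1$, $\langle v_k,v_l\rangle=-1/d$ for $k\ne l$, $\sum_k v_k=0$, and $D_{\Delta^d}^2=2(1+1/d)$, a direct expansion gives $\|f(x)-f(x')\|_2^2=(1+1/d)\sum_k(c_k-c'_k)^2$ and hence
\[
|\psi\circ\rho_f(x,x')-\psi\circ\rho_{f^*}(x,x')|=2-\sum_k(c_k-c'_k)^2=\sum_k c_k(1-c_k)+\sum_k c'_k(1-c'_k)+2\sum_k c_k c'_k\ge c_i(1-c_i).
\]
Two elementary estimates on $(c_k)_{k\ne i}$ tighten the bound: $\sum_{k\ne i}c_k^2\le(1-c_i)^2$ (non-negative entries) and $\sum_{k\ne i}c_k^2\ge(1-c_i)^2/d$ (Cauchy--Schwarz with $d$ summands), which by a computation analogous to $\|f(x)-f(x')\|_2^2=(1+1/d)\sum_k(c_k-c'_k)^2$ translate to $D_{\textup{proj}}(1-c_i)\le\|f(x)-v_i\|_2\le D_{\Delta^d}(1-c_i)$. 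Hence $\|f(x)-v_i\|_2\le\beta<D_{\textup{proj}}$ forces $c_i\ge 1-\beta/D_{\textup{proj}}$, while $\|f(x)-v_i\|_2>r$ forces $1-c_i>r/D_{\Delta^d}$. Multiplying, $c_i(1-c_i)\ge(1-\beta/D_{\textup{proj}})\,r/D_{\Delta^d}$, and splitting into the cases $r\le D_{\Delta^d}(1-\beta/D_{\textup{proj}})$ and $r>D_{\Delta^d}(1-\beta/D_{\textup{proj}})$ verifies that this linear-in-$r$ lower bound dominates $D_{\Delta^d}^{-2}\bigl(r\wedge D_{\Delta^d}(1-\beta/D_{\textup{proj}})\bigr)^2$.

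Markov's inequality applied under $P_{X,X'}^-$ then yields $P_{X,X'}^-(B_{i,r}\times A_j)\le D_{\Delta^d}^{2}\bigl(r\wedge D_{\Delta^d}(1-\beta/D_{\textup{proj}})\bigr)^{-2}\mathbb{E}_{P_{X,X'}^-}[|\psi\circ\rho_f-\psi\circ\rho_{f^*}|]$, and adding the $\beta_0$ estimate on $B_{i,r}\times\widetilde{A}_j$ proves the claim with $C_{i,j}=D_{\Delta^d}^{2}$. The principal obstacle is the middle step: the ``$L^2$-scale'' radius condition $\|f(x)-f^*(x)\|_2>r$ does not directly control the loss gap (cf.\ Example~\ref{example:not good representation}), and one must exploit the regular-simplex geometry on both sides---the constraint $\beta<D_{\textup{proj}}$ to keep $c_i$ bounded away from zero, and the upper bound $\|f(x)-v_i\|_2\le D_{\Delta^d}(1-c_i)$ to convert the radius condition into a quantitative lower bound on $1-c_i$---to bridge the gap between the radius condition and the pointwise size of $|\psi\circ\rho_f-\psi\circ\rho_{f^*}|$.
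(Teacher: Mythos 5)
Your proof is correct, and it follows the same overall structure as the paper's: split $\mathcal{K}_j$ into the $\beta$-good region $A_j$ and its complement $\widetilde A_j$, bound $P_{X,X'}^-(B_{i,r}\times\widetilde A_j)\le\beta_0$ via the localization (and $P_X=P_{X'}$ from (A3)), then apply Markov's inequality on $B_{i,r}\times A_j$ after establishing a pointwise lower bound on $|\psi\circ\rho_f-\psi\circ\rho_{f^*}|$. What differs is how the pointwise bound is obtained, and here your route is genuinely different and cleaner. The paper constructs a chain of auxiliary points $z_0,\ldots,z_5$ on the simplex and argues through two geometric claims plus a Pythagorean step to exhibit an explicit upper bound $M_{i,j}^*$ on $\|f(x)-f(x')\|_2$, then lower-bounds $\psi(M_{i,j}^{*2})+1$. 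You instead expand everything in barycentric coordinates using $\langle v_k,v_l\rangle=-1/d$, obtaining the identity $|\psi\circ\rho_f-\psi\circ\rho_{f^*}|=2-\sum_k(c_k-c'_k)^2=\sum_k c_k(1-c_k)+\sum_k c'_k(1-c'_k)+2\sum_k c_kc'_k\ge c_i(1-c_i)$, and then derive the two-sided sandwich $D_{\textup{proj}}(1-c_i)\le\|f(x)-v_i\|_2\le D_{\Delta^d}(1-c_i)$ (I verified both: the lower bound from Cauchy--Schwarz gives $\sum_{k\ne i}c_k^2\ge(1-c_i)^2/d$ and hence the factor $(1+1/d)=D_{\textup{proj}}$; the upper bound from non-negativity gives $\sum_{k\ne i}c_k^2\le(1-c_i)^2$ and the factor $\sqrt{2(1+1/d)}=D_{\Delta^d}$). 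This yields $c_i\ge 1-\beta/D_{\textup{proj}}$ and $1-c_i>r/D_{\Delta^d}$, hence $c_i(1-c_i)\ge(1-\beta/D_{\textup{proj}})r/D_{\Delta^d}$, which your case split correctly shows dominates $D_{\Delta^d}^{-2}(r\wedge D_{\Delta^d}(1-\beta/D_{\textup{proj}}))^2$. Your approach buys transparency: it makes explicit why $\beta<D_{\textup{proj}}$ is needed (to keep $c_i$ bounded away from zero) and how each side of the sandwich on $\|f(x)-v_i\|_2$ enters, and it avoids the paper's more elaborate geometric manipulations entirely while arriving at a comparable constant.
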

In the proof, the condition that $\|f(x)-f^{*}(x)\|_{2}<\beta$ with probability at least $1-\beta_{0}$ (see Definition~\ref{def:concentrated subclass}) is utilized to show lower bounds of the quantity $|\psi\circ\rho_{f}(x,x')-\psi\circ\rho_{f^{*}}(x,x')|$.
The local condition in Definition~\ref{def:concentrated subclass} is introduced to deal with this technical difficulty.

Incorporating the above lemmas in the approach shown in Section~\ref{subsubsec:outline}, we obtain the estimation bound of the $L^{2}$-risk in Definition~\ref{def:l2 risk} for a general estimator.
\begin{theorem}
\label{thm:main result}
Let $\mathcal{F}\subset\mathcal{F}_{0}$, $\tau\geq 1$, $\xi=(R,K,d_{1},E,\theta_{\textup{NC}},\theta_{1},\theta_{2},\theta_{3})\in \Xi$, $\beta\in (0,D_{{\textup{proj}}})$, $\beta_{0}\geq 0$, and $n\in\mathbb{N}\setminus\{1\}$.
Let $P\in\mathcal{P}_{\tau,\xi}$, and let $U_{1},\cdots,U_{n}$ be i.i.d. random variables following $P$.
Then, there are positive constants $C$, $C'$, and $C''$ that are independent of $n$ and $P$, such that
for any estimator $\widehat{f}_{n,P}^{\textup{local}}:(\mathcal{X}^{2}\times\mathcal{Y})^{n}\to\mathscr{F}_{\beta,\beta_{0},P}(\mathcal{F})\subset\mathcal{F}_{0}$ and $\widehat{g}_{n,P}^{\textup{local}}=(\widehat{g}_{n,P,1}^{\textup{local}},\cdots,\widehat{g}_{n,P,d_{1}}^{\textup{local}}):(\mathcal{X}^{2}\times\mathcal{Y})^{n}\to\mathcal{G}_{0}$ for which $\widehat{f}_{n,P}^{\textup{local}}=\sum_{i=1}^{d_{1}}\widehat{g}_{n,P,i}^{\textup{local}}v_{i}$ is satisfied, we have
\begin{align}
\label{eq:main inequality}
\mathcal{R}(\widehat{g}_{n,P}^{\textup{local}};P)\leq C(\log{n})\mathbb{E}[\mathcal{E}(\widehat{f}_{n,P}^{\textup{local}}(U_{1}^{n});P)]^{\frac{1}{\tau}}+C'\beta_{0}+\frac{C''}{n},
\end{align}
where $\mathcal{E}(f;P)=\mathbb{E}_{P}[\ell_{f}]-\mathbb{E}_{P}[\ell_{f^{*}}]$ denotes the excess risk of the given $f\in\mathscr{F}_{\beta,\beta_{0},P}(\mathcal{F})$ with respect to the contrastive function $f^{*}=\sum_{i=1}^{d_{1}}g_{i}^{*}v_{i}$ of $P$.
\end{theorem}
Namely, the combination of Lemmas~\ref{lem:step 3 lemma 2} --~\ref{lem:step 3 lemma 1} provides a solution to (Q2) in Question~\ref{question:how to derive a lower bound}.

\paragraph{Further analyses.}
To apply Theorem~\ref{thm:main result}, we introduce a local ERM estimator.
\begin{definition}[Local ERM]
\label{def:formal definition of erm}
Given $\xi=(R,K,d_{1},E,\theta_{\textup{NC}},\theta_{1},\theta_{2},\theta_{3})\in \Xi$, let $\beta\in (0,D_{\textup{proj}})$, $n\in\mathbb{N}\setminus\{1,2\}$, $\varepsilon>0$, 
$\mathcal{P}\subset\mathcal{P}_{\xi}$, and $\mathcal{F}\subset\mathcal{F}_{0}$.
For each $P\in\mathcal{P}$, consider the $(\beta,\beta^{-1}\varepsilon,P)$-localized subclass $\mathscr{F}_{\beta,\beta^{-1}\varepsilon,P}(\mathcal{F})$ of $\mathcal{F}$.
In addition, let $\ell_{f}$ be the hinge loss in Definition~\ref{def:contrastive loss}.
Here, define $\widehat{f}_{n,P}^{\textup{LERM}}:(\mathcal{X}^{2}\times\mathcal{Y})^{n}\to\mathscr{F}_{\beta,\beta^{-1}\varepsilon,P}(\mathcal{F})$ as a map satisfying
\begin{align*}
\widehat{f}_{n,P}^{\textup{LERM}}(u_{1},\cdots,u_{n})\in\argmin_{f\in\mathscr{F}_{\beta,\beta^{-1}\varepsilon,P}(\mathcal{F})}\frac{1}{n}\sum_{i=1}^{n}\ell_{f}(u_{i}),
\end{align*}
where $u_{i}=(x_{i},x_{i}',y_{i})\in\mathcal{X}^{2}\times\mathcal{Y}$ for each $i=1,\cdots,n$.
Then, the $(\beta,\varepsilon,n,\mathcal{P},\mathcal{F})$\emph{-local ERM estimator} $\widehat{g}_{n}^{\textup{LERM}}$ is defined as the local estimator of the class $\mathcal{P}$ for which the estimator $\widehat{g}_{n}^{\textup{LERM}}(P):=\widehat{g}_{n,P}^{\textup{LERM}}:(\mathcal{X}^{2}\times\mathcal{Y})^{n}\to\mathcal{G}_{0}$ of each $P\in\mathcal{P}$ satisfies that
\begin{align*}
\widehat{f}_{n,P}^{\textup{LERM}}=\sum_{i=1}^{d_{1}}\widehat{g}_{n,P,i}^{\textup{LERM}}v_{i},
\end{align*}
where $\widehat{g}_{n,P}^{\textup{LERM}}=(\widehat{g}_{n,P,1}^{\textup{LERM}},\cdots,\widehat{g}_{n,P,d_{1}}^{\textup{LERM}})$.
\end{definition}
Note that without loss of generality we may assume the existence of the local ERM estimator at every sample $(u_{1},\cdots,u_{n})\in (\mathcal{X}^{2}\times\mathcal{Y})^{n}$, for simplicity.
When this assumption is violated for some sample, it suffices to modify the definition of $\mathcal{F}_{L,J,S,M,\bm{d}}^{\Delta^{d}\textup{-NN}}$ so that the modified class becomes a finite set of ReLU networks, similarly to~\citep[Definition~2.9]{petersen2018optimal} (see Remark~B.29\showsupplement{in}).

The remained steps of the proof, which are deferred to Appendix~B.6\showsupplement{in}, can be summarized as follows:
\begin{itemize}
\item To prove Theorem~\ref{thm:estimation error for deep relu networks}, we additionally need to analyze the excess risk.
We show that the analyses developed in~\citep{park2009convergence,kim2021fast} are applicable to a pairwise binary classification setting.
\item To do so, we need to evaluate the approximation errors, where we follow similar arguments to the approximation theorems on deep neural networks with the softmax function developed by~\citet{bos2022convergence}.
We also use several approximation theorems of indicator functions using deep ReLU networks developed in the previous work~\citep{petersen2018optimal,imaizumi2019deep,imaizumi2022advantage}.
See Appendix B.6.1 and B.6.2\showsupplement{in} for the details.
\item A remained technical issue is about to what extent one can reduce the value of $\beta_{0}$ in~\eqref{eq:main inequality}.
In fact, the approximation theorems in~\citep{petersen2018optimal,imaizumi2019deep,imaizumi2022advantage,bos2022convergence} are developed for the class $\mathcal{F}_{L,J,S,M,\bm{d}}^{\textup{NN}}$, while in our analysis we consider the approximation property of the localized subclass $\mathscr{F}_{\beta,\beta^{-1}\varepsilon_{n},P}(\mathcal{F}_{L,J,S,M,\bm{d}}^{\Delta^{d}\textup{-NN}})$.
In Proposition~B.24 in Appendix B.6.3\showsupplement{of}, we prove that this issue can be resolved by showing that some function approximating the true function within a small error belongs to a localized subclass.
\item Combining Lemma~21 in~\citep{nakada2020adaptive}, Proposition~1 in~\citep{lecue2007optimal}, and the other arguments mentioned above, we can apply Theorem~A.1 in~\citep{kim2021fast} to the excess risk in~\eqref{eq:main inequality} of Theorem~\ref{thm:main result}.
See Appendix B.6.4\showsupplement{in} for the details.
\end{itemize}
Combining all the steps, we can prove the claim of Theorem~\ref{thm:estimation error for deep relu networks}.

\section{Discussion}
\label{sec:discussion of the main results}

We provide the detailed discussion of Theorem~\ref{thm:estimation error for deep relu networks} and its proof method in Theorem~\ref{thm:main result}.

\subsection{Discussion of Theorem~\ref{thm:estimation error for deep relu networks}}
\label{subsec:discussion of the formal version of the main theorem}

\paragraph{Minimax lower bound for global estimators.}
For the definition of global estimators, see Definition~\ref{def:definitions of local and global estimators}.
We prove a minimax lower bound of the pairwise binary classification problem in Section~\ref{subsec:assumptions}, when $\tau=1$.
\begin{theorem}
\label{thm:minimax lower bound}
Given any $n\in\mathbb{N}$, $\alpha>0$, and $\xi=(R,K,d_{1},E,\theta_{\textup{NC}},\theta_{1},\theta_{2},\theta_{3})\in\Xi$ for which the conditions $\theta_{1}(1-\theta_{3})^{\frac{1}{2}}\geq 1$, $\theta_{3}>\frac{1}{2}$, and $\theta_{\textup{NC}}<\frac{1-\theta_{3}}{2(1+\theta_{3})}$ are satisfied, there is a constant $C>0$ independent of $n$ such that
\begin{align*}
\inf_{\widehat{g}_{n}}\sup_{P\in\mathcal{P}_{\alpha,1,\xi}}\mathcal{R}(\widehat{g}_{n};P)
\geq Cn^{-\frac{\alpha}{\alpha+K-1}},
\end{align*}
where the infimum is taken over the set of all global estimators.
\end{theorem}

Note that each $P$ in the class $\mathcal{P}_{\alpha,1,\xi}$ is defined in $\mathcal{X}^{2}\times\mathcal{Y}$, and the dimension of $\mathcal{X}^{2}$ is $2K$.
However, since each $P\in\mathcal{P}_{\alpha,1,\xi}$ has parameter $\mathscr{S}_{P}=\{\mathcal{K}_{i}\}_{i=1}^{d_{1}}$ for which each $\mathcal{K}_{i}$ is a subset of $\mathcal{X}=[0,1]^{K}$, it might be natural that this lower bound is observed.
In the proof of Claim~B.34 in Appendix~B.7\showsupplement{of}, condition~\eqref{eq:statistical partition of unity} in (A4) of Definition~\ref{def:main assumption} enables us to observe this result.

The conditions $\theta_{1}(1-\theta_{3})^{\frac{1}{2}}\geq 1$ and $\theta_{\textup{NC}}<\frac{1-\theta_{3}}{2(1+\theta_{3})}$ are satisfied if both $\theta_{1}$ and $\theta_{\textup{NC}}^{-1}$ are sufficiently large for the given $\theta_{3}$.
This sufficient condition is reasonable, since conditions (A2) and (A3) in Definition~\ref{def:main assumption} become weaker as both $\theta_{1}$ and $\theta_{\textup{NC}}^{-1}$ increase.

We note that the proof of Theorem~\ref{thm:minimax lower bound} is based on Assouad's lemma~\citep{assouad1983deux}, and specifically we use the version shown in~\citep{yu1997assouad}.
While this approach is standard in the field of set estimation (see, e.g.,~\citep{mammen1995asymptotical,mammen1999smooth,tsybakov2004optimal,meyer2023optimal}), both the construction of probability distributions and the derivation of bounds are complicated due to the pairwise binary classification setting.
Therefore, the proof of Theorem~\ref{thm:minimax lower bound} may be of an independent interest.
One of the key ideas is to construct a finite set of Borel probability measures, using useful notions developed in~\citep{arora2019theoretical,awasthi2022do}.
The proof of Theorem~\ref{thm:minimax lower bound} is given in Appendix~B.7\showsupplement{of}.

\begin{remark}
\label{rem:open question for general noise condition in minimax lower bounds}
We comment on several limitations of Theorem~\ref{thm:minimax lower bound}.
\begin{enumerate}
    \item[(i)] Currently, we do not know how to prove a minimax lower bound when $\tau>1$.
    The proof method of Theorem~\ref{thm:minimax lower bound} is specific to the case where $\tau=1$, and it might be required to extend the method in a non-trivial way.
    \item[(ii)] It is well known that $n^{-\frac{\alpha}{\alpha+K-1}}$ is the minimax optimal rate in some conventional binary classification problems~\citep{tsybakov2004optimal,kim2021fast,meyer2023optimal} (see also~\citep{mammen1995asymptotical,mammen1999smooth,imaizumi2019deep,imaizumi2022advantage} for some results in other learning problems).
    As discussed in Remark~\ref{remark:condition on theorem for local erm}--(ii) and (iv), Theorem~\ref{thm:estimation error for deep relu networks} implies that the $L^{2}$-risk of a global estimator is upper bounded by $C^{*}n^{-\frac{\alpha}{\alpha+K-1}}\log^{4}{n}$ for some constant $C^{*}>0$ if $\tau=1$ and $\beta^{-1}\varepsilon_{n}\geq 1$.
    While the minimax rate is not determined by this result (since $n$ must satisfy $\varepsilon_{n}\geq \beta$), Theorem~\ref{thm:estimation error for deep relu networks} might provide some clues to prove this open question.
    For instance, as discussed in Appendix~C\showsupplement{of}, one may consider to analyze the approximation property of the ERM algorithm, although doing so might be challenging.
\end{enumerate}
\end{remark}

\paragraph{Comparison theorem.}
Let $\alpha>0$, $\tau\geq 1$, $\xi\in\Xi$, and $n\in\mathbb{N}$.
By the definitions of local and global estimators (see Definition~\ref{def:definitions of local and global estimators}), it holds that
\begin{align}
\label{eq:comparison of local and global minimax risks}
\inf_{\widehat{g}_{n}^{\textup{local}}}\sup_{P\in\mathcal{P}_{\alpha,\tau,\xi}}\mathcal{R}(\widehat{g}_{n,P}^{\textup{local}};P)\leq \inf_{\widehat{g}_{n}}\sup_{P\in\mathcal{P}_{\alpha,\tau,\xi}}\mathcal{R}(\widehat{g}_{n};P),
\end{align}
where the infimum of the left-hand side is taken over the set of all local estimators of the class $\mathcal{P}_{\alpha,\tau,\xi}$, while in the right-hand side the infimum is taken over the set of all global estimators.
By~\eqref{eq:comparison of local and global minimax risks}, there is a local estimator $\widehat{g}_{n}^{\textup{local}}$ of the class $\mathcal{P}_{\alpha,\tau,\xi}$ such that for any global estimator $\widehat{g}_{n}$, we have
\begin{align}
\label{eq:comparison inequality of local estimators}
\sup_{P\in\mathcal{P}_{\alpha,\tau,\xi}}\mathcal{R}(\widehat{g}_{n,P}^{\textup{local}};P)\lesssim \sup_{P\in\mathcal{P}_{\alpha,\tau,\xi}}\mathcal{R}(\widehat{g}_{n};P).
\end{align}
In other words, there is a local estimator such that its convergence rate is not slower than that of any global estimator.
Clearly, this is a statistical property, which does not hold for any local estimator.
Therefore, one can test the effectiveness of the given proof method by checking whether the obtained local estimator achieves the inequality~\eqref{eq:comparison inequality of local estimators}.

The following theorem implies the existence of a local estimator defined with a localized subclass for which it attains~\eqref{eq:comparison inequality of local estimators} up to some logarithmic factor:
\begin{theorem}
\label{thm:comparison theorem of local estimators}
For any $\alpha>0$, $\xi=(R,K,d_{1},E,\theta_{\textup{NC}},\theta_{1},\theta_{2},\theta_{3})\in\Xi$, $\beta\in (0,D_{\textup{proj}})$, and $n\in\mathbb{N}\setminus \{1,2\}$ for which $\varepsilon_{n}=n^{-\alpha/(\alpha+K-1)}<2^{-1}$, $\theta_{1}(1-\theta_{3})^{\frac{1}{2}}\geq 1$, $\theta_{3}>\frac{1}{2}$, and $\theta_{\textup{NC}}<\frac{1-\theta_{3}}{2(1+\theta_{3})}$ are satisfied, there are
\begin{enumerate}
\item[(i)] $L^{*}\in\mathbb{N}$, $J^{*},S^{*},M^{*}\geq 0$, and $\bm{d^{*}}=(K,d_{\textup{NN},1}^{*},\cdots,d_{\textup{NN},L^{*}-1}^{*},d_{1})\in\mathbb{N}^{L^{*}+1}$ depending on $n$, and
\item[(ii)] a local estimator $\widehat{f}_{n}^{\textup{local}}$ of the class $\mathcal{P}_{\alpha,1,\xi}$ for which $\widehat{f}_{n}^{\textup{local}}(P):=\widehat{f}_{n,P}^{\textup{local}}:(\mathcal{X}^{2}\times\mathcal{Y})^{n}\to \mathcal{F}_{L^{*},J^{*},S^{*},M^{*},\bm{d}^{*}}^{\Delta^{d}\textup{-NN}}\subset \mathcal{F}_{0}$ is satisfied for every $P\in\mathcal{P}_{\alpha,1,\xi}$,
\end{enumerate}
such that for any global estimator $\widehat{g}_{n}:(\mathcal{X}^{2}\times\mathcal{Y})^{n}\to \mathcal{G}_{0}$ and the local estimator $\widehat{g}_{n}^{\textup{local}}$ satisfying $\widehat{f}_{n,P}^{\textup{local}}=\sum_{i=1}^{d_{1}}\widehat{g}_{n,P,i}^{\textup{local}}v_{i}$ for each $P\in\mathcal{P}_{\alpha,1,\xi}$, we have
\begin{align*}
\sup_{P\in\mathcal{P}_{\alpha,1,\xi}}\mathcal{R}(\widehat{g}_{n,P}^{\textup{local}};P)\lesssim (\log^{4}{n})\sup_{P\in\mathcal{P}_{\alpha,1,\xi}}\mathcal{R}(\widehat{g}_{n};P).
\end{align*}
\end{theorem}
\begin{proof}
This claim is the combination of Theorem~\ref{thm:estimation error for deep relu networks} and Theorem~\ref{thm:minimax lower bound}.
\end{proof}
Note that the logarithmic factor $\log^{4}{n}$ in this theorem may be less important if one particularly focuses on the relation between the convergence rates of the given local and global estimators, similarly to the standard argument on minimax optimality (see, e.g.,~\citep{schmidt-hieber2020nonparametric,bos2022convergence,kim2021fast,meyer2023optimal,imaizumi2019deep,imaizumi2022advantage}).

\paragraph{Discussion of minimax lower bounds of local estimators.}
In connection with Theorem~\ref{thm:comparison theorem of local estimators}, we discuss some technical problems of the local minimax risk $\inf_{\widehat{g}_{n}^{\textup{local}}}\sup_{P\in\mathcal{P}_{\alpha,\tau,\xi}}\mathcal{R}(\widehat{g}_{n,P}^{\textup{local}};P)$ defined with the set of all local estimators of $\mathcal{P}_{\alpha,\tau,\xi}$ using deep ReLU networks.
It might be natural to ask the applicability of the standard minimax lower bounds, such as Le Cam's method~\citep{lecam1973convergence}, to the local minimax risk (see~\citep[Lemma~1]{yu1997assouad} and~\citep[Theorem~2.1]{tsybakov2009introduction} for the details of Le Cam's method, and see also~\citep[Lemma~2]{yu1997assouad} for the connection between Le Cam's method and Assouad's lemma~\citep{assouad1983deux}).
Let $\delta$ be a pseudo-distance on a parameter set $\Theta$.
According to~\citep[Eq.~(2.8)]{tsybakov2009introduction}, Le Cam's method builds on the triangle inequality $\delta(\vartheta,\vartheta')\leq \delta(\vartheta,\widehat{\vartheta}_{n})+\delta(\vartheta',\widehat{\vartheta}_{n})$ for any parameters $\vartheta,\vartheta'\in\Theta$ and any estimator $\widehat{\vartheta}_{n}$.
A similar argument is also used in~\citep[p.425]{yu1997assouad} under a general setting.
However, this triangle inequality is not necessarily applicable for the local estimators due to the dependence on the true parameter.
In fact, if there are at least two distinct parameters $\vartheta,\vartheta'$ such that $\delta(\vartheta,\vartheta')>0$, one can construct a local estimator $\vartheta\mapsto \widehat{\vartheta}_{n,\vartheta}$ for which $\delta(\vartheta,\vartheta')>\delta(\vartheta,\widehat{\vartheta}_{n,\vartheta})+\delta(\vartheta',\widehat{\vartheta}_{n,\vartheta'})$ is satisfied (e.g., the trivial estimator $\vartheta\mapsto \widehat{\vartheta}_{n,\vartheta}:=\vartheta$).
Note that it is well known that every ReLU network is a piecewise linear function (see, e.g.,~\citep[Theorem~2.1]{arora2018understanding}), and thus in the setting of Theorem~\ref{thm:estimation error for deep relu networks} the existence of such a trivial estimator might not be guaranteed.
However, it is still unclear whether this inequality holds for any local estimators using deep ReLU networks.

Therefore, it is required to develop a new general theory of local minimax lower bounds to study the optimality in terms of local minimax risks, although this might be a challenging problem.
Since the purpose of the this work is to develop a proof method of the learnability of smooth boundaries via pairwise binary classification, this topic is beyond the scope of the current work.

\paragraph{Interpretation of the local ERM estimator.}
The reader may wonder whether some local estimators have been studied so far.
Interestingly, several local ERM estimators are introduced in~\citep{mendelson2015learning,mendelson2017local}.
In~\citep{mendelson2015learning}, an ERM estimator depending on the given data distribution is introduced to develop the theory of regression under some mild assumptions on data distributions.
In~\citep{mendelson2017local}, a local ERM estimator is employed to establish tight upper bounds of the $L^{2}$-risk of a regression problem.
For a general learning theory based on a sub-Gaussian condition, see~\citep{alquier2019estimation}.
Note that in the current work, we use the local ERM estimator in Definition~\ref{def:formal definition of erm} to address the non-identifiability issue shown in Example~\ref{example:not good representation}, different from the purposes of~\citep{mendelson2015learning,mendelson2017local,alquier2019estimation}.

While the study of local estimators in Theorem~\ref{thm:estimation error for deep relu networks} is of mathematical interest, the reader may wonder about the practical implication of the theory.
Since providing a formal definition of what is practical is impossible without elucidating the practical nature of standard ERM algorithms, the current work cannot give a complete answer to this question.
Instead, we comment on some ideas that might be useful to fill the gap between the theory of local estimators and the practical implementation.

The notion of local estimators might be less common than the standard global estimators, in the field of statistics.
Meanwhile, in the field of machine learning, one may interpret the local ERM algorithm in Definition~\ref{def:formal definition of erm} as a learning algorithm influenced by some inductive biases.
For instance, it is well known that solving the optimization problem of the commonly-used ERM algorithm defined over the whole function class of deep neural networks is usually intractable due to the non-convexity with respect to the parameters in the networks.
Usually, stochastic optimization algorithms are used instead (see, e.g.,~\citep{oord2018representation,henaff2020data,he2020momentum,chen2020simple,dwibedi2021with}).
It is well known that stochastic optimization algorithms contain various types of inductive biases that control the optimization dynamics, such as regularization, initialization of parameters in neural networks, and sampling noise (see, e.g.,~\citep{suzuki2020generalization}).
Hence, one of the possible interpretations is to assume that the local ERM estimator defined with the localized subclass is realized by some other inductive biases depending on the prior knowledge of the data distribution.
Recently, in the context of self-supervised learning some structural conditions on inductive biases of function classes have been introduced to overcome the limitations of global estimators from several viewpoints, such as misclassification risk bounds in~\citep{saunshi2022understanding,haochen2023a}, designs of algorithms in~\citep{cabannes23ssl}, and optimal solutions in~\citep{parulekar2023infonce}.

An open question is about how to provide the mathematical definitions of such inductive biases precisely.
A possible future direction is to generalize the statistical optimization theory of deep learning in~\citep{suzuki2020generalization} to apply it to a pairwise binary classification problem, although the problem setting and assumptions in~\citep{suzuki2020generalization} are quite different from ours, and thus the further analysis might be highly challenging.

\paragraph{Application to nonparametric multiclass classification.}
To maintain the readability, we defer some additional results on the application of Theorem~\ref{thm:estimation error for deep relu networks} to a nonparametric multiclass classification problem, to Appendix~D\showsupplement{of}.

\subsection{Comparison with Other Proof Methods}
\label{subsec:comparison with other methods}

We compare the proof method developed in Section~\ref{subsubsec:outline} and Section~\ref{subsubsec:an oracle inequality} with some other approaches, including the methods developed in~\citep{bao2022pairwise,haochen2021provable,ge2024on}.

\paragraph{Permutation-invariant risks.}
We discuss the estimation problem based on a permutation-invariant risk using a permutation on $\{1,\cdots,d_{1}\}$.
In particular, we focus on the risk function $\mathbb{E}[\min_{\pi}\|\widehat{f}_{n}-f_{\pi}^{*}\|_{\mathcal{X},P_{X},2}^{2}]$, where the minimum is taken over all permutations on $\{1,\cdots,d_{1}\}$, and let $f_{\pi}^{*}:=\sum_{i=1}^{d_{1}}g_{\pi(i)}^{*}v_{i}$ for the given contrastive function $f^{*}=\sum_{i=1}^{d_{1}}g_{i}^{*}v_{i}$.

It is clear that the consistency of a given estimator under this permutation-invariant risk does not always imply the consistency under the $L^{2}$-risk in Definition~\ref{def:l2 risk}, as implied by Example~\ref{example:not good representation}.
Thus, the usage of the permutation-invariant risk is not suitable in the case where the index of each subset has a specific meaning\footnote{For instance, in a standard binary classification problem of medical diagnosis, one may suppose that the indices $i=1$ and $i=2$ represent positive and negative, respectively. In this case, one needs to estimate both the decision boundary and the order of the subsets simultaneously to control both the Type I and Type II errors.}, particularly, in Question~\ref{question:main question}.
A similar situation is also considered in~\citep{bao2022pairwise}.

In the context of representation learning, this permutation-invariant risk might be reasonable, as long as one can use supervised data in a downstream task to estimate the optimal permutation (see, e.g.,~\citep{arora2019theoretical,chen2020simple,haochen2021provable} for downstream tasks).
This setting is different from that in Question~\ref{question:main question}.

Here, we discuss some relations between this permutation-invariant risk and Question~\ref{question:how to derive a lower bound}.
In the case where $\mathbb{E}[\min_{\pi}\|\widehat{f}_{n}-f_{\pi}^{*}\|_{\mathcal{X},P_{X},2}^{2}]$ is employed instead, it is clear that (Q1) in Question~\ref{question:how to derive a lower bound} is resolved immediately.
Indeed, if $f\in\mathcal{F}_{0}$ satisfies that $\mathbb{E}_{P_{X,X'}}[|\psi\circ\rho_{f}-\psi\circ\rho_{f^{*}}|]=0$, then there is a permutation $\pi^{*}$ on $\{1,\cdots,d_{1}\}$ such that $f=f_{\pi^{*}}^{*}$, $P_{X}$-almost surely.
Hence, in this case we have $\min_{\pi}\|f-f_{\pi}^{*}\|_{\mathcal{X},P_{X},2}^{2}=\|f-f_{\pi^{*}}^{*}\|_{\mathcal{X},P_{X},2}^{2}=0$.
To address (Q2) in Question~\ref{question:how to derive a lower bound}, one may consider a generalization of the localized subclass in Definition~\ref{def:concentrated subclass}.
For instance, given $\xi\in\Xi$, $P\in\mathcal{P}_{\xi}$, $\beta\in (0,D_{\textup{proj}})$, $\beta_{0}\geq 0$, and $\mathcal{F}\subset \mathcal{F}_{0}$, we define
\begin{align*}
\mathscr{G}_{\beta,\beta_{0},P}(\mathcal{F})=\{f\in\mathcal{F}\;|\; \max_{\pi}P_{X}(\{x\in\mathcal{X}\;|\; \|f(x)-f_{\pi}^{*}(x)\|_{2}<\beta\})\geq 1-\beta_{0}\},
\end{align*}
where the maximum is taken over all permutations on $\{1,\cdots,d_{1}\}$.
Let $f\in\mathscr{G}_{\beta,\beta_{0},P}(\mathcal{F})$.
Then, there is a permutation $\pi^{*}$ on $\{1,\cdots,d_{1}\}$ such that we have $P_{X}(\|f-f_{\pi^{*}}^{*}\|_{2}<\beta)\geq 1-\beta_{0}$.
Also, one can apply almost the same arguments as in the proofs of Lemmas~\ref{lem:step 3 lemma 2} -- \ref{lem:step 3 lemma 1} to the case where the map $P\mapsto \mathscr{S}_{P}:=\{\mathcal{K}_{i}\}_{i=1}^{d_{1}}$ and the contrastive function $f^{*}$ introduced in Definitions~\ref{def:contrastive representations},~\ref{def:concentrated subclass}, and the lemmas are respectively generalized to $P\mapsto \mathscr{S}_{\pi,P}:=\{\mathcal{K}_{\pi(i)}\}_{i=1}^{d_{1}}$ and $f_{\pi}^{*}$ with any given permutation $\pi$.
Thus, using almost the same argument as the proof of Theorem~\ref{thm:main result}, one can verify that there are positive constants $C,C'$, and $C''$ that are independent of $n$ and $P$ such that for any estimator $\widehat{f}_{n,P}^{\textup{local}}:(\mathcal{X}^{2}\times\mathcal{Y})^{n}\to \mathscr{G}_{\beta,\beta_{0},P}(\mathcal{F})\subset \mathcal{F}_{0}$,
\begin{align*}
\mathbb{E}[\min_{\pi}\|\widehat{f}_{n,P}^{\textup{local}}(U_{1}^{n})-f_{\pi}^{*}\|_{\mathcal{X},P_{X},2}^{2}]\leq
C(\log{n})\mathbb{E}[\mathcal{E}(\widehat{f}_{n,P}^{\textup{local}}(U_{1}^{n});P)]^{1/\tau}+C'\beta_{0}+C''/n,
\end{align*}
where $P\in\mathcal{P}_{\tau,\xi}$, and $\tau,n$, and $U_{1},\cdots,U_{n}$ are defined as in Theorem~\ref{thm:main result}.

\paragraph{Comparison with~\citep{bao2022pairwise}.}
For convenience, we review several claims proven in~\citet{bao2022pairwise}.
The fundamental part of the method of~\citep{bao2022pairwise} is the following claim proven in~\citep[Theorem~1]{bao2022pairwise}:
\begin{theorem}[{Theorem~1 in~\citep{bao2022pairwise}}]
\label{citelem:identity of bao et al}
Given $K\in\mathbb{N}$, let $\mathcal{X}_{0}\subset \mathbb{R}^{K}$.
Given i.i.d. pairs $(X,Z)$ and $(X',Z')$ of covariates $X,X':\Omega\to\mathcal{X}_{0}$ and binary labels $Z,Z':\Omega\to\{-1,1\}$, define the random variable $Y:\Omega\to \{-1,1\}$ as $Y:=ZZ'$.
Let $P_{X_{0},Z}$ and $P$ be the distributions of $(X,Z)$ and $(X,X',Y)$, respectively.
Then, for the function $h:\mathbb{R}\to\mathbb{R}$ defined as $h(s)=\frac{1}{2}-\frac{1}{2}\sqrt{1-2s}$ and any measurable map $g:\mathcal{X}_{0}\to\{-1,1\}$, it holds that
\begin{align*}
P_{X_{0},Z}(g(x)\neq z)\wedge P_{X_{0},Z}(-g(x)\neq z)=h(P(g(x)g(x')\neq y)).
\end{align*}
\end{theorem}
Here, let $\widehat{g}_{n}^{\textup{SL}}$ denote the empirical risk minimizer of a similarity learning problem considered in~\citep[Eq.~(6)]{bao2022pairwise}, and let $\widehat{s}_{n'}$ denote an estimator of the sign of the given binary classifier introduced in~\citep[Eq.~(7)]{bao2022pairwise}, for convenience.
\citet[Theorem~3]{bao2022pairwise} prove an upper bound of the excess risk $P_{X_{0},Z}(\widehat{s}_{n'}\textup{sign}(\widehat{g}_{n}^{\textup{SL}}(x))\neq z)-\inf_{g^{*}}P_{X_{0},Z}(g^{*}(x)\neq z)$ for a given distribution $P_{X_{0},Z}$ on a measurable space $\mathcal{X}_{0}\times\{-1,1\}$.
Note that in~\citep[Theorem~3]{bao2022pairwise} the consistency of the given estimator is not proven;
We refer the reader to~\citep{bao2022pairwise} for the formal statements.

Regarding the results, the main differences are as follows:
(i) Comparing to~\citep[Theorem~1]{bao2022pairwise} (see Theorem~\ref{citelem:identity of bao et al}), one can see that the problem setting of the current work is not similar to that in~\citep{bao2022pairwise}.
Specifically, in the formalization of Definition~\ref{def:main assumption} we use no supervised data.
In addition, we do not assume that $X$ and $X'$ are always independent.
(ii) Theorem~3 in~\citep{bao2022pairwise} focuses on the generalizability of a learning algorithm, while the learnability of smooth boundaries is considered in Theorem~\ref{thm:estimation error for deep relu networks}.
(iii) In~\citep[Section~4]{bao2022pairwise}, some implications of Theorem~3 of~\citep{bao2022pairwise} to parametric models are discussed, while nonparametric estimation of multiple boundaries is not considered.
The differences in terms of the proof methods can be described as follows:
\begin{itemize}
\item Note that the well-known argument of set estimation used in~\citep{tsybakov2004optimal,meyer2023optimal} is applicable to the result of~\citep{bao2022pairwise}: Namely, using Proposition~1 in~\citep{tsybakov2004optimal}, one can see that under the Tsybakov noise condition~\citep{mammen1999smooth,tsybakov2004optimal}, Theorem~3 in~\citep{bao2022pairwise} implies an upper bound of the $L^{2}$-risk of $\widehat{s}_{n'}\cdot (\textup{sign}\circ \widehat{g}_{n}^{\textup{SL}})$.
In a nutshell, the method of~\citet{bao2022pairwise} is an algebraic approach.
According to~\citep[Appendix~A.2]{bao2022pairwise}, the key idea of their method is to define the pairwise response variable $Y$ as $Y=ZZ'$ using i.i.d. supervised data $(X,Z),(X',Z'):\Omega\to \mathcal{X}_{0}\times \{-1,1\}$, which makes it possible to use an identity proven in~\citep[Corollary~1, p.1242]{shimada2021classification}.
While their method does not rely on any localization argument, the algebraic property of labels in $\{-1,1\}$ is essential in~\citep[Theorem~3]{bao2022pairwise}.
Thus, it might be challenging to extend their method to a setting of multiclass classification, as discussed in Appendix~A.2\showsupplement{of}.
\item The method presented in Section~\ref{subsec:general upper bound of main result} is a geometric approach, and the key idea is to use the notion of localized subclasses in Definition~\ref{def:concentrated subclass} to bypass the technical obstacle shown in Example~\ref{example:not good representation}.
While this method relies on a localization argument, this approach is applicable even when the estimation of multiple smooth boundaries is considered.
\end{itemize}

\paragraph{Comparison with~\citep{haochen2021provable}.}
It is shown in~\citep{haochen2021provable} that multiple decision boundaries of a downstream linear multiclass classification problem are learnable if one can observe both supervised and pairwise data.
Hence, the problem setting considered in~\citep{haochen2021provable} is distinct from that in Question~\ref{question:main question}.
For the differences in terms of the results of multiclass classification, see Appendix~D.1\showsupplement{in}.

\paragraph{Comparison with~\citep{ge2024on}.}
\citet{ge2024on} prove an inequality that shares some similarity with (Q2) in Question~\ref{question:how to derive a lower bound}, while their purpose is to find some connection between pairwise binary classification and a downstream supervised learning problem and is quite different from ours.
In Lemma~D.2 of~\citep{ge2024on}, they prove an upper bound of $\|f-f'\|_{\mathcal{X}_{0},P_{X_{0}},2}^{2}$ based on the assumption that $\mathbb{E}_{P_{X_{0}}}[f(x)f'(x)^{\top}]$ is a symmetric matrix for the given pair of vector-valued functions $(f,f')$, where $\mathcal{X}_{0}$ is a measurable space, and $P_{X_{0}}$ is a probability distribution in $\mathcal{X}_{0}$.
Note that the assumption in~\citep[Lemma~D.2]{ge2024on} is usually violated in the problem setting of Theorem~\ref{thm:estimation error for deep relu networks}.
Note also that \citet{ge2024on} consider the setting where the paired covariates $X,X'$ are independent.
On the other hand, in our setting, the covariates $X$ and $X'$ are not necessarily independent, following~\citep{tsai2020neural}.
In the field of contrastive learning, it is common to assume that $X$ and $X'$ can be dependent, both in practice and in theory (see, e.g,~\citep{arora2019theoretical,chen2020simple,haochen2021provable}).

\section{Related Literature}
\label{subsec:discussion of the main theorem}

We provide the discussion of related work.

\subsection{Related Work on Learnability of Smooth Boundaries}
\label{subsec:regarding nonparametric estimation of smooth boundaries}
The statistical learnability of H\"{o}lder continuous boundaries is studied in many works~\citep{mammen1995asymptotical,mammen1999smooth,tsybakov2004optimal,kim2021fast,meyer2023optimal,imaizumi2019deep,imaizumi2022advantage}.
For the results of classical estimators, we focus on the most related work by~\citep{tsybakov2004optimal};
see~\citep{mammen1995asymptotical} for some results of set estimation, and see also~\citep{mammen1999smooth} for discriminant analysis.
Regarding the results using deep neural networks, we consider to compare to the related works~\citep{kim2021fast,meyer2023optimal} and~\citep{imaizumi2019deep,imaizumi2022advantage}.
See also~\citep[Theorem~1]{imaizumi2019deep} for some results of least-squares method using deep ReLU networks.

\begin{table}[t]
    \centering
    \caption{A summary of comparison to the previous work. In each row, we summarize the information from the corresponding reference. Regarding the column ``Convergence rate,'' the notation $\zeta=(K-1)/\alpha$ is used for convenience. The value $\tau\geq 1$ is the parameter of the Tsybakov noise condition~\citep{mammen1999smooth,tsybakov2004optimal}. Note that this noise condition is employed in~\citep{tsybakov2004optimal,kim2021fast,meyer2023optimal} and in the current work. Note also that \citet[Corollary~3.8]{meyer2023optimal} also shows results for $L^{2}$-risk using~\citep[Proposition~1]{tsybakov2004optimal}, under this condition. In~\citep{kim2021fast}, the parameter is defined with an affine transformation $A$. $\{\mathcal{A}_{i}\}_{i}$ and $\mathcal{A}$ denote some subsets defined with $\alpha$-H\"{o}lder continuous functions, where $\alpha> 0$. In~\citep{imaizumi2019deep,imaizumi2022advantage}, it is assumed that $g_{i}$ is $\gamma$-H\"{o}lder continuous. ``DNN'' is the abbreviation of ``Deep Neural Networks.'' Some ReLU networks are employed in~\citep{kim2021fast,imaizumi2019deep,meyer2023optimal} and also in the current work, while neural networks with general activation functions are considered in~\citep{imaizumi2022advantage}.}
    \customresize{
    \begin{tabular}{@{}l@{~~}l@{~~}l@{~~}l@{~~}l@{~~}l@{}}
        \hline
        Reference & Algorithm & Covariate & Parameter & Convergence rate \\
        \hline
        \citep[Thm.~1]{tsybakov2004optimal} & Binary classification & Single & $\mathds{1}_{\mathcal{A}}$ & $n^{-\frac{\tau}{\zeta+2\tau-1}}$ \\
        & (0-1 loss, sieve estimator) & & & (Excess risk) \\
        \hline
        \citep[Thm.~3.1]{kim2021fast} & Binary classification & Single & $A\circ \mathds{1}_{\mathcal{A}}$ & $n^{-\frac{\tau}{\tau\zeta+2\tau-1}}\log^{\frac{3\tau}{\tau\zeta+2\tau-1}}{n}$ \\
         & (hinge loss, DNN estimator) & & & (Excess risk) \\
        \hline
        \citep[Thm.~5.1]{kim2021fast} & Multiclass classification & Single & $\{A\circ\mathds{1}_{\mathcal{A}_{i}}\}_{i}$ & $n^{-\frac{\tau}{\tau\zeta+2\tau-1}}\log^{\frac{3\tau}{\tau\zeta+2\tau-1}}{n}$ \\
        & (DNN estimator) & & & (Excess risk) \\
        \hline
        \citep[Cor.~3.8]{meyer2023optimal} & Binary classification & Single & $\mathds{1}_{\mathcal{A}}$ & $n^{-\frac{s\tau}{\zeta+2\tau-1}}\log^{\frac{2s\tau}{\zeta}}{n}$, $s\geq 1$ \\
        & (0-1 loss, DNN estimator) & & & (Excess risk) \\
        \hline
        \citep[Thm.~2]{imaizumi2019deep} & Bayes estimation & Single & $\sum_{i}g_{i}\mathds{1}_{\mathcal{A}_{i}}$ & $n^{-(\frac{2\gamma}{2\gamma+K}\wedge \frac{1}{\zeta+1})}\log^{2}{n}$ \\
        & (DNN estimator) & & & ($L^{2}$-risk) \\
        \hline
        \citep[Thm.~7]{imaizumi2022advantage} & Least-squares method & Single & $\sum_{i} g_{i}\mathds{1}_{\mathcal{A}_{i}}$ & $n^{-(\frac{2\gamma}{2\gamma+K}\wedge\frac{1}{\zeta+1})}\log^{2}{n}$ \\
        & (DNN estimator) & & & ($L^{2}$-risk) \\
        \hline
        This work & Pairwise binary classification & Paired & $\sum_{i} \mathds{1}_{\mathcal{A}_{i}}v_{i}$ & $n^{-\frac{1}{\tau\zeta+2\tau-1}}\log^{3\tau^{-1}+1}{n}$ \\
        (Thm.~\ref{thm:estimation error for deep relu networks}) & (DNN estimator) & & & ($L^{2}$-risk) \\
        \hline
    \end{tabular}
    }
    \label{tab:comparison to related literature}
\end{table}

The comparison is shown in Table~\ref{tab:comparison to related literature}.
The main differences can be seen in four points, namely methods, types of data (either conventional data $(X,Z)$ or pairwise data $(X,X',Y)$), how boundary estimation is carried out (namely the true parameter and the risk function), and the convergence rates of upper bounds.
We can summarize the main points of Table~\ref{tab:comparison to related literature} as follows:
\begin{itemize}
\item \emph{Convergence rates.} When $\tau=1$, the convergence rate obtained in Theorem~\ref{thm:estimation error for deep relu networks} is consistent with the results shown in~\citep{tsybakov2004optimal,kim2021fast,meyer2023optimal,imaizumi2019deep,imaizumi2022advantage}, up to some logarithmic factors.
When $\tau\geq 1$, the convergence rate in Theorem~\ref{thm:estimation error for deep relu networks} is similar to that in~\citep[Theorem~3.1]{kim2021fast}.
This observation might be natural since the problem setting in Definition~\ref{def:main assumption} aligns with the standard setting employed in~\citep{kim2021fast}.
In addition similarly to~\citep[Corollary~3.8]{meyer2023optimal}, the combination of Theorem~3.1 in~\citep{kim2021fast} and Proposition~1 in~\citep{tsybakov2004optimal} implies the rate $n^{-\alpha/((2\tau-1)\alpha+\tau(K-1))}$ up to a logarithmic factor under the $L^{2}$-risk.
\item \emph{Problem settings.} Note that it is proven in~\citep[Theorem~3.2]{kim2021fast} that the rate in Theorem~3.1 of~\citep{kim2021fast} is improved under some additional assumption on data distributions.
Since another additional condition is also assumed in~\citep[Corollary~3.8]{meyer2023optimal}, the result in~\citep{meyer2023optimal} might not be directly comparable to Theorem~\ref{thm:estimation error for deep relu networks}.
While the regression problems considered in~\citep{imaizumi2019deep,imaizumi2022advantage} are more general than the setting defined with the $L^{2}$-risk in Definition~\ref{def:l2 risk}, it suffices to consider this $L^{2}$-risk to study Question~\ref{question:main question}.
\item \emph{Data.} The most clear difference is that we use a contrastive learning algorithm that requires data observed in a pairwise binary classification setting.
\item \emph{Estimators.} Another difference is that some local estimator is considered in Theorem~\ref{thm:estimation error for deep relu networks}, while the global ERM estimators are employed in~\citep{kim2021fast,meyer2023optimal,imaizumi2019deep,imaizumi2022advantage}.
This observation is due to the mathematical difference of learnabilities between the conventional and pairiwise binary classification problems, as discussed in Section~\ref{subsubsec:outline}.
\end{itemize}

\subsection{Related Work on Pairwise Binary Classification}
\label{subsec:on learning theory}

Similarity learning is an instance of pairwise binary classification, and its generalizability via the excess risks has been discussed in the literature~\citep{cao2015generalization,bao2022pairwise,zhou2024generalization}.
The most related work is~\citep{bao2022pairwise}, and the comparison has been shown in Section~\ref{subsec:comparison with other methods}.
The theoretical performance of deep neural networks in similarity learning involving nonparametric estimation is investigated in~\citep{zhou2024generalization}.
\citet{zhou2024generalization} show several upper bounds of the excess risk of similarity learning in the setting where conditional probability functions belong to the Sobolev space.
Meanwhile, in our work, we are mainly interested in the $L^{2}$-risk~\eqref{eq:sup l2 risk}.
Furthermore, we focus on the smoothness of boundaries, different from~\citep{zhou2024generalization}.
For the mathematical differences of classification problems in terms of the smoothness of target functions, we refer the reader to~\citep{audibert2007fast,kim2021fast}.

Contrastive learning is often interpreted as an instance of pairwise binary classification in the literature~\citep{gutmann2010noise,tsai2020neural,tosh2021contrastivejmlr,tosh2021contrastive,bao2022pairwise,chuang2022robust,zhai2023sigmoid}.
The theory of some general contrastive learning frameworks is extensively studied in much previous work (see, e.g., \citep{arora2019theoretical,haochen2021provable,wang2022chaos,bao2022on,saunshi2022understanding,huang2023towards,waida2023towards}).
In particular, several properties of the population risk minimizers of contrastive learning algorithms are investigated in~\citep{wang2020understanding,haochen2021provable,awasthi2022do,parulekar2023infonce,johnson2023contrastive,zhai2023understanding,koromilas2024bridging}.
However, the estimation performance in terms of the $L^{2}$-risk is less studied.
In the context of contrastive learning, the work by~\citet{tosh2021contrastivejmlr,tosh2021contrastive} is relevant to our analyses.
Indeed, in~\citep[Theorem~4]{tosh2021contrastivejmlr} and \citep[Theorem~11]{tosh2021contrastive}, they show several estimation error bounds of contrastive learning defined with some specific classification losses, where they consider the setting in which the estimation error is measured by downstream regression tasks and derive upper bounds in terms of an excess risk.
On the other hand, in Theorem~\ref{thm:estimation error for deep relu networks} the estimation error is measured by the $L^{2}$-risk for $\Delta^{d}$-valued functions, which enables us to study the performance of boundary estimation.
Furthermore, in Theorem~\ref{thm:estimation error for deep relu networks} the convergence rate is shown.
For the comparison to~\citep{haochen2021provable,ge2024on} in terms of the proof methods, see Section~\ref{subsec:comparison with other methods}.

\subsection{Discussion of Definitions}
\label{appsec:comparison with related mathematical notions}

\paragraph{Class $\mathscr{P}_{\alpha,R}^{K,d_{1},E}$ in Definition~\ref{def:class of smooth partitions}.}
The arguments developed in the proof of Theorem~\ref{thm:estimation error for deep relu networks} can be modified so that other smooth functions are employed.
For instance, the approximation theorems for boundaries defined with Barron functions~\citep{barron1993universal} and some applications to the conventional binary classification problems are studied in~\citep{caragea2023neural}.
In addition, the Besov space can also be employed if we instead apply the analyses of~\citep{suzuki2018adaptivity}, following the proofs of approximation theorems for indicator functions developed in~\citep{petersen2018optimal,imaizumi2019deep,imaizumi2022advantage}.

\paragraph{Condition (A3) in Definition~\ref{def:main assumption}.}
As discussed in Remark~\ref{rem:some remarks on the main assumptions}--(i), we consider a single-modal setting, and thus we assume that $q$ is symmetric.
Meanwhile, in the context of multimodal learning, a learning problem is often defined under the setting where the marginal distributions $p_{X}$ and $p_{X'}$ are distinct (see, e.g.,~\citep{radford2021learning} and~\citep[Section~4.2]{balestriero2023cookbook}).
The further study of multimodal learning settings is beyond the scope of this work, since we consider a single-modal setting as a method to discuss Question~\ref{question:main question}.
However, the proof method might be applicable under some modification.
For instance, it might be worth studying the setting where a localized subclass of the given function class is defined as the set of all functions satisfying the constraint in Definition~\ref{def:concentrated subclass} for both $P_{X}$ and $P_{X'}$.

\paragraph{Condition~\eqref{eq:statistical partition of unity} in Definition~\ref{def:main assumption}.}
In the context of contrastive learning theory, condition~\eqref{eq:statistical partition of unity} is weaker than the formulation of~\citep{awasthi2022do}.
Indeed, \citet{awasthi2022do} use a joint distribution introduced by~\citep{arora2019theoretical} and assume that the joint probability at any two data points belonging to the different disjoint subsets is zero.
On the other hand, the condition is stronger than the settings studied in~\citep{waida2023towards} and~\citep{parulekar2023infonce}.
The main difference to the formulation of~\citep{waida2023towards} is that we consider a setting of binary classification rather than contrastive learning, which requires to use disjoint partitions of the space $\mathcal{X}$.
Also, \eqref{eq:statistical partition of unity} is understood as an example of the formulation introduced by~\citep{parulekar2023infonce} since~\eqref{eq:statistical partition of unity} defines an equivalence relation in $\mathcal{X}$.
Meanwhile \citet{parulekar2023infonce} use a general equivalence relation using latent variables introduced by~\citep{vonkugelgenself2021}.

\paragraph{Localized subclasses in Definition~\ref{def:concentrated subclass}.}
For any given $\beta>0$, $\beta_{0}\geq 0$, $\xi\in\Xi$, $P\in\mathcal{P}_{\xi}$, and $\mathcal{F}\subset\mathcal{F}_{0}$, if $f\in\mathscr{F}_{\beta,\beta_{0},P}(\mathcal{F})$, then
\begin{align*}
P_{X}\circ f^{-1}\left(\bigcup_{i=1}^{d_{1}}\{z\in\Delta^{d}\;|\; \|z-v_{i}\|_{2}<\beta\}\right)\geq 1-\beta_{0}.
\end{align*}
Hence, Definition~\ref{def:concentrated subclass} implies how features are embedded.
In this sense, Definition~\ref{def:concentrated subclass} shares some similarity with some related concepts introduced in~\citep{schiebinger2015geometry,trillos2021geometric} in the context of clustering theory.
\citet[Definition~1]{schiebinger2015geometry} introduce an embedding condition based on finite samples.
\citet[Definition~8]{trillos2021geometric} consider a population setting and use the angles between a vector $z$ and orthonormal bases in the Euclidean space.
Meanwhile, we use the distance from each vertex.
Also, we use the notion of localized subclasses to address Question~\ref{question:main question}.

The notion of localized subclasses is also related to the condition referred to as ``small-ball condition'' by~\citep[p.10]{mendelson2015learning}.
Specifically, \citet[Assumption~3.1]{mendelson2015learning} additionally normalizes the difference $g-g'$ ($g,g'\in L^{2}(\mathcal{A})$ for a measure space $\mathcal{A}$) and considers a condition on the tail probability of the normalized gap.
Also, \citet[Definition~2.1]{mendelson2017local} employs a sub-Gaussian condition.
On the other hand, in our analysis it suffices to use an unnormalized gap as defined in Definition~\ref{def:concentrated subclass}.

\paragraph{Hinge loss in Definition~\ref{def:contrastive loss}.}
The function $\psi\circ\rho_{f}$ shares some similarity with a classifier introduced in~\citep{jin2009regularized}.
In~\citep{jin2009regularized}, a classifier is defined with the Mahalanobis distance.
On the other hand, in Definition~\ref{def:contrastive loss}, we employ the Euclidean distance and additionally embed the variables $x$ and $x'$ in the regular simplex.
Furthermore, \citet{jin2009regularized} define the binary variable by the information showing whether the supervised labels of two covariates coincide or not.
Meanwhile, we use the statistical dependence to define the distribution of $(X,X',Y)$, as in~\citep{tsai2020neural}.

Hinge loss is often studied in the literature on statistical learning (e.g., \citep{boser1992training,lin2002support,zhang2004statistical,bartlett2006convexity,lecue2007optimal,steinwart2008support,kim2021fast}).
In particular, in a setting of conventional binary classification, \citet[Theorem~3.2]{kim2021fast} study nonparametric estimation of smooth boundaries using empirical risk minimization with hinge loss.
Note that it is common in the literature~\citep{kim2021fast,meyer2023optimal,imaizumi2019deep,imaizumi2022advantage} that nonparametric estimation of smooth boundaries is studied with some specific loss functions, such as 0-1 loss in~\citep{meyer2023optimal} and squared loss in~\citep{imaizumi2019deep,imaizumi2022advantage}.
Furthermore, hinge loss and its variants are also studied in the context of self-supervised learning~\citep{arora2019theoretical,li2021self,shah2022max,waida2023towards,ji2023power} and similarity learning~\citep{jin2009regularized,cao2015generalization,zhou2024generalization}.

\section{Conclusion}
\label{sec:discussion}

In this work, we develop a method to prove that under several conditions, both the disjoint partition of smooth boundaries introduced in~\citep{imaizumi2022advantage} and its order are jointly learnable using a local estimator generated by a pairwise binary classification problem.

In addition to the discussion in Section~\ref{sec:discussion of the main results}, we provide some concluding remarks.

\paragraph{Other data and distances.}
While condition~\eqref{eq:condition of tsai et al} due to~\citep{tsai2020neural} is employed in the problem setting, it might be interesting to consider some other conditions to use another data generating process.
In the learning algorithm, the Euclidean distance is used for simplicity, while it might be interesting to consider other distance functions (see, e.g.,~\citep{dovgoshey2013weak}).

\paragraph{The choice of vector-valued networks.}
We note that in Definition~\ref{def:formal definition of erm}, the softmax function is used.
In the proof of Theorem~\ref{thm:estimation error for deep relu networks}, the class $\mathcal{F}_{L,J,S,M,\bm{d}}^{\Delta^{d}\textup{-NN}}$ ensures that the range of any network $f$ in this class is always included in $\Delta^{d}$, and we can thus apply several properties, such as Proposition~\ref{prop:simple fact for risks} and Lemma~\ref{lem:step 3 lemma 1}.
Additionally, by this property and the continuity of ReLU networks, the classifier $\psi\circ \rho_{f}:\mathcal{X}^{2}\to [-1,1]$ is continuous, similarly to the case where the conventional classification problem using the hinge loss is considered (see, e.g.,~\citep{lin2002support,lecue2007optimal}).
In this sense, it is a natural choice to use the softmax function to develop a pairwise binary classification algorithm using the hinge loss.

Let us recall that in addition to the result of~\citet{kim2021fast} for set estimation using the conventional classification algorithm with the hinge loss, the statistical property of set estimation using the 0-1 loss is also studied by~\citet{meyer2023optimal} (see Section~\ref{subsec:regarding nonparametric estimation of smooth boundaries}).
Hence, it is reasonable to consider another approach using the 0-1 loss, in the context of pairwise binary classification.
For instance, let us consider the case where one first estimates the smooth boundaries by some neural networks and then compose them with the indicator function to produce the estimator $\widehat{f}_{n}^{\textup{set}}:(\mathcal{X}^{2}\times\mathcal{Y})^{n}\to\mathcal{F}_{0}$ defined as $\widehat{f}_{n}^{\textup{set}}:=\sum_{i=1}^{d_{1}}\mathds{1}_{\widehat{\mathcal{K}}_{n,i}}v_{i}$, where $\widehat{\mathcal{K}}_{n,1},\cdots,\widehat{\mathcal{K}}_{n,d_{1}}$ denotes some set estimators based on the networks (for some examples of set estimators using neural networks, see~\citep{meyer2023optimal}).
Note that this is a variant of the estimator using the softmax function, defined in a similar way to Definition~\ref{def:formal definition of erm}.
In the case where such an estimator is considered, the applicability of the ideas in Section~\ref{subsec:general upper bound of main result} may depend on the property of the estimator, such as the range of $\widehat{f}_{n}^{\textrm{set}}(u_{1},\cdots,u_{n})$ for each $(u_{1},\cdots,u_{n})\in(\mathcal{X}^{2}\times\mathcal{Y})^{n}$.
Additionally, since $\psi\circ \rho_{\widehat{f}_{n}^{\textup{set}}(u_{1},\cdots,u_{n})}$ is not a continuous function in general, some additional discussion of the practical implementation is needed.
Thus, the study of the statistical property of pairwise binary classification using the 0-1 loss is an independent, interesting future work.

\paragraph{Other loss functions.}
Since the study of the learnability of smooth boundaries using pairwise binary classification algorithms has been lacking, in the current work we focus on the hinge loss as a tractable approach that allows us to examine several technical issues, including the existence of the Bayes classifier and Question~\ref{question:how to derive a lower bound}.
Meanwhile, in the field of machine learning, some other loss functions, such as \emph{InfoNCE}~\citep{oord2018representation}, its variants~\citep{henaff2020data,he2020momentum,chen2020simple,dwibedi2021with}, and other self-supervised learning algorithms (see, e.g.,~\citep{ermolov2021whitening,dufumier23integrating,huang2023towards,balestriero2023cookbook}), have recently been used.
However, the statistical properties of such loss functions, including the relations to the \emph{Bernstein condition}~\citep{bartlett2005empirical} and the Bayes classifier, have not been fully elucidated.
Additionally, while the statistical properties of several classical loss functions of binary classification are well understood (see, e.g.,~\citep{zhang2004statistical,bartlett2006convexity,alquier2019estimation}), the study of the optimal classifiers in the pairwise binary classification problems is also lacking.
To discuss whether one can replace the hinge loss with other loss functions in the proof method of Section~\ref{subsec:general upper bound of main result}, some detailed analyses of the loss functions are required in advance, and thus this topic is an independent, important future direction.


\subsubsection*{Acknowledgments}
\addcontentsline{toc}{section}{Acknowledgments}


We would like to thank the anonymous referees, the Associate Editor, and the Editor for many insightful suggestions and comments on the presentation of the paper, the main theorems, and the discussion of the main results.
We used \texttt{NumPy}~\citep{harris2020array} and \texttt{Matplotlib}~\citep{hunter2007matplotlib} to plot Figures~\ref{fig:smooth boundaries} -- \ref{fig:localized subclass}.
The visualization was performed using TSUBAME 4.0 of Institute of Science Tokyo.


\subsubsection*{Funding}
\addcontentsline{toc}{section}{Funding}


This work was partially supported by JSPS KAKENHI Grant Number 20H00576, 23H03460, and 24K14849.
We also acknowledge partial support by JST BOOST, Japan Grant Number JPMJBS2417 and JPMJBS2430.

\appendix



\section*{Notation Lists}
\label{appsec:notation list}

We show the main notation in Table~\ref{tab:some general notations} -- Table~\ref{tab:notations of algorithm}.

\begin{table}[H]
    \centering
    \caption{Some general notation.}
    \begin{tabular}{p{0.18\columnwidth}p{0.56\columnwidth}l}
    \hline
    Notation & Definition(s) & Section \\
    \hline
    $\mathcal{B}(\mathcal{A})$ & Borel $\sigma$-algebra of a topological space $\mathcal{A}$. & Section~\ref{subsec:notations} \\
    $\|\cdot\|_{L^{s}(\mathcal{A},\nu)}$ & $L^{s}$-norm of the given measure space $(\mathcal{A},\nu)$. & Section~\ref{subsec:notations} \\
    $\|\cdot\|_{s}$ & $s$-norm in the Euclidean space. & Section~\ref{subsec:notations} \\
    $\|\cdot\|_{\mathcal{A},\nu,s}$ & $\|f\|_{\mathcal{A},\nu,s}=\|\|f\|_{s}\|_{L^{s}(\mathcal{A},\nu)}$ for the given $f:\mathcal{A}\to\mathbb{R}^{t}$. & Section~\ref{subsec:notations} \\
    $\textup{sign}$ & $\textup{sign}(s)=1$ if $s\geq 0$ and $-1$ if $s<0$. & Section~\ref{subsec:notations} \\
    $g=(g_{1},\cdots,g_{s})$ & Given a set $\mathcal{A}$, $g_{1},\cdots,g_{s}:\mathcal{A}\to\mathbb{R}$, where $g:\mathcal{A}\to\mathbb{R}^{s}$. & Section~\ref{subsec:notations} \\
    $b=(b_{j})$ & Given $b\in\mathbb{R}^{s}$, $b=(b_{j}):=(b_{1},\cdots,b_{s})$. & Section~\ref{subsec:notations} \\
    $\mathbb{R}^{s\times t}$ & The set of all linear operators from $\mathbb{R}^{t}$ to $\mathbb{R}^{s}$. & Section~\ref{subsec:notations} \\
    $W=(W_{j_{1},j_{2}})$ & Given $W\in\mathbb{R}^{s\times t}$, $(W_{j_{1},j_{2}})$ is the matrix identified with $W$. & Section~\ref{subsec:notations} \\
    $\lesssim$ & Given $s_{1},s_{2}\in\mathbb{R}$, $s_{1}\lesssim s_{2}$ if there is $C>0$ independent of $n$ for which $s_{1}\leq Cs_{2}$, unless otherwise specified. & Section~\ref{subsec:notations} \\
    $\lceil\cdot\rceil, \lfloor\cdot\rfloor$ & Given $s\in\mathbb{R}$, $\lceil s\rceil=\min\{t\in\mathbb{Z}\;|\; s\leq t\}$ and $\lfloor s\rfloor=\max\{t\in\mathbb{Z}\;|\; t\leq s\}$. & Section~\ref{subsec:notations} \\
    $\mathds{1}_{\mathcal{A}}$ & Indicator function of the given set $\mathcal{A}$. & Section~\ref{subsec:notations} \\
    \hline
    \end{tabular}
    \label{tab:some general notations}
\end{table}

\begin{table}[H]
    \centering
    \caption{Notation of sets and related notions.}
    \begin{tabular}{p{0.18\columnwidth}p{0.56\columnwidth}l}
    \hline
    Notation & Definition(s) & Section \\
    \hline
    $\mathcal{X}$ & $\mathcal{X}=[0,1]^{K}$, where $K\in\mathbb{N}$. & Section~\ref{subsec:notations}\\
    $\mu$ & Lebesgue measure in $(\mathcal{X},\mathcal{B}(\mathcal{X}))$. & Section~\ref{subsec:notations}\\
    $\|\cdot\|_{\mathcal{X},s}$ & $\|f\|_{\mathcal{X},s}=\|f\|_{\mathcal{X},\mu,s}$ for any vector-valued function $f:\mathcal{X}\to\mathbb{R}^{t}$, where $s\in [1,\infty]$. & Section~\ref{subsec:notations}\\
    $\mathcal{Y}$ & $\mathcal{Y}=\{1,-1\}$. & Section~\ref{subsec:notations}\\
    $\chi$ & Counting measure in $\mathcal{Y}$. & Section~\ref{subsec:notations} \\
    $d_{1}$ & Number of subsets. & Section~\ref{subsec:notations} \\
    $(\Omega,\Sigma,Q)$ & A probability space. & Section~\ref{subsec:notations}\\
    $\mathcal{C}_{R}^{\alpha,K-1}$ & $\alpha$-H\"{o}lder ball on $[0,1]^{K-1}$ centered at the origin, where the radius is $R$. & Section~\ref{subsec:noise condition and smooth boundaries} \\
    $\|\cdot\|_{\mathcal{C}^{\alpha,K-1}}$ & H\"{o}lder norm of the $\alpha$-H\"{o}lder space on $[0,1]^{K-1}$. & Section~\ref{subsec:noise condition and smooth boundaries} \\
    $\mathscr{P}_{\alpha,R}^{K,d_{1},E}$ & A class of disjoint partitions of $\mathcal{X}$, where the smoothness condition is due to~\citep{imaizumi2022advantage} (see Definition~\ref{def:class of smooth partitions}). & Section~\ref{subsec:noise condition and smooth boundaries} \\
    $\tau,\theta_{\textup{NC}}$ & $\tau\geq 1$ is a parameter of the Tsybakov noise condition~\eqref{eq:margin assumption} due to~\citep{mammen1999smooth,tsybakov2004optimal}, and $\theta_{\textup{NC}}\in (0,1]$ is a threshold (see Definition~\ref{def:definition of noise condition}). & Section~\ref{subsec:noise condition and smooth boundaries} \\
    $d$ & $d=d_{1}-1$. & Section~\ref{subsec:learning models} \\
    $\mathcal{S}^{d-1}$ & Unit hypersphere in $\mathbb{R}^{d}$. & Section~\ref{subsec:learning models} \\
    $\Delta^{d}$ & A regular simplex in $\mathbb{R}^{d}$. & Section~\ref{subsec:learning models}\\
    $v_{1},\cdots,v_{d_{1}}$ & The vertices of $\Delta^{d}$. & Section~\ref{subsec:learning models}\\
    $D_{\Delta^{d}}$ & Diameter of the regular simplex $\Delta^{d}$. & Section~\ref{subsec:learning models} \\
    $\theta_{1},\theta_{2},\theta_{3}$ & Some thresholds used in Definition~\ref{def:main assumption}. & Section~\ref{subsec:assumptions} \\
    $\Xi$ & A set of hyperparameters (see Definition~\ref{def:auxiliary notions used in the theorem}). & Section~\ref{subsec:assumptions} \\
    $\mathcal{P}_{\alpha,\tau,\xi}$ & A class of Borel probability measures in $\mathcal{X}^{2}\times\mathcal{Y}$ (see Definition~\ref{def:main assumption}). & Section~\ref{subsec:assumptions} \\
    $\mathcal{P}_{\tau,\xi},\mathcal{P}_{\xi}$ & $\mathcal{P}_{\tau,\xi}=\bigcup_{\alpha>0}\mathcal{P}_{\alpha,\tau,\xi}$ and $\mathcal{P}_{\xi}=\bigcup_{\tau\geq 1}\mathcal{P}_{\tau,\xi}$ (see Definition~\ref{def:auxiliary notions used in the theorem}). & Section~\ref{subsec:assumptions} \\
    $\mathscr{S}_{P}$ & See Definition~\ref{def:auxiliary notions used in the theorem}. & Section~\ref{subsec:assumptions} \\
    \hline
    \end{tabular}
    \label{tab:notations of sets}
\end{table}

\begin{table}[H]
    \centering
    \caption{Notation of probability distributions.}
    \begin{tabular}{p{0.18\columnwidth}p{0.56\columnwidth}l}
    \hline
    Notation & Definition(s) & Section \\
    \hline
    $p(x,x',y)$ & Probability density function satisfying~\eqref{eq:condition of tsai et al} due to~\citep{tsai2020neural}. & Section~\ref{subsec:notations}\\
    $p_{X}(x)$ & $p_{X}(x)=\int_{\mathcal{X}}(p(x,x',1)+p(x,x',-1))\mu(dx')$. & Section~\ref{subsec:notations} \\
    $p_{X'}(x')$ & $p_{X'}(x')=\int_{\mathcal{X}}(p(x,x',1)+p(x,x',-1))\mu(dx)$. & Section~\ref{subsec:notations}\\
    $p_{X,X'}(x,x')$ & $p_{X,X'}(x,x')=p(x,x',1)+p(x,x',-1)$. & Section~\ref{subsec:notations}\\
    $p_{Y}(y)$ & $p_{Y}(y)=\int_{\mathcal{X}\times\mathcal{X}}p(x,x',y)\mu(dx)\mu(dx')$. & Section~\ref{subsec:notations} \\
    $q(x,x')$ & $q(x,x')=p(x,x'|y=1)$, following~\citep{tsai2020neural}. & Section~\ref{subsec:notations}\\
    $\eta(x,x')$ & $\eta(x,x')=p(y=1|x,x')$. & Section~\ref{subsec:notations}\\
    $P_{X,X'}$ & probability measure whose density is $p_{X,X'}$. & Section~\ref{subsec:notations} \\
    $P_{X},P_{X'}$ & probability measures with densities $p_{X}$ and $p_{X'}$. & Section~\ref{subsec:notations} \\
    $P_{X,X'}^{-}$ & probability measure with density $p_{X}\otimes p_{X'}$. & Section~\ref{subsubsec:an oracle inequality} \\
    \hline
    \end{tabular}
    \label{tab:notations of probability distributions}
\end{table}

\begin{table}[H]
    \centering
    \caption{Notation of function classes and risk functions.}
    \begin{tabular}{p{0.18\columnwidth}p{0.56\columnwidth}l}
    \hline
    Notation & Definition(s) & Section \\
    \hline
    $\mathcal{F}_{0}$ & A set of $\Delta^{d}$-valued functions on $\mathcal{X}$. & Section~\ref{subsec:learning models} \\
    $\sigma_{\textup{ReLU}}$ & ReLU function. & Section~\ref{subsec:learning models} \\
    $g_{\bm{W},\bm{b}}$ & ReLU networks. & Section~\ref{subsec:learning models} \\
    $\mathcal{F}_{L,J,S,M,\bm{d}}^{\textup{NN}}$ & A class of ReLU networks introduced in~\citep{nakada2020adaptive,imaizumi2019deep,imaizumi2022advantage}. & Section~\ref{subsec:learning models} \\
    $H$ & Softmax function. & Section~\ref{subsec:learning models} \\
    $\mathcal{F}_{L,J,S,M,\bm{d}}^{\Delta^{d}\textup{-NN}}$ & A set of $\Delta^{d}$-valued ReLU networks. & Section~\ref{subsec:learning models} \\
    $f_{\bm{W},\bm{b}}$ & $\Delta^{d}$-valued ReLU networks in $\mathcal{F}_{L,J,S,M,\bm{d}}^{\Delta^{d}\textup{-NN}}$. & Section~\ref{subsec:learning models} \\
    $\mathcal{G}_{0}$ & A set of probability-simplex-valued functions on $\mathcal{X}$. & Section~\ref{subsec:assumptions} \\
    $\mathcal{R}(\widehat{g}_{n};P)$ & $L^{2}$-risk of the given estimator $\widehat{g}_{n}$ (see Definition~\ref{def:l2 risk}). & Section~\ref{subsec:assumptions} \\
    $\mathcal{E}(f;P)$ & An excess risk (see Theorem~\ref{thm:main result}). & Section~\ref{subsubsec:an oracle inequality} \\
    \hline
    \end{tabular}
    \label{tab:notations of function classes}
\end{table}

\begin{table}[H]
    \centering
    \caption{Notation of estimators and algorithms.}
    \begin{tabular}{p{0.18\columnwidth}p{0.56\columnwidth}l}
    \hline
    Notation & Definition(s) & Section \\
    \hline
    $U_{1}^{n}$ & $U_{1}^{n}=(U_{1},\cdots,U_{n})$, where $U_{1},\cdots,U_{n}:\Omega\to \mathcal{X}^{2}\times\mathcal{Y}$ are random variables (see Definition~\ref{def:l2 risk}). & Section~\ref{subsec:assumptions} \\
    $f^{*}$ & Contrastive function (see Definition~\ref{def:contrastive representations}). & Section~\ref{subsec:core notations} \\
    $\mathscr{F}_{\beta,\beta_{0},P}(\mathcal{F})$ & Localized subclass of the given class $\mathcal{F}\subset\mathcal{F}_{0}$ (see Definition~\ref{def:concentrated subclass}). & Section~\ref{subsec:core notations} \\
    $D_{\textup{proj}}$ & See Remark~\ref{rem:remarks about localized subclass}--(i). & Section~\ref{subsec:core notations} \\
    $\widehat{f}_{n},\widehat{g}_{n}$ & Global estimators (see Definition~\ref{def:definitions of local and global estimators}). & Section~\ref{subsec:core notations} \\
    $\widehat{f}_{n}^{\textup{local}},\widehat{g}_{n}^{\textup{local}}$ & Local estimators (see Definition~\ref{def:definitions of local and global estimators}). & Section~\ref{subsec:core notations} \\
    $\widehat{f}_{n,P}^{\textup{local}},\widehat{g}_{n,P}^{\textup{local}}$ & Estimators defined with each $P\in\mathcal{P}\subset\mathcal{P}_{\xi}$ (see Remark~\ref{rem:remark of the global estimators}). & Section~\ref{subsec:core notations} \\
    $\rho_{f}$ & A squared Euclidean distance between vectors $f(x)$ and $f(x')$. & Section~\ref{subsec:loss function continued} \\
    $\psi$ & Function $\psi(s)=1-2D_{\Delta^{d}}^{-2}s$ (see Definition~\ref{def:contrastive loss}). & Section~\ref{subsec:loss function continued} \\
    $\ell_{f}$ & Hinge loss in Definition~\ref{def:contrastive loss}. & Section~\ref{subsec:loss function continued} \\
    $\widehat{g}_{n}^{\textup{LERM}}$ & $(\beta,\varepsilon,n,\mathcal{P},\mathcal{F})$-local ERM estimator (Definition~\ref{def:formal definition of erm}). & Section~\ref{subsubsec:an oracle inequality} \\
    $\widehat{f}_{n}^{\textup{LERM}}$ & Local estimator corresponding to $\widehat{g}_{n}^{\textup{LERM}}$ (see Definition~\ref{def:formal definition of erm}). & Section~\ref{subsubsec:an oracle inequality} \\
    \hline
    \end{tabular}
    \label{tab:notations of algorithm}
\end{table}

\addcontentsline{toc}{section}{Notion Lists}

\counterwithin{figure}{section}



\section{Additional Discussion}
\label{appsec:technical discussion}

\subsection{Validity of the Estimation Problem}
\label{subsec:well-conditionedness of the stimation problem}

It is natural to ask whether (A1) -- (A4) in Definition~\ref{def:main assumption} are well-conditioned for estimating smooth partitions introduced by~\citep{imaizumi2022advantage}.
In what follows, given any $\theta_{4}\in (0,1/2]$, we show that the range of the map $P\mapsto \mathscr{S}_{P}$ is related to the class
\begin{align*}
\mathscr{P}_{\alpha,R,+}^{K,d_{1},E}=\{\{\mathcal{K}_{i}\}_{i=1}^{d_{1}}\in\mathscr{P}_{\alpha,R}^{K,d_{1},E}\;|\; \mu(\mathcal{K}_{i})\in [\theta_{4},\theta_{3}]\textup{ for any }i=1,\cdots,d_{1}\}.
\end{align*}
\begin{proposition}
\label{lem:universal partition}
Let $\alpha>0$, $\tau\geq 1$, $\xi=(R,K,d_{1},E,\theta_{\textup{NC}},\theta_{1},\theta_{2},\theta_{3})\in \Xi$, and $\theta_{4}\in (0,1/2]$.
If $\theta_{1}\theta_{4}^{\frac{1}{2}}\geq 1$ and $\theta_{\textup{NC}}$ satisfies either $\theta_{\textup{NC}}\leq (\frac{1-\theta_{3}}{2(1+\theta_{3})})^{\frac{1}{\tau-1}}$ (if $\tau>1$) or $\theta_{\textup{NC}}<\frac{1-\theta_{3}}{2(1+\theta_{3})}$ (if $\tau=1$), then for every $\mathscr{S}=\{\mathcal{K}_{i}\}_{i=1}^{d_{1}}\in\mathscr{P}_{\alpha,R,+}^{K,d_{1},E}$, there are a Borel probability measure $P\in\mathcal{P}_{\alpha,\tau,\xi}$ and a permutation $\pi$ on $\{1,\cdots,d_{1}\}$ such that $\mathscr{S}_{P}=\{\mathcal{K}_{\pi(i)}\}_{i=1}^{d_{1}}$.
\end{proposition}

In other words, this proposition implies that statistical learning with samples drawn from any distribution in $\mathcal{P}_{\alpha,\tau,\xi}$ may cover the smooth partitions belonging to the class $\mathscr{P}_{\alpha,R,+}^{K,d_{1},E}$, as long as $\theta_{1}$ and $\theta_{\textup{NC}}$ satisfy these conditions.
Thus, it is reasonable to focus on the class $\mathcal{P}_{\alpha,\tau,\xi}$.

To show the above proposition, we utilize some notions developed in~\citep{arora2019theoretical,awasthi2022do}.

\begin{proof}[Proof of Proposition~\ref{lem:universal partition}]
Denote by $\mathscr{S}=\{\mathcal{K}_{i}\}_{i=1}^{d_{1}}$.
Let $P_{X}$ be an arbitrary Borel probability measure in $(\mathcal{X},\mathcal{B}(\mathcal{X}))$ such that $P_{X}$ is absolutely continuous for the Lebesgue measure $\mu$, and the Lebesgue density $p_{X}$ is continuous and positive at every point in $\mathcal{X}$ and satisfies $\|p_{X}\|_{L^{\infty}(\mathcal{X})}\leq \theta_{1}\theta_{4}^{\frac{1}{2}}$ and $P_{X}(\mathcal{K}_{i})\in [\theta_{4},\theta_{3}]$ for every $i\in\{1,\cdots,d_{1}\}$.
Let $B$ and $B'$ be i.i.d. random variables drawn from a distribution on the set $\{1,\cdots,d_{1}\}$ such that $Q(B=i)=Q(B'=i)=P_{X}(\mathcal{K}_{i})$ for any $i=1,\cdots,d_{1}$.
Also, let $V_{1},\cdots,V_{d_{1}}$ and $V_{1}',\cdots,V_{d_{1}}'$ be i.i.d. random variables on $\Omega$ such that both $V_{i}$ and $V_{i}'$ follow the conditional distribution $P_{X}(\cdot|\mathcal{K}_{i})=P_{X}(\cdot\cap\mathcal{K}_{i})/P_{X}(\mathcal{K}_{i})$ for every $i=1,\cdots,d_{1}$.
Here, consider a random variable $\Gamma:\Omega\to\{0,1\}$ drawn from the Bernoulli distribution with parameter $Q(\Gamma=1)=2^{-1}\in (0,1-\theta_{2}]$.
Following~\citep{arora2019theoretical,awasthi2022do}, we define
\begin{align*}
Y&=2\mathds{1}_{\left\{\Gamma=1\right\}}-1,\\
(X,X')|Y&=
\begin{cases}
(\sum_{i=1}^{d_{1}}V_{i}\mathds{1}_{\{B=i\}},\sum_{i=1}^{d_{1}}V_{i}'\mathds{1}_{\{B=i\}})&\quad\textup{if }Y=1,\\
(\sum_{i=1}^{d_{1}}V_{i}\mathds{1}_{\{B=i\}},\sum_{i=1}^{d_{1}}V_{i}'\mathds{1}_{\{B'=i\}})&\quad\textup{if }Y=-1.
\end{cases}
\end{align*}
Note that the distribution of $(X,X')|Y$ is indeed an example of joint distributions introduced by~\citep[Eq.~(1), (2)]{arora2019theoretical}.
In particular, the above construction satisfies Assumption~3.1 in~\citep{awasthi2022do}.
Hence, the above example expresses the setting developed in~\citep{arora2019theoretical,awasthi2022do} as a pairwise binary classification problem.
Thus, it suffices to show that the example satisfies (A1)--(A4).

By the construction, the distribution of $(X,X',Y)$ satisfies both conditions (A1) and (A3), where note that $\|q\|_{L^{\infty}(\mathcal{X}^{2})}\leq \theta_{4}^{-1}(\theta_{1}\theta_{4}^{\frac{1}{2}})^{2}=\theta_{1}^{2}$.
In addition, for any $i\in\{1,\cdots,d_{1}\}$ and every $(x,x')\in \mathcal{K}_{i}\times\mathcal{K}_{i}$, the condition $\theta_{3}<1$ implies that we have
\begin{align*}
\eta(x,x')=\frac{1}{1+P_{X}(\mathcal{K}_{i})} >\frac{1}{2}.
\end{align*}
Note that $\eta(x,x')=0$ otherwise.
Thus, the distribution satisfies condition (A4).
Here, let $t_{i}=1/(1+P_{X}(\mathcal{K}_{i}))-2^{-1}$ for each $i\in\{1,\cdots,d_{1}\}$.
When $\tau>1$, condition (A2) is satisfied if
\begin{align*}
\theta_{\textup{NC}}^{-1}\min_{i\in\{1,\cdots,d_{1}\}}t_{i}^{\frac{1}{\tau-1}}\geq 1.
\end{align*}
This sufficient condition is satisfied if $\theta_{\textup{NC}}\leq ((1-\theta_{3})/(2(1+\theta_{3})))^{1/(\tau-1)}$, when $\tau>1$.
When $\tau=1$, condition (A2) is satisfied with $\theta_{\textup{NC}}< (1-\theta_{3})/(2(1+\theta_{3}))$ since it holds that $\max_{i\in\{1,\cdots,d_{1}\}}P_{X}(\mathcal{K}_{i})\leq \theta_{3}$.
Therefore, the distribution of $(X,X',Y)$, denoted by $P$, belongs to $\mathcal{P}_{\alpha,\tau,\xi}$.

By the construction of $P$ and condition~\eqref{eq:statistical partition of unity}, for the partition $\mathscr{S}_{P}=\{\mathcal{K}_{i}'\}_{i=1}^{d_{1}}\in\mathscr{P}_{\alpha,R}^{K,d_{1},E}$ satisfying condition (A4) in Definition~\ref{def:main assumption}, there is a permutation $\pi$ on the set $\{1,\cdots,d_{1}\}$ such that $\mathcal{K}_{i}'=\mathcal{K}_{\pi(i)}$ for any $i\in\{1,\cdots,d_{1}\}$.
We obtain the claim.
\end{proof}

\subsection{Discussion of a Similarity Learning Problem}
\label{appsec:similarity learning and localized subclass}

We discuss the learnability of smooth boundaries via the learning problem studied in~\citep{bao2022pairwise}, where some additional discussion can be found in Section~\ref{subsec:comparison with other methods}.
In the subsequent paragraphs, we focus on the following two topics: (i) the investigation of the problem setting of~\citet{bao2022pairwise} in terms of (Q1) in Question~\ref{question:how to derive a lower bound}, and (ii) the discussion of the extensibility of the theoretical results in~\citep{bao2022pairwise}.

Before proceeding, we define some notation, following~\citep{bao2022pairwise}.
Recall that~\citet{bao2022pairwise} consider the following data generating process of the binary variable $Y$:
\begin{align}
\label{eq:similarity learning data sampling process}
Y=ZZ',
\end{align}
where $Z,Z':\Omega\to \{-1,1\}$ are given random variables.
Recall also that in~\citep{bao2022pairwise}, the pairwise binary classifier $g(x)g(x')$ defined with the given binary classifier $g:\mathcal{X}_{0}\to\mathcal{Y}$ is trained in their algorithm, to predict the decision boundary in the given measurable space $\mathcal{X}_{0}\subset\mathbb{R}^{K}$ via the trained classifier $g$.
For convenience, we rewrite the pairwise classifier used in~\citep{bao2022pairwise} as the functional $g\mapsto h_{g}$ satisfying
\begin{align*}
h_{g}(x,x')=g(x)g(x'),
\end{align*}
for the given measurable map $g:\mathcal{X}_{0}\to\mathcal{Y}$.

\paragraph{(i) Discussion of problem settings.}
In connection with (Q1) in Question~\ref{question:how to derive a lower bound}, we consider the setting where the hinge loss is used.
Then, as shown in the following proposition, an example similar to Example~\ref{example:not good representation} is observed:
\begin{proposition}
\label{prop:excess risk and l2 risk of similarity learning}
Let $\mathcal{X}_{0}\subset\mathbb{R}^{K}$ be a measurable space endowed with a non-negative, $\sigma$-finite measure $\nu$.
Given random variables $(X,Z),(X',Z'):\Omega\to \mathcal{X}_{0}\times \{-1,1\}$ that are i.i.d. sampled from the distribution $P_{X_{0},Z}$, where $P_{X_{0},Z}$ is supposed to be absolutely continuous for $\nu\otimes \chi$, let $g^{*}:\mathcal{X}_{0}\to \{-1,1\}$ be the Bayes classifier of $P_{X_{0},Z}$, and define the random variable $Y:\Omega\to \{-1,1\}$ to satisfy~\eqref{eq:similarity learning data sampling process} for $Z$ and $Z'$.
Let $P$ be the distribution of $(X,X',Y)$, and denote the marginal distribution of $P_{X_{0},Z}$ with respect to the space $\mathcal{X}_{0}$ by $P_{X_{0}}$.
Then, we have
\begin{align*}
\mathbb{E}_{P_{X_{0},X_{0}'}}[|h_{-g^{*}}-h_{g^{*}}|]=0,
\end{align*}
and
\begin{align*}
&\|\mathds{1}_{\{x\in\mathcal{X}_{0}\;|\; -g^{*}(x)=1\}}-\mathds{1}_{\{x\in\mathcal{X}_{0}\;|\; g^{*}(x)=1\}}\|_{L^{2}(\mathcal{X}_{0},P_{X_{0}})}^{2}>0,\\
&\|(-g^{*})-g^{*}\|_{L^{2}(\mathcal{X}_{0},P_{X_{0}})}^{2}>0,
\end{align*}
where $P_{X_{0},X_{0}'}$ denotes the marginal distribution of $P$ with respect to the space $\mathcal{X}_{0}^{2}$.
\end{proposition}
\begin{proof}
The identity is due to the property that $h_{-g^{*}}(x,x')=h_{g^{*}}(x,x')$ for any $x,x'\in\mathcal{X}_{0}$.
Since $\|\mathds{1}_{\{x\in\mathcal{X}_{0}|-g^{*}(x)=1\}}-\mathds{1}_{\{x\in\mathcal{X}_{0}|g^{*}(x)=1\}}\|_{L^{2}(\mathcal{X}_{0},P_{X_{0}})}^{2}=P_{X_{0}}(\mathcal{X}_{0})=1$, we obtain the first inequality.
The second inequality is trivial.
\end{proof}
Note that under the additional condition that $P_{X_{0},X_{0}'}(p(y=1|x,x')=\frac{1}{2})=0$, Theorem~1 in~\citep{bao2022pairwise} (see Theorem~\ref{citelem:identity of bao et al}) directly implies that the Bayes classifier of $P$ defined in Proposition~\ref{prop:excess risk and l2 risk of similarity learning} is $h_{g^{*}}$.
Thus, under the setting of Proposition~\ref{prop:excess risk and l2 risk of similarity learning}, the argument presented in Section~\ref{subsubsec:outline} is applicable:
Given $\tau\geq 1$, if $P$ satisfies the Tsybakov noise condition~\citep{mammen1999smooth,tsybakov2004optimal} with some suitable constants, by Proposition~1 in~\citep{lecue2007optimal} (see Lemma~\ref{citelem:lecue lemma}), there is a constant $C>0$ such that
\begin{align*}
&\mathbb{E}_{P_{X_{0},X_{0}'}}[|h_{-g^{*}}-h_{g^{*}}|]^{\tau}\\
&\leq C(\mathbb{E}_{P}[\max\{0,1-yh_{-g^{*}}(x,x')\}]-\mathbb{E}_{P}[\max\{0,1-yh_{g^{*}}(x,x')\}]).
\end{align*}
Therefore, Proposition~\ref{prop:excess risk and l2 risk of similarity learning} implies that under the similarity learning problem of~\citet{bao2022pairwise}, one needs to overcome a problem similar to (Q1) in Question~\ref{question:how to derive a lower bound}.
In the next paragraph, we review how~\citet{bao2022pairwise} develop a method that enables one to address this problem.

\paragraph{(ii) Discussion of the extensibility.}
In~\citep[Section~A.2]{bao2022pairwise}, several equivalent identities of the expected binary misclassification loss in~\citep[Theorem~1 and Corollary~1]{shimada2021classification} are utilized to prove a core idea of a sign estimator.
Formally, it is claimed in the proof of~\citep[Corollary~1, p.1242]{shimada2021classification} that in the setting where the label is a $\{-1,1\}$-valued random variable, for any probability measure $P_{X_{0},Z}$ in any given measurable space $\mathcal{X}_{0}\times \{-1,1\}$ that is absolutely continuous for non-negative $\sigma$-finite product measure $\nu\otimes\chi$ and for any measurable function $g:\mathcal{X}_{0}\to\{-1,1\}$, it holds that
\begin{align}
\label{eq:shimada bao identity}
&P_{X_{0},Z}(g(x)\neq z)\nonumber\\
&=\mathbb{E}_{(x,z),(x',z')\sim_{i.i.d.} P_{X_{0},Z}}\left[\frac{\mathds{1}_{\{(x,z,z')\;|\; g(x)\neq zz'\}}+\mathds{1}_{\{(x',z,z')\;|\; g(x')\neq zz'\}}}{2(p(z=1)-p(z=-1))}\right]\nonumber \\
&\quad -\frac{p(z=-1)}{p(z=1)-p(z=-1)},
\end{align}
where $p(z=1):=P_{X_{0},Z}(\mathcal{X}_{0}\times \{1\})$ and $p(z=-1):=1-p(z=1)$.
\citet[Theorem~2]{bao2022pairwise} use this identity to prove a formula for calculating the optimal sign $s^{*}$ of the given measurable function $g:\mathcal{X}_{0}\to\{-1,1\}$, which is defined as
\begin{align*}
s^{*}=\argmin_{s\in\{-1,1\}}P_{X_{0},Z}(sg(x)\neq z).
\end{align*}
See~\citep[Section~A.2]{bao2022pairwise} for the detailed derivations.
Consequently, \citet[Theorem~3]{bao2022pairwise} use the formula to prove an excess risk bound of binary classification.
Therefore, the proof method of~\citep{bao2022pairwise} does not rely on a localization argument.

As mentioned in~\citep[Section~6]{bao2022pairwise}, their proof method is for binary classification, not for multiclass classification.
One can notice that in~\citep[Section~3.1.1, p.1240]{shimada2021classification} the derivation of the identity is based on the fact that the set $\{-1,1\}$ with standard multiplication is a cyclic group generated by $-1$.
From this observation, it might be useful to consider some other cyclic group to extend the method of~\citet{bao2022pairwise} to a multiclass classification problem.
In particular, the following open question remains:
(i) Is there a cyclic group such that both Theorem~1 and Theorem~2 in~\citep{bao2022pairwise} are extensible to a multiclass classification problem where the multiclass label takes values in the group?
(ii) Supposing that such a cyclic group exists, is it possible to construct a practical similarity learning algorithm that produces some minimax optimal ERM estimator?
A natural idea might be to use some cyclic group of complex numbers, since $\{-1,1\}$ can be defined as a set of complex numbers.
For instance, given $d_{1}\in\mathbb{N}$ such that $d_{1}\geq 3$, one can take a complex number $z_{1}$ for which it generates a cyclic group $\{z_{1},z_{1}^{2},\cdots,z_{1}^{d_{1}}\}$.
Another idea might be to use some quotient group to define the labels.
However, it might be highly non-trivial to prove or disprove these questions with such cyclic groups.

\subsection{Discussion of Neural Networks in Pairwise Binary Classification}
\label{subsec:discussion of neural networks in pairwise binary classification}

In~\citep{imaizumi2019deep,imaizumi2022advantage}, under a nonparametric regression problem where the regression function is defined with the product of a smooth function and a discontinuous function characterized by some smooth boundaries, the following two claims are proven: (i) It is proven that the least squares estimator using deep neural networks is minimax optimal up to some logarithmic factor (see~\citep[Theorems~1, 2, and 3]{imaizumi2019deep} and \citep[Theorems~7 and 13]{imaizumi2022advantage}). (ii) The sub-optimality of some classical linear estimators in the regression problem is proven (see~\citep[Corollary~1]{imaizumi2019deep} and~\citep[Corollary~20]{imaizumi2022advantage}).
It is well known that the classical linear estimators are minimax optimal in some regression problems defined with smooth regression functions (see, e.g.,~\citep{tsybakov2009introduction,imaizumi2019deep,imaizumi2022advantage}).
\citet{imaizumi2019deep,imaizumi2022advantage} focus on this background and consider another setting to prove a mathematical advantage of deep learning over some classical linear estimators in a regression problem.

In~\citep[Section~1.1]{imaizumi2022advantage}, it is mentioned that their problem setting using piecewise smooth functions follows that of~\citep{petersen2018optimal} (see also~\citep[Remark~1]{imaizumi2022advantage}).
According to~\citep[Section~1.1]{petersen2018optimal}, \citet{petersen2018optimal} consider piecewise continuous functions with smooth boundaries, motivated by the fact that some of the usual classification problems are defined with such functions.
Indeed, in the context of nonparametric statistics, \citet{tsybakov2004optimal} provides some analyses for a class similar to that of~\citep[Definition~3.3]{petersen2018optimal} in the conventional binary classification problem and also proves the minimax optimality of some classical estimators (see~\citep[Theorem~1 and Theorem~2]{tsybakov2004optimal}).
Meanwhile, \citet{imaizumi2019deep,imaizumi2022advantage} employ the setting of~\citep{petersen2018optimal} to prove an advantage of deep learning in a standard regression problem.

In the current work, we studied the theoretical properties of smooth boundaries via a nonparametric classification problem, as in~\citep{petersen2018optimal} and the related work~\citep{tsybakov2004optimal,kim2021fast,meyer2023optimal}.
Thus, both the motivation and the goal are different from those in~\citep{imaizumi2019deep,imaizumi2022advantage}.
In addition, deep ReLU networks have been often employed to prove the consistency of estimators in classification problems~\citep{kim2021fast,bos2022convergence,meyer2023optimal}.
Therefore, the further investigation of the superiority of deep learning in (pairwise) binary classification problems in connection with the contributions of~\citet{imaizumi2019deep,imaizumi2022advantage} is beyond the scope of the current work.

\section{Proofs}
\label{appsec:auxiliary lemmas for the proof of theorem main result}

\subsection{Useful Properties of a Regular Simplex}
\label{appsubsec:proof of proposition simple fact for risks}

We provide a proof of Proposition~\ref{prop:simple fact for risks}:
\begin{proof}[Proof of Proposition~\ref{prop:simple fact for risks}]
Since the first claim is a special case of the second claim, we prove claim (ii).
Let $f_{1}=\sum_{i=1}^{d_{1}}g_{1,i}v_{i},f_{2}=\sum_{i=1}^{d_{1}}g_{2,i}v_{i}\in\mathcal{F}_{0}$.
By Corollary~2 in~\citep{alexander1977width}, the set $\{v_{1},\cdots,v_{d_{1}}\}$ satisfying the definition of vertices in $\Delta^{d}$ (see Section~\ref{subsec:learning models}) is uniquely determined up to rotation.
Hence, by~\citep[Corollary~2.6]{conn2009introduction} it holds that
\begin{align}
\label{eq:same inner product}
\langle v_{i},v_{j}\rangle=-1/d \textup{ for any } i,j\in\{1,\cdots,d_{1}\} \textup{ such that } i\neq j.
\end{align}
Then, we have
\begin{align}
&\|f_{1}(x)-f_{2}(x)\|_{2}^{2}\nonumber\\
&=
\left\|\sum_{i=1}^{d_{1}}(g_{1,i}(x)-g_{2,i}(x))v_{i}\right\|_{2}^{2}\nonumber\\
&=
\sum_{i=1}^{d_{1}}|g_{1,i}(x)-g_{2,i}(x)|^{2}-\frac{1}{d}\sum_{\substack{i,j=1,\cdots,d_{1},\\i\neq j}}(g_{1,i}(x)-g_{2,i}(x))(g_{1,j}(x)-g_{2,j}(x)).\nonumber\\
&=\frac{d_{1}}{d}\sum_{i=1}^{d_{1}}|g_{1,i}(x)-g_{2,i}(x)|^{2}-\frac{1}{d}\left(\sum_{i=1}^{d_{1}}g_{1,i}(x)-\sum_{i=1}^{d_{1}}g_{2,i}(x)\right)^{2}\nonumber\\
\label{eq:proof of prop simple fact for risks eq 1}
&=\frac{d_{1}}{d}\sum_{i=1}^{d_{1}}|g_{1,i}(x)-g_{2,i}(x)|^{2}.
\end{align}
Here, in the second equality we use~\eqref{eq:same inner product}.
In the last inequality we use the fact $\sum_{i=1}^{d_{1}}g_{1,i}(x)=\sum_{i=1}^{d_{1}}g_{2,i}(x)=1$, which is due to the definition of $\mathcal{F}_{0}$.
\end{proof}

Given a subset $\mathcal{A}\subset \mathbb{R}^{s}$, denote by $\textup{conv}(\mathcal{A})$, the convex hull of $\mathcal{A}$.
Also, given a subset $\mathcal{A}$ in the Euclidean space equipped with the standard distance, denote by $\textup{diam}(\mathcal{A})$, the diameter of $\mathcal{A}$.
We recall a well-known fact on the diameter of any convex hull (see, e.g.,~\citep[Lemma~5--17]{hocking1961topology} and~\citep{alexander1977width}):
\begin{lemma}[{Lemma~5--17 in~\citep{hocking1961topology}}]
\label{citelem:diameter of any convex hull}
Let $s,t\in\mathbb{N}$.
For any $z_{1},\cdots,z_{s}\in\mathbb{R}^{t}$, it holds that
\begin{align*}
\textup{diam}(\textup{conv}(\{z_{1},\cdots,z_{s}\}))=\textup{diam}(\{z_{1},\cdots,z_{s}\}).
\end{align*}
\end{lemma}

The following properties of a regular simplex are often used in this paper.
Note that the properties in Lemma~\ref{lem:d proj and d diameter}-(i) and (ii) are well-known, and we give some proofs for completeness.
\begin{lemma}
\label{lem:d proj and d diameter}
We have the following properties:
\begin{enumerate}
\item[(i)] It holds that $D_{\textup{proj}}=\frac{d_{1}}{d}$.
\item[(ii)] It holds that $D_{\Delta^{d}}=\sqrt{\frac{2d_{1}}{d}}$.
\item[(iii)] For any $f\in\mathcal{F}_{0}$ and $x,x'\in\mathcal{X}$, $\psi\circ\rho_{f}(x,x')=1$ if $f(x)=f(x')$.
In addition, $\psi\circ\rho_{f}(x,x')=-1$ if $f(x),f(x')\in\{v_{1},\cdots,v_{d_{1}}\}$ and $f(x)\neq f(x')$.
\end{enumerate}
\end{lemma}
\begin{proof}
By the definition of $D_{\textup{proj}}$, we have
\begin{align}
\label{eq:d proj eq 0}
D_{\textup{proj}}^{2}
&=
\inf_{c_{2},\cdots,c_{d_{1}}\in [0,1], \sum_{i=2}^{d_{1}}c_{i}=1}\frac{d_{1}}{d}+\frac{d_{1}}{d}\sum_{i=2}^{d_{1}}c_{i}^{2}\\
&=
\frac{d_{1}}{d}+\frac{d_{1}}{d^{2}}\nonumber\\
&=
\frac{d_{1}^{2}}{d^{2}},\nonumber
\end{align}
where in~\eqref{eq:d proj eq 0} we used Proposition~\ref{prop:simple fact for risks}--(i).

As in the proof of Proposition~\ref{prop:simple fact for risks}, by Corollary~2 in~\citep{alexander1977width} and Corollary~2.6 in~\citep{conn2009introduction}, it holds that
\begin{align}
\label{eq:d proj eq 0.0001}
\langle v_{1},v_{2}\rangle=-1/d.
\end{align}
We have
\begin{align}
\label{eq:d proj eq 1}
D_{\Delta^{d}}
&=
\|v_{1}-v_{2}\|_{2}\\
&=\sqrt{2-2\langle v_{1},v_{2}\rangle}\nonumber\\
\label{eq:d proj eq 2}
&=\sqrt{\frac{2d_{1}}{d}},
\end{align}
where~\eqref{eq:d proj eq 1} is due to Lemma~\ref{citelem:diameter of any convex hull}, and~\eqref{eq:d proj eq 2} is due to~\eqref{eq:d proj eq 0.0001}.

The first claim in (iii) is due to the definition of $\psi$ and $\rho_{f}$, namely, $\rho_{f}(x,x')=\|f(x)-f(x')\|_{2}^{2}$ and $\psi(s)=1-2D_{\Delta^{d}}^{-2}s$.
For the second claim, if $f(x),f(x')\in \{v_{1},\cdots,v_{d_{1}}\}$ and $f(x)\neq f(x')$, then by Lemma~\ref{citelem:diameter of any convex hull}, the property that $\|v_{i}-v_{j}\|_{2}=\|v_{i'}-v_{j'}\|_{2}$ for every $i,j,i',j'\in\{1,\cdots,d_{1}\}$ such that $i\neq j$ and $i'\neq j'$, and the convexity of the regular simplex $\Delta^{d}$, we have that $\psi\circ\rho_{f}(x,x')=-1$.
\end{proof}

\subsection{Proof of Proposition~\ref{thm:main result basic case}}
\label{subsec:proof of proposition main result basic case modified}

The following condition guarantees that the Bayes classifier $\textup{sign}\circ(2\eta-1)$ is representable by some $f\in\mathcal{F}_{0}$.
\begin{definition}[$(\psi_{0},\mathcal{F})$-representability]
\label{def:psi, F constructable}
Given $\xi=(R,K,d_{1},E,\theta_{\textup{NC}},\theta_{1},\theta_{2},\theta_{3})\in\Xi$, a measurable function $\psi_{0}:\mathbb{R}\to\mathbb{R}$, and $\mathcal{F}\subset \mathcal{F}_{0}$, a Borel probability measure $P\in\mathcal{P}_{\xi}$ is said as $(\psi_{0},\mathcal{F})$\textit{-representable} if there is a vector-valued function $f\in\mathcal{F}$ such that $\psi_{0}\circ \rho_{f}=\textup{sign}\circ (2\eta-1)$ holds $P_{X,X'}$-almost surely.
\end{definition}

Recall that the function $\psi(s)=1-2D_{\Delta^{d}}^{-2}s$ is defined in Definition~\ref{def:contrastive loss}.
\begin{lemma}
\label{prop:constructability for F0}
Given any $\xi=(R,K,d_{1},E,\theta_{\textup{NC}},\theta_{1},\theta_{2},\theta_{3})\in\Xi$, any $P\in \mathcal{P}_{\xi}$ is $(\psi,\mathcal{F}_{0})$-representable with the contrastive function $f^{*}$ of $P$.
\end{lemma}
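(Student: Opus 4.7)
The plan is to verify the claim by direct computation of $\psi \circ \rho_{f^{*}}$ on the two natural pieces of $\mathcal{X}^{2}$ induced by the partition $\mathscr{S}_{P} = \{\mathcal{K}_{i}\}_{i=1}^{d_{1}}$, and then to match the result against $\textup{sign}\circ(2\eta-1)$ using condition (A4).

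First I would check that $f^{*} \in \mathcal{F}_{0}$. Since $\{\mathcal{K}_{i}\}_{i=1}^{d_{1}}$ is a disjoint Borel partition of $\mathcal{X}$ by virtue of (P1') in (A4), the coordinate maps $g_{i}^{*} = \mathds{1}_{\mathcal{K}_{i}}$ are measurable and satisfy $\sum_{i=1}^{d_{1}} g_{i}^{*}(x) = 1$ at every $x\in\mathcal{X}$. Hence $f^{*} = \sum_{i=1}^{d_{1}} g_{i}^{*} v_{i}$ takes values in the vertex set $\{v_{1},\ldots,v_{d_{1}}\} \subset \Delta^{d}$ and lies in $\mathcal{F}_{0}$.

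Next I would compute $\rho_{f^{*}}(x,x')$ by cases. If there exists $i\in\{1,\ldots,d_{1}\}$ with $(x,x') \in \mathcal{K}_{i}\times\mathcal{K}_{i}$, then $f^{*}(x) = f^{*}(x') = v_{i}$, so $\rho_{f^{*}}(x,x')=0$ and $\psi(0) = 1$. Otherwise there exist distinct $i,j$ with $(x,x')\in \mathcal{K}_{i}\times\mathcal{K}_{j}$, in which case $\rho_{f^{*}}(x,x') = \|v_{i} - v_{j}\|_{2}^{2} = D_{\Delta^{d}}^{2}$ by Theorem~3 of~\citep{alexander1977width}, and $\psi(D_{\Delta^{d}}^{2}) = 1 - 2 = -1$. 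Therefore
\begin{align*}
\psi\circ\rho_{f^{*}}(x,x') =
\begin{cases}
+1 & \textup{if } (x,x') \in \bigcup_{i=1}^{d_{1}} \mathcal{K}_{i}\times \mathcal{K}_{i},\\
-1 & \textup{otherwise}.
\end{cases}
\end{align*}

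Finally I would invoke condition (A4), which identifies $\bigcup_{i=1}^{d_{1}} \mathcal{K}_{i}\times\mathcal{K}_{i}$ with the set $\{(x,x')\in\mathcal{X}^{2} \,|\, \eta(x,x') \geq 1/2\}$. Combined with the convention $\textup{sign}(s) = 1$ iff $s\geq 0$, this yields $\textup{sign}(2\eta(x,x')-1) = +1$ precisely on that diagonal union and $-1$ on its complement, matching $\psi\circ\rho_{f^{*}}$ pointwise on all of $\mathcal{X}^{2}$. Hence the identity $\psi\circ\rho_{f^{*}} = \textup{sign}\circ(2\eta-1)$ holds everywhere, and therefore $P_{X,X'}$-almost surely, establishing $(\psi,\mathcal{F}_{0})$-representability of $P$ via $f^{*}$. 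There is no substantive obstacle here; the only subtlety is keeping the sign convention at $\eta = 1/2$ consistent with the weak inequality used in (A4), which is why the identity holds everywhere rather than merely almost everywhere.
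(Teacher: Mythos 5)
Your proof is correct and takes essentially the same route as the paper's: both rely on Theorem~3 of~\citep{alexander1977width} to conclude that $\psi\circ\rho_{f^{*}}$ is $\{1,-1\}$-valued (equal to $+1$ exactly on $\bigcup_{i}\mathcal{K}_{i}\times\mathcal{K}_{i}$) and then invoke condition (A4) together with the sign convention $\textup{sign}(s)=1\iff s\geq 0$ to identify $\psi\circ\rho_{f^{*}}$ with the Bayes classifier $\textup{sign}\circ(2\eta-1)$ pointwise. You simply spell out the two-case computation and the check $f^{*}\in\mathcal{F}_{0}$ that the paper leaves implicit.
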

\begin{proof}
By Lemma~\ref{lem:d proj and d diameter}--(iii), we have $\psi\circ\rho_{f^{*}}(x,x')=-1$ for any $i,j\in\{1,\cdots,d_{1}\}$ such that $i\neq j$, any $x\in\mathcal{K}_{i}$, and any $x'\in\mathcal{K}_{j}$.
Note also that $\psi\circ\rho_{f^{*}}(x,x')=1$ for any $i\in\{1,\cdots,d_{1}\}$ and any $x,x'\in\mathcal{K}_{i}$.
Hence, by condition (A4) in Definition~\ref{def:main assumption}, we have
\begin{align}
\label{eq:identity in the representability lemma}
\psi\circ\rho_{f^{*}}(x,x')=\textup{sign}\circ(2\eta-1)(x,x'),
\end{align}
for any $(x,x')\in\mathcal{X}\times\mathcal{X}$.
This fact indicates that $P$ is $(\psi,\mathcal{F}_{0})$-representable.
\end{proof}

We now provide the proof of Proposition~\ref{thm:main result basic case}.
To this end, we recall a well-known fact on hinge loss:
\citet[Lemma~3.1]{lin2002support} shows that a real-valued classifier minimizes expected hinge loss if and only if it is equal to the Bayes classifier.
See also Section~3.3 in~\citep{zhang2004statistical} and Example~4 in~\citep{bartlett2006convexity}.
Then, we can show the existence of the minimizers, namely, Proposition~\ref{thm:main result basic case}.
\begin{proof}[Proof of Proposition~\ref{thm:main result basic case}]
Let $h:\mathcal{X}^{2}\to [-1,1]$ be an arbitrary measurable function.
By Lemma~3.1 in~\citep{lin2002support}, the expected hinge loss $\mathbb{E}_{P}[\max\{0,1-yh(x,x')\}]$ is minimized at the Bayes classifier $h=\textup{sign}\circ (2\eta-1)$.
Note also that Lemma~\ref{prop:constructability for F0} implies that the Bayes classifier is equal to $\psi\circ\rho_{f^{*}}$.
Therefore, the combination of Lemma~3.1 in~\citep{lin2002support} and Lemma~\ref{prop:constructability for F0} implies that $\mathbb{E}_{P}[\ell_{f}]$ is minimized at $f=f^{*}$.
\end{proof}

\subsection{Proof of Lemma~\ref{lem:step 3 lemma 0}}
\label{appsubsec:proof of lemma step 3 lemma 0}

\begin{proof}
Noting the fact that $P_{X}(\mathcal{A})=P_{X,X'}^{-}(\mathcal{A}\times\mathcal{X})$ for any measurable $\mathcal{A}\subseteq\mathcal{X}$, we have
\begin{align}
&\mathbb{E}_{P_{X}}[\|f-f^{*}\|_{2}^{2}]\nonumber\\
&=\int_{0}^{\infty}P_{X}(\{x\in\mathcal{X}\;|\;\|f(x)-f^{*}(x)\|_{2}^{2}> r\})dr\nonumber\\
&=\int_{0}^{\infty}P_{X,X'}^{-}\left(\bigcup_{i,j=1}^{d_{1}}(\{x\in\mathcal{X}\;|\;\|f(x)-f^{*}(x)\|_{2}^{2}> r\}\cap \mathcal{K}_{i})\times \mathcal{K}_{j}\right)dr\nonumber\\
&\leq \int_{0}^{\infty}\sum_{i\neq j}P_{i,j}^{-}(r^{1/2})dr\nonumber\\
&\quad +\int_{0}^{\infty}\sum_{i=1}^{d_{1}}P_{X,X'}^{-}((\{x\in\mathcal{X}\;|\;\|f(x)-f^{*}(x)\|_{2}^{2}>r\}\cap \mathcal{K}_{i})\times \mathcal{K}_{i})dr\nonumber\\
\label{eq:step 3 lemma 0 eq 1}
&\leq 2 \int_{0}^{\infty}\sum_{i\neq j}rP_{i,j}^{-}(r)dr+\max_{i\in\{1,\cdots,d_{1}\}}P_{X'}(\mathcal{K}_{i})\mathbb{E}_{P_{X}}[\|f-f^{*}\|_{2}^{2}],
\end{align}
where the last inequality is obtained as follows: we note that
\begin{align*}
&\sum_{i=1}^{d_{1}}P_{X,X'}^{-}((\{x\in\mathcal{X}\;|\;\|f(x)-f^{*}(x)\|_{2}^{2}>r\}\cap \mathcal{K}_{i})\times \mathcal{K}_{i})\\
&=\sum_{i=1}^{d_{1}}P_{X'}(\mathcal{K}_{i})P_{X}(\{x\in\mathcal{X}\;|\;\|f(x)-f^{*}(x)\|_{2}^{2}>r\}\cap \mathcal{K}_{i})\\
&\leq \sum_{i=1}^{d_{1}}\max_{j\in\{1,\cdots,d_{1}\}}P_{X'}(\mathcal{K}_{j})P_{X}(\{x\in\mathcal{X}\;|\;\|f(x)-f^{*}(x)\|_{2}^{2}>r\}\cap \mathcal{K}_{i})\\
&=\max_{j\in\{1,\cdots,d_{1}\}}P_{X'}(\mathcal{K}_{j})P_{X}(\{x\in\mathcal{X}\;|\;\|f(x)-f^{*}(x)\|_{2}^{2}>r\}),
\end{align*}
where in the last equality we note that $\mathcal{K}_{1},\cdots,\mathcal{K}_{d_{1}}$ are disjoint.
Integrating both the sides in the above calculation yields~\eqref{eq:step 3 lemma 0 eq 1}.
Here, we note that by condition (A4) in Definition~\ref{def:main assumption}, we have
\begin{align*}
\max_{j\in\{1,\cdots,d_{1}\}}P_{X}(\mathcal{K}_{j})=\max_{j\in\{1,\cdots,d_{1}\}}P_{X'}(\mathcal{K}_{j})\leq\theta_{3}<1,
\end{align*}
where in the equality we use the assumption that $q(x,x')$ is symmetric, which is introduced in condition (A3) of Definition~\ref{def:main assumption} and implies that $p_{X}=p_{X'}$.
Combining this fact and \eqref{eq:step 3 lemma 0 eq 1}, we obtain
\begin{align}
\label{eq:step 3 lemma 0 eq 2}
\mathbb{E}_{P_{X}}[\|f-f^{*}\|_{2}^{2}]\leq 2(1-\theta_{3})^{-1}\int_{0}^{\infty}\sum_{i\neq j}rP_{i,j}^{-}(r)dr.
\end{align}
Let us set $C_{2}'=2(1-\theta_{3})^{-1}$.
We next show an upper bound of the integral in the right-hand side of~\eqref{eq:step 3 lemma 0 eq 2}.
We can proceed as follows:
\begin{align}
\label{eq:step 3 lemma 0 eq 3}
&\int_{0}^{\infty}r P_{i,j}^{-}(r)dr\nonumber\\
&=\int_{0}^{\frac{\beta}{n}}rP_{i,j}^{-}(r)dr+\sum_{w=0}^{\infty}\int_{(\frac{\beta}{n})\vee((\frac{1}{2})^{w+1}\beta)}^{(\frac{\beta}{n})\vee((\frac{1}{2})^{w}\beta)}rP_{i,j}^{-}(r)dr+\int_{\beta}^{D_{\Delta^{d}}}rP_{i,j}^{-}(r)dr.
\end{align}
By Lemma~\ref{lem:step 3 lemma 2}, the third term in the right-hand side of~\eqref{eq:step 3 lemma 0 eq 3} is evaluated as
\begin{align*}
\int_{\beta}^{D_{\Delta^{d}}}rP_{i,j}^{-}(r)dr\leq D_{\Delta^{d}}(D_{\Delta^{d}}-\beta)\beta_{0}.
\end{align*}
For the first term in the right-hand side of~\eqref{eq:step 3 lemma 0 eq 3}, we have
\begin{align*}
\int_{0}^{\frac{\beta}{n}}rP_{i,j}^{-}(r)dr\leq \frac{\beta^{2}}{n}.
\end{align*}
Regarding the second term, we note the bound
\begin{align}
\label{eq:step 3 lemma 0 eq 5}
&\sum_{w=0}^{\infty}\int_{(\frac{\beta}{n})\vee((\frac{1}{2})^{w+1}\beta)}^{(\frac{\beta}{n})\vee((\frac{1}{2})^{w}\beta)}rP_{i,j}^{-}(r)dr\nonumber\\
&= \int_{\frac{\beta}{n}}^{(\frac{1}{2})^{\lfloor\log_{2}{n}\rfloor}\beta} rP_{i,j}^{-}(r)dr + \sum_{w=0}^{\lfloor\log_{2}{n}\rfloor-1}\int_{(\frac{1}{2})^{w+1}\beta}^{(\frac{1}{2})^{w}\beta} rP_{i,j}^{-}(r)dr \nonumber\\
&\leq \sum_{w=0}^{\lfloor \log_{2}{n}\rfloor} \int_{(\frac{1}{2})^{w+1}\beta}^{(\frac{1}{2})^{w}\beta} rP_{i,j}^{-}(r)dr\nonumber\\
&\leq \sum_{w=0}^{\lfloor \log_{2}{n}\rfloor}\left(\frac{1}{2}\right)^{2w+1}\beta^{2}P_{i,j}^{-}(2^{-(w+1)}\beta).
\end{align}
Here, in~\eqref{eq:step 3 lemma 0 eq 5}, we used the fact that $P_{i,j}^{-}(r)$ is a non-increasing function of $r$.
Define $C_{2}=C_{2}'(1\vee (d+1)dD_{\Delta^{d}})$.
Combining~\eqref{eq:step 3 lemma 0 eq 2}, \eqref{eq:step 3 lemma 0 eq 3}, and \eqref{eq:step 3 lemma 0 eq 5}, we obtain
\begin{align*}
&\mathbb{E}_{P_{X}}[\|f-f^{*}\|_{2}^{2}] \\
&\leq C_{2}\left((D_{\Delta^{d}}-\beta)\beta_{0}+\sum_{i\neq j}\sum_{w=0}^{\lfloor \log_{2}{n}\rfloor}\left(\frac{1}{2}\right)^{2w+1}\beta^{2}P_{i,j}^{-}(2^{-(w+1)}\beta)+\frac{\beta^{2}}{n}\right),
\end{align*}
which shows the claim.
\end{proof}

\subsection{Proof of Lemma~\ref{lem:step 3 lemma 1}}
\label{appsubsec:proof of lemma step 3 lemma 1}

To prove Lemma~\ref{lem:step 3 lemma 1}, we use the following lemma:
\begin{lemma}
\label{lem:lemma of lemma computing diameter}
Let $\xi=(R,K,d_{1},E,\theta_{\textup{NC}},\theta_{1},\theta_{2},\theta_{3})\in\Xi$, $\beta\in (0,D_{\textup{proj}})$, $\beta_{0}\geq 0$, $\mathcal{F}\subset \mathcal{F}_{0}$, and $P\in\mathcal{P}_{\xi}$.
Let $f^{*}$ be the contrastive function of $P$, and let $\mathscr{S}_{P}=\{\mathcal{K}_{i}\}_{i=1}^{d_{1}}$.
Given any $i,j\in\{1,\cdots,d_{1}\}$ for which $i\neq j$ is satisfied, any $r\in (0,2^{-1}\beta]$, and any $f\in\mathscr{F}_{\beta,\beta_{0},P}(\mathcal{F})$, suppose that the point $(x,x')\in\mathcal{X}^{2}$ satisfies
\begin{align}
\label{eq:lemma of lemma computing diameter eq 1}
(x,x')\in&(\{x\in\mathcal{X}\;|\; r<\|f(x)-f^{*}(x)\|_{2}\leq \beta\}\cap \mathcal{K}_{i})\nonumber\\
&\times (\mathcal{K}_{j}\cap\{x'\in\mathcal{X}\;|\; \|f(x')-f^{*}(x')\|_{2}<\beta\}).
\end{align}
Then, we have
\begin{align*}
&\|f(x)-f(x')\|_{2}\\
&\leq ((\sqrt{3}D_{\Delta^{d}}/2)^{2}+((D_{\Delta^{d}}/2-r)\vee (\beta D_{\Delta^{d}}/D_{\textup{proj}}-D_{\Delta^{d}}/2))^{2})^{1/2}.
\end{align*}
\end{lemma}

The proof is deferred to Appendix~\ref{appsubsubsec:proof of lemma diameter of convex polytope}.
We now prove Lemma~\ref{lem:step 3 lemma 1}.

\begin{proof}[Proof of Lemma~\ref{lem:step 3 lemma 1}]
Let $(x,x')\in\mathcal{X}^{2}$ be any point such that~\eqref{eq:lemma of lemma computing diameter eq 1} is satisfied.
\begin{itemize}
\item Let us consider the case that $D_{\Delta^{d}}/2-r\geq \beta D_{\Delta^{d}}/D_{{\textup{proj}}}-D_{\Delta^{d}}/2$.
Let $r_{0}=(\sqrt{3}D_{\Delta^{d}}/2)^{2}+(D_{\Delta^{d}}/2-r)^{2}$.
Since $\psi$ is a monotonically decreasing function, \eqref{eq:lemma of lemma computing diameter eq 1} implies that $\psi\circ\rho_{f}(x,x')\geq \psi(r_{0})$.
Here note that $f^{*}(x)=v_{i}$ and $f^{*}(x')=v_{j}$.
Note also that by Lemma~\ref{lem:d proj and d diameter}--(iii), we have $\psi\circ\rho_{f^{*}}(x,x')=-1$.
Thus, it holds that 
\begin{align}
|\psi\circ\rho_{f}(x,x')-\psi\circ\rho_{f^{*}}(x,x')|\geq 2-2\widetilde{C}_{i,j}r_{0}
\end{align}
for every $(x,x')$ satisfying~\eqref{eq:lemma of lemma computing diameter eq 1}, where $\widetilde{C}_{i,j}=D_{\Delta^{d}}^{-2}$.
Here, the inequality $2-2\widetilde{C}_{i,j}r_{0}\geq \widetilde{C}_{i,j}'r^{2}$ is satisfied for every $r\in [0,D_{\Delta^{d}}/2]$, where $\widetilde{C}_{i,j}'=2\widetilde{C}_{i,j}$.
Hence, we have
\begin{align}
\label{eq:error bound lemma eq 6.1}
|\psi\circ\rho_{f}(x,x')-\psi\circ\rho_{f^{*}}(x,x')|\geq \widetilde{C}_{i,j}'r^{2}=\widetilde{C}_{i,j}'(r\wedge (D_{\Delta^{d}}(1-\beta/D_{{\textup{proj}}})))^{2}.
\end{align}
\item On the other hand, in the case that $D_{\Delta^{d}}/2-r<\beta D_{\Delta^{d}}/D_{{\textup{proj}}}-D_{\Delta^{d}}/2$, by the same arguments as~\eqref{eq:error bound lemma eq 6.1}, we have
\begin{align*}
|\psi\circ\rho_{f}(x,x')-\psi\circ\rho_{f^{*}}(x,x')|&\geq \widetilde{C}_{i,j}'(D_{\Delta^{d}}(1-\beta/D_{{\textup{proj}}}))^{2}\\
&=\widetilde{C}_{i,j}'(r\wedge (D_{\Delta^{d}}(1-\beta/D_{{\textup{proj}}})))^{2},
\end{align*}
where we note that $\beta D_{\Delta^{d}}/D_{\textup{proj}}-D_{\Delta^{d}}/2=D_{\Delta^{d}}/2-(D_{\Delta^{d}}-\beta D_{\Delta^{d}}/D_{{\textup{proj}}})$ and $D_{\Delta^{d}}-\beta D_{\Delta^{d}}/D_{{\textup{proj}}} <r \leq D_{\Delta^{d}}/2$ in this case.
\end{itemize}
Therefore, taking a constant $\widetilde{C}_{i,j}'$ to satisfy the above condition, we have
\begin{align}
\label{eq:error bound lemma eq 6.20}
&\quad\{(x,x')\in\mathcal{X}^{2}\;|\; (x,x')\textup{ satisfies \eqref{eq:lemma of lemma computing diameter eq 1}}\}\nonumber \\
&\subseteq \{(x,x')\in\mathcal{X}^{2} | \widetilde{C}_{i,j}'(r\wedge (D_{\Delta^{d}}(1-\frac{\beta}{D_{{\textup{proj}}}})))^{2}\leq |\psi\circ\rho_{f}(x,x')-\psi\circ\rho_{f^{*}}(x,x')|\}.
\end{align}

Therefore, we have
\begin{align}
\label{eq:error bound lemma eq 6.20001}
&\mathbb{E}_{P_{X,X'}^{-}}[\mathds{1}_{(\{x\in\mathcal{X}\;|\; r< \|f(x)-f^{*}(x)\|_{2}\leq\beta\}\cap \mathcal{K}_{i})\times \mathcal{K}_{j}}]\nonumber\\
&\leq\mathbb{E}_{P_{X,X'}^{-}}[\mathds{1}_{(\{x\in\mathcal{X}\;|\; r<\|f(x)-f^{*}(x)\|_{2}\leq \beta\}\cap \mathcal{K}_{i})\times (\mathcal{K}_{j}\cap \{x\in\mathcal{X}\;|\; \|f(x)-f^{*}(x)\|_{2}<\beta\})}]+\beta_{0}\\
&\leq \mathbb{E}_{P_{X,X'}^{-}}[\mathds{1}_{\{(x,x')\in\mathcal{X}^{2}\;|\; \widetilde{C}_{i,j}'(r\wedge (D_{\Delta^{d}}(1-\beta/D_{{\textup{proj}}})))^{2}\leq |\psi\circ\rho_{f}(x,x')-\psi\circ\rho_{f^{*}}(x,x')|\}}]+\beta_{0} \nonumber \\
&\leq \widetilde{C}_{i,j}'^{-1}(r\wedge(D_{\Delta^{d}}(1-\beta/D_{{\textup{proj}}})))^{-2}\mathbb{E}_{P_{X,X'}^{-}}[|\psi\circ\rho_{f}-\psi\circ\rho_{f^{*}}|]+\beta_{0} \nonumber \\
&= C_{i,j}(r\wedge (D_{\Delta^{d}}(1-\beta/D_{{\textup{proj}}})))^{-2}\mathbb{E}_{P_{X,X'}^{-}}[|\psi\circ\rho_{f}-\psi\circ\rho_{f^{*}}|]+\beta_{0}, \nonumber
\end{align}
where the first inequality is due to Lemma~\ref{lem:step 3 lemma 2} and the assumption that $p_{X}=p_{X'}$ (see condition (A3) in Definition~\ref{def:main assumption}), the second inequality is due to~\eqref{eq:error bound lemma eq 6.20}, and in the third inequality we used Markov's inequality.
In the last equality, we set $C_{i,j}=\widetilde{C}_{i,j}'^{-1}$.
Thus, we obtain the claim.
\end{proof}

\subsubsection{Proof of Lemma~\ref{lem:lemma of lemma computing diameter}}
\label{appsubsubsec:proof of lemma diameter of convex polytope}

\begin{proof}[Proof of Lemma~\ref{lem:lemma of lemma computing diameter}]
Note that $i,j,r$, and $\beta$ are fixed in this proof.
Define the sets
\begin{align*}
B_{i}&=\{z\in \Delta^{d} \;|\; r\leq \|z-v_{i}\|_{2}\leq \beta\},\\
B_{j}&=\{z\in\Delta^{d} \;|\; \|z-v_{j}\|_{2}\leq \beta\}.
\end{align*}
Here, note that by the definition of $f^{*}$, $f^{*}(x)=v_{i}$ if $x\in\mathcal{K}_{i}$, and $f^{*}(x)=v_{j}$ if $x\in\mathcal{K}_{j}$.
Hence, we notice that
\begin{align}
\label{eq:lemma of lemma computing diameter eq 0}
f(x)\in B_{i}\quad\textup{and}\quad f(x')\in B_{j}.
\end{align}
Thus, to show the claim, it suffices to evaluate the diameter of the set $B_{i}\cup B_{j}$.

To this end, we first construct a convex polytope that includes both $B_{i}$ and $B_{j}$.
Then, we evaluate the diameter of the convex polytope to conclude the proof.
Note that the diameter of the given set is denoted by $\textup{diam}(\cdot)$.
We divide the proof in several steps\footnote{The visualization of Figures~\ref{fig: lemma b.5 figure 1} -- \ref{fig: lemma b.5 figure 5} are performed using \texttt{NumPy}~\citep{harris2020array} and \texttt{Matplotlib}~\citep{hunter2007matplotlib} on TSUBAME 4.0 of Institute of Science Tokyo.}.
    \paragraph{Step 1.}
    In this step, we consider $B_{j}$.
    We show the following claim:
    \begin{claim}
    \label{claim:lemma of lemma computing diameter subclaim 1}
    We have
    \begin{align*}
    B_{j}\subset \left\{z\in\Delta^{d}\;\bigg|\; z=\sum_{h=1}^{d_{1}}c_{h}v_{h},\; c_{j}\geq 1-D_{\textup{proj}}^{-1}\beta\right\}.
    \end{align*}
    \end{claim}
    \begin{proof}[Proof of Claim~\ref{claim:lemma of lemma computing diameter subclaim 1}]
    Let $z=\sum_{h=1}^{d_{1}}c_{h}v_{h}\in B_{j}$.
    Note that
    \begin{align}
    \label{eq:subclaim 1 eq 1}
    \left\|\sum_{h=1}^{d_{1}}c_{h}v_{h}-v_{j}\right\|_{2}^{2}
    &=
    \frac{d_{1}}{d}(1-c_{j})^{2}+\frac{d_{1}}{d}\sum_{\substack{h=1,\cdots,d_{1},\\ h\neq j}}c_{h}^{2}\\
    \label{eq:subclaim 1 eq 2}
    &\geq
    \frac{d_{1}}{d}(1-c_{j})^{2}+\frac{d_{1}}{d^{2}}(1-c_{j})^{2}\\
    \label{eq:subclaim 1 eq 2.001}
    &=
    \frac{d_{1}^{2}}{d^{2}}(1-c_{j})^{2},
    \end{align}
    where Proposition~\ref{prop:simple fact for risks}--(i) is used in~\eqref{eq:subclaim 1 eq 1}, and the Cauchy-Schwarz inequality applies in~\eqref{eq:subclaim 1 eq 2}.
    By~\eqref{eq:subclaim 1 eq 2.001}, the condition $\|z-v_{j}\|_{2}\leq \beta$ implies
    \begin{align}
    \label{eq:subclaim 1 eq 3}
    \frac{d_{1}^{2}}{d^{2}}(1-c_{j})^{2}\leq \beta^{2}
    \iff 
    |1-c_{j}|\leq \frac{d}{d_{1}}\beta
    \iff
    c_{j} \geq 1-\frac{d}{d_{1}}\beta,
    \end{align}
    where the condition $c_{j}\in [0,1]$ is used in the second relationship.
    Note that $d/d_{1}=D_{\textup{proj}}^{-1}$ by Lemma~\ref{lem:d proj and d diameter}--(i).
    Thus, the relationships in~\eqref{eq:subclaim 1 eq 3} show the claim.
    \end{proof}
    \begin{figure}
        \centering
        \includegraphics[width=0.95\linewidth]{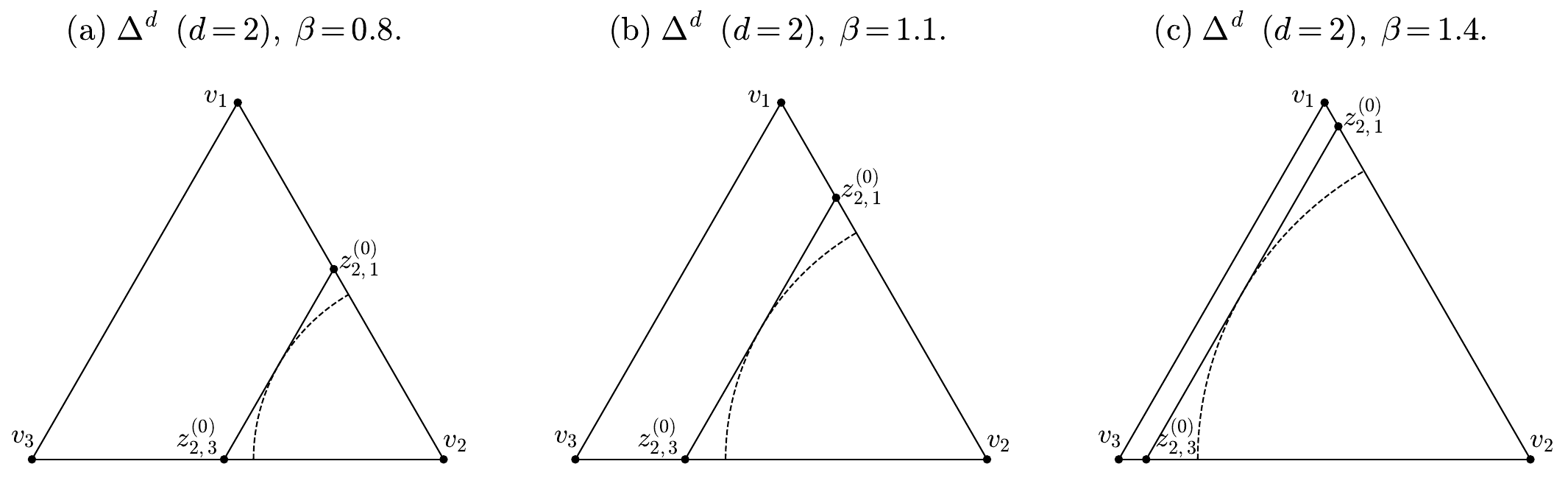}
        \caption{Examples of $z_{j,h}^{(0)}$, where we set $d=2$, $i=1$, and $j=2$. In each panel, the dashed curve shows the set $\{z\in\Delta^{d}\;|\; \|z-v_{j}\|_{2}=\beta\}$.}
        \label{fig: lemma b.5 figure 1}
    \end{figure}
    In addition, we notice the following fact, where $\textup{conv}(\cdot)$ denotes the convex hull of the given set (see Figure~\ref{fig: lemma b.5 figure 1}).
    \begin{claim}
    \label{claim:lemma of lemma computing diameter subclaim 2}
    For every $h\in \{1,\cdots,d_{1}\}\setminus \{j\}$, let $z_{j,h}^{(0)}=\sum_{k=1}^{d_{1}}c_{j,h,k}^{(0)}v_{k}\in \Delta^{d}$ be the point such that
    \begin{align*}
    c_{j,h,k}^{(0)}=
    \begin{cases}
    1-D_{\textup{proj}}^{-1}\beta &\quad \textup{if }k=j,\\
    D_{\textup{proj}}^{-1}\beta &\quad \textup{if }k=h,\\
    0&\quad \textup{otherwise}.
    \end{cases}
    \end{align*}
    Then, we have
    \begin{align*}
    \left\{z\in \Delta^{d}\;\bigg|\; z=\sum_{h=1}^{d_{1}}c_{h}v_{h},\; c_{j}\geq 1-D_{\textup{proj}}^{-1}\beta\right\}\subset \textup{conv}(\{v_{j}\}\cup \bigcup_{\substack{h=1,\cdots,d_{1},\\ h\neq j}}\{z_{j,h}^{(0)}\}).
    \end{align*}
    \end{claim}
    \begin{proof}[Proof of Claim~\ref{claim:lemma of lemma computing diameter subclaim 2}]
    Let
    \begin{align*}
    z=\sum_{h=1}^{d_{1}}c_{h}v_{h}\in \left\{z\in\Delta^{d}\;|\; z=\sum_{h=1}^{d_{1}}c_{h}v_{h},\; c_{j}\geq 1-D_{\textup{proj}}^{-1}\beta\right\}.
    \end{align*}
    Given an arbitrary $h_{0}\in\{1,\cdots,d_{1}\}\setminus \{j\}$, we can decompose $z$ as
    \begin{align*}
    z&=(c_{j}-(1-c_{j})\frac{c_{j,h_{0},j}^{(0)}}{1-c_{j,h_{0},j}^{(0)}})v_{j}+\sum_{\substack{h=1,\cdots,d_{1},\\ h\neq j}}\frac{c_{h}}{1-c_{j,h,j}^{(0)}}(c_{j,h,j}^{(0)}v_{j}+(1-c_{j,h,j}^{(0)})v_{h})\\
    &=\frac{c_{j}-c_{j,h_{0},j}^{(0)}}{1-c_{j,h_{0},j}^{(0)}}v_{j}+\sum_{\substack{h=1,\cdots,d_{1},\\ h\neq j}}\frac{c_{h}}{1-c_{j,h,j}^{(0)}}z_{j,h}^{(0)},
    \end{align*}
    where note that $c_{j,h,j}^{(0)}=1-D_{\textup{proj}}^{-1}\beta$ and $c_{j,h,j}^{(0)}/(1-c_{j,h,j}^{(0)})=(1-D_{\textup{proj}}^{-1}\beta)/(D_{\textup{proj}}^{-1}\beta)$ for any $h\in\{1,\cdots,d_{1}\}\setminus \{j\}$.
    Since $c_{j}\geq 1-D_{\textup{proj}}^{-1}\beta$ by the definition of $z$, we have
    \begin{align*}
    0\leq \frac{c_{j}-c_{j,h_{0},j}^{(0)}}{1-c_{j,h_{0},j}^{(0)}}\leq \frac{1-c_{j,h_{0},j}^{(0)}}{1-c_{j,h_{0},j}^{(0)}}= 1,
    \end{align*}
    and
    \begin{align*}
    0\leq \frac{c_{h}}{1-c_{j,h,j}^{(0)}}\leq \frac{D_{\textup{proj}}^{-1}\beta}{1-(1-D_{\textup{proj}}^{-1}\beta)}=1,
    \end{align*}
    which implies
    \begin{align*}
    z\in \textup{conv}(\{v_{j}\}\cup \bigcup_{\substack{h=1,\cdots,d_{1},\\ h\neq j}}\{z_{j,h}^{(0)}\}).
    \end{align*}
    Thus, we obtain the claim.
    \end{proof}
    By Claim~\ref{claim:lemma of lemma computing diameter subclaim 1} and Claim~\ref{claim:lemma of lemma computing diameter subclaim 2}, we obtain
    \begin{align}
    \label{eq:lemma of lemma computing diameter eq 2}
    B_{j}\subset \textup{conv}(\{v_{j}\}\cup\bigcup_{\substack{h=1,\cdots,d_{1},\\ h\neq j}}\{z_{j,h}^{(0)}\}).
    \end{align}
    \paragraph{Step 2.}
    We next focus on the set $B_{i}$.
    \begin{claim}
    \label{claim:lemma of lemma computing diameter subclaim 3}
    We have
    \begin{align*}
        B_{i}\subset\left\{z\in\Delta^{d}\;\bigg|\; z=\sum_{h=1}^{d_{1}}c_{h}v_{h},\; 1-D_{\textup{proj}}^{-1}\beta\leq c_{i}\leq 1-D_{\Delta^{d}}^{-1}r \right\}.
    \end{align*}
    \end{claim}
    \begin{proof}[Proof of Claim~\ref{claim:lemma of lemma computing diameter subclaim 3}]
    Let $z=\sum_{h=1}^{d_{1}}c_{h}v_{h}\in B_{i}$.
    We have
    \begin{align}
    \label{eq:subclaim 3 eq 1}
    \left\|\sum_{h=1}^{d_{1}}c_{h}v_{h}-v_{i}\right\|_{2}^{2}
    &=
    \frac{d_{1}}{d}(1-c_{i})^{2}+\frac{d_{1}}{d}\sum_{\substack{h=1,\cdots,d_{1},\\ h\neq i}}c_{h}^{2}\\
    \label{eq:subclaim 3 eq 2}
    &\leq
    \frac{d_{1}}{d}(1-c_{i})^{2}+\frac{d_{1}}{d}(1-c_{i})^{2}\\
    \label{eq:subclaim 3 eq 3}
    &=
    \frac{2d_{1}}{d}(1-c_{i})^{2},
    \end{align}
    where Proposition~\ref{prop:simple fact for risks}--(i) applies in~\eqref{eq:subclaim 3 eq 1}, and the Cauchy-Schwarz inequality is used in~\eqref{eq:subclaim 3 eq 2}.
    Hence, the condition $\|z-v_{i}\|_{2}\geq r$ implies
    \begin{align}
    \label{eq:subclaim 3 eq 4}
    \frac{2d_{1}}{d}(1-c_{i})^{2}\geq r^{2}
    \iff
    c_{i}\leq 1-\sqrt{\frac{d}{2d_{1}}}r,
    \end{align}
    where~\eqref{eq:subclaim 3 eq 3} and the constraint $c_{i}\in [0,1]$ are used.
    Also, combining~\eqref{eq:subclaim 3 eq 1} with the same arguments as in~\eqref{eq:subclaim 1 eq 2} -- \eqref{eq:subclaim 1 eq 3}, we obtain
    \begin{align}
    \label{eq:subclaim 3 eq 5}
    c_{i}\geq 1-\frac{d}{d_{1}}\beta.
    \end{align}
    Note that $\sqrt{d/(2d_{1})}=D_{\Delta^{d}}^{-1}$ and $d/d_{1}=D_{\textup{proj}}^{-1}$ by Lemma~\ref{lem:d proj and d diameter}--(ii) and (i).
    Thus, combining~\eqref{eq:subclaim 3 eq 4} and \eqref{eq:subclaim 3 eq 5}, we obtain the assertion.
    \end{proof}
    \begin{figure}
        \centering
        \includegraphics[width=0.95\linewidth]{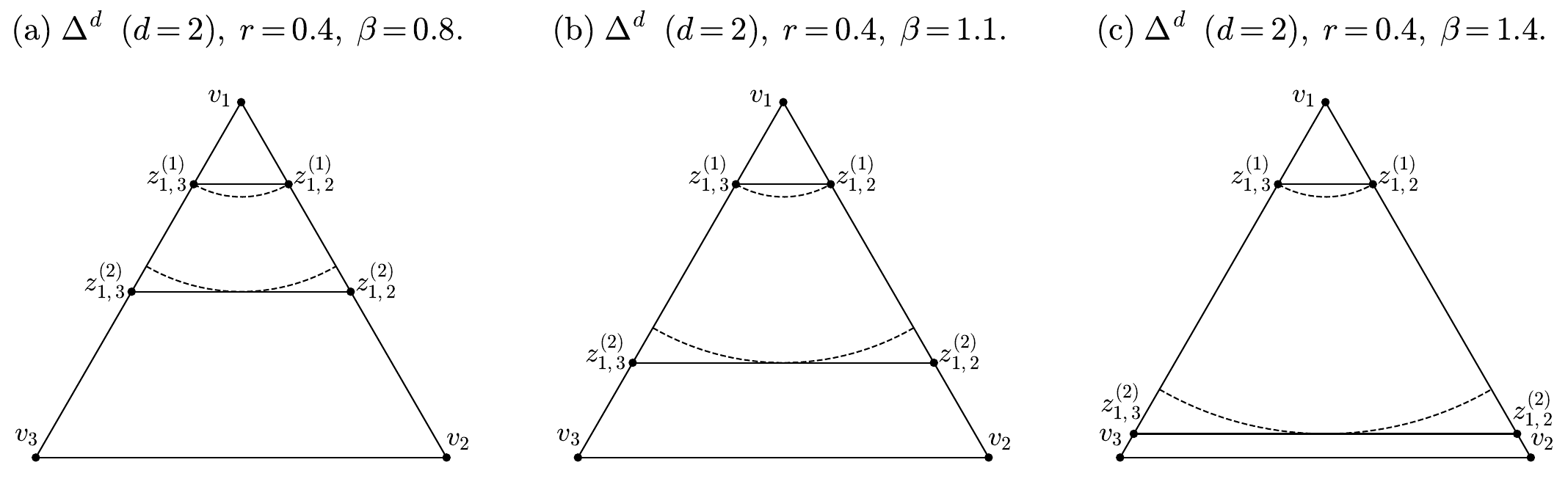}
        \caption{Examples of the points $z_{i,h}^{(1)}$ and $z_{i,h}^{(2)}$, where $d=2$, $i=1$, and $j=2$. The dashed curves in each panel show the subsets $\{z\in\Delta^{d}\;|\; \|z-v_{i}\|_{2}=r\}$ and $\{z\in\Delta^{d}\;|\; \|z-v_{i}\|_{2}=\beta\}$.}
        \label{fig: lemma b.5 figure 2}
    \end{figure}
    We also have the following claim (see Figure~\ref{fig: lemma b.5 figure 2}):
    \begin{claim}
    \label{claim:lemma of lemma computing diameter subclaim 4}
    For every $h\in \{1,\cdots,d_{1}\}\setminus \{i\}$, define $z_{i,h}^{(1)}=\sum_{k=1}^{d_{1}}c_{i,h,k}^{(1)}v_{k}\in \Delta^{d}$ and $z_{i,h}^{(2)}=\sum_{k=1}^{d_{1}}c_{i,h,k}^{(2)}v_{k}\in \Delta^{d}$ so that
    \begin{align*}
    c_{i,h,k}^{(1)}&=
    \begin{cases}
    1-D_{\Delta^{d}}^{-1}r &\quad \textup{if }k=i,\\
    D_{\Delta^{d}}^{-1}r &\quad \textup{if }k=h,\\
    0 &\quad \textup{otherwise},
    \end{cases}
    \\
    c_{i,h,k}^{(2)}&=
    \begin{cases}
    1-D_{\textup{proj}}^{-1}\beta &\quad \textup{if }k=i,\\
    D_{\textup{proj}}^{-1}\beta &\quad \textup{if }k=h,\\
    0 & \quad \textup{otherwise}.
    \end{cases}
    \end{align*}
    Then, we have
    \begin{align*}
    &\left\{z\in\Delta^{d}\;\bigg|\; z=\sum_{h=1}^{d_{1}}c_{h}v_{h},\; 1-D_{\textup{proj}}^{-1}\beta \leq c_{i}\leq 1-D_{\Delta^{d}}^{-1}r\right\}\\
    &\subset \textup{conv}(\bigcup_{\substack{h=1,\cdots,d_{1},\\ h\neq i}}\{z_{i,h}^{(1)},z_{i,h}^{(2)}\}).
    \end{align*}
    \end{claim}
    \begin{proof}[Proof of Claim~\ref{claim:lemma of lemma computing diameter subclaim 4}]
    Let
    \begin{align*}
    z=\sum_{h=1}^{d_{1}}c_{h}v_{h}\in \left\{z\in\Delta^{d}\;\bigg |\; z=\sum_{h=1}^{d_{1}}c_{h}v_{h},\; 1-D_{\textup{proj}}^{-1}\beta\leq c_{i}\leq 1-D_{\Delta^{d}}^{-1}r\right\}.
    \end{align*}
    Given an arbitrary $h\in\{1,\cdots,d_{1}\}\setminus\{i\}$, let $\lambda_{i}\in\mathbb{R}$ be the solution of the equation
    \begin{align}
    \label{eq:subclaim 4 eq 1}
    \lambda_{i}z_{i,h}^{(1)}+(1-\lambda_{i})z_{i,h}^{(2)}=c_{i}v_{i}+\{\lambda_{i} c_{i,h,h}^{(1)}+(1-\lambda_{i})c_{i,h,h}^{(2)}\}v_{h}.
    \end{align}
    Note that $\langle v_{h},v_{i}\rangle =-1/d$ by Corollary~2 in~\citep{alexander1977width} and Corollary~2.6 in~\citep{conn2009introduction}.
    Using this property, we can solve~\eqref{eq:subclaim 4 eq 1} by calculating the inner product with $v_{i}$, and consequently we have
    \begin{align}
    \label{eq:subclaim 4 eq 2}
    \lambda_{i}=\frac{c_{i}-c_{i,h,i}^{(2)}}{c_{i,h,i}^{(1)}-c_{i,h,i}^{(2)}}.
    \end{align}
    Note that $\lambda_{i}$ does not depend on the choice of $h\in\{1,\cdots,d_{1}\}\setminus\{i\}$, by the definitions of $c_{i,h,i}^{(1)}$ and $c_{i,h,i}^{(2)}$.
    We next consider the equation of $\lambda_{h}\in\mathbb{R}$ for every $h\in\{1,\cdots,d_{1}\}\setminus \{i\}$,
    \begin{align}
    \label{eq:subclaim 4 eq 3}
    \lambda_{h}\{\lambda_{i}c_{i,h,h}^{(1)}+(1-\lambda_{i})c_{i,h,h}^{(2)}\}=c_{h}.
    \end{align}
    Solving~\eqref{eq:subclaim 4 eq 3}, we have
    \begin{align}
    \label{eq:subclaim 4 eq 4}
    \lambda_{h}=\frac{c_{h}}{-c_{i}+c_{i,h,i}^{(2)}+c_{i,h,h}^{(2)}}=\frac{c_{h}}{1-c_{i}},
    \end{align}
    where we used the identity $c_{i,h,i}^{(1)}-c_{i,h,i}^{(2)}=-c_{i,h,h}^{(1)}+c_{i,h,h}^{(2)}$.
    Using~\eqref{eq:subclaim 4 eq 2} and \eqref{eq:subclaim 4 eq 4}, define $\widetilde{z}$ as
    \begin{align}
    \label{eq:subclaim 4 eq 5}
    \widetilde{z}=\sum_{\substack{h=1,\cdots,d_{1},\\ h\neq i}}\{\lambda_{i}\lambda_{h}z_{i,h}^{(1)}+(1-\lambda_{i})\lambda_{h}z_{i,h}^{(2)}\}.
    \end{align}
    By the definitions of $z_{i,h}^{(1)}$ and $z_{i,h}^{(2)}$ and equations \eqref{eq:subclaim 4 eq 1} and \eqref{eq:subclaim 4 eq 3}, we have $z=\widetilde{z}$.
    Furthermore, since $c_{i,h,i}^{(2)}\leq c_{i}\leq c_{i,h,i}^{(1)}$ by the definition of $z$, we have
    \begin{align*}
    0\leq \lambda_{i}\lambda_{h}
    &=\frac{c_{h}}{1-c_{i}}\frac{c_{i}-c_{i,h,i}^{(2)}}{c_{i,h,i}^{(1)}-c_{i,h,i}^{(2)}}\\
    &\leq \frac{c_{h}}{1-c_{i}}\frac{c_{i,h,i}^{(1)}-c_{i,h,i}^{(2)}}{c_{i,h,i}^{(1)}-c_{i,h,i}^{(2)}}=\frac{c_{h}}{1-c_{i}}=\frac{c_{h}}{\sum_{\substack{k=1,\cdots,d_{1},\\ k\neq i}}c_{k}}\leq 1.
    \end{align*}
    Similarly, we also have $0\leq (1-\lambda_{i})\lambda_{h}\leq 1$.
    Therefore, we obtain
    \begin{align*}
    z\in \textup{conv}(\bigcup_{\substack{h=1,\cdots,d_{1},\\ h\neq i}}\{z_{i,h}^{(1)},z_{i,h}^{(2)}\}).
    \end{align*}
    This shows the claim.
    \end{proof}
    We consider the following claim (see Figure~\ref{fig: lemma b.5 figure 3}):
    \begin{claim}
    \label{claim:lemma of lemma computing diameter subclaim 5}
    For every $h\in\{1,\cdots,d_{1}\}\setminus \{i\}$, let $z_{i,h}^{(3)}=\sum_{k=1}^{d_{1}}c_{i,h,k}^{(3)}v_{h}\in\Delta^{d}$ be the point such that
    \begin{align*}
    c_{i,h,k}^{(3)}=
    \begin{cases}
    c_{i,h,i}^{(1)} &\quad \textup{if }k=i\textup{ and }r\leq D_{\Delta^{d}}(1-D_{\textup{proj}}^{-1}\beta), \\
    c_{i,h,h}^{(1)} &\quad \textup{if }k=h\textup{ and }r\leq D_{\Delta^{d}}(1-D_{\textup{proj}}^{-1}\beta), \\
    D_{\textup{proj}}^{-1}\beta &\quad \textup{if }k=i\textup{ and }r>D_{\Delta^{d}}(1-D_{\textup{proj}}^{-1}\beta), \\
    1-D_{\textup{proj}}^{-1}\beta &\quad \textup{if }k=h\textup{ and }r>D_{\Delta^{d}}(1-D_{\textup{proj}}^{-1}\beta), \\
    0 &\quad \textup{otherwise}.
    \end{cases}
    \end{align*}
    Then, we have
    \begin{align*}
    \textup{conv}(\bigcup_{\substack{h=1,\cdots,d_{1},\\ h\neq i}}\{z_{i,h}^{(1)},z_{i,h}^{(2)}\})\subset \textup{conv}(\bigcup_{\substack{h=1,\cdots,d_{1},\\ h\neq i}}\{z_{i,h}^{(2)},z_{i,h}^{(3)}\}).
    \end{align*}
    \end{claim}

    \begin{figure}
        \centering
        \includegraphics[width=0.95\linewidth]{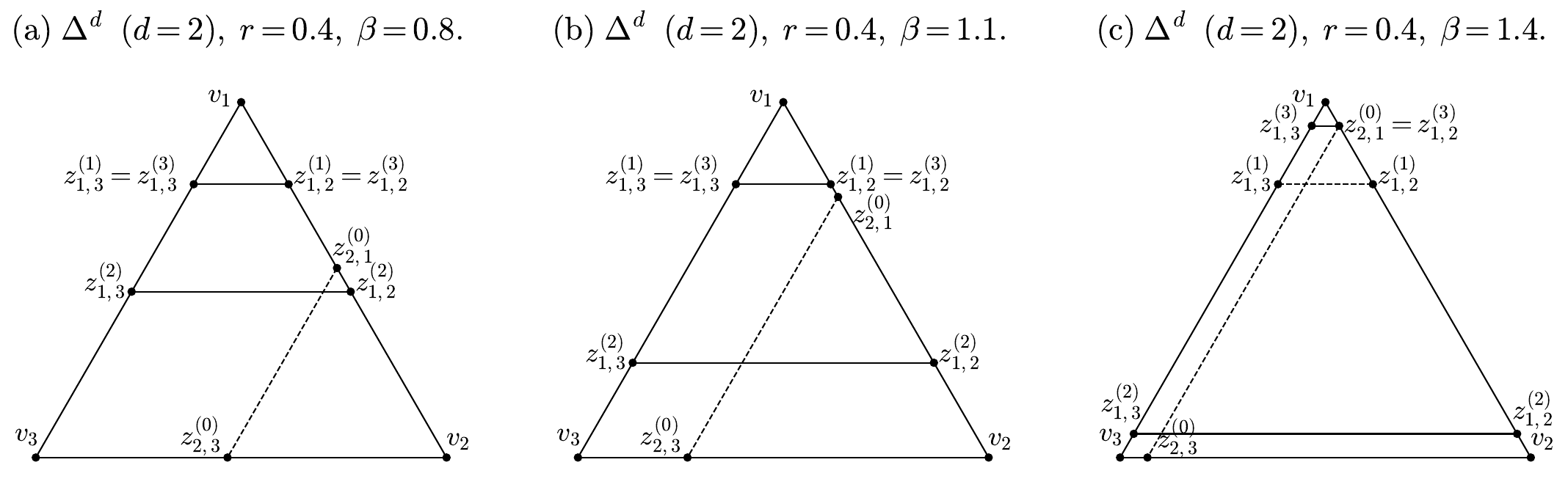}
        \caption{Examples of the points $z_{i,h}^{(3)}$, where $d=2$, $i=1$, and $j=2$.}
        \label{fig: lemma b.5 figure 3}
    \end{figure}
    
    \begin{proof}[Proof of Claim~\ref{claim:lemma of lemma computing diameter subclaim 5}]
    When $r\leq D_{\Delta^{d}}(1-D_{\textup{proj}}^{-1}\beta)$, the claim is straightforward from the definition of $c_{i,h,k}^{(3)}$.
    When $\beta\leq D_{\textup{proj}}/2$, we have $r\leq D_{\Delta^{d}}/2\leq D_{\Delta^{d}}(1-D_{\textup{proj}}^{-1}\beta)$.

    Suppose that $r>D_{\Delta^{d}}(1-D_{\textup{proj}}^{-1}\beta)$. Then, we have $\beta>D_{\textup{proj}}/2$.
    Let
    \begin{align*}
    z=\sum_{\substack{h=1,\cdots,d_{1},\\ h\neq i}}(\lambda_{1,h}z_{i,h}^{(1)}+\lambda_{2,h}z_{i,h}^{(2)})\in \textup{conv}(\bigcup_{\substack{h=1,\cdots,d_{1},\\ h\neq i}}\{z_{i,h}^{(1)},z_{i,h}^{(2)}\}).
    \end{align*}
    Note that $c_{i,h,i}^{(1)}\leq c_{i,h,i}^{(3)}$ by the definition of $c_{i,h,i}^{(3)}$, and $c_{i,h,i}^{(2)}=1-D_{\textup{proj}}^{-1}\beta<\frac{1}{2}\leq 1-D_{\Delta^{d}}^{-1}r$ since $r\leq \beta/2\leq D_{\textup{proj}}/2\leq D_{\Delta^{d}}/2$.
    Thus, we can decompose $z_{i,h}^{(1)}$ as
    \begin{align}
    \label{eq:subclaim 5 eq 1}
    z_{i,h}^{(1)}= \frac{c_{i,h,i}^{(1)}-c_{i,h,i}^{(2)}}{c_{i,h,i}^{(3)}-c_{i,h,i}^{(2)}}z_{i,h}^{(3)}+\frac{c_{i,h,i}^{(3)}-c_{i,h,i}^{(1)}}{c_{i,h,i}^{(3)}-c_{i,h,i}^{(2)}}z_{i,h}^{(2)}.
    \end{align}
    Hence, we have
    \begin{align*}
    z=
    \sum_{\substack{h=1,\cdots,d_{1},\\ h\neq i}}\left\{\left(\lambda_{1,h}\frac{c_{i,h,i}^{(3)}-c_{i,h,i}^{(1)}}{c_{i,h,i}^{(3)}-c_{i,h,i}^{(2)}}+\lambda_{2,h}\right)z_{i,h}^{(2)}+\lambda_{1,h}\frac{c_{i,h,i}^{(1)}-c_{i,h,i}^{(2)}}{c_{i,h,i}^{(3)}-c_{i,h,i}^{(2)}}z_{i,h}^{(3)}\right\}.
    \end{align*}
    Thus, we have $z\in \textup{conv}(\bigcup_{h=1,\cdots,d_{1},\; h\neq i}\{z_{i,h}^{(2)},z_{i,h}^{(3)}\})$, which shows the claim.
    \end{proof}
    By Claim~\ref{claim:lemma of lemma computing diameter subclaim 3}, Claim~\ref{claim:lemma of lemma computing diameter subclaim 4}, and Claim~\ref{claim:lemma of lemma computing diameter subclaim 5}, we have
    \begin{align}
    \label{eq:lemma of lemma computing diameter eq 3}
    B_{i}\subset \textup{conv}(\bigcup_{\substack{h=1,\cdots,d_{1},\\ h\neq i}}\{z_{i,h}^{(2)},z_{i,h}^{(3)}\}).
    \end{align}
    \paragraph{Step 3.}
    We need the following claim to proceed (see Figure~\ref{fig: lemma b.5 figure 4}):
    \begin{claim}
    \label{claim:lemma of lemma computing diameter subclaim 6}
    We have
    \begin{align*}
    &\textup{conv}(\{v_{j}\}\cup \bigcup_{\substack{h=1,\cdots,d_{1},\\ h\neq j}}\{z_{j,h}^{(0)}\})\cup \textup{conv}(\bigcup_{\substack{h=1,\cdots,d_{1},\\ h\neq i}}\{z_{i,h}^{(2)},z_{i,h}^{(3)}\})\\
    &\subset
    \textup{conv}(\{v_{j}\}\cup \bigcup_{\substack{h=1,\cdots,d_{1},\\ h\neq i,j}}\{z_{j,h}^{(0)}\}\cup \bigcup_{\substack{h=1,\cdots,d_{1},\\ h\neq i,j}}\{z_{i,h}^{(2)},z_{i,h}^{(3)}\}\cup \{z_{i,j}^{(3)}\}).
    \end{align*}
    \end{claim}
    \begin{figure}
        \centering
        \includegraphics[width=0.95\linewidth]{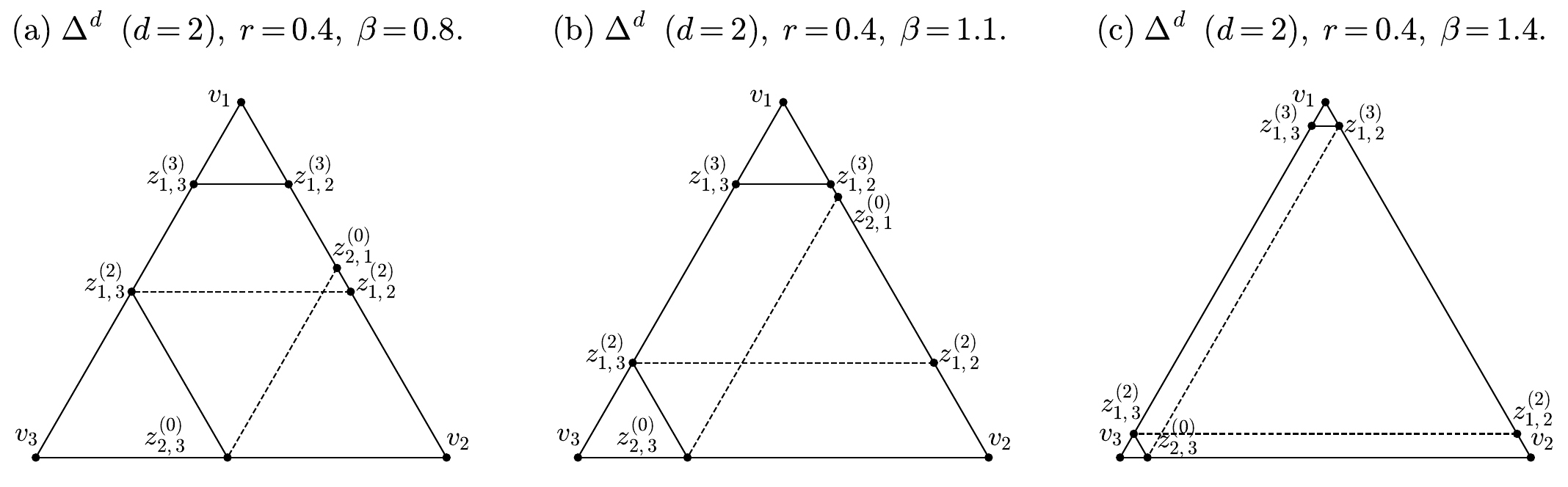}
        \caption{An illustration of Claim~\ref{claim:lemma of lemma computing diameter subclaim 6}, where $d=2$, $i=1$, and $j=2$.}
        \label{fig: lemma b.5 figure 4}
    \end{figure}
    \begin{proof}[Proof of Claim~\ref{claim:lemma of lemma computing diameter subclaim 6}]
    First, let
    \begin{align*}
    z=\lambda_{j}v_{j}+\sum_{\substack{h=1,\cdots,d_{1},\\ h\neq j}}\lambda_{h}z_{j,h}^{(0)}\in\textup{conv}(\{v_{j}\}\cup \bigcup_{\substack{h=1,\cdots,d_{1},\\ h\neq j}}\{z_{j,h}^{(0)}\}).
    \end{align*}
    Here, we notice that $z_{j,i}^{(0)}$ is decomposed as
    \begin{align*}
    z_{j,i}^{(0)}=\frac{c_{j,i,j}^{(0)}-c_{i,j,j}^{(3)}}{1-c_{i,j,j}^{(3)}}v_{j}+\frac{1-c_{j,i,j}^{(0)}}{1-c_{i,j,j}^{(3)}}z_{i,j}^{(3)}.
    \end{align*}
    Since $c_{j,i,j}^{(0)}=1-D_{\textup{proj}}^{-1}\beta$ and $c_{i,j,j}^{(3)}=\min\{c_{i,j,j}^{(1)},1-D_{\textup{proj}}^{-1}\beta\}\leq c_{j,i,j}^{(0)}$ by the definitions, we have
    \begin{align*}
    0\leq \frac{c_{j,i,j}^{(0)}-c_{i,j,j}^{(3)}}{1-c_{i,j,j}^{(3)}}\leq \frac{1-c_{i,j,j}^{(3)}}{1-c_{i,j,j}^{(3)}}=1.
    \end{align*}
    Hence,
    \begin{align*}
    z&=(\lambda_{i}\frac{c_{j,i,j}^{(0)}-c_{i,j,j}^{(3)}}{1-c_{i,j,j}^{(3)}}+\lambda_{j})v_{j}+\sum_{\substack{h=1,\cdots,d_{1},\\ h\neq i,j}} \lambda_{h}z_{j,h}^{(0)}+\lambda_{i}\frac{1-c_{j,i,j}^{(0)}}{1-c_{i,j,j}^{(3)}}z_{i,j}^{(3)}\\
    &\in
    \textup{conv}(\{v_{j}\}\cup \bigcup_{\substack{h=1,\cdots,d_{1},\\ h\neq i,j}}\{z_{j,h}^{(0)}\}\cup \bigcup_{\substack{h=1,\cdots,d_{1},\\ h\neq i,j}}\{z_{i,h}^{(2)},z_{i,h}^{(3)}\}\cup \{z_{i,j}^{(3)}\}),
    \end{align*}
    which implies
    \begin{align}
    \label{eq:subclaim 6 eq 1}
    &\textup{conv}(\{v_{j}\}\cup \bigcup_{\substack{h=1,\cdots,d_{1},\\ h\neq j}}\{z_{j,h}^{(0)}\})\nonumber\\
    &\subset
    \textup{conv}(\{v_{j}\}\cup \bigcup_{\substack{h=1,\cdots,d_{1},\\ h\neq i,j}}\{z_{j,h}^{(0)}\}\cup \bigcup_{\substack{h=1,\cdots,d_{1},\\ h\neq i,j}}\{z_{i,h}^{(2)},z_{i,h}^{(3)}\}\cup \{z_{i,j}^{(3)}\}).
    \end{align}

    Next, let
    \begin{align*}
    z=\sum_{\substack{h=1,\cdots,d_{1},\\ h\neq i}}(\lambda_{2,h}z_{i,h}^{(2)}+\lambda_{3,h}z_{i,h}^{(3)})\in \textup{conv}(\bigcup_{\substack{h=1,\cdots,d_{1},\\ h\neq i}}\{z_{i,h}^{(2)},z_{i,h}^{(3)}\}).
    \end{align*}
    We decompose $z_{i,j}^{(2)}$ as
    \begin{align*}
    z_{i,j}^{(2)}=\frac{c_{i,j,j}^{(2)}-c_{i,j,j}^{(3)}}{1-c_{i,j,j}^{(3)}}v_{j}+\frac{1-c_{i,j,j}^{(2)}}{1-c_{i,j,j}^{(3)}}z_{i,j}^{(3)}.
    \end{align*}
    Note that $c_{i,j,j}^{(2)}=D_{\textup{proj}}^{-1}\beta$ and $c_{i,j,j}^{(3)}=\min\{D_{\Delta^{d}}^{-1}r,1-D_{\textup{proj}}^{-1}\beta\}\leq D_{\Delta^{d}}^{-1}r$ by the definitions.
    Since $\sqrt{d_{1}/(2d)}\leq 1$ and $r\leq \beta$, we have
    \begin{align*}
    c_{i,j,j}^{(3)}\leq D_{\Delta^{d}}^{-1}r=\sqrt{\frac{d}{2d_{1}}}r\leq \sqrt{\frac{d}{2d_{1}}}\beta=\sqrt{\frac{d_{1}}{2d}}c_{i,j,j}^{(2)}\leq c_{i,j,j}^{(2)},
    \end{align*}
    where the first and second equalities are due to Lemma~\ref{lem:d proj and d diameter}--(ii) and (i), respectively.
    Hence, we obtain
    \begin{align*}
    0\leq \frac{c_{i,j,j}^{(2)}-c_{i,j,j}^{(3)}}{1-c_{i,j,j}^{(3)}}\leq \frac{1-c_{i,j,j}^{(3)}}{1-c_{i,j,j}^{(3)}}=1.
    \end{align*}
    Thus, we have
    \begin{align*}
    &z=\\
    &\lambda_{2,j}\frac{c_{i,j,j}^{(2)}-c_{i,j,j}^{(3)}}{1-c_{i,j,j}^{(3)}}v_{j}+\sum_{\substack{h=1,\cdots,d_{1},\\ h\neq i,j}}(\lambda_{2,h}z_{i,h}^{(2)}+\lambda_{3,h}z_{i,h}^{(3)})+(\lambda_{2,j}\frac{1-c_{i,j,j}^{(2)}}{1-c_{i,j,j}^{(3)}}+\lambda_{3,j})z_{i,j}^{(3)}\\
    &\in \textup{conv}(\{v_{j}\}\cup \bigcup_{\substack{h=1,\cdots,d_{1},\\ h\neq i,j}}\{z_{j,h}^{(0)}\}\cup \bigcup_{\substack{h=1,\cdots,d_{1},\\ h\neq i,j}}\{z_{i,h}^{(2)},z_{i,h}^{(3)}\}\cup \{z_{i,j}^{(3)}\}),
    \end{align*}
    which implies the relationship
    \begin{align}
    \label{eq:subclaim 6 eq 2}
    &\textup{conv}(\bigcup_{\substack{h=1,\cdots,d_{1},\\ h\neq i}}\{z_{i,h}^{(2)},z_{i,h}^{(3)}\})\nonumber\\
    &\subset
    \textup{conv}(\{v_{j}\}\cup \bigcup_{\substack{h=1,\cdots,d_{1},\\ h\neq i,j}}\{z_{j,h}^{(0)}\}\cup \bigcup_{\substack{h=1,\cdots,d_{1},\\ h\neq i,j}}\{z_{i,h}^{(2)},z_{i,h}^{(3)}\}\cup \{z_{i,j}^{(3)}\}).
    \end{align}

    By \eqref{eq:subclaim 6 eq 1} and \eqref{eq:subclaim 6 eq 2}, we obtain the claim.
    \end{proof}
    \paragraph{Step 4.}
    We now prove Lemma~\ref{lem:lemma of lemma computing diameter}.
    By~\eqref{eq:lemma of lemma computing diameter eq 2}, \eqref{eq:lemma of lemma computing diameter eq 3}, and Claim~\ref{claim:lemma of lemma computing diameter subclaim 6}, we have
    \begin{align}
    \label{eq:lemma of lemma computing diameter eq 4}
    B_{i}\cup B_{j}\subset
    \textup{conv}(\{v_{j}\}\cup \bigcup_{\substack{h=1,\cdots,d_{1},\\ h\neq i,j}}\{z_{j,h}^{(0)}\}\cup \bigcup_{\substack{h=1,\cdots,d_{1},\\ h\neq i,j}}\{z_{i,h}^{(2)},z_{i,h}^{(3)}\}\cup \{z_{i,j}^{(3)}\}).
    \end{align}
    Thus, we have
    \begin{align}
    \label{eq:lemma of lemma computing diameter eq 5}
    &\textup{diam}(B_{i}\cup B_{j})\nonumber \\
    &\leq 
    \textup{diam}(\textup{conv}(\{v_{j}\}\cup \bigcup_{\substack{h=1,\cdots,d_{1},\\ h\neq i,j}}\{z_{j,h}^{(0)}\}\cup \bigcup_{\substack{h=1,\cdots,d_{1},\\ h\neq i,j}}\{z_{i,h}^{(2)},z_{i,h}^{(3)}\}\cup \{z_{i,j}^{(3)}\})).
    \end{align}
    By Lemma~\ref{citelem:diameter of any convex hull}, we have
    \begin{align}
    \label{eq:lemma of lemma computing diameter eq 6}
    &\textup{diam}(\textup{conv}(\{v_{j}\}\cup \bigcup_{\substack{h=1,\cdots,d_{1},\\ h\neq i,j}}\{z_{j,h}^{(0)}\}\cup \bigcup_{\substack{h=1,\cdots,d_{1},\\ h\neq i,j}}\{z_{i,h}^{(2)},z_{i,h}^{(3)}\}\cup \{z_{i,j}^{(3)}\}))\nonumber\\
    &=
    \textup{diam}(\{v_{j}\}\cup \bigcup_{\substack{h=1,\cdots,d_{1},\\ h\neq i,j}}\{z_{j,h}^{(0)}\}\cup \bigcup_{\substack{h=1,\cdots,d_{1},\\ h\neq i,j}}\{z_{i,h}^{(2)},z_{i,h}^{(3)}\}\cup \{z_{i,j}^{(3)}\}).
    \end{align}
    By~\eqref{eq:lemma of lemma computing diameter eq 5} and \eqref{eq:lemma of lemma computing diameter eq 6}, we obtain
    \begin{align}
    \label{eq:lemma of lemma computing diameter eq 7}
    &\textup{diam}(B_{i}\cup B_{j})\nonumber \\
    &\leq
    \textup{diam}(\{v_{j}\}\cup \bigcup_{\substack{h=1,\cdots,d_{1},\\ h\neq i,j}}\{z_{j,h}^{(0)}\}\cup \bigcup_{\substack{h=1,\cdots,d_{1},\\ h\neq i,j}}\{z_{i,h}^{(2)},z_{i,h}^{(3)}\}\cup \{z_{i,j}^{(3)}\}).
    \end{align}

    It remains to compute the right-hand side of~\eqref{eq:lemma of lemma computing diameter eq 7}.
    To this end, we show all the combinations of points in the set
    \begin{align*}
    \{v_{j}\}\cup \bigcup_{\substack{h=1,\cdots,d_{1},\\ h\neq i,j}}\{z_{j,h}^{(0)}\}\cup \bigcup_{\substack{h=1,\cdots,d_{1},\\ h\neq i,j}}\{z_{i,h}^{(2)},z_{i,h}^{(3)}\}\cup \{z_{i,j}^{(3)}\}
    \end{align*}
    and the squared distance between the points in every pair, multiplied by $d/d_{1}$, using the formula in Proposition~\ref{prop:simple fact for risks}--(i).
    For instance, in (D1) we compute
    \begin{align*}
    \mathscr{D}_{1}=\frac{d}{d_{1}}\|v_{j}-z_{j,h}^{(0)}\|_{2}^{2}.
    \end{align*}
    The other parts (D2)--(D16) follow the same way as that in (D1).
    Note that we abbreviate as
    \begin{align*}
    c_{i,h,k}:=c_{i,h,k}^{(3)},
    \end{align*}
    for every $h\in \{1,\cdots,d_{1}\}\setminus \{i\}$ and $k\in \{1,\cdots,d_{1}\}$, for convenience.
    \begin{align*}
    \begin{array}{@{}l@{~}l@{~}l@{~}l@{}}
        \textup{(D1)} & (v_{j}, z_{j,h}^{(0)}) & \forall h\neq i,j, & \mathscr{D}_{1}=\frac{2\beta^{2}}{D_{\textup{proj}}^{2}}. \\
        \textup{(D2)} & (v_{j}, z_{i,h}^{(2)}) & \forall h\neq i,j, & \mathscr{D}_{2}=1+(1-\frac{\beta}{D_{\textup{proj}}})^{2}+\frac{\beta^{2}}{D_{\textup{proj}}^{2}}. \\
        \textup{(D3)} & (v_{j}, z_{i,h}^{(3)}) & \forall h\neq i,j, & \mathscr{D}_{3}=1+c_{i,h,i}^{2}+c_{i,h,h}^{2}. \\
        \textup{(D4)} & (v_{j}, z_{i,j}^{(3)}), &  & \mathscr{D}_{4}=(1-c_{i,j,j})^{2}+c_{i,j,i}^{2}. \\
        \textup{(D5)} & (z_{j,h_{1}}^{(0)}, z_{j,h_{2}}^{(0)}) & \forall h_{1},h_{2}\neq i,j\textup{ s.t. }h_{1}<h_{2}, & \mathscr{D}_{5}=\frac{2\beta^{2}}{D_{\textup{proj}}^{2}}. \\
        \textup{(D6)} & (z_{j,h}^{(0)}, z_{i,h}^{(2)}) & \forall h\neq i,j, & \mathscr{D}_{6}=2(1-\frac{\beta}{D_{\textup{proj}}})^{2}. \\
        \textup{(D7)} & (z_{j,h_{1}}^{(0)}, z_{i,h_{2}}^{(2)}) & \forall h_{1},h_{2}\neq i,j\textup{ s.t. }h_{1}\neq h_{2}, & \mathscr{D}_{7}=2(1-\frac{\beta}{D_{\textup{proj}}})^{2}+\frac{2\beta^{2}}{D_{\textup{proj}}^{2}}. \\
        \textup{(D8)} & (z_{j,h}^{(0)}, z_{i,h}^{(3)}) & \forall h\neq i,j, & \mathscr{D}_{8}=(1-\frac{\beta}{D_{\textup{proj}}})^{2}+c_{i,h,i}^{2}+(\frac{\beta}{D_{\textup{proj}}}-c_{i,h,h})^{2}. \\
        \textup{(D9)} & (z_{j,h_{1}}^{(0)}, z_{i,h_{2}}^{(3)}) & \forall h_{1},h_{2}\neq i,j\textup{ s.t. }h_{1}\neq h_{2}, & \mathscr{D}_{9}=(1-\frac{\beta}{D_{\textup{proj}}})^{2}+c_{i,h_{2},i}^{2}+\frac{\beta^{2}}{D_{\textup{proj}}^{2}}+c_{i,h_{2},h_{2}}^{2}. \\
        \textup{(D10)} & (z_{j,h}^{(0)}, z_{i,j}^{(3)}) & \forall h\neq i,j, & \mathscr{D}_{10}=(1-\frac{\beta}{D_{\textup{proj}}}-c_{i,j,j})^{2}+\frac{\beta^{2}}{D_{\textup{proj}}^{2}}+c_{i,j,i}^{2}. \\
        \textup{(D11)} & (z_{i,h_{1}}^{(2)}, z_{i,h_{2}}^{(2)}) & \forall h_{1},h_{2}\neq i,j\textup{ s.t. }h_{1}<h_{2}, & \mathscr{D}_{11}=\frac{2\beta^{2}}{D_{\textup{proj}}^{2}}. \\
        \textup{(D12)} & (z_{i,h}^{(2)}, z_{i,h}^{(3)}) & \forall h\neq i,j, & \mathscr{D}_{12}=(1-\frac{\beta}{D_{\textup{proj}}}-c_{i,h,i})^{2}+(\frac{\beta}{D_{\textup{proj}}}-c_{i,h,h})^{2}. \\
        \textup{(D13)} & (z_{i,h_{1}}^{(2)}, z_{i,h_{2}}^{(3)}) & \forall h_{1},h_{2}\neq i,j\textup{ s.t. }h_{1}\neq h_{2}, & \mathscr{D}_{13}=(1-\frac{\beta}{D_{\textup{proj}}}-c_{i,h_{2},i})^{2}+\frac{\beta^{2}}{D_{\textup{proj}}^{2}}+c_{i,h_{2},h_{2}}^{2}. \\
        \textup{(D14)} & (z_{i,h}^{(2)}, z_{i,j}^{(3)}) & \forall h\neq i,j, & \mathscr{D}_{14}=(1-\frac{\beta}{D_{\textup{proj}}}-c_{i,j,i})^{2}+\frac{\beta^{2}}{D_{\textup{proj}}^{2}}+c_{i,j,j}^{2}. \\
        \textup{(D15)} & (z_{i,h_{1}}^{(3)}, z_{i,h_{2}}^{(3)}) & \forall h_{1},h_{2}\neq i,j\textup{ s.t. }h_{1}<h_{2}, & \mathscr{D}_{15}=(c_{i,h_{1},i}-c_{i,h_{2},i})^{2}+c_{i,h_{1},h_{1}}^{2}+c_{i,h_{2},h_{2}}^{2}. \\
        \textup{(D16)} & (z_{i,h}^{(3)}, z_{i,j}^{(3)}) & \forall h\neq i,j, & \mathscr{D}_{16}=(c_{i,h,i}-c_{i,j,i})^{2}+c_{i,h,h}^{2}+c_{i,j,j}^{2}. \\
    \end{array}
    \end{align*}
    Then, noting the inequality $\lambda^{2}+(1-\lambda)^{2}\leq 1$ for any $\lambda\in [0,1]$, we have
    \begin{align*}
    \begin{cases}
    &\mathscr{D}_{1}=\mathscr{D}_{5}=\mathscr{D}_{11}\leq \mathscr{D}_{2},\\
    &\mathscr{D}_{6}\leq \mathscr{D}_{7}\leq \mathscr{D}_{2},\\
    &\mathscr{D}_{8}\leq \mathscr{D}_{9}\leq \mathscr{D}_{2},\\
    &\mathscr{D}_{4}\leq \mathscr{D}_{3},\\
    &\mathscr{D}_{10}\leq \mathscr{D}_{3},\\
    &\mathscr{D}_{12}\leq \mathscr{D}_{13}=\mathscr{D}_{14}\leq \mathscr{D}_{3},\\
    &\mathscr{D}_{15}=\mathscr{D}_{16}\leq \mathscr{D}_{3}.
    \end{cases}
    \end{align*}

    \begin{figure}
        \centering
        \includegraphics[width=0.95\linewidth]{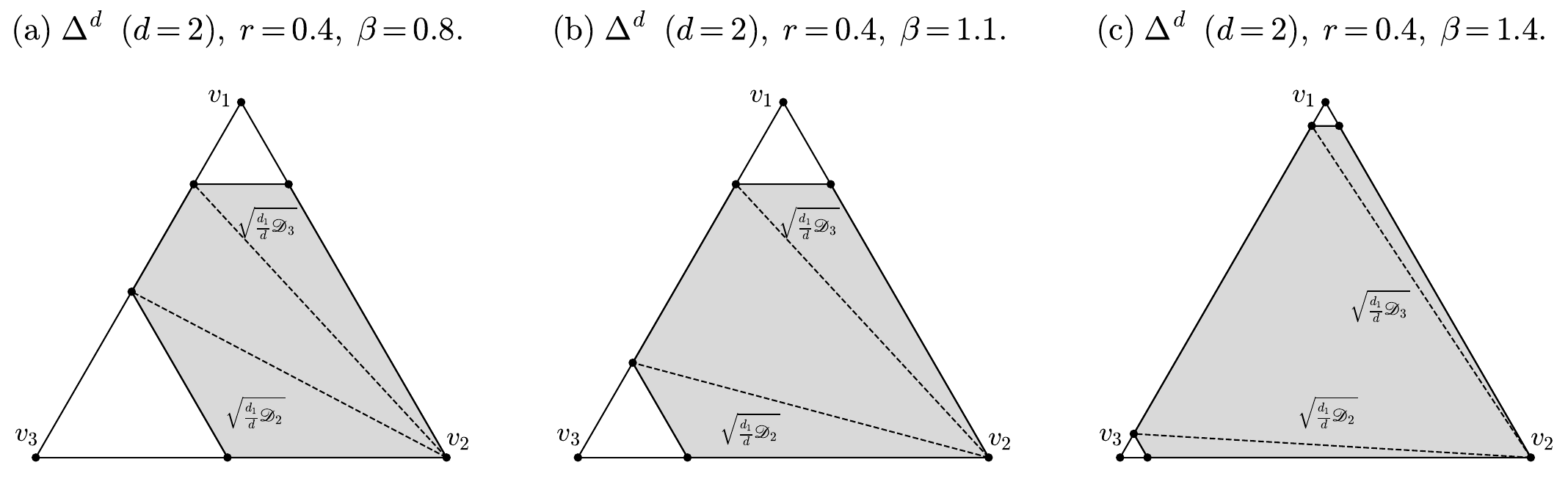}
        \caption{Examples of the convex hulls in Step 4, where $d=2$, $i=1$, and $j=2$. In each panel, the length of the dashed line above is $\sqrt{\frac{d_{1}}{d}\mathscr{D}_{3}}$, and the length of the dashed line below is $\sqrt{\frac{d_{1}}{d}\mathscr{D}_{2}}$, where we used Proposition~\ref{prop:simple fact for risks}--(i).}
        \label{fig: lemma b.5 figure 5}
    \end{figure}
    
    Regarding $\mathscr{D}_{2}$ and $\mathscr{D}_{3}$ (see Figure~\ref{fig: lemma b.5 figure 5}), we notice that the condition $r\leq D_{\Delta^{d}}(1-\beta/D_{\textup{proj}})$ implies $|1/2-D_{\Delta^{d}}^{-1}r|\geq |\beta/D_{\textup{proj}}-1/2|$, where we also used $1/2-D_{\Delta^{d}}^{-1}r\geq 0$ and $D_{\Delta^{d}}^{-1}r\leq D_{\textup{proj}}^{-1}\beta$ (note that $r\leq \beta$ and $D_{\textup{proj}}\leq D_{\Delta^{d}}$).
    In addition, $\lambda_{1}^{2}+(1-\lambda_{1})^{2}\leq \lambda_{2}^{2}+(1-\lambda_{2})^{2}$ if $|\lambda_{1}-1/2|\leq |\lambda_{2}-1/2|$ holds for the given $\lambda_{1},\lambda_{2}\in [0,1]$.
    Hence, the condition $r\leq D_{\Delta^{d}}(1-\beta/D_{\textup{proj}})$ implies that
    \begin{align*}
    (1-D_{\textup{proj}}^{-1}\beta)^{2}+(D_{\textup{proj}}^{-1}\beta)^{2}\leq (1-D_{\Delta^{d}}^{-1}r)^{2}+(D_{\Delta^{d}}^{-1}r)^{2}.
    \end{align*}
    In addition, if $r>D_{\Delta^{d}}(1-\beta/D_{\textup{proj}})$, then we have $c_{i,h,i}^{(3)}=1-c_{i,h,i}^{(2)}$.
    Thus, we always have
    \begin{align*}
        \mathscr{D}_{2}\leq \mathscr{D}_{3}.
    \end{align*}
    Therefore, for any $h_{0}\in\{1,\cdots,d_{1}\}\setminus \{i,j\}$, we have
    \begin{align}
    &\textup{diam}(\{v_{j}\}\cup \bigcup_{\substack{h=1,\cdots,d_{1},\\ h\neq i,j}}\{z_{j,h}^{(0)}\}\cup \bigcup_{\substack{h=1,\cdots,d_{1},\\ h\neq i,j}}\{z_{i,h}^{(2)},z_{i,h}^{(3)}\}\cup \{z_{i,j}^{(3)}\})\nonumber\\
    &=\sqrt{\frac{d_{1}}{d}\mathscr{D}_{3}}\nonumber \\
    &=
    \|v_{j}-z_{i,h_{0}}^{(3)}\|_{2}\nonumber\\
    &=((\sqrt{3}D_{\Delta^{d}}/2)^{2}+(D_{\Delta^{d}}/2-r)^{2}\vee (\beta D_{\Delta^{d}}/D_{\textup{proj}}-D_{\Delta^{d}}/2)^{2})^{1/2}\nonumber\\
    \label{eq:lemma of lemma computing diameter eq 8}
    &=
    ((\sqrt{3}D_{\Delta^{d}}/2)^{2}+((D_{\Delta^{d}}/2-r)\vee (\beta D_{\Delta^{d}}/D_{\textup{proj}}-D_{\Delta^{d}}/2))^{2})^{1/2},
    \end{align}
    where in the equality~\eqref{eq:lemma of lemma computing diameter eq 8} we use the following properties:
    \begin{itemize}
    \item When $0<\beta\leq D_{\textup{proj}}/2$, we have $0\leq D_{\Delta^{d}}/2-\beta D_{\Delta^{d}}/D_{\textup{proj}}\leq D_{\Delta^{d}}/2-r$, where we use the inequalities $0\leq r\leq \beta\leq \beta D_{\Delta^{d}}/D_{\textup{proj}}$.
    \item When $D_{\textup{proj}}/2<\beta\leq D_{\textup{proj}}-rD_{\textup{proj}}/D_{\Delta^{d}}$, we have $0\leq \beta D_{\Delta^{d}}/D_{\textup{proj}}-D_{\Delta^{d}}/2\leq D_{\Delta^{d}}/2-r$.
    \item When $D_{\textup{proj}}-rD_{\textup{proj}}/D_{\Delta^{d}}<\beta < D_{\textup{proj}}$, we have $0\leq D_{\Delta^{d}}/2-r\leq \beta D_{\Delta^{d}}/D_{\textup{proj}}-D_{\Delta^{d}}/2$.
    \end{itemize}
    By~\eqref{eq:lemma of lemma computing diameter eq 7} and \eqref{eq:lemma of lemma computing diameter eq 8}, we have
    \begin{align}
    \label{eq:lemma of lemma computing diameter eq 9}
    &\textup{diam}(B_{i}\cup B_{j})\nonumber\\
    &\leq ((\sqrt{3}D_{\Delta^{d}}/2)^{2}+((D_{\Delta^{d}}/2-r)\vee (\beta D_{\Delta^{d}}/D_{\textup{proj}}-D_{\Delta^{d}}/2))^{2})^{1/2}.
    \end{align}
    Combining~\eqref{eq:lemma of lemma computing diameter eq 0} and \eqref{eq:lemma of lemma computing diameter eq 9}, we obtain
    \begin{align*}
    &\|f(x)-f(x')\|_{2}\\
    &\leq \textup{diam}(B_{i}\cup B_{j})\\
    &\leq
    ((\sqrt{3}D_{\Delta^{d}}/2)^{2}+((D_{\Delta^{d}}/2-r)\vee (\beta D_{\Delta^{d}}/D_{\textup{proj}}-D_{\Delta^{d}}/2))^{2})^{1/2}.
    \end{align*}
We obtain the claim.
\end{proof}

\subsection{Proof of Theorem~\ref{thm:main result}}
\label{sec:estimation error bounds}

\begin{proof}
Let $C_{2}$ and $C_{i,j}$ ($i,j\in\{1,\cdots,d_{1}\}$ for which $i\neq j$ is satisfied) be the constants satisfying the conditions in Lemma~\ref{lem:step 3 lemma 0} and Lemma~\ref{lem:step 3 lemma 1}, respectively.
Let $f\in\mathscr{F}_{\beta,\beta_{0},P}(\mathcal{F})$ be arbitrary.
Applying Lemma~\ref{lem:step 3 lemma 2} and Lemma~\ref{lem:step 3 lemma 1} to Lemma~\ref{lem:step 3 lemma 0}, we have
\begin{align*}
&\mathbb{E}_{P_{X}}[\|f-f^{*}\|_{2}^{2}]\\
&\leq C_{2}(D_{\Delta^{d}}-\beta)\beta_{0}\\
&\quad + C_{2}\sum_{i\neq j}C_{i,j}\sum_{w=0}^{\lfloor\log_{2}{n}\rfloor}2^{-(2w+1)}\beta^{2}((2^{-(w+1)}\beta)\wedge (D_{\Delta^{d}}(1-\beta/D_{{\textup{proj}}})))^{-2}\\
&\quad \cdot\mathbb{E}_{P_{X,X'}^{-}}[|\psi\circ\rho_{f}-\psi\circ\rho_{f^{*}}|] +2C_{2}\sum_{i\neq j}\sum_{w=0}^{\infty}\left(\frac{1}{2}\right)^{2w+1}\beta^{2}\beta_{0} +C_{2}\frac{\beta^{2}}{n}\\
&\leq C_{2}'(\log{n})C_{2}\sum_{i\neq j}C_{i,j}\cdot\mathbb{E}_{P_{X,X'}^{-}}[|\psi\circ\rho_{f}-\psi\circ\rho_{f^{*}}|]\\
&\quad +C_{2}(D_{\Delta^{d}}-\beta+\frac{4}{3}d(d+1)\beta^{2})\beta_{0}+C_{2}\frac{\beta^{2}}{n},
\end{align*}
where $C_{2}'=(\log{2})^{-1}(4+2\beta^{2}(D_{\Delta^{d}}(1-\beta/D_{{\textup{proj}}}))^{-2})$, and in the last inequality we use the following inequalities valid for any $n\in\mathbb{N}\setminus\{1\}$,
\begin{align*}
&\sum_{w=0}^{\lfloor \log_{2}{n}\rfloor}2^{-(2w+1)}\beta^{2}((2^{-(w+1)}\beta)\wedge (D_{\Delta^{d}}(1-\beta/D_{{\textup{proj}}})))^{-2}\\
&\leq
4\log_{2}{n}+2(\log_{2}{n})\beta^{2}(D_{\Delta^{d}}(1-\beta/D_{{\textup{proj}}}))^{-2}\\
&\leq
(\log{2})^{-1}(4+2\beta^{2}(D_{\Delta^{d}}(1-\beta/D_{{\textup{proj}}}))^{-2})\log{n}.
\end{align*}
Note that $p_{Y}(-1)^{-1}\leq \theta_{2}^{-1}$ holds by condition (A3) in Definition~\ref{def:main assumption}.
Define
\begin{align}
\label{eq:threshold for constant c3}
C_{3}&=C_{2}'C_{2}\sum_{i\neq j}C_{i,j}\theta_{2}^{-1},\\
C_{4}&=C_{2}(D_{\Delta^{d}}-\beta+(4/3)d(d+1)\beta^{2}),\nonumber\\
C_{5}&=C_{2}\beta^{2}.\nonumber
\end{align}
Then, we have
\begin{align}
\label{eq:thm estimation bound eq 1}
\mathbb{E}_{P_{X}}[\|f-f^{*}\|_{2}^{2}]\leq C_{3}(\log{n})\mathbb{E}_{P_{X,X'}}[|\psi\circ\rho_{f}-\psi\circ\rho_{f^{*}}|]+C_{4}\beta_{0}+\frac{C_{5}}{n}.
\end{align}
Here, by Lemma~\ref{prop:constructability for F0}, $\psi\circ\rho_{f^{*}}$ is the Bayes classifier.
Note that if condition $\tau$-(NC) is satisfied, then Proposition~1 in~\citep{lecue2007optimal} (see Lemma~\ref{citelem:lecue lemma}) is applicable.
By Proposition~1 in~\citep{lecue2007optimal} (see Lemma~\ref{citelem:lecue lemma}), there is a universal constant $C_{0}>0$ that does not depend on the choice of $P$, such that
\begin{align}
\label{eq:thm estimation bound eq 2}
\mathbb{E}_{P_{X,X'}}[|\psi\circ\rho_{f}-\psi\circ\rho_{f^{*}}|]^{\tau}\leq C_{0}(\mathbb{E}_{P}[\ell_{f}]-\mathbb{E}_{P}[\ell_{f^{*}}]).
\end{align}
Applying~\eqref{eq:thm estimation bound eq 2} to~\eqref{eq:thm estimation bound eq 1}, we have
\begin{align}
\label{eq:thm estimation bound eq 3}
\mathbb{E}_{P_{X}}[\|f-f^{*}\|_{2}^{2}]\leq C_{0}^{\frac{1}{\tau}}C_{3}(\log{n})(\mathbb{E}_{P}[\ell_{f}]-\mathbb{E}_{P}[\ell_{f^{*}}])^{\frac{1}{\tau}}+C_{4}\beta_{0}+\frac{C_{5}}{n}.
\end{align}
Finally, set $f=\widehat{f}_{n,P}^{\textup{local}}(U_{1}^{n}(\omega))$ with arbitrary $\omega\in\Omega$, and integrate~\eqref{eq:thm estimation bound eq 3} on $\Omega$.
Then, by Proposition~\ref{prop:simple fact for risks}--(ii) and Jensen's inequality, we obtain the claim.
\end{proof}

\subsection{Proof of Theorem~\ref{thm:estimation error for deep relu networks}}
\label{appsec:proofs for section examples}

In this section, we prove Theorem~\ref{thm:estimation error for deep relu networks}.
The proof consists of five steps:
\begin{itemize}
\item In Step 0, we present Theorem~\ref{thm:main result} in Appendix~\ref{sec:estimation error bounds}.
\item In Step 1, we investigate the approximation error measured by the excess risk of hinge loss, following~\citep{park2009convergence,kim2021fast}. See~Appendix~\ref{appsubsubsec:step 1 of the proof of the main theorem}.
\item In Step 2, we approximate the true function $f^{*}$, according to the notion defined in Step 1. We basically follow the arguments developed by~\citet{bos2022convergence}. See Appendix~\ref{appsubsubsec:step 2 of the proof of the main theorem}.
\item In Step 3, we provide an analysis of the parameter $\beta_{0}$ in the definition of localized subclasses. See Appendix~\ref{appsubsubsec:step 3 of the proof of the main theorem}.
\item In Step 4, we complete the proof of Theorem~\ref{thm:estimation error for deep relu networks}. See Appendix~\ref{appsubsubsec:step 4 of the proof of the main theorem}.
\end{itemize}

\subsubsection{Step 1}
\label{appsubsubsec:step 1 of the proof of the main theorem}

We aim at deriving an upper bound of the excess risk.
We consider to apply the results shown in~\citep{park2009convergence,kim2021fast}.
To this end, we need to investigate the approximation error bounds.

First, we use a weaker notion of $(\psi_{0},\mathcal{F})$-representability:
\begin{definition}
\label{def:weak constructability}
Given any $\xi=(R,K,d_{1},E,\theta_{\textup{NC}},\theta_{1},\theta_{2},\theta_{3})\in\Xi$, $\mathcal{F}\subseteq\mathcal{F}_{0}$, $\varepsilon\geq 0$, $V\in [0,1]$, and a measurable function $\psi_{0}:\mathbb{R}\to\mathbb{R}$, a Borel probability measure $P\in\mathcal{P}_{\xi}$ is $(\psi_{0},\mathcal{F},\varepsilon,V)$\emph{-weak representable}, if there is a function $f\in\mathcal{F}$ such that the following inequality holds:
\begin{align}
\label{eq:condition on weak realizability}
\sup\left\{
P_{X,X'}(\mathcal{A})\;|\;\mathcal{A}\in\mathscr{W}_{f,\varepsilon}
\right\}
\geq V,
\end{align}
where we define
\begin{align*}
\mathscr{W}_{f,\varepsilon}=
\left\{
\mathcal{A}\in\mathcal{B}(\mathcal{X}^{2})\;|\;
\|\psi_{0}\circ \rho_{f}-\textup{sign}\circ(2\eta-1)\|_{\mathcal{A},P_{X,X'},1}\leq \varepsilon
\right\}.
\end{align*}
\end{definition}
Hereafter, in this section we define $\mathcal{F}\subseteq\mathcal{F}_{0}$, a measurable function $\psi_{0}:\mathbb{R}\to\mathbb{R}$, $\varepsilon\geq 0$, and $V\in[0,1]$.

Using the notion of the weak representability in Definition~\ref{def:weak constructability}, we can evaluate the approximation error in terms of the excess risk of hinge loss, which will be used when applying the results of~\citep{park2009convergence,kim2021fast}.
\begin{proposition}
\label{lem:connecting the upper bound to the excess risk}
Let $\xi=(R,K,d_{1},E,\theta_{\textup{NC}},\theta_{1},\theta_{2},\theta_{3})\in\Xi$, $\mathcal{H}\subset\mathcal{F}_{0}$, $\varepsilon\geq 0$, $V\in[0,1]$, and $\psi$ be the function defined in Definition~\ref{def:contrastive loss}.
For every $(\psi,\mathcal{H},\varepsilon,V)$-weak representable $P\in\mathcal{P}_{\xi}$, if there is a vector-valued function $f^{*}\in\mathcal{F}_{0}$ such that $\psi \circ \rho_{f^{*}}=\textup{sign}\circ (2\eta-1)$, $P_{X,X'}$-almost surely, then there is a function $f\in\mathcal{H}$ such that we have
\begin{align*}
    \mathbb{E}_{P}[\ell_{f}]-\mathbb{E}_{P}[\ell_{f^{*}}]
    \leq \varepsilon+2(1-V).
\end{align*}
\end{proposition}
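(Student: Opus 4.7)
The plan is to unpack the weak representability hypothesis to extract a specific candidate $f \in \mathcal{H}$ together with a good approximation set $\mathcal{A}$, then compare the hinge risks of $f$ and $f^{*}$ pointwise by conditioning on $(x, x')$, and finally split the resulting integral over $\mathcal{A}$ and its complement.

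First I would use the $(\psi, \mathcal{H}, \varepsilon, V)$-weak representability of $P$ to produce $f \in \mathcal{H}$ and $\mathcal{A} \in \mathcal{B}(\mathcal{X}^{2})$ with $P_{X,X'}(\mathcal{A}) \geq V$ and $\|\psi \circ \rho_{f} - \textup{sign} \circ (2\eta - 1)\|_{\mathcal{A}, P_{X,X'}, 1} \leq \varepsilon$. Since the supremum in Definition~\ref{def:weak constructability} need not be attained, I would pick an approximating sequence of such sets whose $P_{X,X'}$-measures converge to $V$ from below, derive the bound for each, and pass to the limit (the target estimate is stable in $V$).

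Next I would reduce the hinge loss to a linear functional of $\psi \circ \rho_{g}$ for any $g \in \mathcal{F}_{0}$. The key observation is that, since the range of $g$ is contained in $\Delta^{d}$, we have $\rho_{g} \in [0, D_{\Delta^{d}}^{2}]$ and hence $\psi \circ \rho_{g} \in [-1, 1]$; combined with $y \in \{-1, 1\}$ this removes the $\max$ in $\ell_{g}$. After conditioning on $(x, x')$ and using $\mathbb{E}_{P}[y \mid x, x'] = 2\eta(x, x') - 1$, this yields
\begin{align*}
\mathbb{E}_{P}[\ell_{g} \mid x, x'] = 1 - (2\eta(x, x') - 1)\,\psi \circ \rho_{g}(x, x').
\end{align*}
Applying this to $g \in \{f, f^{*}\}$ and using $\psi \circ \rho_{f^{*}} = \textup{sign} \circ (2\eta - 1)$ $P_{X,X'}$-almost surely, the pointwise excess collapses to $(2\eta - 1)(\textup{sign} \circ (2\eta - 1) - \psi \circ \rho_{f})$, which is nonnegative and bounded above by $|\psi \circ \rho_{f} - \textup{sign} \circ (2\eta - 1)|$ because $|2\eta - 1| \leq 1$.

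The final step is to integrate against $P_{X,X'}$ and split on $\mathcal{A}$ and $\mathcal{A}^{c}$: on $\mathcal{A}$ the contribution is at most $\varepsilon$ by the defining property of $\mathcal{A}$, while on $\mathcal{A}^{c}$ the integrand is uniformly bounded by $2$ (since both $\psi \circ \rho_{f}$ and $\textup{sign} \circ (2\eta - 1)$ lie in $[-1, 1]$), contributing at most $2\,P_{X,X'}(\mathcal{A}^{c}) \leq 2(1 - V)$. Adding the two pieces gives the claimed bound. I do not expect a substantial obstacle here; the one delicate point is that the collapse of the hinge $\max$ and of the conditional expectation into a linear form depend essentially on the structural restriction to $\Delta^{d}$-valued functions, so it is important throughout to keep the computation inside $\mathcal{F}_{0}$ rather than the general ambient class of measurable maps.
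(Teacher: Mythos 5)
Your proposal is correct, and it splits the integral over $\mathcal{A}$ and $\mathcal{A}^{c}$ exactly as the paper does, with the same $\varepsilon + 2(1-V)$ bookkeeping and the same observation that the supremum in the weak-representability definition may not be attained (the paper handles this by fixing a slack $\delta>0$ and letting it vanish at the end, which is the same move as your approximating sequence). The difference is in how the two arguments pass from the excess hinge risk to the quantity $|\psi\circ\rho_{f}-\psi\circ\rho_{f^{*}}|$. The paper invokes the $1$-Lipschitz continuity of hinge loss in its scalar argument, giving $\mathbb{E}_{P}[\ell_{f}]-\mathbb{E}_{P}[\ell_{f^{*}}]\le \mathbb{E}_{P_{X,X'}}[|\psi\circ\rho_{f}-\psi\circ\rho_{f^{*}}|]$ directly and without ever opening the $\max$. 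You instead observe that the $\Delta^{d}$-constraint forces $\psi\circ\rho_{g}\in[-1,1]$ for every $g\in\mathcal{F}_{0}$, so the $\max$ in $\ell_{g}$ is trivial; conditioning on $(x,x')$ then yields the exact identity for the conditional excess, $(2\eta-1)(\textup{sign}\circ(2\eta-1)-\psi\circ\rho_{f})$, which is nonnegative and dominated by $|\psi\circ\rho_{f}-\psi\circ\rho_{f^{*}}|$ since $|2\eta-1|\le 1$. Your route is slightly more explicit: it shows the pointwise excess is nonnegative (a fact the paper does not need here but is structurally informative) and makes visible that the linearization of the hinge depends essentially on confining the model class to $\Delta^{d}$-valued functions, which is the same structural point the paper exploits elsewhere (e.g., Proposition~\ref{thm:main result basic case}). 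The paper's Lipschitz argument is marginally shorter and does not require knowing that the range of $\psi\circ\rho_{g}$ is $[-1,1]$. Both are complete and correct.
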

\begin{proof}
Let $f_{0}\in\mathcal{H}$ be a function satisfying the condition of the weak representability,
\begin{align*}
    \sup\{P_{X,X'}(\mathcal{A})\;|\; \mathcal{A}\in\mathscr{W}_{f_{0},\varepsilon}\}\geq V.
\end{align*}
Let $\delta>0$ be arbitrary.
Then, there is a subset $\mathcal{A}\in\mathscr{W}_{f_{0},\varepsilon}$ such that $P_{X,X'}(\mathcal{A})\geq V-\delta$ and $\|\psi \circ \rho_{f_{0}}-\textup{sign}\circ (2\eta-1)\|_{\mathcal{A},P_{X,X'},1}\leq \varepsilon$.
We have
\begin{align}
    &\mathbb{E}_{P}[\ell_{f_{0}}]-\mathbb{E}_{P}[\ell_{f^{*}}]\nonumber\\
    \label{eq:prop connecting approximation error eq 40}
    &\leq \int_{\mathcal{A}}|\psi \circ \rho_{f_{0}}-\psi \circ\rho_{f^{*}}|dP_{X,X'}+\int_{\mathcal{X}\times\mathcal{X}\setminus\mathcal{A}}|\psi \circ\rho_{f_{0}}-\psi \circ\rho_{f^{*}}|dP_{X,X'}\\
    \label{eq:prop connecting approximation error eq 5}
    &\leq \varepsilon+\int_{\mathcal{X}\times\mathcal{X}\setminus\mathcal{A}}|\psi \circ\rho_{f_{0}}-\psi \circ\rho_{f^{*}}|dP_{X,X'}\\
    \label{eq:prop connecting approximation error eq 6}
    &\leq \varepsilon+2P_{X,X'}(\mathcal{X}\times\mathcal{X}\setminus\mathcal{A})\\
    &\leq \varepsilon + 2(1-V)+2\delta,\nonumber
\end{align}
where in~\eqref{eq:prop connecting approximation error eq 40} we used the well-known fact that hinge loss is Lipschitz continuous (see, e.g.,~\citep[Example~2.27]{steinwart2008support}), and in \eqref{eq:prop connecting approximation error eq 5} we used both the condition that $\psi \circ \rho_{f^{*}}=\textup{sign}\circ (2\eta -1)$, $P_{X,X'}$-almost surely, and the property that $\|\psi \circ \rho_{f_{0}}-\textup{sign}\circ (2\eta-1)\|_{\mathcal{A},P_{X,X'},1}\leq \varepsilon$ holds.
In~\eqref{eq:prop connecting approximation error eq 6}, we use the triangle inequality and $|\psi\circ\rho_{f}|\leq 1$ on $\mathcal{X}^{2}$ for every $f\in\mathcal{F}_{0}$.
Since $\delta$ is arbitrary, we obtain the claim.
\end{proof}

Next, we show several properties of weak representability.
\begin{proposition}
\label{prop:properties of weak constructablity}
Given any $\xi=(R,K,d_{1},E,\theta_{\textup{NC}},\theta_{1},\theta_{2},\theta_{3})\in\Xi$, $\mathcal{F}\subset \mathcal{F}_{0}$, and any measurable function $\psi_{0}:\mathbb{R}\to\mathbb{R}$, if $P\in\mathcal{P}_{\xi}$ is $(\psi_{0},\mathcal{F})$-representable, then $P$ is $(\psi_{0},\mathcal{F},0,V)$-weak representable for any $V\in[0,1]$.
\end{proposition}
\begin{proof}
By the definition of $(\psi_{0},\mathcal{F})$-representability, there exists a function $f\in\mathcal{F}$ such that the following identity holds:
\begin{align*}
    \|\psi_{0}\circ \rho_{f}-\textup{sign}\circ (2\eta-1)\|_{\mathcal{X}^{2},P_{X,X'},1}=0.
\end{align*}
Using the notation
\begin{align}
\label{eq:a useful notation for set of sets}
\mathscr{W}_{f,\varepsilon,\psi_{0}}=\{\mathcal{A}\in\mathcal{B}(\mathcal{X}^{2})\;|\; \|\psi_{0}\circ\rho_{f}-\textup{sign}\circ(2\eta-1)\|_{\mathcal{A},P_{X,X'},1}\leq \varepsilon\},
\end{align}
it holds that
\begin{align*}
    \sup\{P_{X,X'}(\mathcal{A})\;|\;\mathcal{A}\in\mathscr{W}_{f,0,\psi_{0}}\}=1\geq V.
\end{align*}
This shows the claim.
\end{proof}

We next see several useful properties of the weak representability:
\begin{lemma}
\label{lem:trade-off for parameters}
Let $\xi=(R,K,d_{1},E,\theta_{\textup{NC}},\theta_{1},\theta_{2},\theta_{3})\in\Xi$, $\mathcal{F}\subset \mathcal{F}_{0}$, $\varepsilon\geq 0$, $V\in [0,1]$, and let $\psi_{0}:\mathbb{R}\to\mathbb{R}$ be a measurable function.
Suppose that $\mathcal{F}'\subset\mathcal{F}_{0}$, $\varepsilon'\geq 0$, and $V'\in [0,1]$ satisfy $\mathcal{F}'\subseteq \mathcal{F}$, $\varepsilon'\leq \varepsilon$, and $V'\geq V$.
Then, any $(\psi_{0},\mathcal{F}',\varepsilon',V')$-weak representable $P\in\mathcal{P}_{\xi}$ is $(\psi_{0},\mathcal{F},\varepsilon,V)$-weak representable.
\end{lemma}
\begin{proof}
For convenience, we use the notation $\mathscr{W}_{f,\varepsilon,\psi_{0}}$ of~\eqref{eq:a useful notation for set of sets} in this proof.
Let $P$ be an arbitrary $(\psi_{0},\mathcal{F}',\varepsilon',V')$-weak representable distribution in $\mathcal{P}_{\xi}$.
Recall that by the definition of $P$, there exists some $f\in\mathcal{F}'$ such that
\begin{align*}
\sup\{P_{X,X'}(\mathcal{A})\;|\;\mathcal{A}\in\mathscr{W}_{f,\varepsilon',\psi_{0}}\}\geq V'.
\end{align*}
Here notice that from the condition that $\mathcal{F}'\subseteq \mathcal{F}$, it holds that $f\in\mathcal{F}$.
Besides, if $\mathcal{A}\in\mathcal{B}(\mathcal{X}^{2})$ satisfies the condition $\|\psi_{0}\circ \rho_{f}-\textup{sign}\circ (2\eta-1)\|_{\mathcal{A},P_{X,X'},1}\leq \varepsilon'$, then by the condition $\varepsilon'\leq \varepsilon$ we have $\|\psi_{0}\circ \rho_{f}-\textup{sign}\circ (2\eta-1)\|_{\mathcal{A},P_{X,X'},1}\leq \varepsilon$.
Hence, we have
\begin{align*}
    \sup\{P_{X,X'}(\mathcal{A})\;|\;\mathcal{A}\in\mathscr{W}_{f,\varepsilon,\psi_{0}}\}
    \geq \sup\{P_{X,X'}(\mathcal{A})\;|\;\mathcal{A}\in\mathscr{W}_{f,\varepsilon',\psi_{0}}\}\geq V'.
\end{align*}
Since $V'\geq V$, we obtain
\begin{align*}
    \sup\{P_{X,X'}(\mathcal{A})\;|\;\mathcal{A}\in\mathscr{W}_{f,\varepsilon,\psi_{0}}\}\geq V,
\end{align*}
which implies the assertion.
\end{proof}
The following lemma is a fundamental part in the proof of Proposition~\ref{prop:approximation property of H}.
\begin{lemma}
\label{lem:approximation and weak realizability}
Let $\xi=(R,K,d_{1},E,\theta_{\textup{NC}},\theta_{1},\theta_{2},\theta_{3})\in\Xi$, $\mathcal{F}\subseteq \mathcal{F}_{0}$, $\varepsilon\geq 0$, $V\in [0,1]$, and $P\in\mathcal{P}_{\xi}$.
Let $\psi$ be the function defined in Definition~\ref{def:contrastive loss}.
Suppose that $P$ is $(\psi,\mathcal{F},\varepsilon,V)$-weak representable, and $f\in\mathcal{F}$ satisfies condition~\eqref{eq:condition on weak realizability}.
Then for any subset $\mathcal{G}\subseteq\mathcal{F}$, $P$ is $(\psi,\mathcal{G},(2\varepsilon)\vee (32D_{\Delta^{d}}^{-1}\mu(\mathcal{X})\|p_{X,X'}\|_{\mathcal{X}^{2},\infty}\varepsilon_{0}),V)$-weak representable, where $\varepsilon_{0}=\inf_{g\in\mathcal{G}}\|f-g\|_{\mathcal{X},1}$.
\end{lemma}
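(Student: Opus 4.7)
The plan is to transfer weak representability from a witness $f\in\mathcal{F}$ to some $g\in\mathcal{G}$ close to $f$ in $L^{1}(\mathcal{X},\mu)$. Fix $f\in\mathcal{F}$ realizing the condition~\eqref{eq:condition on weak realizability} for $\mathcal{F}$ with parameter $\varepsilon$, and pick $g\in\mathcal{G}$ with $\|f-g\|_{\mathcal{X},1}\leq 2\varepsilon_{0}$ using the definition of $\varepsilon_{0}$ as an infimum (a sequential argument handles the boundary case when the infimum is not attained). The strategy is then to show that the very same witness sets $\mathcal{A}\in\mathscr{W}_{f,\varepsilon}$ continue to serve as witnesses for $g$, once the tolerance is slightly enlarged to absorb the approximation error.

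The key step is a pointwise control of $|\psi\circ\rho_{g}-\psi\circ\rho_{f}|$. Since $\psi(s)=1-2D_{\Delta^{d}}^{-2}s$, one immediately has $|\psi\circ\rho_{g}-\psi\circ\rho_{f}|=2D_{\Delta^{d}}^{-2}|\rho_{g}-\rho_{f}|$. Setting $u=g(x)-g(x')$ and $v=f(x)-f(x')$ and using the identity $|\,\|u\|_{2}^{2}-\|v\|_{2}^{2}|\leq(\|u\|_{2}+\|v\|_{2})\|u-v\|_{2}$, the diameter bound $\|u\|_{2}+\|v\|_{2}\leq 2D_{\Delta^{d}}$ (available because both $f$ and $g$ are $\Delta^{d}$-valued), and the triangle inequality $\|u-v\|_{2}\leq\|g(x)-f(x)\|_{2}+\|g(x')-f(x')\|_{2}\leq\|g(x)-f(x)\|_{1}+\|g(x')-f(x')\|_{1}$, one arrives at
\[
|\psi\circ\rho_{g}(x,x')-\psi\circ\rho_{f}(x,x')|\leq 4D_{\Delta^{d}}^{-1}\bigl(\|g(x)-f(x)\|_{1}+\|g(x')-f(x')\|_{1}\bigr).
\]
Integrating this pointwise bound against $P_{X,X'}$ over any measurable $\mathcal{A}\subseteq\mathcal{X}^{2}$, bounding the density by $\|p_{X,X'}\|_{\mathcal{X}^{2},\infty}$, and applying Fubini give
\[
\|\psi\circ\rho_{g}-\psi\circ\rho_{f}\|_{\mathcal{A},P_{X,X'},1}\leq 8D_{\Delta^{d}}^{-1}\mu(\mathcal{X})\|p_{X,X'}\|_{\mathcal{X}^{2},\infty}\|f-g\|_{\mathcal{X},1}\leq 16D_{\Delta^{d}}^{-1}\mu(\mathcal{X})\|p_{X,X'}\|_{\mathcal{X}^{2},\infty}\varepsilon_{0}.
\]
For any $\mathcal{A}\in\mathscr{W}_{f,\varepsilon}$, the triangle inequality in $L^{1}(\mathcal{A},P_{X,X'})$ combined with the elementary estimate $a+b\leq 2(a\vee b)$ then yields
\[
\|\psi\circ\rho_{g}-\textup{sign}\circ(2\eta-1)\|_{\mathcal{A},P_{X,X'},1}\leq \varepsilon+16D_{\Delta^{d}}^{-1}\mu(\mathcal{X})\|p_{X,X'}\|_{\mathcal{X}^{2},\infty}\varepsilon_{0}\leq (2\varepsilon)\vee\bigl(32D_{\Delta^{d}}^{-1}\mu(\mathcal{X})\|p_{X,X'}\|_{\mathcal{X}^{2},\infty}\varepsilon_{0}\bigr),
\]
so $\mathcal{A}$ sits in $\mathscr{W}_{g,(2\varepsilon)\vee(32D_{\Delta^{d}}^{-1}\mu(\mathcal{X})\|p_{X,X'}\|_{\mathcal{X}^{2},\infty}\varepsilon_{0})}$. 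Taking the supremum of $P_{X,X'}(\mathcal{A})$ over $\mathcal{A}\in\mathscr{W}_{f,\varepsilon}$ transfers the lower bound $V$, proving $(\psi,\mathcal{G},(2\varepsilon)\vee(32D_{\Delta^{d}}^{-1}\mu(\mathcal{X})\|p_{X,X'}\|_{\mathcal{X}^{2},\infty}\varepsilon_{0}),V)$-weak representability.

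The main obstacle is the pointwise estimate, which hinges on a delicate matching of scales: the $D_{\Delta^{d}}^{-2}$ factor hidden in $\psi$ and the diameter $D_{\Delta^{d}}$ introduced through $\|u\|_{2}+\|v\|_{2}$ must cancel against each other to leave a single $D_{\Delta^{d}}^{-1}$ prefactor, a cancellation that would not survive without the constraint that both $f$ and $g$ take values in $\Delta^{d}$. The remaining manipulations (Fubini, boundedness of $p_{X,X'}$, passage from $\|\cdot\|_{2}$ to $\|\cdot\|_{1}$) are routine, and the slack factor of $2$ appearing in $(2\varepsilon)\vee(32\cdots\varepsilon_{0})$ is exactly what absorbs the additive form $\varepsilon+16\cdots\varepsilon_{0}$ via $a+b\leq 2(a\vee b)$.
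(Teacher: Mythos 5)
Your proof is correct and follows essentially the same route as the paper's: pick a near-optimal $g\in\mathcal{G}$ with $\|f-g\|_{\mathcal{X},1}\le 2\varepsilon_0$, keep the same witness sets $\mathcal{A}\in\mathscr{W}_{f,\varepsilon}$, control $|\psi\circ\rho_g-\psi\circ\rho_f|$ pointwise via the difference-of-squares identity plus the $\Delta^d$-diameter bound, integrate against $P_{X,X'}$ with the $\|p_{X,X'}\|_{\mathcal{X}^2,\infty}$ density bound and Fubini, and absorb the additive error with $a+b\le 2(a\vee b)$. The only cosmetic difference is where you pass from $\|\cdot\|_2$ to $\|\cdot\|_1$ (pointwise rather than after integration), and the constants match the paper's exactly.
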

\begin{proof}
In the proof we use the notation $\mathscr{W}_{f,\varepsilon,\psi}$ in~\eqref{eq:a useful notation for set of sets}.
Let $\mathcal{A}\in\mathscr{W}_{f,\varepsilon,\psi}$.
Let $g\in\mathcal{G}$ be arbitrary.
Then we notice that
\begin{align}
    &\|\psi\circ \rho_{g}-\textup{sign}\circ (2\eta-1)\|_{\mathcal{A},P_{X,X'},1}\nonumber\\
    &\leq 
    \|\psi\circ \rho_{f}-\psi\circ \rho_{g}\|_{\mathcal{A},P_{X,X'},1}+\|\psi\circ \rho_{f}-\textup{sign}\circ (2\eta-1)\|_{\mathcal{A},P_{X,X'},1}\nonumber\\
    \label{eq:theorem basic properties proof part3 eq 2.1}
    &\leq 
    \|\psi\circ \rho_{f}-\psi\circ \rho_{g}\|_{\mathcal{A},P_{X,X'},1}+\varepsilon,
\end{align}
where in the second inequality we used the condition $\mathcal{A}\in\mathscr{W}_{f,\varepsilon,\psi}$.
Let us bound the first term in the last line of the above inequality.
It holds that
\begin{align*}
    &\|\psi\circ \rho_{f}-\psi\circ \rho_{g}\|_{\mathcal{A},P_{X,X'},1}\\
    &= \int_{\mathcal{A}}|\psi\circ\rho_{f}(x,x')-\psi\circ\rho_{g}(x,x')|P_{X,X'}(dx,dx')\\
    &\leq 2D_{\Delta^{d}}^{-2}\int_{\mathcal{X}\times\mathcal{X}}|\rho_{f}(x,x')-\rho_{g}(x,x')|P_{X,X'}(dx,dx')\\
    &\leq 2D_{\Delta^{d}}^{-2}\|p_{X,X'}\|_{\mathcal{X}^{2},\infty}\|\rho_{f}-\rho_{g}\|_{\mathcal{X}^{2},\mu\otimes\mu,1},
\end{align*}
where in the first inequality we used the definition of $\psi$ and $\mathcal{A}\subset\mathcal{X}\times\mathcal{X}$, and in the last inequality we used H\"{o}lder's inequality.
Besides, we also notice that for any $(x,x')\in\mathcal{X}^{2}$,
\begin{align}
\label{eq:theorem basic properties proof part3 eq 3}
    &\quad|\rho_{f}(x,x')-\rho_{g}(x,x')|\nonumber\\
    &\leq (\|f(x)-f(x')\|_{2}+\|g(x)-g(x')\|_{2})\cdot(\|f(x)-g(x)\|_{2}+\|f(x')-g(x')\|_{2})\\
    \label{eq:theorem basic properties proof part3 eq 4}
    &\leq 2D_{\Delta^{d}}(\|f(x)-g(x)\|_{2}+\|f(x')-g(x')\|_{2}),
\end{align}
where we use the triangle inequality in \eqref{eq:theorem basic properties proof part3 eq 3} and \eqref{eq:theorem basic properties proof part3 eq 4}.
Thus,
\begin{align*}
    \|\rho_{f}-\rho_{g}\|_{\mathcal{X}^{2},\mu\otimes\mu,1}\leq 4D_{\Delta^{d}}\mu(\mathcal{X})\|f-g\|_{\mathcal{X},1}.
\end{align*}
Using this fact, we have
\begin{align}
\label{eq:theorem basic properties proof part3 eq 40}
\|\psi\circ\rho_{f}-\psi\circ\rho_{g}\|_{\mathcal{A},P_{X,X'},1}
\leq 8D_{\Delta^{d}}^{-1}\mu(\mathcal{X})\|p_{X,X'}\|_{\mathcal{X}^{2},\infty}\|f-g\|_{\mathcal{X},1}.
\end{align}

Let $g_{0}\in\mathcal{G}$ be a function such that $\|f-g_{0}\|_{\mathcal{X},1}\leq 2\varepsilon_{0}$.
By~\eqref{eq:theorem basic properties proof part3 eq 40}, we have
\begin{align}
\label{eq:theorem basic properties proof part3 eq 6}
    \|\psi\circ\rho_{f}-\psi\circ\rho_{g_{0}}\|_{\mathcal{A},P_{X,X'},1}\leq 
    16D_{\Delta^{d}}^{-1}\mu(\mathcal{X})\|p_{X,X'}\|_{\mathcal{X}^{2},\infty}\varepsilon_{0}.
\end{align}
From \eqref{eq:theorem basic properties proof part3 eq 2.1} and \eqref{eq:theorem basic properties proof part3 eq 6}, we have
\begin{align*}
    \|\psi\circ \rho_{g_{0}}-\textup{sign}\circ (2\eta-1)\|_{\mathcal{A},P_{X,X'},1}\leq 
    (2\varepsilon)\vee (32D_{\Delta^{d}}^{-1}\mu(\mathcal{X})\|p_{X,X'}\|_{\mathcal{X}^{2},\infty}\varepsilon_{0})
\end{align*}
Let $\varepsilon_{1}=(2\varepsilon)\vee (32D_{\Delta^{d}}^{-1}\mu(\mathcal{X})\|p_{X,X'}\|_{\mathcal{X}^{2},\infty}\varepsilon_{0})$.
Therefore, we have
\begin{align*}
    \sup\{P_{X,X'}(\mathcal{A})\;|\;\mathcal{A}\in\mathscr{W}_{g_{0},\varepsilon_{1},\psi}\}
    \geq\sup\{P_{X,X'}(\mathcal{A})\;|\;\mathcal{A}\in\mathscr{W}_{f,\varepsilon,\psi}\}
    \geq V.
\end{align*}
This indicates that $P$ is $(\psi,\mathcal{G},(2\varepsilon)\vee (32D_{\Delta^{d}}^{-1}\mu(\mathcal{X})\|p_{X,X'}\|_{\mathcal{X}^{2},\infty}\varepsilon_{0}),V)$-weak representable.
\end{proof}

\begin{proposition}
\label{prop:approximation property of H}
Let $\xi=(R,K,d_{1},E,\theta_{\textup{NC}},\theta_{1},\theta_{2},\theta_{3})\in\Xi$, $\mathcal{H}\subset \mathcal{F}_{0}$, and $\psi$ be the function defined in Definition~\ref{def:contrastive loss}.
For every $P\in\mathcal{P}_{\xi}$, there is some $C_{1}>0$ independent of $P$ such that $P$ is $(\psi,\mathcal{H},C_{1}\varepsilon_{\mathcal{H},f^{*}},1)$-weak representable, where $f^{*}=\sum_{i=1}^{d_{1}}g_{i}^{*}v_{i}$ is the contrastive function of $P$, and $\varepsilon_{\mathcal{H},f^{*}}$ is defined as
\begin{align}
\label{eq:inf sup approximation error}
\varepsilon_{\mathcal{H},f^{*}}=\inf_{f=\sum_{i=1}^{d_{1}}g_{i}v_{i}\in\mathcal{H}}\max_{i=1,\cdots,d_{1}}\|g_{i}-g_{i}^{*}\|_{L^{1}(\mathcal{X})}.
\end{align}
\end{proposition}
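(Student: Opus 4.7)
The plan is to combine three facts already established above: the exact representability of the Bayes classifier by $\psi\circ\rho_{f^{*}}$ (Lemma~\ref{prop:constructability for F0}), the promotion of representability to weak representability (Proposition~\ref{prop:properties of weak constructablity}), and the $L^{1}$-stability of weak representability under replacement of the witness function (Lemma~\ref{lem:approximation and weak realizability}).

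First I would invoke Lemma~\ref{prop:constructability for F0} to conclude that $P$ is $(\psi,\mathcal{F}_{0})$-representable, with the contrastive function $f^{*}$ as an explicit witness. Proposition~\ref{prop:properties of weak constructablity} then upgrades this to $(\psi,\mathcal{F}_{0},0,1)$-weak representability, again witnessed by $f^{*}$ (the proof of that proposition shows the same $f$ can be re-used). Applying Lemma~\ref{lem:approximation and weak realizability} with $\mathcal{F}=\mathcal{F}_{0}$, $\mathcal{G}=\mathcal{H}$, $\varepsilon=0$, $V=1$, and $f=f^{*}$, I would then obtain that $P$ is $(\psi,\mathcal{H},C_{0}\,\varepsilon_{0},1)$-weak representable, where $C_{0}=32D_{\Delta^{d}}^{-1}\mu(\mathcal{X})\|p_{X,X'}\|_{\mathcal{X}^{2},\infty}$ and $\varepsilon_{0}=\inf_{g\in\mathcal{H}}\|f^{*}-g\|_{\mathcal{X},1}$.

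The final step is to relate $\varepsilon_{0}$ to the componentwise quantity $\varepsilon_{\mathcal{H},f^{*}}$. For any $f=\sum_{i=1}^{d_{1}}g_{i}v_{i}\in\mathcal{H}$, pointwise on $\mathcal{X}$ the triangle inequality applied in the Euclidean $1$-norm gives
\begin{align*}
\|f(x)-f^{*}(x)\|_{1}\leq\sum_{i=1}^{d_{1}}\|v_{i}\|_{1}\,|g_{i}(x)-g_{i}^{*}(x)|\leq\sqrt{d}\sum_{i=1}^{d_{1}}|g_{i}(x)-g_{i}^{*}(x)|,
\end{align*}
using $\|v_{i}\|_{2}=1$ together with $\|v\|_{1}\leq\sqrt{d}\,\|v\|_{2}$ on $\mathbb{R}^{d}$. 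Integrating against $\mu$ and then taking the infimum over $f\in\mathcal{H}$ yields $\varepsilon_{0}\leq d_{1}\sqrt{d}\,\varepsilon_{\mathcal{H},f^{*}}$. Choosing $C_{1}=d_{1}\sqrt{d}\,C_{0}$ (and, if convenient, invoking the monotonicity in Lemma~\ref{lem:trade-off for parameters} to enlarge the noise parameter from $C_{0}\varepsilon_{0}$ to $C_{1}\varepsilon_{\mathcal{H},f^{*}}$) completes the argument.

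No genuine obstacle is expected: essentially all the heavy lifting is encoded in Lemma~\ref{lem:approximation and weak realizability}, and what remains is a one-line norm comparison. The only point deserving care is tracking the distinction between the Euclidean $1$-norm appearing in the definition of $\|\cdot\|_{\mathcal{X},1}$ and the scalar $L^{1}(\mathcal{X})$-norms of the coordinate functions $g_{i}-g_{i}^{*}$ used in $\varepsilon_{\mathcal{H},f^{*}}$; this is handled by the finite-dimensional equivalence of norms on $\mathbb{R}^{d_{1}}$ and is absorbed into the constant $C_{1}$.
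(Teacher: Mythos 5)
Your proof matches the paper's argument essentially line for line: you apply Lemma~\ref{prop:constructability for F0} and Proposition~\ref{prop:properties of weak constructablity} to obtain $(\psi,\mathcal{F}_{0},0,1)$-weak representability with witness $f^{*}$, feed this into Lemma~\ref{lem:approximation and weak realizability} with $\mathcal{G}=\mathcal{H}$ and $\varepsilon=0$, and then bound $\varepsilon_{0}=\inf_{f\in\mathcal{H}}\|f-f^{*}\|_{\mathcal{X},1}$ by $d_{1}\sqrt{d}\,\varepsilon_{\mathcal{H},f^{*}}$ via a norm comparison and the triangle inequality, arriving at the same constant $C_{1}=32\,d_{1}\sqrt{d}\,D_{\Delta^{d}}^{-1}\mu(\mathcal{X})\|p_{X,X'}\|_{\mathcal{X}^{2},\infty}$. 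The only cosmetic difference is that you apply the triangle inequality (in the Euclidean $1$-norm) before the $\|\cdot\|_{1}\leq\sqrt{d}\|\cdot\|_{2}$ comparison, whereas the paper does the two steps in the opposite order; both yield the identical bound.
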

\begin{proof}
By Lemma~\ref{prop:constructability for F0} and Proposition~\ref{prop:properties of weak constructablity}, $P$ is $(\psi,\mathcal{F}_{0},0,V)$-weak representable for any $V\in [0,1]$.

Here let $f_{1},f_{2}\in\mathcal{F}_{0}$ be arbitrary, and denote by $f_{1}=\sum_{i=1}^{d_{1}}g_{i}^{(1)}v_{i}$ and $f_{2}=\sum_{i=1}^{d_{1}} g_{i}^{(2)}v_{i}$.
Then, we have
\begin{align}
    \label{eq:step 0 prop eq 00}
    \|f_{1}-f_{2}\|_{\mathcal{X},1}&\leq d^{\frac{1}{2}}\|\|f_{1}(x)-f_{2}(x)\|_{2}\|_{L^{1}(\mathcal{X})} \\
    &= d^{\frac{1}{2}}\left\|\left\|\sum_{i=1}^{d_{1}} g_{i}^{(1)}(x)v_{i}-\sum_{i=1}^{d_{1}} g_{i}^{(2)}(x)v_{i}\right\|_{2}\right\|_{L^{1}(\mathcal{X})}\nonumber\\
    \label{eq:step 0 prop eq 1}
    &\leq d^{\frac{1}{2}}\left\|\sum_{i=1}^{d_{1}}|g_{i}^{(1)}(x)-g_{i}^{(2)}(x)|\left\|v_{i}\right\|_{2}\right\|_{L^{1}(\mathcal{X})}\\
    \label{eq:step 0 prop eq 2}
    &\leq d^{\frac{1}{2}}\sum_{i=1}^{d_{1}}\left\|g_{i}^{(1)}-g_{i}^{(2)}\right\|_{L^{1}(\mathcal{X})},
\end{align}
where in~\eqref{eq:step 0 prop eq 00} we used the Cauchy-Schwarz inequality, and in \eqref{eq:step 0 prop eq 1} and \eqref{eq:step 0 prop eq 2} we used the triangle inequality.

By Lemma~\ref{lem:approximation and weak realizability}, $P$ is $(\psi,\mathcal{H},32D_{\Delta^{d}}^{-1}\|p_{X,X'}\|_{\mathcal{X}^{2},\infty}\varepsilon_{0},1)$-weak representable, where $\varepsilon_{0}=\inf_{f\in\mathcal{H}}\|f-f^{*}\|_{\mathcal{X},1}$.
Denote by $f^{*}=\sum_{i=1}^{d_{1}}g_{i}^{*}v_{i}$.
Then, by~\eqref{eq:step 0 prop eq 2} we have
\begin{align*}
\varepsilon_{0}\leq d^{\frac{1}{2}}d_{1}\inf_{f=\sum_{i=1}^{d_{1}}g_{i}v_{i}\in\mathcal{H}}\max_{i=1,\cdots,d_{1}}\|g_{i}-g_{i}^{*}\|_{L^{1}(\mathcal{X})}=d^{\frac{1}{2}}d_{1}\varepsilon_{\mathcal{H},f^{*}}.
\end{align*}
Here, note that by condition (A3) in Definition~\ref{def:main assumption}, we have
\begin{align}
\label{eq:threshold for constant c1}
\|p_{X,X'}\|_{\mathcal{X}^{2},\infty}\leq \|q\|_{\mathcal{X}^{2},\infty}+\|p_{X}p_{X'}\|_{\mathcal{X}^{2},\infty}\leq \theta_{1}^{2}+\theta_{1}^{2}\leq 2\theta_{1}^{2}.
\end{align}
Hence, define
\begin{align}
\label{eq:threshold for constant c1 eq 2}
C_{1}=64d^{\frac{1}{2}}d_{1}D_{\Delta^{d}}^{-1}\theta_{1}^{2}.
\end{align}
By Lemma~\ref{lem:trade-off for parameters}, we have that $P$ is $(\psi,\mathcal{H},C_{1}\varepsilon_{\mathcal{H},f^{*}},1)$-weak representable.
This shows the claim.
\end{proof}

\subsubsection{Step 2}
\label{appsubsubsec:step 2 of the proof of the main theorem}

We then show the approximation error bound for the indicator functions $g_{i}^{*}=\mathds{1}_{\mathcal{K}_{i}}$, $i=1,\cdots,d_{1}$.
Note that in this section, the notation $\lesssim$ abbreviates a coefficient that is a universal constant independent of the given error $\varepsilon>0$.

While the following inequality is originally shown in~\citep[proof of Lemma~4.3, p.2741]{bos2022convergence} for the $L^{\infty}(\mathcal{X})$ norm, its generalization to the $L^{s}(\mathcal{X})$-norm ($s\in[1,\infty]$) is straightforward, as it suffices to replace the $L^{\infty}(\mathcal{X})$-norm with the $L^{s}(\mathcal{X})$-norm in the original proof.
We provide the proof, for the reader's convenience.
\begin{lemma}[{Generalization of the inequality of~\citep{bos2022convergence}} for $\|\cdot\|_{L^{s}(\mathcal{X})}$]
\label{lem:bos lemma}
Let $g=(g_{1},\cdots,g_{d_{1}}):\mathcal{X}\to\mathbb{R}^{d_{1}}$ be a function such that $\|g\|_{\mathcal{X},\infty}\leq M$ for some $M\geq 0$.
Let $g^{*}:\mathcal{X}\to\mathbb{R}^{d_{1}}$, $g^{*}=(g_{1}^{*},\cdots,g_{d_{1}}^{*})$, be a function satisfying $\sum_{i=1}^{d_{1}}g_{i}^{*}(x)=1$ on $\mathcal{X}$.
Then, for any $i=1,\cdots,d_{1}$ and $s\in [1,\infty]$, it holds that
\begin{align*}
\left\|H_{i}\circ g-g_{i}^{*}\right\|_{L^{s}(\mathcal{X})}\leq \left\|\exp\circ{g_{i}}-g_{i}^{*}\right\|_{L^{s}(\mathcal{X})}+ \sum_{j=1}^{d_{1}}\left\|\exp\circ{g_{j}}-g_{j}^{*}\right\|_{L^{s}(\mathcal{X})}.
\end{align*}
\end{lemma}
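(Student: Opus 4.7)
The plan is to establish the inequality pointwise on $\mathcal{X}$ and then integrate with respect to the Lebesgue measure. So I would fix $x \in \mathcal{X}$ and aim at
\begin{align*}
|(H\circ g)_{i}(x)-g_{i}^{*}(x)|\leq |\exp(g_{i}(x))-g_{i}^{*}(x)|+\sum_{j=1}^{d_{1}}|\exp(g_{j}(x))-g_{j}^{*}(x)|.
\end{align*}
Integrating this gives the claim at once.

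To obtain the pointwise bound, I would write $S(x)=\sum_{j=1}^{d_{1}}\exp(g_{j}(x))$ and use the elementary decomposition
\begin{align*}
(H\circ g)_{i}(x)-g_{i}^{*}(x)
=\frac{\exp(g_{i}(x))}{S(x)}-g_{i}^{*}(x)
=\bigl(\exp(g_{i}(x))-g_{i}^{*}(x)\bigr)+\exp(g_{i}(x))\cdot\frac{1-S(x)}{S(x)},
\end{align*}
obtained by adding and subtracting $\exp(g_{i}(x))$. The first term contributes the $|\exp(g_{i})-g_{i}^{*}|$ piece directly. For the second term, the key observation is that $\exp(g_{i}(x))/S(x)=(H\circ g)_{i}(x)\in[0,1]$, since the softmax always produces a probability vector; this bounds the multiplicative factor by $1$ independently of the parameter $M$.

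It then remains to control $|1-S(x)|$. Using the hypothesis $\|g^{*}(x)\|_{1}=1$ (together with the nonnegativity of the components of $g^{*}$ coming from the context in which the lemma will be applied, so that $\sum_{j}g_{j}^{*}(x)=1$), I would write $1-S(x)=\sum_{j=1}^{d_{1}}\bigl(g_{j}^{*}(x)-\exp(g_{j}(x))\bigr)$ and apply the triangle inequality. Combining these estimates yields the stated pointwise bound, and integration over $\mathcal{X}$ finishes the proof.

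The argument is essentially algebraic, so I do not anticipate a serious obstacle; the only subtlety worth stressing is the choice of decomposition. A naive rewriting via $\exp(g_{i})-g_{i}^{*}S$ would force one to control $1/S$, which need not be bounded in terms of $M$ alone; the decomposition above sidesteps this by routing the normalization error through the factor $(H\circ g)_{i}\leq 1$ instead. The assumption $\|g\|_{\mathcal{X},\infty}\leq M$ therefore plays no role in the proof itself, although it is natural in the applications where the lemma is used.
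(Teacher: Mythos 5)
Your proof is correct and, underneath the notation, is the same argument as the paper invokes. The paper does not give a self-contained proof: it cites Lemma~4.3 of \citet{bos2022convergence}, which establishes the pointwise inequality, and then notes that one integrates with the $L^{1}$ norm instead of taking a supremum. Your decomposition
\begin{align*}
(H\circ g)_{i}(x)-g_{i}^{*}(x)=\bigl(\exp(g_{i}(x))-g_{i}^{*}(x)\bigr)+(H\circ g)_{i}(x)\bigl(1-S(x)\bigr),\qquad S(x)=\sum_{j=1}^{d_{1}}\exp(g_{j}(x)),
\end{align*}
together with $(H\circ g)_{i}\in[0,1]$ and $1-S(x)=\sum_{j}\bigl(g_{j}^{*}(x)-\exp(g_{j}(x))\bigr)$, is precisely the pointwise bound from that source, so you have simply reconstructed the cited step rather than found a new route. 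Two small remarks. First, the nonnegativity of the $g_{j}^{*}$ that you flagged as borrowed from context is not actually needed: since $\exp(g_{j})>0$, the reverse triangle inequality gives $\bigl||g_{j}^{*}(x)|-\exp(g_{j}(x))\bigr|\leq |g_{j}^{*}(x)-\exp(g_{j}(x))|$, so $|1-S(x)|=\bigl|\sum_{j}(|g_{j}^{*}(x)|-\exp(g_{j}(x)))\bigr|\leq\sum_{j}|g_{j}^{*}(x)-\exp(g_{j}(x))|$ directly from $\|g^{*}(x)\|_{1}=1$. Second, you are right that the bound $\|g\|_{\mathcal{X},\infty}\leq M$ is inert here; it merely records a property of the networks to which the lemma is later applied.
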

\begin{proof}
Similarly to~\citep[proof of Lemma~4.3, p.2741]{bos2022convergence}, we note that
\begin{align*}
\|H_{i}\circ g-g_{i}^{*}\|_{L^{s}(\mathcal{X})}
&\leq
\|H_{i}\circ g-e^{g_{i}}\|_{L^{s}(\mathcal{X})}+\|e^{g_{i}}-g_{i}^{*}\|_{L^{s}(\mathcal{X})}\\
&\leq
\left(\sum_{j=1}^{d_{1}}\|H_{i}\circ g\|_{L^{\infty}(\mathcal{X})}\|e^{g_{j}}-g_{j}^{*}\|_{L^{s}(\mathcal{X})}\right)+\|e^{g_{i}}-g_{i}^{*}\|_{L^{s}(\mathcal{X})}\\
&\leq
\left(\sum_{j=1}^{d_{1}}\|e^{g_{j}}-g_{j}^{*}\|_{L^{s}(\mathcal{X})}\right)+\|e^{g_{i}}-g_{i}^{*}\|_{L^{s}(\mathcal{X})},
\end{align*}
where note that in the second inequality the property $\sum_{j=1}^{d_{1}}g_{j}^{*}=1$ is used.
\end{proof}

The following useful approximation property is proven by~\citet{bos2022convergence}:
\begin{lemma}[{Theorem~4.1 in~\citep{bos2022convergence}}]
\label{citetheorem:bos theorem}
Let $0<\varepsilon\leq \frac{1}{2}$, and let $\alpha>0$.
Then, there are constant $c'>0$ and $g_{\bm{W}^{*},\bm{b}^{*}}\in\mathcal{F}_{L_{\textup{log}},J_{\textup{log}},S_{\textup{log}},M_{\textup{log}},\bm{d}_{\textup{log}}}^{\textup{NN}}$ with parameters $L_{\textup{log}}\lesssim \log_{2}\varepsilon^{-1}$, $J_{\textup{log}}\leq 1$, $S_{\textup{log}}\lesssim \varepsilon^{-1/\alpha}\log_{2}\varepsilon^{-1}$, $M_{\textup{log}}\leq |\log(4\varepsilon)|\vee \log(1+4\varepsilon)$, and $\bm{d}_{\textup{log}}=(1,\lfloor c'\varepsilon^{-1/\alpha}\rfloor,\cdots,\lfloor c'\varepsilon^{-1/\alpha}\rfloor, 1)$ such that for any $s\in[0,1]$,
\begin{align}
\label{eq:bos inequality for approximation error}
|\exp\circ g_{\bm{W}^{*},\bm{b}^{*}}(s)-s|\leq 4\varepsilon.
\end{align}
\end{lemma}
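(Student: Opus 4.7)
The target is to build a ReLU network $g_{\bm{W}^*,\bm{b}^*}:[0,1]\to\mathbb{R}$ such that $\exp\circ g_{\bm{W}^*,\bm{b}^*}$ approximates the identity in sup-norm to error $4\varepsilon$. The natural choice is $g_{\bm{W}^*,\bm{b}^*}\approx\log$, which is singular at $0$; I would therefore take as target the truncated function $\tilde g(s):=\log(s\vee\varepsilon)$. For this choice $|\exp(\tilde g(s))-s|=|(s\vee\varepsilon)-s|\le\varepsilon$ on $[0,1]$, and since $\exp$ is $e$-Lipschitz on $(-\infty,1]$ (which contains the range of $\tilde g$), it suffices to approximate $\tilde g$ in sup norm to precision $\delta\lesssim\varepsilon$ in order to obtain total error at most $4\varepsilon$.

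I would construct the ReLU approximation of $\tilde g$ via dyadic reduction. For $s\in[2^{-(k+1)},2^{-k}]$ with $0\le k\le K:=\lceil\log_2\varepsilon^{-1}\rceil$, one has $\log s=-k\log 2+\log(2^k s)$ with $2^k s\in[1/2,1]$, so $\log$ on $[\varepsilon,1]$ is reduced to $\log$ on the fixed interval $[1/2,1]$ plus bookkeeping. The first piece is a \emph{normalization block} returning approximations $\widehat k(s)\in\{0,\dots,K\}$ and $\widehat r(s)\in[1/2,1]$ via a staircase of $K$ threshold comparisons against $2^{-j}$, with the doubling by $2$ spread one layer at a time so that all weights stay bounded by $1$; this block has depth $O(\log\varepsilon^{-1})$ and $O(\log\varepsilon^{-1})$ active parameters. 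The second piece is a \emph{smooth-log block} approximating $\log$ on $[1/2,1]$ to precision $\delta\sim\varepsilon$: since the target is $C^\infty$ on a compact interval bounded away from the singularity, its $\alpha$-H\"older norm is bounded by a constant depending only on $\alpha$, so the standard ReLU approximation theorem for $\alpha$-H\"older functions in the vein of~\citet{petersen2018optimal,imaizumi2022advantage} yields a network of depth $O(\log\varepsilon^{-1})$, width $O(\varepsilon^{-1/\alpha})$, and sparsity $O(\varepsilon^{-1/\alpha}\log\varepsilon^{-1})$. Combining gives $g_{\bm{W}^*,\bm{b}^*}(s)=-\widehat k(s)\log 2+N_{\log}(\widehat r(s))$, clipped so the output lies in $[\log(4\varepsilon),\log(1+4\varepsilon)]$; a short computation then shows $|g_{\bm{W}^*,\bm{b}^*}-\tilde g|\lesssim\varepsilon$ uniformly, and composition with $\exp$ preserves this bound up to the Lipschitz factor.

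The hardest part will be the bookkeeping required to meet all stated bounds simultaneously. The weight constraint $J_{\textup{log}}\le 1$ is particularly restrictive: it forbids implementing ``multiply by $2^K$'' in any single layer, which is why the normalization block has to be serialized into $O(K)$ one-doubling layers. One must then verify that the two blocks compose with compatible widths $\bm d_{\textup{log}}=(1,\lfloor c'\varepsilon^{-1/\alpha}\rfloor,\ldots,\lfloor c'\varepsilon^{-1/\alpha}\rfloor,1)$, that the total depth stays $O(\log\varepsilon^{-1})$ rather than the sum of the two, and that intermediate activations respect the $M_{\textup{log}}$ bound $|\log(4\varepsilon)|\vee\log(1+4\varepsilon)$. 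The remaining ingredients — the $e$-Lipschitz error propagation through $\exp$ on the range $[\log(4\varepsilon),\log(1+4\varepsilon)]$, and the H\"older-class ReLU approximation of $\log|_{[1/2,1]}$ — are standard and will supply the stated rates up to constants depending only on $\alpha$.
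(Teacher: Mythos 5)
The paper does not prove this lemma; it is imported verbatim as Theorem~4.1 of Bos and Schmidt-Hieber (2022), so there is no in-paper argument to compare against and you are reconstructing the proof from scratch. Your outer strategy — truncate to $\tilde g(s)=\log(s\vee c\varepsilon)$, push the $\exp$-composition error through the $e$-Lipschitz bound on $(-\infty,1]$, and reduce to approximating $\log$ on a fixed compact interval via dyadic range reduction — is the right shape. But the normalization block has a genuine gap. You propose to output $\widehat{k}(s)$ near $\{0,\dots,K\}$ and $\widehat{r}(s)\approx 2^{k(s)}s\in[1/2,1]$ and set $g(s)=-\widehat{k}(s)\log 2+N_{\log}(\widehat{r}(s))$. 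A ReLU network is continuous, so across each dyadic boundary $s=2^{-k}$, where $k(s)$ drops by one and $2^{k(s)}s$ jumps from $1$ to $1/2$, both $\widehat{k}$ and $\widehat{r}$ must ramp rather than jump. The exact cancellation $-k\log 2+\log(2^{k}s)=\log s$ relies on the identity $\widehat{r}=2^{\widehat{k}}s$, which no piecewise-linear $\widehat{r}$ can satisfy once $\widehat{k}$ is non-integer. At the midpoint of the ramp $(\widehat{k}\approx k-\tfrac12,\ \widehat{r}\approx\tfrac34)$ one gets $g\approx-k\log 2+\tfrac12\log 2+\log\tfrac34$ while $\tilde g(2^{-k})=-k\log 2$, an error of $\log\bigl(3/(2\sqrt 2)\bigr)\approx 0.06$, a constant independent of $\varepsilon$ and of the ramp width. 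So the asserted short computation giving $|g-\tilde g|\lesssim\varepsilon$ uniformly does not hold: the construction as written has sup-norm error $\Theta(1)$.

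The repair is to never produce $\widehat{k}$ and $\widehat{r}$ as separate outputs. With the exact piecewise-linear chain $s_j=\min(2^j s,1)$ (realizable under $J\le 1$ by spreading each doubling over two layers, as you note) one has $\log s_j-\log s_{j+1}=-\log 2+\log\max(1,2s_j)$, and hence $\log(s\vee 2^{-K})=-K\log 2+\sum_{j=0}^{K-1}\log\max(1,2s_j)$. Each summand feeds a continuous piecewise-linear input landing in $[1,2]$ into a ReLU approximant of $\log|_{[1,2]}$ (a $C^\infty$ function on a fixed compact interval) to per-term error $\sim\varepsilon/K$; these are run in parallel after the depth-$O(K)$ chain, so depth stays $O(\log\varepsilon^{-1})$. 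This yields a single continuous network with $O(\varepsilon)$ sup error, and the remaining bookkeeping — the $M_{\textup{log}}$ clipping, the $e$-Lipschitz propagation through $\exp$, and the width/sparsity counting — proceeds essentially as you describe.
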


To approximate indicator functions, we adapt the analyses of~\citet{petersen2018optimal} and \citet{imaizumi2019deep,imaizumi2022advantage}.
A similar approach is considered in~\citep{kim2021fast}, although we need to deal with the softmax function.
This additional step is done by applying the analyses shown by~\citet{bos2022convergence}.

The following properties of deep ReLU networks are well-known (see, e.g,~\citep{yarotsky2017error,schmidt-hieber2020nonparametric,petersen2018optimal,nakada2020adaptive,bos2022convergence,imaizumi2019deep,imaizumi2022advantage}).
We provide a proof, for the reader's convenience.
\begin{lemma}
\label{lem:basic operations of neural networks}
We have the following properties:
\begin{enumerate}
    \item[(i)] Given $L,L'\in\mathbb{N}$, $J,S,M,J',S',M'\geq 0$, $\bm{d}=(d_{\textup{NN},0},\cdots,d_{\textup{NN},L})\in \mathbb{N}^{L+1}$, and $\bm{d}'=(d_{\textup{NN},0}',\cdots,d_{\textup{NN},L'}')\in\mathbb{N}^{L'+1}$, if $L=L'$ and $d_{\textup{NN},0}=d_{\textup{NN},0}'$, then for each $g_{\bm{W},\bm{b}}\in \mathcal{F}_{L,J,S,M,\bm{d}}^{\textup{NN}}$ and $g_{\bm{W}',\bm{b}'}\in\mathcal{F}_{L',J',S',M',\bm{d}'}^{\textup{NN}}$, there is $g_{\bm{W}'',\bm{b}''}\in\mathcal{F}_{L,J\vee J',S+S',M\vee M',\widetilde{\bm{d}}}^{\textup{NN}}$ with $\widetilde{\bm{d}}=(d_{\textup{NN},0},d_{\textup{NN},1}+d_{\textup{NN},1}',\cdots,d_{\textup{NN},L}+d_{\textup{NN},L}')$ such that
    \begin{align}
    \label{eq:well-known property of deep relu networks property one}
    g_{\bm{W}'',\bm{b}''}(x)=(g_{\bm{W},\bm{b}}(x),g_{\bm{W}',\bm{b}'}(x))\quad\textup{ for every }x\in\mathbb{R}^{d_{\textup{NN},0}}.
    \end{align}
    \item[(ii)] Given $L,L'\in\mathbb{N}$, $J,S,M,J',S',M'\geq 0$, $\bm{d}=(d_{\textup{NN},0},\cdots,d_{\textup{NN},L})\in\mathbb{N}^{L+1}$, and $\bm{d}'=(d_{\textup{NN},0}',\cdots,d_{\textup{NN},L'}')\in\mathbb{N}^{L'+1}$, if $d_{\textup{NN},L}=d_{\textup{NN},0}'$ and $M\leq 1$, then for each $g_{\bm{W},\bm{b}}\in\mathcal{F}_{L,J,S,M,\bm{d}}^{\textup{NN}}$ and $g_{\bm{W}',\bm{b}'}\in\mathcal{F}_{L',J',S',M',\bm{d}'}^{\textup{NN}}$, there is $g_{\bm{W}'',\bm{b}''}\in\mathcal{F}_{L+L',J\vee J',S+S',M',\widetilde{\bm{d}}}^{\textup{NN}}$ with $\widetilde{\bm{d}}=(d_{\textup{NN},0},\cdots,d_{\textup{NN},L},d_{\textup{NN},1}',\cdots,d_{\textup{NN},L'}')$ such that
    \begin{align}
    \label{eq:well-known property of deep relu networks property two}
    g_{\bm{W}'',\bm{b}''}(x)=g_{\bm{W}',\bm{b}'}\circ \bm{\sigma}_{\textup{ReLU},d_{\textup{NN},L}}\circ g_{\bm{W},\bm{b}}(x)\quad \textup{ for every }x\in\mathbb{R}^{d_{\textup{NN},0}}.
    \end{align}
\end{enumerate}
\end{lemma}
\begin{proof}
For the first claim, let $\bm{W}=(W_{1},\cdots,W_{L})$, $\bm{b}=(b_{1},\cdots,b_{L})$, $\bm{W}'=(W_{1}',\cdots,W_{L}')$, and $\bm{b}'=(b_{1}',\cdots,b_{L}')$.
Similarly to~\citep[Definition~2.7]{petersen2018optimal} and~\citep[Appendix~B.1.1]{nakada2020adaptive}, one can construct networks $g_{\bm{W}'',\bm{b}''}$ for which in each layer, the weight and the bias are defined as
\begin{align*}
W_{1}''&=
\left(
\begin{matrix}
W_{1}\\
W_{1}'
\end{matrix}
\right),\quad
b_{1}''=
\left(
b_{1},
b_{1}'
\right),
\\
W_{i}''&=
\left(
\begin{matrix}
W_{i} & \bm{O} \\
\bm{O} & W_{i}'
\end{matrix}
\right),\quad
b_{i}''=
\left(
b_{i},
b_{i}'
\right),\quad i=2,\cdots,L,
\end{align*}
where $\bm{O}$ denotes the zero matrix.
This function satisfies~\eqref{eq:well-known property of deep relu networks property one}.
The second claim (ii) is proven by constructing networks as in~\eqref{eq:well-known property of deep relu networks property two}.
\end{proof}
Note that the composition operation in Lemma~\ref{lem:basic operations of neural networks}--(ii) is slightly different from~\citep[Definition~2.5]{petersen2018optimal} and~\citep[Appendix~B.1.1]{nakada2020adaptive}, where the difference is that the implementation of identity function in~\citep[Lemma~2.3]{petersen2018optimal} is not used in Lemma~\ref{lem:basic operations of neural networks}--(ii).
This difference is due to the setting of the estimation problem considered in the current work.
Indeed, it suffices to approximate indicator functions, which are always non-negative functions.
In particular, we use the following property of $\sigma_{\textup{ReLU}}$:
For any $s\in\mathbb{R}$ and $s'\geq 0$, it holds that
\begin{align}
\label{eq:inequality of relu function}
|\sigma_{\textup{ReLU}}(s)-s'|\leq |s-s'|.
\end{align}

\citet{petersen2018optimal} show the following fact.
\begin{lemma}[{Lemma~3.4 in~\citep{petersen2018optimal}}]
\label{lem:petersen voigtlaender lemma}
Given any $\alpha>0$, $R> 0$, $K\in\mathbb{N}\setminus \{1\}$, denote by $\mathcal{C}_{R}^{\alpha,K-1}([-2^{-1},2^{-1}]^{K-1})$, the ball of H\"{o}lder space on $[-2^{-1},2^{-1}]^{K-1}$ for which its center is the origin, and its radius is $R$.
Let $\mathcal{K}\subset[-2^{-1},2^{-1}]^{K}$ be any set such that there are a function $h\in\mathcal{C}_{R}^{\alpha,K-1}([-2^{-1},2^{-1}]^{K-1})$ and a permutation $\pi$ on $\{1,\cdots,K\}$ satisfying
\begin{align*}
\mathcal{K}=\{x\in[-2^{-1},2^{-1}]^{K}\;|\; x_{\pi(1)}\geq -h(x_{\setminus \pi(1)})\}.
\end{align*}
Then, for each $\varepsilon\in (0,2^{-1})$ and $s>0$, there are a constant $c\in\mathbb{N}$ independent of $\varepsilon$, a finite subset $\mathcal{W}\subset \mathbb{R}$, and deep ReLU networks $g_{\bm{W},\bm{b}}\in\mathcal{F}_{L,J,S,M,\bm{d}_{1}}^{\textup{NN}}$ with $L\leq L_{0}\in\mathbb{N}$ ($L_{0}$ is a universal constant), $J\leq \varepsilon^{-c}$, $S\lesssim \varepsilon^{-s(K-1)/\alpha}$, $M=1$, and $\bm{d}_{1}=(K,d_{\textup{NN},1},\cdots,d_{\textup{NN},L-1},1)\in\mathbb{N}^{L+1}$, such that all of $c$, $\mathcal{W}$, $L,J,S,M$, and $\bm{d}_{1}$ are independent of $\mathcal{K}$, every entry in $\bm{W}$ or $\bm{b}$ belongs to $\mathcal{W}$, and it holds that
\begin{align*}
\|g_{\bm{W},\bm{b}}-\mathds{1}_{\mathcal{K}}\|_{L^{s}([-2^{-1},2^{-1}]^{K})}<\varepsilon.
\end{align*}
\end{lemma}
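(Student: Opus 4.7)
The plan is to reduce the approximation of $\mathds{1}_{\mathcal{K}}$ to two standard ingredients: a ReLU approximation of the Hölder boundary function $h$, and a ReLU-implementable ``soft indicator'' that turns a real number into a value in $[0,1]$ via a thin linear transition. Without loss of generality assume $i=d_0$, so that $\mathcal{K}=\{x \mid x_{d_0}\geq h(x_{\setminus d_0})\}$. Fix an intermediate smoothing width $\delta>0$ to be chosen later as a small polynomial in $\varepsilon$.

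First, invoke a Yarotsky/Schmidt-Hieber-type approximation theorem for Hölder functions: there exists $\widetilde h\in\mathcal{F}^{\textup{NN}}_{L',J',S',1,\bm{d}'}([-2^{-1},2^{-1}]^{d_0-1})$ with $L'\lesssim 1$, $J'\leq \delta^{-c_0}$ for some constant $c_0$, $S'\lesssim \delta^{-(d_0-1)/\alpha}$, and $\|\widetilde h - h\|_\infty\leq \delta$. Next, introduce the one-dimensional soft indicator
\begin{align*}
\phi_\delta(t) = \tfrac{1}{2\delta}\bigl(\sigma(t+\delta)-\sigma(t-\delta)\bigr),
\end{align*}
which equals $0$ for $t\leq -\delta$, equals $1$ for $t\geq \delta$, and interpolates linearly on $[-\delta,\delta]$. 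This is a single hidden layer with two units and weights of magnitude $\delta^{-1}$. Define the candidate network
\begin{align*}
g_{\bm{W},\bm{b}}(x)\;=\;\phi_\delta\bigl(x_{d_0}-\widetilde h(x_{\setminus d_0})\bigr),
\end{align*}
which is assembled by routing $x_{d_0}$ in parallel with the Hölder-approximating subnetwork and composing with $\phi_\delta$, producing a network of bounded depth, weight magnitude at most $\delta^{-c_0}\vee \delta^{-1}$, sparsity $O(\delta^{-(d_0-1)/\alpha})$, and output values in $[0,1]\subset[-1,1]$.

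The core estimate is the pointwise bound $g_{\bm{W},\bm{b}}(x)=\mathds{1}_{\mathcal{K}}(x)$ whenever $|x_{d_0}-h(x_{\setminus d_0})|>2\delta$: indeed, in that regime $|x_{d_0}-\widetilde h(x_{\setminus d_0})|>\delta$ because $\|\widetilde h-h\|_\infty\leq \delta$, and $\phi_\delta$ has already saturated. Consequently $|g_{\bm{W},\bm{b}}-\mathds{1}_{\mathcal{K}}|\leq 1$ only on the strip
\begin{align*}
\mathcal{T}_\delta=\bigl\{x\in[-\tfrac12,\tfrac12]^{d_0}\;\big|\; |x_{d_0}-h(x_{\setminus d_0})|\leq 2\delta\bigr\},
\end{align*}
whose Lebesgue measure is bounded by $4\delta$ using Fubini and the fact that for each fixed $x_{\setminus d_0}$ the section has one-dimensional measure $\leq 4\delta$. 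Hence
\begin{align*}
\|g_{\bm{W},\bm{b}}-\mathds{1}_{\mathcal{K}}\|_{L^s}^s\leq \mu(\mathcal{T}_\delta)\leq 4\delta.
\end{align*}

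Choosing $\delta=\varepsilon^s/4$ (valid because $\varepsilon\in(0,1/2)$) yields the target $L^s$ error strictly below $\varepsilon$, while the network parameters become $L\lesssim 1$, $J\leq (\varepsilon^s/4)^{-c_0}\lesssim \varepsilon^{-c}$ for a suitable constant $c$, and $S\lesssim (\varepsilon^s/4)^{-(d_0-1)/\alpha}\lesssim \varepsilon^{-s(d_0-1)/\alpha}$. The output magnitude is at most $1$, giving $M=1$. The main obstacle is ensuring the approximating $\widetilde h$ for $h$ really lives in the sparse ReLU class with the stated bounds on $(L,J,S,M,\bm{d})$; this is delegated to the classical Hölder-approximation theorem for ReLU networks, whose precise quantitative statement is what supplies the exponent $(d_0-1)/\alpha$ and the dependence of $J$ on $\varepsilon$. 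Everything else is an elementary measure-theoretic strip argument combined with the explicit single-layer construction of $\phi_\delta$.
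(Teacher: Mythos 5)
The paper quotes this result directly from Petersen and Voigtlaender (2018, Lemma 3.4) and supplies no proof of its own, so the comparison is really against the original source rather than anything in this manuscript. Your reconstruction has exactly the right shape: approximate the boundary function $h$ by a ReLU network $\widetilde h$ to sup-norm accuracy $\delta$, compose $x_{d_0}-\widetilde h(x_{\setminus d_0})$ with the ramp $\phi_\delta$, observe that the resulting network agrees with $\mathds{1}_{\mathcal{K}}$ off a strip of Lebesgue measure at most $4\delta$ around the graph of $h$, and then choose $\delta\asymp\varepsilon^{s}$ to convert the strip estimate into an $L^{s}$ bound. This is precisely the strategy Petersen and Voigtlaender use, and the strip-measure bound via Fubini and the final parameter bookkeeping are all correct.

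There is, however, one substantive mis-step in the ingredient you delegate. You attribute the H\"older approximation result with $L'\lesssim 1$, $J'\leq\delta^{-c_{0}}$, $S'\lesssim\delta^{-(d_{0}-1)/\alpha}$ to ``Yarotsky/Schmidt-Hieber-type'' theorems. Those results (Yarotsky 2017; Schmidt-Hieber 2020) produce networks whose \emph{depth grows like} $\log(1/\delta)$ while keeping the weight magnitudes uniformly bounded; they do not give constant depth. The variant you actually need --- fixed depth $L_{0}$ independent of $\delta$, paid for by weight magnitudes growing polynomially in $\delta^{-1}$ --- is exactly what Petersen and Voigtlaender establish in their own H\"older approximation theorem, and that theorem is the real heavy lifting behind their Lemma~3.4. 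As written, if you literally invoked Yarotsky or Schmidt-Hieber at this step, the constant-depth conclusion $L\leq L_{0}$ would fail. Swap in the correct fixed-depth approximation theorem and the rest of your argument goes through unchanged.
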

The following Lemma~\ref{lem:imaizumi fukumizu lemma} is a generalization of the fact proven in~\citep[Appendix~B.1]{imaizumi2019deep} for the $L^{2}(\mathcal{X})$-norm.
In particular, \citet[Appendix~B.1]{imaizumi2019deep} prove it by combining~\citep[Proposition~3]{yarotsky2017error} and~\citep[Lemma~A.3 and Lemma~3.4]{petersen2018optimal} for the $L^{2}(\mathcal{X})$-norm.
Therefore, its generalization to the $L^{s}(\mathcal{X})$-norm with $s\geq 1$ is straightforward, as it suffices to apply Lemma~\ref{lem:petersen voigtlaender lemma} with any $s\geq 1$ in the proof of~\citep[Appendix~B.1]{imaizumi2019deep}.
For the reader's convenience, we provide the proof.
\begin{lemma}[{Generalization of~\citep[Appendix~B.1]{imaizumi2019deep} for $\|\cdot\|_{L^{s}(\mathcal{X})}$}]
\label{lem:imaizumi fukumizu lemma}
Let $\alpha>0$, $R> 0$, and $K\in\mathbb{N}\setminus \{1\}$.
Let $\mathcal{K}\subset\mathcal{X}$ be any subset such that there are functions $h_{1},\cdots,h_{E}\in\mathcal{C}_{R}^{\alpha,K-1}$, $j_{1},\cdots,j_{E}\in\{1,\cdots,K\}$, and $s_{1},\cdots,s_{E}\in\{1,-1\}$ satisfying
\begin{align*}
\mathcal{K}=\bigcap_{i=1}^{E}\{x\in\mathcal{X}\;|\; s_{i}x_{j_{i}}\geq s_{i} h_{i}(x_{\setminus j_{i}})\}.
\end{align*}
Then, for any $\varepsilon\in(0,2^{-1})$ and $s\geq 1$, there are a constant $c>0$, a universal constant $L_{0}\in\mathbb{N}$, and $g_{\bm{W},\bm{b}}\in\mathcal{F}_{L,J,S,M,\bm{d}_{1}}^{\textup{NN}}$ with $L\leq L_{0}$, $J\lesssim \varepsilon^{-c}$, $S\lesssim \varepsilon^{-s(K-1)/\alpha}$, $M\lesssim 1$, and $\bm{d}_{1}=(K,d_{\textup{NN},1},\cdots,d_{\textup{NN},L-1},1)\in\mathbb{N}^{L+1}$, such that all of $c$, $L,J,S,M$, and $\bm{d}_{1}$ are independent of $\mathcal{K}$, and it holds that
\begin{align*}
\|g_{\bm{W},\bm{b}}-\mathds{1}_{\mathcal{K}}\|_{L^{s}(\mathcal{X})}<\varepsilon.
\end{align*}
\end{lemma}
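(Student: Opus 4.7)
The plan is to assemble the desired approximation from $d_{*}$ copies of the Petersen--Voigtl\"{a}ender half-space approximation (Lemma~\ref{lem:petersen voigtlaender lemma}) combined via a constant-depth ReLU product gadget, essentially following the argument of~\citet{imaizumi2019deep}. A global shift by $-1/2$ in the first affine layer converts $\mathcal{X}=[0,1]^{d_{0}}$ into $[-1/2,1/2]^{d_{0}}$ at no extra cost in depth, sparsity, weight magnitude, or uniform bound, so Lemma~\ref{lem:petersen voigtlaender lemma} applies directly.

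For each $k\in\{1,\ldots,d_{*}\}$, set $A_{k}=\{x\in\mathcal{X}\,|\,s_{k}x_{j_{k}}\geq s_{k}h_{k}(x_{\setminus j_{k}})\}$, so that $\mathcal{K}=\bigcap_{k}A_{k}$. When $s_{k}=+1$, the set $A_{k}$ is directly of the form covered by Lemma~\ref{lem:petersen voigtlaender lemma}; when $s_{k}=-1$, it equals the complement (modulo a $\mu$-null boundary) of such a set, so $\mathds{1}_{A_{k}}=1-\mathds{1}_{A_{k}^{c}}$ can be implemented by flipping the sign and adding one in the final affine layer. Setting $\varepsilon'=\varepsilon/(2d_{*})$, Lemma~\ref{lem:petersen voigtlaender lemma} with $s=1$ yields, for each $k$, a ReLU network $g_{k}$ of constant depth, sparsity $\lesssim(\varepsilon')^{-(d_{0}-1)/\alpha}\lesssim\varepsilon^{-(d_{0}-1)/\alpha}$, weight bound $\leq\varepsilon^{-c_{k}}$ for some $c_{k}>0$, magnitude $M=1$, and $\|g_{k}-\mathds{1}_{A_{k}}\|_{L^{1}(\mathcal{X})}<\varepsilon'$.

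These networks are combined into an approximation of $\mathds{1}_{\mathcal{K}}=\prod_{k=1}^{d_{*}}\mathds{1}_{A_{k}}$ via a Yarotsky-type ReLU multiplication gadget. Since $d_{*}$ is fixed, a bounded-depth, bounded-sparsity ReLU circuit approximates $(a_{1},\ldots,a_{d_{*}})\mapsto\prod_{k}a_{k}$ on $[0,1]^{d_{*}}$ within any prescribed uniform error, and composing after a clip $\mathrm{clip}(t)=\sigma(t)-\sigma(t-1)\in[0,1]$ keeps the inputs in $[0,1]$. Let $g_{\bm{W},\bm{b}}$ be the resulting composite network; the telescoping identity $\prod_{k}a_{k}-\prod_{k}b_{k}=\sum_{k}(a_{k}-b_{k})\prod_{j<k}a_{j}\prod_{j>k}b_{j}$ combined with $|a_{j}|,|b_{j}|\leq 1$ gives
\begin{align*}
\|g_{\bm{W},\bm{b}}-\mathds{1}_{\mathcal{K}}\|_{L^{1}(\mathcal{X})}
\leq \sum_{k=1}^{d_{*}}\|g_{k}-\mathds{1}_{A_{k}}\|_{L^{1}(\mathcal{X})}+\tfrac{\varepsilon}{2}
\leq d_{*}\varepsilon'+\tfrac{\varepsilon}{2}=\varepsilon,
\end{align*}
with the $\varepsilon/2$ absorbing the (tunable) product-gadget error.

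The main obstacle is purely bookkeeping: one must verify that $L\leq L_{0}$, $J\leq\varepsilon^{-c}$, $S\lesssim\varepsilon^{-(d_{0}-1)/\alpha}$, and $M\lesssim 1$ hold simultaneously for the composite network. Constant depth survives because both the Petersen--Voigtl\"{a}ender blocks and the product gadget have depth independent of $\varepsilon$ and $d_{*}$ is fixed; sparsity is dominated by the $d_{*}$ parallel subnetworks; the weight bound of the form $\varepsilon^{-c}$ survives taking the maximum over a finite number of constituents; and the uniform bound $M\lesssim 1$ follows from the clip step together with $\|g_{k}\|_{\infty}\leq 1$. The only delicate point is ensuring the product-gadget sparsity does not exceed the Petersen--Voigtl\"{a}ender budget, which is immediate since the gadget has $\varepsilon$-independent complexity.
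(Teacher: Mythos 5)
Your high-level plan—approximate each half-space indicator $\mathds{1}_{A_k}$ via Lemma~\ref{lem:petersen voigtlaender lemma} (after a domain shift and, for $s_k=-1$, a sign flip $\mathds{1}_{A_k}=1-\mathds{1}_{A_k^{c}}$ up to a $\mu$-null set), then combine the $d_*$ pieces with the budget $\varepsilon'=\varepsilon/(2d_*)$—is the right route, and matches what the paper defers to in Appendix~B.1 of \citet{imaizumi2019deep} with $s=1$ instead of $s=2$. The gap is in the combining step: you assert that ``a bounded-depth, bounded-sparsity ReLU circuit approximates $(a_1,\ldots,a_{d_*})\mapsto\prod_k a_k$ on $[0,1]^{d_*}$ within any prescribed uniform error.'' This is false. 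A ReLU network of fixed architecture realizes a fixed piecewise-linear function with a bounded number of pieces, and no such function approximates $(a,b)\mapsto ab$ on $[0,1]^2$ to arbitrary accuracy; Yarotsky's $\widetilde{\times}$ gadget reaches uniform accuracy $\delta$ only at depth $O(\log(1/\delta))$. Taking $\delta=\varepsilon/2$ as you propose, the composite network has depth $O(\log(1/\varepsilon))$, which contradicts the lemma's conclusion $L\leq L_0$ with $L_0$ a universal constant independent of $\varepsilon$.

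The correct combining tool is an exact $\min$ gadget: $\min(a,b)=\tfrac{1}{2}\bigl(a+b-\sigma(a-b)-\sigma(b-a)\bigr)$ is a two-layer ReLU network, so $\min_k$ over $d_*$ inputs has depth $O(\log d_*)$—a universal constant because $d_*$ is fixed—and contributes zero additional error. After clipping each $g_k$ to $[0,1]$ via the exact two-ReLU map $t\mapsto\sigma(t)-\sigma(t-1)$ (a $1$-Lipschitz projection fixing $\{0,1\}$, hence not increasing $|g_k-\mathds{1}_{A_k}|$), the fact that $\min$ is $1$-Lipschitz with respect to the $\ell_\infty$ norm on its arguments and agrees with the product on $\{0,1\}^{d_*}$ gives
\begin{align*}
\Bigl\|\min_{k}g_k-\mathds{1}_{\mathcal{K}}\Bigr\|_{L^1(\mathcal{X})}
=\Bigl\|\min_{k}g_k-\min_{k}\mathds{1}_{A_k}\Bigr\|_{L^1(\mathcal{X})}
\leq\sum_{k=1}^{d_*}\|g_k-\mathds{1}_{A_k}\|_{L^1(\mathcal{X})}<\varepsilon,
\end{align*}
while the parameter bounds $L\leq L_0$, $J\leq\varepsilon^{-c}$, $S\lesssim\varepsilon^{-(d_0-1)/\alpha}$, $M\lesssim 1$ survive because the clip and $\min$ layers have $\varepsilon$-independent complexity. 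Equivalently one can use the single-layer threshold $\sigma\bigl(\sum_{k}g_k-(d_*-1)\bigr)$. With this substitution the rest of your bookkeeping goes through unchanged.
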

\begin{proof}
For each $i=1,\cdots,E$, define
\begin{align*}
\mathcal{J}_{i}=\{x\in\mathcal{X}\;|\; s_{i}x_{j_{i}}\geq s_{i}h_{i}(x_{\setminus j_{i}})\}.
\end{align*}
Similarly to~\citep[Appendix~B.1]{imaizumi2019deep}, the proof consists of the following two steps: First, for any function $g=g_{0}\circ (g_{1},\cdots,g_{E})$ defined with continuous functions $g_{0}:[0,1]^{E}\to\mathbb{R}$ and $g_{1},\cdots,g_{E}:\mathcal{X}\to [0,1]$, we note that
\begin{align}
\label{eq:imaizumi fukumizu lemma proof eq 1}
&\|g-\mathds{1}_{\mathcal{K}}\|_{L^{s}(\mathcal{X})}\nonumber\\
&\leq
\|g_{1}g_{2}\cdots g_{E}-\mathds{1}_{\mathcal{K}}\|_{L^{s}(\mathcal{X})}+\|g_{0}(t_{1},\cdots,t_{E})-t_{1}t_{2}\cdots t_{E}\|_{L^{\infty}([0,1]^{E})}\nonumber\\
&\leq
\sum_{i=1}^{E}\|g_{i}-\mathds{1}_{\mathcal{J}_{i}}\|_{L^{s}(\mathcal{X})}+\|g_{0}(t_{1},\cdots,t_{E})-t_{1}t_{2}\cdots t_{E}\|_{L^{\infty}([0,1]^{E})},
\end{align}
where $t_{1}t_{2}\cdots t_{E}$ denotes the function $(t_{1},t_{2},\cdots,t_{E})\mapsto t_{1}t_{2}\cdots t_{E}$.
In the next step, we apply Lemma~\ref{lem:petersen voigtlaender lemma} and Lemma~1 in~\citep{imaizumi2019deep} to conclude the proof.
To this end, we note that when $s_{i}=1$, Lemma~\ref{lem:petersen voigtlaender lemma} is directly applicable by using transform $[-2^{-1},2^{-1}]^{K}\ni x\mapsto x+2^{-1}\bm{1}_{K}\in \mathcal{X}$, where $\bm{1}_{K}=(1,\cdots,1)\in\mathcal{X}$.
When $s_{i}=-1$, it suffices to note that by property (P2) in Remark~\ref{rem:the property p2}, for any ReLU networks $g_{\bm{W},\bm{b}}$, it holds that
\begin{align}
\label{eq:imaizumi fukumizu lemma proof eq 2}
&\|1-g_{\bm{W},\bm{b}}-\mathds{1}_{\{x\in\mathcal{X}\;|\; -x_{j_{i}}\geq -h_{i}(x_{\setminus j_{i}})\}}\|_{L^{s}(\mathcal{X})}\nonumber \\
&=\|g_{\bm{W},\bm{b}}-\mathds{1}_{\{x\in\mathcal{X}\;|\; x_{j_{i}}\geq h_{i}(x_{\setminus j_{i}})\}}\|_{L^{s}(\mathcal{X})}.
\end{align}
Thus, by Lemma~\ref{lem:petersen voigtlaender lemma}, for each $i=1,\cdots,E$, we can take some deep ReLU networks $g_{\bm{W}_{i},\bm{b}_{i}}\in\mathcal{F}_{L_{1},J_{1},S_{1},M_{1},\bm{d}_{1,1}}^{\textup{NN}}$ satisfying $\|g_{\bm{W}_{i},\bm{b}_{i}}-\mathds{1}_{\mathcal{J}_{i}}\|_{L^{s}(\mathcal{X})}<\varepsilon/(2E)$ with some $c>0$, $L_{1}\in\mathbb{N}$, $J_{1}\lesssim \varepsilon^{-c}$, $S_{1}\lesssim \varepsilon^{-s(K-1)/\alpha}$, $M_{1}=1$, and $\bm{d}_{1,1}=(K,d_{\textup{NN},1}^{(1)},\cdots,d_{\textup{NN},L_{1}-1}^{(1)},1)$.
By Lemma~1 in~\citep{imaizumi2019deep}, one can take some deep ReLU networks $g_{\bm{W}_{0},\bm{b}_{0}}\in\mathcal{F}_{L_{2},J_{2},S_{2},M_{2},\bm{d}_{2}}^{\textup{NN}}$ satisfying that
\begin{align*}
\|g_{\bm{W}_{0},\bm{b}_{0}}(t_{1},\cdots,t_{E})-t_{1}t_{2}\cdots t_{E}\|_{L^{\infty}([0,1]^{E})}<\frac{\varepsilon}{2},
\end{align*}
with some $c'>0$ and parameters $L_{2}\in\mathbb{N}$, $J_{2}\leq \varepsilon^{-c'}$, $S_{2}\lesssim \varepsilon^{-s(K-1)/\alpha}$, $M_{2}\lesssim 1$, and $\bm{d}_{2}=(E,d_{\textup{NN},1}^{(2)},\cdots,d_{\textup{NN},L_{2}-1}^{(2)},1)$.
By Lemma~\ref{lem:basic operations of neural networks} and~\eqref{eq:inequality of relu function}, we can construct networks $g_{\bm{W},\bm{b}}\in\mathcal{F}_{L,J,S,M,\bm{d}_{1}}^{\textup{NN}}$ using $g_{\bm{W}_{0},\bm{b}_{0}},\cdots,g_{\bm{W}_{E},\bm{b}_{E}}$ to obtain the claim.
\end{proof}

We obtain the following approximation error bounds:
\begin{proposition}
\label{prop:approximation error bound}
Let $\alpha>0$, $\tau\geq 1$, $\xi=(R,K,d_{1},E,\theta_{\textup{NC}},\theta_{1},\theta_{2},\theta_{3})\in \Xi$, $0<\varepsilon<2^{-1}$, $P\in\mathcal{P}_{\alpha,\tau,\xi}$, and let $\mathscr{S}_{P}=\{\mathcal{K}_{i}\}_{i=1}^{d_{1}}$.
Denote by $g_{i}^{*}=\mathds{1}_{\mathcal{K}_{i}}$, $i=1,\cdots,d_{1}$.
Then, there are ReLU networks $g_{\bm{W}^{*},\bm{b}^{*}}\in\mathcal{F}_{L^{*},J^{*},S^{*},M^{*},\bm{d}^{*}}^{\textup{NN}}$ with a constant $c\geq 0$ and parameters $L^{*}\lesssim \log_{2}\varepsilon^{-1}$, $1\leq J^{*}\lesssim \varepsilon^{-c}$, $S^{*}\lesssim \varepsilon^{-(K-1)/\alpha}\log_{2}\varepsilon^{-1}$, $M^{*}\lesssim |\log(4\varepsilon)|\vee 1$, and $\bm{d}^{*}=(K,d_{\textup{NN},1}^{*},\cdots,d_{\textup{NN},L^{*}-1}^{*},d_{1})\in\mathbb{N}^{L^{*}+1}$ for which all of $c$, $L^{*},J^{*},S^{*},M^{*}$, and $\bm{d}^{*}$ are independent of $P$, such that for every $i=1,\cdots,d_{1}$,
\begin{align*}
    \|H_{i}\circ g_{\bm{W}^{*},\bm{b}^{*}}-g_{i}^{*}\|_{L^{1}(\mathcal{X})}\leq \varepsilon.
\end{align*}
\end{proposition}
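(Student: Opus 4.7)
The plan is to build $g_{\bm{W}^{*},\bm{b}^{*}}:\mathcal{X}\to\mathbb{R}^{d_{1}}$ as a vector whose $i$-th component, after the softmax $H$, approximates $\mathds{1}_{\mathcal{K}_{i}}$. The strategy has three layers: first approximate each $\mathds{1}_{\mathcal{K}_{i}}$ in $L^{1}(\mathcal{X})$ by a plain ReLU network using the H\"older-boundary decomposition in (P1'); second, compose with the ReLU implementation of an inverse-exp-type map supplied by Lemma~\ref{citetheorem:bos theorem}; and third, turn this into a softmax bound via Lemma~\ref{lem:bos lemma}.

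For the first layer, condition (P1') says each $\mathcal{K}_{i}$ is the disjoint union over $\sigma=(s_{1},\dots,s_{d_{*}})\in\mathcal{I}_{i}$ of intersections $\bigcap_{k=1}^{d_{*}}\widetilde{\mathcal{K}}_{s_{k},h_{k},j_{k}}$, and each intersection has exactly the form treated in Lemma~\ref{lem:imaizumi fukumizu lemma}. Disjointness of the union (different $\sigma$'s impose contradictory sign conditions) gives the pointwise identity $\mathds{1}_{\mathcal{K}_{i}}=\sum_{\sigma\in\mathcal{I}_{i}}\mathds{1}_{\bigcap_{k}\widetilde{\mathcal{K}}_{s_{k},h_{k},j_{k}}}$ with at most $2^{d_{*}}$ summands. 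Applying Lemma~\ref{lem:imaizumi fukumizu lemma} to each summand with tolerance $\varepsilon_{1}$ and adding the outputs in a single hidden-layer combination yields a ReLU network $\widehat{g}_{i}$ of constant depth $L_{0}$, size $\lesssim \varepsilon_{1}^{-(d_{0}-1)/\alpha}$, and $\|\widehat{g}_{i}-\mathds{1}_{\mathcal{K}_{i}}\|_{L^{1}(\mathcal{X})}\leq 2^{d_{*}}\varepsilon_{1}$. A ReLU clipping $\mathrm{clip}(s)=\sigma(s)-\sigma(s-1)$ afterwards forces the output into $[0,1]$ without enlarging the $L^{1}$ error against the $\{0,1\}$-valued target.

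For the second layer, I apply Lemma~\ref{citetheorem:bos theorem} with the given $\alpha$ and tolerance $\varepsilon_{2}$ to obtain a univariate ReLU network $g_{\log}$ with depth $\lesssim\log_{2}\varepsilon_{2}^{-1}$, size $\lesssim \varepsilon_{2}^{-1/\alpha}\log_{2}\varepsilon_{2}^{-1}$, weight magnitude $\lesssim 1$, and uniform magnitude $\lesssim |\log(4\varepsilon_{2})|$, satisfying $|\exp\circ g_{\log}(s)-s|\leq 4\varepsilon_{2}$ on $[0,1]$. I then define the $i$-th component of $g_{\bm{W}^{*},\bm{b}^{*}}$ as $g_{\log}\circ\mathrm{clip}\circ\widehat{g}_{i}$ and parallelize the $d_{1}$ components into a single network whose output layer has width $d_{1}$. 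Serial composition adds depths, and parallel composition multiplies widths and sums sparsities by a factor bounded by the constant $d_{1}$. A triangle inequality gives
\begin{align*}
\|\exp\circ g_{\bm{W}^{*},\bm{b}^{*},i}-\mathds{1}_{\mathcal{K}_{i}}\|_{L^{1}(\mathcal{X})}
\leq 4\varepsilon_{2}\,\mu(\mathcal{X})+2^{d_{*}}\varepsilon_{1}.
\end{align*}

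For the third layer, the partition property yields $\|\mathds{1}_{\mathcal{K}_{\cdot}}(x)\|_{1}=1$ pointwise, so Lemma~\ref{lem:bos lemma} applies componentwise and bounds $\|(H\circ g_{\bm{W}^{*},\bm{b}^{*}})_{i}-\mathds{1}_{\mathcal{K}_{i}}\|_{L^{1}(\mathcal{X})}$ by $(d_{1}+1)(4\varepsilon_{2}\mu(\mathcal{X})+2^{d_{*}}\varepsilon_{1})$. Choosing $\varepsilon_{1}$ and $\varepsilon_{2}$ as fixed multiples of $\varepsilon$ makes the right-hand side at most $\varepsilon$, and the asserted orders for $L^{*},J^{*},S^{*},M^{*},\bm{d}^{*}$ follow by tallying resources (the $\varepsilon^{-(d_{0}-1)/\alpha}$ from the boundary approximation dominating the $\varepsilon^{-1/\alpha}\log\varepsilon^{-1}$ from $g_{\log}$, and the $\log\varepsilon^{-1}$ in $L^{*}$ coming from $g_{\log}$'s depth). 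The only subtle bookkeeping point, and the step I expect to take most of the care, is arranging the compositions so that the clipping layer, the summation of the $2^{d_{*}}$ intersection networks, and the $d_{1}$-fold parallelization all collapse into a valid element of $\mathcal{F}^{\mathrm{NN}}_{L^{*},J^{*},S^{*},M^{*},\bm{d}^{*}}(\mathcal{X})$ with the quoted parameter orders; nothing here is conceptually hard, but the constants and widths must be threaded consistently.
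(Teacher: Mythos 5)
Your proof is correct and follows essentially the same route as the paper's: approximate each $\mathds{1}_{\mathcal{K}_i}$ in $L^{1}$ via Lemma~\ref{lem:imaizumi fukumizu lemma} (the paper delegates the union-over-$\mathcal{I}_i$ bookkeeping to Lemma~5 of \citet{imaizumi2022advantage} and condition (P2), where you carry it out explicitly), compose with the inverse-exponential network of Lemma~\ref{citetheorem:bos theorem}, and transfer the bound through the softmax via Lemma~\ref{lem:bos lemma}. The only minor variation is your two-sided $\mathrm{clip}$ in place of the paper's single ReLU $\sigma$ combined with the uniform bound $M_1 \lesssim 1$ from the approximation lemma; both serve to ensure the intermediate output lands in the domain on which Lemma~\ref{citetheorem:bos theorem} applies.
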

\begin{proof}
We basically follow the proof of~Lemma~4.3 in~\citep{bos2022convergence}.

For some neural networks $g_{\bm{W}_{1,i},\bm{b}_{1,i}}:[0,1]^{K}\to [-1,1]$ and $g_{\bm{W}_{2,i},\bm{b}_{2,i}}:[0,1]\to\mathbb{R}$ for $i=1,\cdots,d_{1}$, define $g_{\bm{W}_{i},\bm{b}_{i}}=g_{\bm{W}_{2,i},\bm{b}_{2,i}}\circ\sigma_{\textup{ReLU}}\circ g_{\bm{W}_{1,i},\bm{b}_{1,i}}$.
Following the approach of~Lemma~4.3 in~\citep{bos2022convergence}, we evaluate every quantity $\|\exp\circ g_{\bm{W}_{i},\bm{b}_{i}}-g_{i}^{*}\|_{L^{1}(\mathcal{X})}$, $i=1,\cdots,d_{1}$, as follows:
\begin{align}
\label{eq:lemma coordinate wise approximation eq 1}
&\left\|\exp\circ g_{\bm{W}_{i},\bm{b}_{i}}-g_{i}^{*}\right\|_{L^{1}(\mathcal{X})}\nonumber\\
&\leq 
\left\|\exp\circ g_{\bm{W}_{i},\bm{b}_{i}}-\sigma_{\textup{ReLU}}\circ g_{\bm{W}_{1,i},\bm{b}_{1,i}}\right\|_{L^{1}(\mathcal{X})}
+
\left\|\sigma_{\textup{ReLU}}\circ g_{\bm{W}_{1,i},\bm{b}_{1,i}}-g_{i}^{*}\right\|_{L^{1}(\mathcal{X})}.
\end{align}
Let $g:=(g_{\bm{W}_{1},\bm{b}_{1}},\cdots,g_{\bm{W}_{d_{1}},\bm{b}_{d_{1}}})$.
Lemma~\ref{lem:bos lemma} and \eqref{eq:lemma coordinate wise approximation eq 1} assert that
\begin{align}
\label{eq:lemma coordinate wise approximation eq 2}
&\left\|H_{i}\circ g-g_{i}^{*}\right\|_{L^{1}(\mathcal{X})}\nonumber\\
&\leq 2\sum_{j=1}^{d_{1}}\left\|\exp\circ g_{\bm{W}_{j},\bm{b}_{j}}-g_{j}^{*}\right\|_{L^{1}(\mathcal{X})}\\
\label{eq:lemma coordinate wise approximation eq 3}
&\leq
2\sum_{j=1}^{d_{1}}(\left\|\exp\circ g_{\bm{W}_{j},\bm{b}_{j}}-\sigma_{\textup{ReLU}}\circ g_{\bm{W}_{1,j},\bm{b}_{1,j}}\right\|_{L^{1}(\mathcal{X})}
+
\left\|g_{\bm{W}_{1,j},\bm{b}_{1,j}}-g_{j}^{*}\right\|_{L^{1}(\mathcal{X})}),
\end{align}
where \eqref{eq:lemma coordinate wise approximation eq 2} follows from Lemma~\ref{lem:bos lemma}.
\eqref{eq:lemma coordinate wise approximation eq 3} is due to the combination of~\eqref{eq:lemma coordinate wise approximation eq 1} and~\eqref{eq:inequality of relu function}.

Regarding the second term in~\eqref{eq:lemma coordinate wise approximation eq 3}, similarly to~\citep[Lemma~5]{imaizumi2022advantage}, we note that for any disjoint partition $\{\mathcal{I}_{i}\}_{i=1}^{d_{1}}$ of $\{1,-1\}^{E}$, functions $h_{1},\cdots,h_{E}\in\mathcal{C}_{R}^{\alpha,K-1}$, and indices $j_{1},\cdots,j_{E}\in\{1,\cdots,K\}$ satisfying condition (P1) in Definition~\ref{def:class of smooth partitions}, and the sum of ReLU networks $g_{\widetilde{\bm{W}}_{1,j},\widetilde{\bm{b}}_{1,j}}=\sum_{\bm{s}\in\mathcal{I}_{j}}\sigma_{\textup{ReLU}}\circ g_{\bm{W}_{1,j,\bm{s}},\bm{b}_{1,j,\bm{s}}}$ defined with any $g_{\bm{W}_{1,j,\bm{s}},\bm{b}_{1,j,\bm{s}}}\in \mathcal{F}_{L,J,S,M,\bm{d}}^{\textup{NN}}$, $L\in\mathbb{N}$, $J,S,M\geq 0$, and $\bm{d}=(K,d_{\textup{NN},1},\cdots,d_{\textup{NN},L-1},1)$, the following inequality holds:
\begin{align}
\label{eq:neural net approximation partition eq 1}
\|g_{\widetilde{\bm{W}}_{1,j},\widetilde{\bm{b}}_{1,j}}-g_{j}^{*}\|_{L^{1}(\mathcal{X})}&\leq \sum_{\bm{s}=(s_{1},\cdots,s_{E})\in\mathcal{I}_{j}}\|g_{\bm{W}_{1,j,\bm{s}},\bm{b}_{1,j,\bm{s}}}-\mathds{1}_{\bigcap_{k=1}^{E}\mathcal{L}_{s_{k},h_{k},j_{k}}}\|_{L^{1}(\mathcal{X})}\\
\label{eq:neural net approximation partition eq 2}
&=
\sum_{\bm{s}=(s_{1},\cdots,s_{E})\in\mathcal{I}_{j}}\|g_{\bm{W}_{1,j,\bm{s}},\bm{b}_{1,j,\bm{s}}}-\mathds{1}_{\bigcap_{k=1}^{E}\textup{cl}(\mathcal{L}_{s_{k},h_{k},j_{k}})}\|_{L^{1}(\mathcal{X})},
\end{align}
where in~\eqref{eq:neural net approximation partition eq 1} we use the triangle inequality and~\eqref{eq:inequality of relu function}, and in~\eqref{eq:neural net approximation partition eq 2} $\textup{cl}(\cdot)$ denotes the closure of the given set, and property (P2) in Remark~\ref{rem:the property p2} is used.
Here, note that $|\mathcal{I}_{j}|\leq |\{1,-1\}^{E}|=2^{E}$ for any $j\in\{1,\cdots,d_{1}\}$.
By Lemma~\ref{lem:imaizumi fukumizu lemma}, for every $\bm{s}\in\mathcal{I}_{j}$, there are some $c\in\mathbb{N}$ and deep ReLU networks $g_{\widetilde{\bm{W}}_{1,j,\bm{s}}^{*},\widetilde{\bm{b}}_{1,j,\bm{s}}^{*}}\in\mathcal{F}_{\widetilde{L}_{1},\widetilde{J}_{1},\widetilde{S}_{1},\widetilde{M}_{1},\widetilde{\bm{d}}_{1,1}}^{\textup{NN}}$ with $\widetilde{L}_{1}\in \mathbb{N}$, $\widetilde{J}_{1}\lesssim \varepsilon^{-c}$, $\widetilde{S}_{1}\lesssim \varepsilon^{-(K-1)/\alpha}$, $\widetilde{M}_{1}\lesssim 1$, and $\widetilde{\bm{d}}_{1,1}=(K,\widetilde{d}_{\textup{NN},1},\cdots,\widetilde{d}_{\textup{NN},\widetilde{L}_{1}-1},1)$ such that
\begin{align}
\label{eq:lemma application of petersen lemma}
\|g_{\widetilde{\bm{W}}_{1,j,\bm{s}}^{*},\widetilde{\bm{b}}_{1,j,\bm{s}}^{*}}-\mathds{1}_{\bigcap_{k=1}^{E}\textup{cl}(\mathcal{L}_{s_{k},h_{k},j_{k}})}\|_{L^{1}(\mathcal{X})}< \frac{\varepsilon}{2^{E+1}8d_{1}}.
\end{align}
Similarly to~\citep[proof of Theorem~4.1, p.2761]{bos2022convergence}, we use the projection function $\mathbb{R}\ni s\mapsto \sigma_{\textup{ReLU}}(s)-\sigma_{\textup{ReLU}}(s-1)\in [0,1]$ to define the networks $g_{\bm{W}_{1,j,\bm{s}}^{*},\bm{b}_{1,j,\bm{s}}^{*}}$ as
\begin{align*}
g_{\bm{W}_{1,j,\bm{s}}^{*},\bm{b}_{1,j,\bm{s}}^{*}}(x)=\sigma_{\textup{ReLU}}(g_{\widetilde{\bm{W}}_{1,j,\bm{s}}^{*},\widetilde{\bm{b}}_{1,j,\bm{s}}^{*}}(x))-\sigma_{\textup{ReLU}}(g_{\widetilde{\bm{W}}_{1,j,\bm{s}}^{*},\widetilde{\bm{b}}_{1,j,\bm{s}}^{*}}(x)-1).
\end{align*}
Since the range of the indicator function is included in $\{0,1\}$, we have
\begin{align}
&\|g_{\bm{W}_{1,j,\bm{s}}^{*},\bm{b}_{1,j,\bm{s}}^{*}}-\mathds{1}_{\bigcap_{k=1}^{E}}\textup{cl}(\mathcal{L}_{s_{k},h_{k},j_{k}})\|_{L^{1}(\mathcal{X})}\nonumber\\
&\leq
\|\sigma_{\textup{ReLU}}\circ g_{\widetilde{\bm{W}}_{1,j,\bm{s}}^{*},\widetilde{\bm{b}}_{1,j,\bm{s}}^{*}}-\sigma_{\textup{ReLU}}\circ \mathds{1}_{\bigcap_{k=1}^{E}\textup{cl}(\mathcal{L}_{s_{k},h_{k},j_{k}})}\|_{L^{1}(\mathcal{X})}\nonumber\\
&\quad
+\|\sigma_{\textup{ReLU}}\circ (g_{\widetilde{\bm{W}}_{1,j,\bm{s}}^{*},\widetilde{\bm{b}}_{1,j,\bm{s}}^{*}}-1)-\sigma_{\textup{ReLU}}\circ (\mathds{1}_{\bigcap_{k=1}^{E}\textup{cl}(\mathcal{L}_{s_{k},h_{k},j_{k}})}-1)\|_{L^{1}(\mathcal{X})}\nonumber\\
\label{eq:approximation 1 j s eq 1}
&\leq
2\|g_{\widetilde{\bm{W}}_{1,j,\bm{s}}^{*},\widetilde{\bm{b}}_{1,j,\bm{s}}^{*}}-\mathds{1}_{\bigcap_{k=1}^{E}\textup{cl}(\mathcal{L}_{s_{k},h_{k},j_{k}})}\|_{L^{1}(\mathcal{X})}\\
\label{eq:approximation 1 j s eq 2}
&<
\frac{\varepsilon}{2^{E}8d_{1}},
\end{align}
where~\eqref{eq:approximation 1 j s eq 1} is due to the Lipschitz continuity of $\sigma_{\textup{ReLU}}$, and~\eqref{eq:approximation 1 j s eq 2} is due to~\eqref{eq:lemma application of petersen lemma}.
By the definition, it holds that $g_{\bm{W}_{1,j,\bm{s}}^{*},\bm{b}_{1,j,\bm{s}}^{*}}\in \mathcal{F}_{L_{1}',J_{1}',S_{1}',M_{1}',\bm{d}_{1,1}'}^{\textup{NN}}$ with $L_{1}'\in\mathbb{N}$, $J_{1}'\lesssim \varepsilon^{-c}$, $S_{1}'\lesssim \varepsilon^{-(K-1)/\alpha}$, $M_{1}'\leq 1$, and $\bm{d}_{1,1}'=(K,d_{\textup{NN},1}',\cdots,d_{\textup{NN},L_{1}'-1}',1)\in\mathbb{N}^{L_{1}'+1}$, for every $\bm{s}\in\mathcal{I}_{j}$.
By Lemma~\ref{lem:basic operations of neural networks}, there is a network $g_{\widetilde{\bm{W}}_{1,j}^{*},\widetilde{\bm{b}}_{1,j}^{*}}\in\mathcal{F}_{\widetilde{L}_{1}',\widetilde{J}_{1}',\widetilde{S}_{1}',\widetilde{M}_{1}',\widetilde{\bm{d}}_{1,1}'}^{\textup{NN}}$ with $\widetilde{L}_{1}'\in\mathbb{N}$, $\widetilde{J}_{1}'\lesssim \varepsilon^{-c}$, $\widetilde{S}_{1}'\lesssim \varepsilon^{-(K-1)/\alpha}$, $\widetilde{M}_{1}'\lesssim 1$, and $\widetilde{\bm{d}}_{1,1}'=(K,\widetilde{d}_{\textup{NN},1}',\cdots,\widetilde{d}_{\textup{NN},\widetilde{L}_{1}'-1}',1)\in\mathbb{N}^{\widetilde{L}_{1}'+1}$ such that
\begin{align*}
g_{\widetilde{\bm{W}}_{1,j}^{*},\widetilde{\bm{b}}_{1,j}^{*}}=\sum_{\bm{s}\in\mathcal{I}_{j}}\sigma_{\textup{ReLU}}\circ g_{\bm{W}_{1,j,\bm{s}}^{*},\bm{b}_{1,j,\bm{s}}^{*}}.
\end{align*}
Combining~\eqref{eq:neural net approximation partition eq 1} -- \eqref{eq:approximation 1 j s eq 2}, we have
\begin{align}
\label{eq:approximation property 1 j}
\|g_{\widetilde{\bm{W}}_{1,j}^{*},\widetilde{\bm{b}}_{1,j}^{*}}-g_{j}^{*}\|_{L^{1}(\mathcal{X})}<\frac{\varepsilon}{8d_{1}}.
\end{align}
We again use the projection function $\mathbb{R}\ni s\mapsto \sigma_{\textup{ReLU}}(s)-\sigma_{\textup{ReLU}}(s-1)\in [0,1]$ to define $g_{\bm{W}_{1,j}^{*},\bm{b}_{1,j}^{*}}$, following~\citep[proof of Theorem~4.1, p.2761]{bos2022convergence}:
\begin{align*}
g_{\bm{W}_{1,j}^{*},\bm{b}_{1,j}^{*}}(x)=\sigma_{\textup{ReLU}}(g_{\widetilde{\bm{W}}_{1,j}^{*},\widetilde{\bm{b}}_{1,j}^{*}}(x))-\sigma_{\textup{ReLU}}(g_{\widetilde{\bm{W}}_{1,j}^{*},\widetilde{\bm{b}}_{1,j}^{*}}(x)-1).
\end{align*}
Note that $g_{j}^{*}(x)=\sigma_{\textup{ReLU}}(g_{j}^{*}(x))-\sigma_{\textup{ReLU}}(g_{j}^{*}(x)-1)$ for any $x\in\mathcal{X}$, since $g_{j}^{*}(x)\in [0,1]$.
Hence, we have
\begin{align*}
&\|g_{\bm{W}_{1,j}^{*},\bm{b}_{1,j}^{*}}-g_{j}^{*}\|_{L^{1}(\mathcal{X})}\\
&\leq 
\|\sigma_{\textup{ReLU}}\circ g_{\widetilde{\bm{W}}_{1,j}^{*},\widetilde{\bm{b}}_{1,j}^{*}}-\sigma_{\textup{ReLU}}\circ g_{j}^{*}\|_{L^{1}(\mathcal{X})}\\
&\quad
+\|\sigma_{\textup{ReLU}}\circ (g_{\widetilde{\bm{W}}_{1,j}^{*},\widetilde{\bm{b}}_{1,j}^{*}}-1)-\sigma_{\textup{ReLU}}\circ (g_{j}^{*}-1)\|_{L^{1}(\mathcal{X})}\\
&\leq
2\|g_{\widetilde{\bm{W}}_{1,j}^{*},\widetilde{\bm{b}}_{1,j}^{*}}-g_{j}^{*}\|_{L^{1}(\mathcal{X})},
\end{align*}
where the first inequality is due to the triangle inequality, and the second inequality is due to the Lipschitz continuity of $\sigma_{\textup{ReLU}}$.
By the definition of $g_{\widetilde{\bm{W}}_{1,j}^{*},\widetilde{\bm{b}}_{1,j}^{*}}$ and its property~\eqref{eq:approximation property 1 j}, we can take some $L_{1}\in\mathbb{N}$, $J_{1}\lesssim \varepsilon^{-c}$, $S_{1}\lesssim \varepsilon^{-(K-1)/\alpha}$, $M_{1}\leq 1$, and $\bm{d}_{1,1}=(K,d_{\textup{NN},1},\cdots,d_{\textup{NN},L_{1}-1},1)\in \mathbb{N}^{L_{1}+1}$ for which it holds that $g_{\bm{W}_{1,j}^{*},\bm{b}_{1,j}^{*}}\in\mathcal{F}_{L_{1},J_{1},S_{1},M_{1},\bm{d}_{1,1}}^{\textup{NN}}$, and
\begin{align}
\label{eq:approximation error for indicator functions proposition}
\|g_{\bm{W}_{1,j}^{*},\bm{b}_{1,j}^{*}}-g_{j}^{*}\|_{L^{1}(\mathcal{X})}<\frac{\varepsilon}{4d_{1}}.
\end{align}

Regarding the first term in~\eqref{eq:lemma coordinate wise approximation eq 3}, by Lemma~\ref{citetheorem:bos theorem}, one can take deep ReLU networks $g_{\bm{W}_{2,1}^{*},\bm{b}_{2,1}^{*}},\cdots,g_{\bm{W}_{2,d_{1}}^{*},\bm{b}_{2,d_{1}}^{*}}\in\mathcal{F}_{L_{2},J_{2},S_{2},M_{2},\bm{d}_{1,2}}^{\textup{NN}}$ with $L_{2},J_{2},S_{2},M_{2},\bm{d}_{1,2}$ satisfying the conditions in Lemma~\ref{citetheorem:bos theorem} so that \eqref{eq:bos inequality for approximation error} is satisfied for the error $\frac{\varepsilon}{16d_{1}}$.
Since $|g_{\bm{W}_{1,i}^{*},\bm{b}_{1,i}^{*}}|\leq 1$ by the condition $M_{1}=1$, by Lemma~\ref{citetheorem:bos theorem}, it holds that
\begin{align}
\label{eq:bos inequality application}
\|\exp\circ g_{\bm{W}_{2,j}^{*},\bm{b}_{2,j}^{*}}\circ\sigma_{\textup{ReLU}}\circ g_{\bm{W}_{1,j}^{*},\bm{b}_{1,j}^{*}}-\sigma_{\textup{ReLU}}\circ g_{\bm{W}_{1,j}^{*},\bm{b}_{1,j}^{*}}\|_{L^{1}(\mathcal{X})}\leq \frac{\varepsilon}{4d_{1}}.
\end{align}

Define $g_{\bm{W}_{j}^{*},\bm{b}_{j}^{*}}=g_{\bm{W}_{2,j}^{*},\bm{b}_{2,j}^{*}}\circ\sigma_{\textup{ReLU}}\circ g_{\bm{W}_{1,j}^{*},\bm{b}_{1,j}^{*}}$ for each $j=1,\cdots,d_{1}$.
By Lemma~\ref{lem:basic operations of neural networks}, we can take networks $g_{\bm{W}^{*},\bm{b}^{*}}\in\mathcal{F}_{L^{*},J^{*},S^{*},M^{*},\bm{d}^{*}}^{\textup{NN}}$ for which
\begin{align*}
g_{\bm{W}^{*},\bm{b}^{*}}(x)=(g_{\bm{W}_{1}^{*},\bm{b}_{1}^{*}}(x),\cdots,g_{\bm{W}_{d_{1}}^{*},\bm{b}_{d_{1}}^{*}}(x))\quad \textup{ for every }x\in\mathcal{X},
\end{align*}
with $L^{*}\lesssim \log_{2}\varepsilon^{-1}+1\lesssim \log_{2}\varepsilon^{-1}$, $J^{*}\lesssim \varepsilon^{-c}\vee 1\lesssim \varepsilon^{-c}$, $S^{*}\lesssim \varepsilon^{-(K-1)/\alpha}(1\vee (\log_{2}\varepsilon^{-1}))\lesssim \varepsilon^{-(K-1)/\alpha}\log_{2}\varepsilon^{-1}$, $M^{*}\lesssim |\log(4\varepsilon)|\vee 1$, and $\bm{d}^{*}=(K,d_{\textup{NN},1}^{*},\cdots, d_{\textup{NN},L^{*}-1}^{*},d_{1})$.
By~\eqref{eq:lemma coordinate wise approximation eq 3}, \eqref{eq:approximation error for indicator functions proposition}, and \eqref{eq:bos inequality application}, we have
\begin{align*}
\|H_{i}\circ g_{\bm{W}^{*},\bm{b}^{*}}-g_{i}^{*}\|_{L^{1}(\mathcal{X})}\leq \varepsilon.
\end{align*}
Therefore, we obtain the claim.
\end{proof}

\subsubsection{Step 3}
\label{appsubsubsec:step 3 of the proof of the main theorem}

In this step, we show the following proposition:
\begin{proposition}
\label{thm:approximation and concentration}
Let $\xi=(R,K,d_{1},E,\theta_{\textup{NC}},\theta_{1},\theta_{2},\theta_{3})\in\Xi$, $\beta\in (0,D_{{\textup{proj}}})$, $\varepsilon>0$, and $P\in\mathcal{P}_{\xi}$.
Let $f^{*}=\sum_{i=1}^{d_{1}}g_{i}^{*}v_{i}$ be the contrastive function of $P$.
If $f=\sum_{i=1}^{d_{1}}g_{i}v_{i}\in\mathcal{F}\subset\mathcal{F}_{0}$ satisfies
\begin{align}
\label{eq:thm approximation concentration eq 1}
    \|g_{i}-g_{i}^{*}\|_{L^{1}(\mathcal{X})}\leq d_{1}^{-2}\|p_{X}\|_{L^{\infty}(\mathcal{X})}^{-1}\varepsilon\quad \forall i\in\{1,\cdots,d_{1}\},
\end{align}
then we have $f\in\mathscr{F}_{\beta,\beta^{-1}\varepsilon,P}(\mathcal{F})$.
\end{proposition}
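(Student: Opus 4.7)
The plan is to verify the defining inequality of $\mathscr{F}_{\beta,\beta^{-1}\varepsilon,P}(\mathcal{F})$ directly, namely
\[
P_{X}\!\left(\{x\in\mathcal{X}\mid \|f(x)-f^{*}(x)\|_{2}\geq \beta\}\right)\leq \beta^{-1}\varepsilon,
\]
by combining a pointwise triangle inequality on the $\Delta^{d}$-valued representation with Markov's inequality. This is essentially a tail-bound computation, so I expect no serious obstacle; the only thing to be careful about is converting between the Lebesgue $L^{1}$-norm that appears in the assumption~\eqref{eq:thm approximation concentration eq 1} and the $P_{X}$-integral that is used to define the localized subclass.

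First, since $\|v_{i}\|_{2}=1$ for every vertex of the regular simplex, for any $x\in\mathcal{X}$ the triangle inequality yields
\[
\|f(x)-f^{*}(x)\|_{2}=\Bigl\|\sum_{i=1}^{d_{1}}(g_{i}(x)-g_{i}^{*}(x))v_{i}\Bigr\|_{2}\leq \sum_{i=1}^{d_{1}}|g_{i}(x)-g_{i}^{*}(x)|.
\]
Note that this replaces the need for the sharper identity in Proposition~\ref{prop:simple fact for risks}, whose quadratic form would produce a $\beta^{-2}$ rather than the desired $\beta^{-1}$.

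Next, I would apply Markov's inequality in the first moment, giving
\[
P_{X}(\|f-f^{*}\|_{2}\geq \beta)\leq \beta^{-1}\,\mathbb{E}_{P_{X}}[\|f-f^{*}\|_{2}]\leq \beta^{-1}\sum_{i=1}^{d_{1}}\mathbb{E}_{P_{X}}[|g_{i}-g_{i}^{*}|].
\]
Then I would bound each $\mathbb{E}_{P_{X}}[|g_{i}-g_{i}^{*}|]=\int |g_{i}-g_{i}^{*}|p_{X}\,d\mu$ by $\|p_{X}\|_{L^{\infty}(\mathcal{X})}\|g_{i}-g_{i}^{*}\|_{L^{1}(\mathcal{X})}$, which is where the factor $\|p_{X}\|_{L^{\infty}(\mathcal{X})}$ appearing in the hypothesis enters.

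Finally, substituting the assumption $\|g_{i}-g_{i}^{*}\|_{L^{1}(\mathcal{X})}\leq d_{1}^{-2}\|p_{X}\|_{L^{\infty}(\mathcal{X})}^{-1}\varepsilon$ and summing over the $d_{1}$ coordinates collapses the factors to
\[
P_{X}(\|f-f^{*}\|_{2}\geq \beta)\leq \beta^{-1}\cdot d_{1}\cdot \|p_{X}\|_{L^{\infty}(\mathcal{X})}\cdot d_{1}^{-2}\|p_{X}\|_{L^{\infty}(\mathcal{X})}^{-1}\varepsilon=\frac{\varepsilon}{d_{1}\beta}\leq \beta^{-1}\varepsilon,
\]
which is exactly the condition defining membership in $\mathscr{F}_{\beta,\beta^{-1}\varepsilon,P}(\mathcal{F})$. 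The argument is essentially mechanical; the only design choice that matters is using the $L^{1}$-Markov bound (rather than $L^{2}$) so that the denominator is $\beta$ and not $\beta^{2}$, which is why the hypothesis is phrased in terms of the $L^{1}$-approximation error.
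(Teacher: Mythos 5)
Your proof is correct, and it uses the same ingredients as the paper's: triangle inequality on the simplex representation, Markov's inequality, H\"older's inequality against $\|p_{X}\|_{L^{\infty}(\mathcal{X})}$, and the normalization $d_{1}^{-2}\|p_{X}\|_{L^{\infty}(\mathcal{X})}^{-1}$ built into the hypothesis. The only tactical difference is in the ordering of steps: the paper first observes that on the bad set $\{\|f-f^{*}\|_{2}\geq\beta\}$ the pointwise triangle inequality forces at least one coordinate to satisfy $|g_{i}(x)-g_{i}^{*}(x)|\geq d_{1}^{-1}\beta$, takes a union bound over $i$, and then applies Markov coordinate-wise with threshold $d_{1}^{-1}\beta$; you instead apply Markov directly to the scalar random variable $\|f(x)-f^{*}(x)\|_{2}$ with threshold $\beta$ and only then expand the expectation by the triangle inequality. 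The two routes are equivalent in spirit, and yours is marginally tighter — tracing your constants gives $\beta^{-1}d_{1}^{-1}\varepsilon$ rather than $\beta^{-1}\varepsilon$ — but both establish membership in $\mathscr{F}_{\beta,\beta^{-1}\varepsilon,P}(\mathcal{F})$, which is all that is needed. Your closing remark about why the hypothesis is phrased with the $L^{1}$-norm (so that Markov produces $\beta^{-1}$ rather than $\beta^{-2}$) correctly identifies the reason the paper also works with first moments here.
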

\begin{proof}
Let $f\in\mathcal{F}$ be a function satisfying~\eqref{eq:thm approximation concentration eq 1}.
Then, we have
\begin{align}
& P_{X}(\{x\in\mathcal{X}\;|\;\|f(x)-f^{*}(x)\|_{2}\geq \beta\})\nonumber\\
\label{eq:thm approximation concentration eq 2}
&\leq 
P_{X}\left(\bigcup_{i=1}^{d_{1}}\{x\in\mathcal{X}\;|\;|g_{i}(x)-g_{i}^{*}(x)|\geq d_{1}^{-1}\beta\}\right)\\
\label{eq:thm approximation concentration eq 3}
&\leq
\sum_{i=1}^{d_{1}}d_{1}\beta^{-1}\mathbb{E}_{P_{X}}[|g_{i}-g_{i}^{*}|]\\
\label{eq:thm approximation concentration eq 4}
&\leq 
d_{1}\beta^{-1}\|p_{X}\|_{L^{\infty}(\mathcal{X})}\sum_{i=1}^{d_{1}}\|g_{i}-g_{i}^{*}\|_{L^{1}(\mathcal{X})}.
\end{align}
Here, in~\eqref{eq:thm approximation concentration eq 2} we note that for $x\in\{x'\in \mathcal{X}\;|\;\|f(x')-f^{*}(x')\|_{2}\geq \beta\}$, we have
\begin{align*}
    \beta\leq \|f(x)-f^{*}(x)\|_{2}\leq \sum_{i=1}^{d_{1}}|g_{i}(x)-g_{i}^{*}(x)|\|v_{i}\|_{2}=\sum_{i=1}^{d_{1}}|g_{i}(x)-g_{i}^{*}(x)|.
\end{align*}
Hence, there is at least one index $i\in\{1,\cdots,d_{1}\}$ such that $|g_{i}(x)-g_{i}^{*}(x)|\geq d_{1}^{-1}\beta$ holds.
In~\eqref{eq:thm approximation concentration eq 3} and \eqref{eq:thm approximation concentration eq 4}, we use Markov's inequality and H\"{o}lder's inequality, respectively.
By~\eqref{eq:thm approximation concentration eq 1} and~\eqref{eq:thm approximation concentration eq 4}, we have
\begin{align*}
    P_{X}(\{x\in\mathcal{X}\;|\; \|f(x)-f^{*}(x)\|_{2}<\beta\})\geq 1-\beta^{-1}\varepsilon,
\end{align*}
which shows the claim.
\end{proof}

\subsubsection{Step 4}
\label{appsubsubsec:step 4 of the proof of the main theorem}

To apply the results in~\citep{park2009convergence,kim2021fast}, we need to evaluate the covering numbers of the class of classifiers.
Given a compact subset $\mathcal{A}\subset\mathbb{R}^{s}$, let $\mathscr{N}(\delta,\mathcal{H},\|\cdot\|_{L^{\infty}(\mathcal{A})})$ denote the covering number of a class $\mathcal{H}$ of real-valued functions with respect to the pseudo-distance $\|\cdot\|_{L^{\infty}(\mathcal{A})}$ (see e.g.,~\citep[Definition~6.19]{steinwart2008support} for covering numbers).
Also, for any $\mathcal{F}\subset\mathcal{F}_{0}$, denote by $\rho(\mathcal{F})=\{\rho_{f}\;|\; f\in\mathcal{F}\}$.
Then, the following evaluation holds.
\begin{lemma}
\label{lem:excess risk bound lemma 1}
Let $\xi=(R,K,d_{1},E,\theta_{\textup{NC}},\theta_{1},\theta_{2},\theta_{3})\in\Xi$, $\beta>0$, $\beta_{0}\geq 0$, $P\in\mathcal{P}_{\xi}$, $L\in\mathbb{N}$, $J,S,M\geq 0$, and $\bm{d}=(K,d_{\textup{NN},1},\cdots,d_{\textup{NN},L-1},d_{1})\in\mathbb{N}^{L+1}$.
Consider a function class $\mathcal{F}'\subset \mathcal{F}_{L,J,S,M,\bm{d}}^{\Delta^{d}\textup{-NN}}$.
Then, for any $\delta\geq 0$, there exists a constant $C_{6}>0$ independent of $n$ and $\delta$ such that we have
\begin{align*}
    &\log\mathscr{N}(\delta,\psi\circ\rho (\mathscr{F}_{\beta,\beta_{0},P}(\mathcal{F}')),\|\cdot\|_{L^{\infty}(\mathcal{X}^{2})})\\
    &\leq d_{1}^{2}\log\mathscr{N}(C_{6}\delta,\mathcal{F}_{L,J,S,M,\bm{d}_{1}}^{\textup{NN}},\|\cdot\|_{L^{\infty}(\mathcal{X})}),
\end{align*}
where $\bm{d}_{1}=(K,d_{\textup{NN},1},\cdots,d_{\textup{NN},L-1},1)$.
Note that in the above statement, we can take $C_{6}=2^{-4}dd_{1}^{-1}e^{-4M}D_{\Delta^{d}}^{2}$.
\end{lemma}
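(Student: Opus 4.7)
The plan is to peel off the simplex-valued outer layer, reduce to the underlying scalar-valued ReLU network class, and then appeal to a standard product-cover argument. I begin with the essentially trivial observation that covering numbers are monotone with respect to set inclusion, so
\begin{align*}
\mathscr{N}(\delta,\psi\circ\rho(\mathscr{F}_{\beta,\beta_{0},P}(\mathcal{F}')),\|\cdot\|_{L^{\infty}(\mathcal{X}^{2})})
\leq
\mathscr{N}(\delta,\psi\circ\rho(\mathcal{F}_{L,J,S,M,\bm{d}}^{\Delta^{d}\textup{-NN}}),\|\cdot\|_{L^{\infty}(\mathcal{X}^{2})}),
\end{align*}
and the localization assumption plays no further role.

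The core step is to bound the Lipschitz constant of the map $g_{\bm{W},\bm{b}}\mapsto \psi\circ\rho_{f_{\bm{W},\bm{b}}}$ from the vector sup-norm $\|\cdot\|_{\mathcal{X},\infty,\infty}$ on $\mathcal{X}$ to $\|\cdot\|_{L^{\infty}(\mathcal{X}^{2})}$, via four successive estimates. First, $\psi$ is globally $2D_{\Delta^{d}}^{-2}$-Lipschitz by inspection of its definition. Second, the difference-of-squares factorization together with $\|f(x)-f(x')\|_{2}\leq D_{\Delta^{d}}$ and the triangle inequality give
\begin{align*}
|\rho_{f}(x,x')-\rho_{f'}(x,x')|
\leq 2D_{\Delta^{d}}\,\|(f(x)-f'(x))-(f(x')-f'(x'))\|_{2}
\leq 4D_{\Delta^{d}}\,\|f-f'\|_{\mathcal{X},\infty,2}.
\end{align*}
Third, writing $f=\sum_{i}(H_{i}\circ g)v_{i}$, $f'=\sum_{i}(H_{i}\circ g')v_{i}$ and using that the simplex constraint makes the coordinate differences sum to zero, Proposition~\ref{prop:simple fact for risks} gives $\|f(x)-f'(x)\|_{2}^{2}=(d_{1}/d)\sum_{i}|H_{i}(g(x))-H_{i}(g'(x))|^{2}$, which I dominate pointwise by $(d_{1}^{2}/d)\max_{i}|H_{i}(g(x))-H_{i}(g'(x))|^{2}$. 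Fourth, and most delicate, I bound the softmax Lipschitz constant on $[-M,M]^{d_{1}}$ by splitting
\begin{align*}
|H_{i}(z)-H_{i}(z')|\leq \frac{|e^{z_{i}}-e^{z'_{i}}|}{S(z)}+e^{z'_{i}}\left|\frac{1}{S(z)}-\frac{1}{S(z')}\right|,
\end{align*}
where $S(z)=\sum_{j}e^{z_{j}}$, and invoking $d_{1}e^{-M}\leq S(z)\leq d_{1}e^{M}$ together with $|e^{z_{i}}-e^{z'_{i}}|\leq e^{M}\|z-z'\|_{\infty}$ to obtain a bound of the form $|H_{i}(z)-H_{i}(z')|\leq (2e^{4M}/d_{1})\|z-z'\|_{\infty}$. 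Because $\|g_{\bm{W},\bm{b}}\|_{\mathcal{X},\infty}\leq M$ for every network in the class, chaining these four estimates and collecting constants gives
\begin{align*}
\|\psi\circ\rho_{f_{\bm{W},\bm{b}}}-\psi\circ\rho_{f_{\bm{W}',\bm{b}'}}\|_{L^{\infty}(\mathcal{X}^{2})}
\leq C_{6}^{-1}\|g_{\bm{W},\bm{b}}-g_{\bm{W}',\bm{b}'}\|_{\mathcal{X},\infty,\infty}
\end{align*}
with $C_{6}$ of the stated form after combining the universal constants $8D_{\Delta^{d}}^{-1}$, $d_{1}/\sqrt{d}$, and $2e^{4M}/d_{1}$.

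It remains to reduce the cover of the $d_{1}$-dimensional output class $\mathcal{F}_{L,J,S,M,\bm{d}}^{\textup{NN}}$ to the scalar class $\mathcal{F}_{L,J,S,M,\bm{d}_{1}}^{\textup{NN}}$. Each output coordinate of a network in $\mathcal{F}_{L,J,S,M,\bm{d}}^{\textup{NN}}$ is itself a scalar ReLU network in $\mathcal{F}_{L,J,S,M,\bm{d}_{1}}^{\textup{NN}}$ (keep only the relevant row of the last weight matrix; this does not increase $L,J,S,M$). Since $\|g-g'\|_{\mathcal{X},\infty,\infty}=\max_{i}\|g_{i}-g'_{i}\|_{L^{\infty}(\mathcal{X})}$, a product construction yields
\begin{align*}
\mathscr{N}(C_{6}\delta,\mathcal{F}_{L,J,S,M,\bm{d}}^{\textup{NN}},\|\cdot\|_{\mathcal{X},\infty,\infty})
\leq \mathscr{N}(C_{6}\delta,\mathcal{F}_{L,J,S,M,\bm{d}_{1}}^{\textup{NN}},\|\cdot\|_{L^{\infty}(\mathcal{X})})^{d_{1}},
\end{align*}
and taking logarithms together with the Lipschitz estimate above delivers the lemma; the exponent $d_{1}^{2}$ in the statement absorbs both this product and an additional $d_{1}$-factor coming from the fact that the softmax's $d_{1}$ output coordinates each contribute to the pointwise gap controlling $\|f(x)-f'(x)\|_{2}$. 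The main obstacle is purely bookkeeping in the Lipschitz chain: tracking the interplay of the factors $D_{\Delta^{d}}$, $d$, $d_{1}$, and $e^{M}$ through the four compositions so that the final constant matches exactly the form $C_{6}=2^{-4}dd_{1}^{-1}e^{-4M}D_{\Delta^{d}}^{2}$; everything else is monotonicity or a standard product cover.
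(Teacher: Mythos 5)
You take a genuinely different route from the paper, and it is in fact a sharper one, though you misreport where the stated constants come from.

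The paper's proof works ``one cover at a time'': it first passes from $\psi\circ\rho$ to $\rho$ by the Lipschitz constant of $\psi$; then rewrites $\rho_f(x,x')=(d_1/d)\sum_j(c_j(f,x)-c_j(f,x'))^2$ by the simplex identity (Proposition~\ref{prop:simple fact for risks}) and takes a $d_1$-fold product cover over the coordinate functions $c_j(f,\cdot)$; then for each fixed $j$ invokes the softmax Lipschitz bound $|c_j-\tilde c_j|\lesssim d_1^{-2}e^{4M}\sum_i|g_i-\tilde g_i|$, which couples \emph{all} $d_1$ scalar networks and therefore forces a second $d_1$-fold product cover. These two nested product covers are the source of the factor $d_1^2$. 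You instead compose all the pointwise Lipschitz estimates into a single bound $\|\psi\circ\rho_f-\psi\circ\rho_{f'}\|_{L^\infty(\mathcal{X}^2)}\le C\,\|g-g'\|_{\mathcal{X},\infty,\infty}$ and take a single $d_1$-fold product cover of $\mathcal{F}^{\textup{NN}}_{L,J,S,M,\bm{d}}$ at the very end. This correctly yields
\begin{align*}
\log\mathscr{N}(\delta,\psi\circ\rho(\mathscr{F}_{\beta,\beta_0,P}(\mathcal{F}')),\|\cdot\|_{L^\infty(\mathcal{X}^2)})
\le d_1\log\mathscr{N}(C^{-1}\delta,\mathcal{F}^{\textup{NN}}_{L,J,S,M,\bm{d}_1},\|\cdot\|_{L^\infty(\mathcal{X})}),
\end{align*}
with a factor $d_1$ rather than $d_1^2$: a strictly stronger bound, and a cleaner argument.

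However, two of your remarks do not match your own computation. First, chaining your four constants $2D_{\Delta^d}^{-2}$, $4D_{\Delta^d}$, $d_1/\sqrt{d}$, and $2e^{4M}/d_1$ gives the Lipschitz constant $16e^{4M}/(D_{\Delta^d}\sqrt{d})$, hence the covering precision $C'=2^{-4}D_{\Delta^d}\sqrt{d}\,e^{-4M}$. This is \emph{not} ``of the stated form'': using $D_{\Delta^d}^2=2d_1/d$ one finds $C'=\sqrt{d_1/2}\cdot C_6$, i.e. it differs by a factor $\sqrt{d_1/2}$ from the paper's $C_6=2^{-4}dd_1^{-1}e^{-4M}D_{\Delta^d}^2$. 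Second, your claim that ``the exponent $d_1^2$ absorbs \dots an additional $d_1$-factor coming from the softmax's $d_1$ output coordinates'' is not supported by your own argument: you already absorb the softmax's coupling across coordinates via the single factor $(2e^{4M}/d_1)\|z-z'\|_\infty$, so no second product cover arises, which is precisely why you land on $d_1$. Since $C'\ge C_6$ and $d_1\le d_1^2$, your bound does imply the stated lemma, so the argument is correct; but the attribution of the stated $C_6$ and $d_1^2$ to your chain is wrong, and it is worth recognizing that what you actually proved is a slight sharpening of the paper's lemma rather than a reproduction of it.
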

\begin{proof}
Note that $\psi$ is Lipschitz continuous.
Thus, we have
\begin{align}
\label{eq:excess risk bound lemma 1 eq 2}
&\mathscr{N}(\delta,\psi\circ\rho (\mathscr{F}_{\beta,\beta_{0},P}(\mathcal{F}')),\|\cdot\|_{L^{\infty}(\mathcal{X}^{2})})\nonumber\\
&\leq
\mathscr{N}(2^{-1}D_{\Delta^{d}}^{2}\delta,\rho (\mathscr{F}_{\beta,\beta_{0},P}(\mathcal{F}')),\|\cdot\|_{L^{\infty}(\mathcal{X}^{2})}).
\end{align}
Then, we note that
\begin{align}
\label{eq:excess risk bound lemma 1 eq 3}
&\mathscr{N}(2^{-1}D_{\Delta^{d}}^{2}\delta,\rho (\mathscr{F}_{\beta,\beta_{0},P}(\mathcal{F}')),\|\cdot\|_{L^{\infty}(\mathcal{X}^{2})})\nonumber\\
&= \mathscr{N}(2^{-1}D_{\Delta^{d}}^{2}\delta,\{(x,x')\mapsto \|f(x)-f(x')\|_{2}^{2}\;|\;f\in\mathscr{F}_{\beta,\beta_{0},P}(\mathcal{F}')\},\|\cdot\|_{L^{\infty}(\mathcal{X}^{2})}).
\end{align}
Here, given any $f\in\mathcal{F}_{L,J,S,M,\bm{d}}^{\Delta^{d}\textup{-NN}}$, note that $f(x)=\sum_{j=1}^{d_{1}}(H_{j}\circ g(x)) v_{j}$ and $f(x')=\sum_{j=1}^{d_{1}}(H_{j}\circ g(x'))v_{j}$ for some $g\in\mathcal{F}_{L,J,S,M,\bm{d}}^{\textup{NN}}$.
Hence, we write as $c_{j}(f,x):=H_{j}\circ g(x)$, for convenience.
By Proposition~\ref{prop:simple fact for risks}--(i), we have
\begin{align*}
\|f(x)-f(x')\|_{2}^{2}
=\frac{d_{1}}{d}\sum_{j=1}^{d_{1}}(c_{j}(f,x)-c_{j}(f,x'))^{2}.
\end{align*}
Thus, we obtain
\begin{align}
\label{eq:excess risk bound lemma 1 eq 4}
&\mathscr{N}(2^{-1}D_{\Delta^{d}}^{2}\delta,\{(x,x')\mapsto \|f(x)-f(x')\|_{2}^{2}\;|\;f\in\mathscr{F}_{\beta,\beta_{0},P}(\mathcal{F}')\},\|\cdot\|_{L^{\infty}(\mathcal{X}^{2})})\nonumber\\
&\leq \nonumber\\
&\mathscr{N}(c_{6}\delta,\{(x,x')\mapsto\|\{c_{j}(f,x)-c_{j}(f,x')\}_{j=1}^{d_{1}}\|_{2}^{2} | f\in\mathscr{F}_{\beta,\beta_{0},P}(\mathcal{F}')\},\|\cdot\|_{L^{\infty}(\mathcal{X}^{2})}),
\end{align}
where $c_{6}=2^{-1}dd_{1}^{-1}D_{\Delta^{d}}^{2}$.
Here, $\{c_{j}(f,x)-c_{j}(f,x')\}_{j=1}^{d_{1}}$ denotes the sequence, and $\|\cdot\|_{2}$ denotes the 2-norm.
Since the function $|c_{j}(f,\cdot)|$ is bounded by the definition of $\Delta^{d}$, by the Lipschitz continuity of the function $s\mapsto s^{2}$ on a closed interval, we have
\begin{align}
\label{eq:excess risk bound lemma 1 eq 5}
&\mathscr{N}(c_{6}\delta,\{(x,x')\mapsto \|\{c_{j}(f,x)-c_{j}(f,x')\}_{j=1}^{d_{1}}\|_{2}^{2}\;|\;f\in\mathscr{F}_{\beta,\beta_{0},P}(\mathcal{F}')\},\|\cdot\|_{L^{\infty}(\mathcal{X}^{2})})\nonumber\\
&\leq
\prod_{j=1}^{d_{1}}\mathscr{N}(c_{6}'\delta,\{(x,x')\mapsto c_{j}(f,x)-c_{j}(f,x')\;|\;f\in\mathscr{F}_{\beta,\beta_{0},P}(\mathcal{F}')\},\|\cdot\|_{L^{\infty}(\mathcal{X}^{2})})\nonumber\\
&\leq
\prod_{j=1}^{d_{1}}\mathscr{N}(2^{-3}dd_{1}^{-2}D_{\Delta^{d}}^{2}\delta,\{x\mapsto c_{j}(f,x)\;|\;f\in\mathscr{F}_{\beta,\beta_{0},P}(\mathcal{F}')\},\|\cdot\|_{L^{\infty}(\mathcal{X})}),
\end{align}
where $c_{6}'=2^{-1}d_{1}^{-1}c_{6}$.
Combining~\eqref{eq:excess risk bound lemma 1 eq 2}, \eqref{eq:excess risk bound lemma 1 eq 3}, \eqref{eq:excess risk bound lemma 1 eq 4}, and \eqref{eq:excess risk bound lemma 1 eq 5}, we have 
\begin{align}
\label{eq:excess risk bound lemma 1 eq 7}
&\mathscr{N}(\delta,\psi\circ\rho (\mathscr{F}_{\beta,\beta_{0},P}(\mathcal{F}')),\|\cdot\|_{L^{\infty}(\mathcal{X}^{2})})\nonumber\\
&\leq
\prod_{j=1}^{d_{1}}\mathscr{N}(L_{2}\delta,\{x\mapsto c_{j}(f,x)\;|\;f\in\mathscr{F}_{\beta,\beta_{0},P}(\mathcal{F}')\},\|\cdot\|_{L^{\infty}(\mathcal{X})}),
\end{align}
where $L_{2}=2^{-3}dd_{1}^{-2}D_{\Delta^{d}}^{2}$.
Here, we note that for the softmax function on the domain $[-M,M]^{d_{1}}$ and any $x_{1},\cdots,x_{d_{1}},x_{1}',\cdots,x_{d_{1}}'\in [-M,M]$, we have
\begin{align*}
\left|\frac{e^{x_{j}}}{\sum_{i=1}^{d_{1}}e^{x_{i}}}-\frac{e^{x_{j}'}}{\sum_{i=1}^{d_{1}}e^{x_{i}'}}\right|
&\leq d_{1}^{-2}e^{2M}\sum_{i=1}^{d_{1}}|e^{x_{j}+x_{i}'}-e^{x_{j}'+x_{i}}|\\
&\leq 2d_{1}^{-2}e^{4M}\sum_{j=1}^{d_{1}}|x_{j}-x_{j}'|.
\end{align*}
Hence, we have
\begin{align}
\label{eq:excess risk bound lemma 1 eq 7.1}
&\log\mathscr{N}(L_{2}\delta,\{x\mapsto c_{j}(f,x)\;|\;f\in\mathscr{F}_{\beta,\beta_{0},P}(\mathcal{F}')\},\|\cdot\|_{L^{\infty}(\mathcal{X})})\nonumber\\
&\leq d_{1}\log\mathscr{N}(2^{-1}d_{1}e^{-4M}L_{2}\delta,\mathcal{F}_{L,J,S,M,\bm{d}_{1}}^{\textup{NN}},\|\cdot\|_{L^{\infty}(\mathcal{X})}).
\end{align}
Thus, setting $C_{6}=2^{-1}d_{1}e^{-4M}L_{2}$, by~\eqref{eq:excess risk bound lemma 1 eq 7} and \eqref{eq:excess risk bound lemma 1 eq 7.1}, we obtain
\begin{align}
\label{eq:excess risk bound lemma 1 eq 8}
&\log\mathscr{N}(\delta,\psi\circ\rho (\mathscr{F}_{\beta,\beta_{0},P}(\mathcal{F}')),\|\cdot\|_{L^{\infty}(\mathcal{X}^{2})})\nonumber\\
&\leq
d_{1}^{2}\log\mathscr{N}(C_{6}\delta,\mathcal{F}_{L,J,S,M,\bm{d}_{1}}^{\textup{NN}},\|\cdot\|_{L^{\infty}(\mathcal{X})}),
\end{align}
which shows the assertion.
\end{proof}

The following fact is proven by~\citet{nakada2020adaptive}.
\begin{lemma}[{Lemma~21 in~\citep{nakada2020adaptive}}]
\label{citelem:nakada lemma}
Given $K\in\mathbb{N}$, let $L\in\mathbb{N}$, $J,S,M\geq 0$, and $\bm{d}_{1}=(K,d_{\textup{NN},1},\cdots,d_{\textup{NN},L-1},1)\in\mathbb{N}^{L+1}$.
Then, for every $s>0$, it holds that
\begin{align}
\label{eq:nakada lemma}
\log\mathscr{N}(s,\mathcal{F}_{L,J,S,M,\bm{d}_{1}}^{\textup{NN}},\|\cdot\|_{L^{\infty}(\mathcal{X})})\leq S\log(2s^{-1}LJ^{L}(S+1)^{L}).
\end{align}
\end{lemma}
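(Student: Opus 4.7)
My plan is to prove the covering number bound by reducing the $L^{\infty}(\mathcal{X})$-covering of the function class to an $\ell^{\infty}$-covering of a suitably restricted parameter space, where the sparsity constraint $\|\bm{W}\|_{0}+\|\bm{b}\|_{0}\leq S$ is what makes the effective dimension equal to $S$ rather than the total parameter count. First I would fix $L'\leq L$ and analyze $\mathcal{F}_{L',J,S,M,\bm{d}_{1}}^{\textup{NN}}$; the bound on the union then follows by noting that the upper bound is monotonic in $L'$ (so covering the class for the largest $L'=L$ suffices up to absorbing constants).

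The first step is an a priori forward-pass bound. I would show by induction on the layer index $l$ that for any $x\in[0,1]^{d_{0}}$ and any admissible $(\bm{W},\bm{b})$, the post-activation output $z_{l}$ satisfies $\|z_{l}\|_{\infty}\leq (J(S+1))^{l}$. The induction step uses the fact that each row of $W_{l+1}$ has at most $S$ nonzero entries (since the global sparsity count is $\leq S$), each bounded by $J$, together with $|b_{l+1,j}|\leq J$ and the 1-Lipschitz continuity of ReLU. This gives an explicit, width-independent bound $\|g_{\bm{W},\bm{b}}\|_{\mathcal{X},\infty}\leq (J(S+1))^{L}$.

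The second step is a parameter sensitivity analysis. For two parameter tuples $\theta=(\bm{W},\bm{b})$ and $\theta'=(\bm{W}',\bm{b}')$ whose union of supports has size at most $2S$, I would prove by induction on $l$ that the layer outputs differ in $\ell^{\infty}$-norm by at most $\Lambda_{l}\|\theta-\theta'\|_{\max}$ with $\Lambda_{l}\lesssim l\cdot J^{l-1}(S+1)^{l}$. The recursion has two sources: perturbation of the current-layer weights (contributing a term proportional to the previous layer's magnitude bound from Step~1) and propagation of the previous layer's discrepancy through a sparse linear map with at most $S$ nonzeros of magnitude $\leq J$. Taking $l=L$ gives the global Lipschitz constant $\Lambda\lesssim LJ^{L-1}(S+1)^{L}$ in the $L^{\infty}(\mathcal{X})$-metric.

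The third and hardest step is to build an $\epsilon$-net with $\epsilon:=s/\Lambda$ in the sparse parameter space in a way that yields exactly the bound in~\eqref{eq:nakada lemma}, i.e., no extra logarithmic dependence on the total parameter count. The plan is to re-parametrize each $S$-sparse network by recording, for each nonzero parameter, a pair (discretized-position-index, discretized-value). The position index runs over at most $T$ possibilities, where $T$ is the total number of slots, but using $T\leq L\cdot(J(S+1))^{L}/J$-style bookkeeping (or by absorbing the binomial coefficient $\binom{T}{S}\leq (eT/S)^{S}$ into the $S$-th power and tightening via $S\leq N$), one obtains $\log\mathscr{N}\leq S\log\bigl(2s^{-1}LJ^{L}(S+1)^{L}\bigr)$. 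The main obstacle is precisely this final accounting: showing that the combinatorial factor for choosing which $S$ of the total parameters are active can be folded cleanly into the $S\log(\cdot)$ form, and that the widths $d_{\textup{NN},i}$ drop out due to sparsity. I would expect to need a careful path-counting argument (each nonzero weight contributes to at most a few computational paths of depth $L$, each of magnitude $\leq J^{L}(S+1)^{L-1}$) to obtain the bound in its stated width-free form, rather than the naive Lipschitz-plus-volume estimate which would bring in a $\log(T)$ term.
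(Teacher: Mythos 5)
Note first that the paper does not prove this lemma at all: it is imported verbatim by citation from Lemma~21 of \citet{nakada2020adaptive}, so there is no ``paper proof'' to compare against beyond that reference. Your proposal is therefore an attempt to reconstruct the external proof, and the high-level skeleton you describe---forward-pass magnitude bound, parameter-perturbation Lipschitz bound in the $\ell^{\infty}$-metric, then a net in the sparse parameter space---is indeed the standard route taken in \citet{nakada2020adaptive} and \citet{schmidt-hieber2020nonparametric}. Steps~1 and~2 are essentially sound (modulo the usual $\max\{\|z\|_{\infty},1\}$ bookkeeping in the induction), and the observation that the union over $L'\leq L$ costs only a multiplicative factor $L$, hence an additive $\log L$ that is absorbable when $S\geq 1$, is fine, though your phrasing ``covering the class for the largest $L'=L$ suffices'' is not literally correct since shorter networks are not a subclass of deeper ones without a padding construction.

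The genuine gap is in Step~3, and it is the one you flag yourself. The number of parameter slots is $T=\sum_{i=1}^{L}(d_{\textup{NN},i}d_{\textup{NN},i-1}+d_{\textup{NN},i})$, which depends on the hidden widths $d_{\textup{NN},i}$ and has nothing to do with $J$ or $(J(S+1))^{L}$; the formula $T\leq L(J(S+1))^{L}/J$ that you propose is not a correct bound on $T$, and neither a binomial estimate $\binom{T}{S}\leq (eT/S)^{S}$ nor a ``path-counting'' heuristic removes the width dependence by itself. The idea that actually closes this gap in the cited works is a pruning/width-reduction observation: any network with $\|\bm{W}\|_{0}+\|\bm{b}\|_{0}\leq S$ realizes a function that is also realized by a network in which every hidden layer has width at most $S$, because a hidden unit with all incoming weights and bias equal to zero outputs $\sigma(0)=0$ and can be deleted together with its outgoing weights, and there can be at most $S$ units across all layers with a nonzero incoming parameter. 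After this reduction one has $T\lesssim L(S+1)^{2}$, so $\log\binom{T}{S}\lesssim S\log(L(S+1))$, and combining this with the grid size $\lceil 2J\Lambda/s\rceil$ per active parameter and your $\Lambda\lesssim LJ^{L-1}(S+1)^{L}$ yields $S\log\bigl(2s^{-1}LJ^{L}(S+1)^{L}\bigr)$ up to constants. Without this width-reduction step the final bound in~\eqref{eq:nakada lemma} is not width-free, so your proposal as written does not establish the stated inequality.
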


Given a compact subset $\mathcal{A}\subset \mathbb{R}^{s}$, the bracketing number of a class $\mathcal{H}$ of real-valued functions on $\mathcal{A}$ with respect to $\|\cdot\|_{L^{s}(\mathcal{A})}$ is denoted by
\begin{align*}
\mathscr{B}(\delta,\mathcal{H},\|\cdot\|_{L^{s}(\mathcal{A})})
\end{align*}
(see, e.g.,~\citep[Definition~2.2]{geer2000empirical} for bracketing numbers).
Now, we recall a useful fact on the excess risk, which is first proven by~\citet{park2009convergence}.
In what follows, we recall a simplified version proven by~\citet{kim2021fast}:
\begin{lemma}[Theorem~A.1 in~\citep{kim2021fast}]
\label{lem:lemma by kim et al}
Let $\mathcal{X}_{0}=[0,1]^{d'}$ for some $d'\in\mathbb{N}$.
For every $n\in\mathbb{N}$, consider a constant $M_{n}>0$ and a class $\mathcal{H}_{n}$ of $M_{n}$-uniformly bounded, measurable real-valued functions on $\mathcal{X}_{0}$ (namely, $\sup_{h\in\mathcal{H}_{n}}\|h\|_{\infty}\leq M_{n}$).
Let $\bar{\ell}:\mathbb{R}\times\mathcal{Y}\to\mathbb{R}$ be a loss function such that there is a $c_{0}$-Lipschitz continuous function $\tilde{\ell}:\mathbb{R}\to\mathbb{R}$ satisfying $\bar{\ell}(z,y)=\tilde{\ell}(yz)$ for any $y\in\mathcal{Y}$ and any $z\in\mathbb{R}$.
Let $P$ be a Borel probability measure in $\mathcal{X}_{0}\times\mathcal{Y}$ for which its marginal distribution in $\mathcal{X}_{0}$ is absolutely continuous for the Lebesgue measure, and its density is uniformly bounded.
Suppose that there is a measurable function $h^{*}:\mathcal{X}_{0}\to\mathbb{R}$ satisfying $\mathbb{E}_{P}[\bar{\ell}(h^{*}(x),y)]=\inf_{h}\mathbb{E}_{P}[\bar{\ell}(h(x),y)]$, where the infimum is taken over all the measurable functions on $\mathcal{X}_{0}$, and there are some $\kappa_{0}\in(0,1]$, positive constants $c$, $c'=c'(\kappa_{0},c_{0},c)$, and positive sequences $\{\varepsilon_{n}\}_{n\in\mathbb{N}}$ and $\{\widetilde{\varepsilon}_{n}\}_{n\in\mathbb{N}}$, such that the conditions below are satisfied for an arbitrary $n\in\mathbb{N}$:
\begin{enumerate}
    \item[(C1)] There is a function $h\in\mathcal{H}_{n}$ such that $\mathbb{E}_{P}[\bar{\ell}(h(x),y)]-\mathbb{E}_{P}[\bar{\ell}(h^{*}(x),y)]\leq \varepsilon_{n}$.
    \item[(C2)] The following inequality is satisfied for every $h\in\mathcal{H}_{n}$:
    \begin{align*}
        &\mathbb{E}_{P}[|\bar{\ell}(h(x),y)-\bar{\ell}(h^{*}(x),y)|^{2}]\\
        &\leq cM_{n}^{2-\kappa_{0}}(\mathbb{E}_{P}[\bar{\ell}(h(x),y)-\bar{\ell}(h^{*}(x),y)])^{\kappa_{0}}.
    \end{align*}
    \item[(C3)] It holds that $\log\mathscr{B}(\widetilde{\varepsilon}_{n},\mathcal{H}_{n},\|\cdot\|_{L^{2}(\mathcal{X}_{0})})\leq c'n\widetilde{\varepsilon}_{n}^{2-\kappa_{0}}M_{n}^{-2+\kappa_{0}}$.
\end{enumerate}
Then, for the estimator $\widehat{h}_{n}:(\mathcal{X}_{0}\times\mathcal{Y})^{n}\to\mathcal{H}_{n}$ such that for any sequence of pairs $(x_{1},y_{1}),\cdots,(x_{n},y_{n})\in\mathcal{X}_{0}\times\mathcal{Y}$, the empirical risk $\frac{1}{n}\sum_{i=1}^{n}\bar{\ell}(h(x_{i}),y_{i})$ is minimized at $\widehat{h}_{n}((x_{1},y_{1}),\cdots,(x_{n},y_{n}))$ in $\mathcal{H}_{n}$, there are positive universal constants $c_{1},c_{2}>0$ such that the following inequality holds:
\begin{align*}
&\mathbb{E}[\mathds{1}_{\{\mathbb{E}_{P}[\bar{\ell}(\widehat{h}_{n}(x),y)-\bar{\ell}(h^{*}(x),y)]\geq (2\varepsilon_{n})\vee 128c_{0}^{-1}\widetilde{\varepsilon}_{n}\}}]\\
&\leq c_{1}\exp(-c_{2}n((2\varepsilon_{n})\vee (128c_{0}^{-1}\widetilde{\varepsilon}_{n}))^{2-\kappa_{0}}M_{n}^{-2+\kappa_{0}}).
\end{align*}
\end{lemma}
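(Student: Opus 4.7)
The plan is to treat this as a classical fast-rate ERM concentration bound along the lines of van de Geer and Massart--Nedelec, since conditions (C1)--(C3) are precisely the approximation / Bernstein / bracketing-entropy triple that produces such a rate. My proof would follow a peeling-plus-concentration scheme in three stages: (i) reduce the event $\{\mathbb{E}_{P_0}[\bar{\ell}(\widehat{h}_n(x_0),y)-\bar{\ell}(h^*(x_0),y)]\geq r_n\}$, where $r_n:=(2\varepsilon_n)\vee 128c_0^{-1}\widetilde{\varepsilon}_n$, to a uniform deviation statement on certain ``shells'' of the hypothesis class; (ii) control the empirical process on each shell using a Bernstein-type inequality with the variance control supplied by (C2); (iii) sum the resulting sub-exponential tails across shells to extract the global $\exp(-c_2 n r_n^{2-\kappa_0}M_n^{-2+\kappa_0})$ rate.

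First, write the excess risk of $\widehat{h}_n$ as the sum of an approximation term, bounded by $\varepsilon_n$ thanks to (C1) applied to the oracle approximant $\tilde{h}\in\mathcal{H}_n$, and a centered empirical term $(P_{n,0}-P_0)(\bar{\ell}_{\tilde{h}}-\bar{\ell}_{\widehat{h}_n})$ that arises because $\widehat{h}_n$ minimizes $P_{n,0}\bar{\ell}$ over $\mathcal{H}_n$. On the event that the excess risk exceeds $r_n$, one may restrict attention to $h\in\mathcal{H}_n$ with $\mathbb{E}_{P_0}[\bar{\ell}_h-\bar{\ell}_{h^*}]\in[2^k r_n,2^{k+1}r_n)$ for some $k\geq 0$. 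On the $k$-th shell, (C2) gives
\begin{align*}
\operatorname{Var}_{P_0}(\bar{\ell}_h-\bar{\ell}_{h^*})\leq c M_n^{2-\kappa_0}(2^{k+1}r_n)^{\kappa_0},
\end{align*}
while $c_0$-Lipschitzness of $\tilde{\ell}$ together with the bound $\|h\|_\infty\leq M_n$ yields a uniform envelope of order $c_0 M_n$.

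Second, I invoke a Talagrand / Bernstein inequality for empirical processes indexed by the shell, where the bracketing entropy bound (C3) controls the expected supremum via Dudley's chaining (or directly through the peeling inequality of van de Geer, Theorem~5.11). The key calibration is that condition (C3) forces $\int_0^{\sqrt{c M_n^{2-\kappa_0}(2^{k+1}r_n)^{\kappa_0}}}\sqrt{\log\mathscr{B}(\tau,\mathcal{H}_n,\|\cdot\|_{L^2})}\,d\tau$ to scale no faster than $\sqrt{n}\cdot 2^{k\kappa_0/2}r_n^{\kappa_0/2}M_n^{(2-\kappa_0)/2}\cdot (\text{constant})$; the choice $r_n\geq 128c_0^{-1}\widetilde{\varepsilon}_n$ absorbs the Dudley integral into a fraction of the shell radius, so that Bernstein yields $\mathbb{P}(\sup_{\text{shell } k}|P_n-P|(\bar{\ell}_h-\bar{\ell}_{h^*})>\tfrac{1}{2}2^k r_n)\leq c_1\exp(-c_2'\, n(2^k r_n)^{2-\kappa_0}M_n^{-2+\kappa_0})$.

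Third, summing the geometric series $\sum_{k\geq 0}\exp(-c_2'\,n\,2^{k(2-\kappa_0)}r_n^{2-\kappa_0}M_n^{-2+\kappa_0})$ gives at most a constant multiple of the $k=0$ term, which is exactly $c_1\exp(-c_2 n r_n^{2-\kappa_0}M_n^{-2+\kappa_0})$ as asserted. The delicate part, and the step where I expect the book-keeping to be tightest, is the Bernstein calibration on each shell: one has to simultaneously (a) keep the variance proxy aligned with $\kappa_0$ via (C2), (b) let the Lipschitz structure of $\tilde{\ell}$ transport $L^2(\mathcal{X}_0)$-brackets on $\mathcal{H}_n$ to $L^2(P_0)$-brackets on $\{\bar{\ell}_h-\bar{\ell}_{h^*}:h\in\mathcal{H}_n\}$, and (c) ensure the constant $128c_0^{-1}$ in the threshold is large enough to dominate the chaining integral while the universal constant $c_2$ stays independent of $n$; threading these three constraints through the peeling argument is what makes the resulting inequality genuinely non-trivial rather than a routine Bernstein application.
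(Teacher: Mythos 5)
The paper does not prove this lemma itself: it is imported verbatim as Theorem~A.1 of \citet{kim2021fast}, a simplified form of a result of \citet{park2009convergence}, and is used purely as a black box in Step~4 of the proof of Theorem~\ref{thm:estimation error for deep relu networks}. There is therefore no in-paper proof to compare your sketch against.

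That said, your reconstruction is a fair outline of the mechanism behind the cited result: approximation via (C1), Bernstein-type variance control from (C2), peeling over excess-risk shells, and a geometric sum across shells. One spot where your sketch is looser than what is actually needed: (C3) supplies a single-scale bracketing-entropy bound at radius $\widetilde{\varepsilon}_n$, not an entropy-integral bound, so a Dudley integral over the whole shell diameter is not directly available from it. The argument of \citet{park2009convergence} (following van de Geer's peeling, Theorem~5.11 of \citet{geer2000empirical}) instead truncates the chaining at scale $\widetilde{\varepsilon}_n$ and lets the threshold $r_n\geq 128 c_0^{-1}\widetilde{\varepsilon}_n$ dominate the residual; this truncation device is the structural reason the constant $128 c_0^{-1}$ appears in the statement, and your write-up glides past it. Since the present paper offloads all of this to \citet{kim2021fast}, your sketch is a plausible account of the underlying argument rather than a competing one, but the chaining step would need the truncation to actually close.
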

We aim at applying this fact to a pairwise binary classification setting by checking conditions (C1)--(C3) in Lemma~\ref{lem:lemma by kim et al}.

We now prove Theorem~\ref{thm:estimation error for deep relu networks}.

\begin{proof}[Proof of Theorem~\ref{thm:estimation error for deep relu networks}]
Let $P\in\mathcal{P}_{\alpha,\tau,\xi}$.
First, note that by the condition $\theta_{1}\geq 1$ in Definition~\ref{def:main assumption}, we have $0<d_{1}^{-2}\theta_{1}^{-1}\leq 1$.
Also, $\varepsilon_{n}<\frac{1}{2}$ by the definition.
Thus, we have that $d_{1}^{-2}\theta_{1}^{-1}\varepsilon_{n}<\frac{1}{2}$, which implies that Proposition~\ref{prop:approximation error bound} is applicable.

By Proposition~\ref{prop:approximation error bound}, there are a constant $c>0$, parameters $L^{*}\lesssim \log_{2}\varepsilon_{n}^{-1}$, $1\leq J^{*}\lesssim \varepsilon_{n}^{-c}$, $S^{*}\lesssim \varepsilon_{n}^{-(K-1)/\alpha}\log_{2}\varepsilon_{n}^{-1}$, $M^{*}\lesssim |\log(4d_{1}^{-2}\theta_{1}^{-1}\varepsilon_{n})|\vee 1$, $\bm{d}^{*}=(K,d_{\textup{NN},1}^{*},\cdots,d_{\textup{NN},L^{*}-1}^{*},d_{1})$, and some $g_{\bm{W}^{*},\bm{b}^{*}}\in \mathcal{F}_{L^{*},J^{*},S^{*},M^{*},\bm{d}^{*}}^{\textup{NN}}$, such that we have
\begin{align}
\label{eq:example theorem eq 1}
\|H_{i}\circ g_{\bm{W}^{*},\bm{b}^{*}}-g_{i}^{*}\|_{L^{1}(\mathcal{X})}\leq d_{1}^{-2}\theta_{1}^{-1}\varepsilon_{n}.
\end{align}

Hereafter, the subclass $\mathcal{F}^{*}=\mathcal{F}_{L^{*},J^{*},S^{*},M^{*},\bm{d}^{*}}^{\Delta^{d}\textup{-NN}}$ is considered in this proof.
Denote by $\mathcal{F}_{\textup{local}}^{*}=\mathscr{F}_{\beta,\beta^{-1}\varepsilon_{n},P}(\mathcal{F}^{*})$.
Then, the set $\psi\circ\rho(\mathcal{F}_{\textup{local}}^{*})$ is $1$-uniformly bounded.
Also, it is well known that hinge loss $\max\{0,1-t\}$ is $1$-Lipschitz continuous (see, e.g.,~\citep[Example~2.27]{steinwart2008support}).
Thus, we can set $c_{0}=1$ and $M_{n}=1$ hereafter.

We consider to apply Lemma~\ref{lem:lemma by kim et al} to Theorem~\ref{thm:main result}.
To this end, we will check that conditions (C1) -- (C3) in Lemma~\ref{lem:lemma by kim et al} are satisfied.

\begin{claim}
\label{claim:check condition c1 to c3}
Given $\alpha>0$, $\tau\geq 1$, $\xi=(R,K,d_{1},E,\theta_{\textup{NC}},\theta_{1},\theta_{2},\theta_{3})\in\Xi$, $P\in\mathcal{P}_{\alpha,\tau,\xi}$, $\beta\in (0,D_{\textup{proj}})$, and $n\in\mathbb{N}\setminus \{1,2\}$ such that $\varepsilon_{n}=n^{-\tau\alpha/((2\tau-1)\alpha+\tau(K-1))}<1/2$, let $\mathcal{F}_{L^{*},J^{*},S^{*},M^{*},\bm{d}^{*}}^{\Delta^{d}\textup{-NN}}$ be a class of ReLU networks with a constant $c>0$, $L^{*}\lesssim \log_{2}\varepsilon_{n}^{-1}$, $1\leq J^{*}\lesssim \varepsilon_{n}^{-c}$, $S^{*}\lesssim \varepsilon_{n}^{-(K-1)/\alpha}\log_{2}\varepsilon_{n}^{-1}$, $M^{*}\lesssim |\log(4d_{1}^{-2}\theta_{1}^{-1}\varepsilon_{n})|\vee 1$, and $\bm{d}^{*}=(K,d_{\textup{NN},1}^{*},\cdots,d_{\textup{NN},L^{*}-1}^{*},d_{1})\in\mathbb{N}^{L^{*}+1}$, for which there is $g_{\bm{W}^{*},\bm{b}^{*}}\in\mathcal{F}_{L^{*},J^{*},S^{*},M^{*},\bm{d}^{*}}^{\textup{NN}}$ such that the following inequality is satisfied for every $i\in\{1,\cdots,d_{1}\}$:
\begin{align}
\label{eq:claim check condition c1 to c3 eq 1}
\|H_{i}\circ g_{\bm{W}^{*},\bm{b}^{*}}-g_{i}^{*}\|_{L^{1}(\mathcal{X})}\leq d_{1}^{-2}\theta_{1}^{-1}\varepsilon_{n},
\end{align}
where for $\mathscr{S}_{P}=\{\mathcal{K}_{i}\}_{i=1}^{d_{1}}$ we define $g_{i}^{*}:=\mathds{1}_{\mathcal{K}_{i}}$.
Then, there is a constant $C_{1}>0$ independent of $n$ and $P$ such that all the conditions \emph{(C1)} -- \emph{(C3)} in Lemma~\ref{lem:lemma by kim et al} are satisfied for both $\mathcal{F}_{\textup{local}}^{*}=\mathscr{F}_{\beta,\beta^{-1}\varepsilon_{n},P}(\mathcal{F}_{L^{*},J^{*},S^{*},M^{*},\bm{d}^{*}}^{\Delta^{d}\textup{-NN}})$ and $\mathcal{F}^{*}=\mathcal{F}_{L^{*},J^{*},S^{*},M^{*},\bm{d}^{*}}^{\Delta^{d}\textup{-NN}}$ with $\kappa_{0}=\tau^{-1}$, some positive constants $c,c'$, and the sequences $\{C_{1}\varepsilon_{n}\}_{n\in\mathbb{N}}$ and $\{\varepsilon_{n}\log^{3}{n}\}_{n\in\mathbb{N}}$.
\end{claim}
\begin{proof}[Proof of Claim~\ref{claim:check condition c1 to c3}]
We check condition (C1) in Lemma~\ref{lem:lemma by kim et al}.
Since $\|p_{X}\|_{L^{\infty}(\mathcal{X})}\leq \theta_{1}$ by condition (A3) in Definition~\ref{def:main assumption}, by \eqref{eq:claim check condition c1 to c3 eq 1}, for any $i\in\{1,\cdots,d_{1}\}$ we have
\begin{align}
\label{eq:claim check condition c1 to c3 eq 2}
\|H_{i}\circ g_{\bm{W}^{*},\bm{b}^{*}}-g_{i}^{*}\|_{L^{1}(\mathcal{X})}\leq d_{1}^{-2}\|p_{X}\|_{L^{\infty}(\mathcal{X})}^{-1}\varepsilon_{n}.
\end{align}
By~\eqref{eq:claim check condition c1 to c3 eq 2} and Proposition~\ref{thm:approximation and concentration}, for the function $f_{\bm{W}^{*},\bm{b}^{*}}=\sum_{i=1}^{d_{1}}(H_{i}\circ g_{\bm{W}^{*},\bm{b}^{*}})v_{i}$, we have
\begin{align}
\label{eq:example theorem eq 1.01}
f_{\bm{W}^{*},\bm{b}^{*}}\in\mathcal{F}_{\textup{local}}^{*}.
\end{align}

Let $\varepsilon_{\mathcal{F}_{\textup{local}}^{*},f^{*}}$ be the quantity defined in~\eqref{eq:inf sup approximation error}.
By Proposition~\ref{prop:approximation property of H}, there is a constant $C_{1}>0$ independent of $n$ such that
\begin{align}
\label{eq:example theorem eq 1.02}
P \textup{ is } (\psi,\mathcal{F}_{\textup{local}}^{*},C_{1}\varepsilon_{\mathcal{F}_{\textup{local}}^{*},f^{*}},1)\textup{-weak representable.}
\end{align}
Here, we note that by~\eqref{eq:claim check condition c1 to c3 eq 1} and~\eqref{eq:example theorem eq 1.01}, we have
\begin{align}
\label{eq:example theorem eq 1.021}
\varepsilon_{\mathcal{F}_{\textup{local}}^{*},f^{*}}\leq d_{1}^{-2}\theta_{1}^{-1}\varepsilon_{n}.
\end{align}
Hence, applying Lemma~\ref{lem:trade-off for parameters} and \eqref{eq:example theorem eq 1.021} to~\eqref{eq:example theorem eq 1.02}, we have that
\begin{align}
\label{eq:example theorem eq 1.03}
P \textup{ is }
(\psi,\mathcal{F}_{\textup{local}}^{*},C_{1}d_{1}^{-2}\theta_{1}^{-1}\varepsilon_{n},1)\textup{-weak representable.}
\end{align}
By~\eqref{eq:example theorem eq 1.03}, Lemma~\ref{prop:constructability for F0}, and Proposition~\ref{lem:connecting the upper bound to the excess risk}, there is some $f_{\bm{W},\bm{b}}\in\mathcal{F}_{\textup{local}}^{*}$ such that we have
\begin{align*}
\mathbb{E}_{P}[\ell_{f_{\bm{W},\bm{b}}}]-\mathbb{E}_{P}[\ell_{f^{*}}]\leq C_{1}d_{1}^{-2}\theta_{1}^{-1}\varepsilon_{n}\leq C_{1}\varepsilon_{n}.
\end{align*}
Thus, condition (C1) in Lemma~\ref{lem:lemma by kim et al} is satisfied for $\mathcal{F}_{\textup{local}}^{*}$.
We can check that condition (C1) in Lemma~\ref{lem:lemma by kim et al} is satisfied for $\mathcal{F}^{*}$, applying the arguments~\eqref{eq:example theorem eq 1.02} -- \eqref{eq:example theorem eq 1.03} to $\mathcal{F}^{*}$.

Note that one can verify that conditions (C2) and (C3) in Lemma~\ref{lem:lemma by kim et al} are satisfied in the case where $\mathcal{F}^{*}$ is considered, following almost the same arguments as those for $\mathcal{F}_{\textup{local}}^{*}$.
Hence, we present the detailed derivations for $\mathcal{F}_{\textup{local}}^{*}$ in the subsequent paragraphs.

We next check condition (C2) in Lemma~\ref{lem:lemma by kim et al}.
By Proposition~1 in~\citep{lecue2007optimal} (see Lemma~\ref{citelem:lecue lemma}), condition (C2) in Lemma~\ref{lem:lemma by kim et al} is satisfied with $\kappa_{0}=\tau^{-1}\in (0,1]$ (note that Lemma~6.1 in~\citep{steinwart2007fast} is also applicable, as in~\citep{kim2021fast}).

We finally check condition (C3) in Lemma~\ref{lem:lemma by kim et al}.
By the standard bracketing number bound shown in~\citep[Eq.~(A.1)]{kim2021fast}, which is a consequence of Lemma~2.1 in~\citep{geer2000empirical}, for any $s>0$ we have
\begin{align}
\label{eq:bracketing and covering numbers}
\log\mathscr{B}(s,\psi\circ\rho (\mathcal{F}_{\textup{local}}^{*}),\|\cdot\|_{L^{2}(\mathcal{X}^{2})})\leq \log\mathscr{N}(\frac{s}{2},\psi\circ\rho (\mathcal{F}_{\textup{local}}^{*}),\|\cdot\|_{L^{\infty}(\mathcal{X}^{2})}).
\end{align}
By Lemma~\ref{lem:excess risk bound lemma 1} and Lemma~\ref{citelem:nakada lemma}, for the constant $C_{6}=2^{-4}dd_{1}^{-1}e^{-4M^{*}} D_{\Delta^{d}}^{2}$, we have
\begin{align}
&\log\mathscr{N}(2^{-1}s,\psi\circ\rho (\mathcal{F}_{\textup{local}}^{*}),\|\cdot\|_{L^{\infty}(\mathcal{X}^{2})})\nonumber\\
&\leq
d_{1}^{2}\log\mathscr{N}(2^{-1}C_{6}s,\mathcal{F}_{L^{*},J^{*},S^{*},M^{*},\bm{d}_{1}^{*}}^{\textup{NN}},\|\cdot\|_{L^{\infty}(\mathcal{X})})\nonumber\\
\label{eq:condition (c3) eq 2}
&\leq d_{1}^{2}S^{*}\log(4s^{-1}C_{6}^{-1}L^{*}(J^{*})^{L^{*}}(S^{*}+1)^{L^{*}}),
\end{align}
where Lemma~\ref{lem:excess risk bound lemma 1} is applied in the first inequality, and Lemma~\ref{citelem:nakada lemma} is used in the second inequality.
Here, $\bm{d}_{1}^{*}=(K,d_{\textup{NN},1}^{*},\cdots,d_{\textup{NN},L^{*}-1}^{*},1)$.
Note that by the conditions of $L^{*},J^{*},S^{*}$, and $M^{*}$, we have
\begin{align*}
d_{1}^{2}S^{*}\log(4\widetilde{\varepsilon}_{n}^{-1}C_{6}^{-1}L^{*}(J^{*})^{L^{*}}(S^{*}+1)^{L^{*}})
\lesssim
\varepsilon_{n}^{-\frac{K-1}{\alpha}}((\log\widetilde{\varepsilon}_{n}^{-1})\vee (\log^{2}{n}))\log{n}.
\end{align*}
Hence, by~\eqref{eq:bracketing and covering numbers} and \eqref{eq:condition (c3) eq 2}, condition (C3) in Lemma~\ref{lem:lemma by kim et al} is satisfied with any $\widetilde{\varepsilon}_{n}$ satisfying
\begin{align}
\label{eq:condition (c3) eq 3}
\varepsilon_{n}^{-\frac{K-1}{\alpha}}((\log\widetilde{\varepsilon}_{n}^{-1})\vee(\log^{2}{n}))\log{n}
\leq 
c' n\widetilde{\varepsilon}_{n}^{2-\tau^{-1}},
\end{align}
where $c'>0$ is some constant independent of $n$ and $P$.
Hence, we define $\widetilde{\varepsilon}_{n}$ as
\begin{align*}
\widetilde{\varepsilon}_{n}=\varepsilon_{n}\log^{3}{n}.
\end{align*}
Then, $\widetilde{\varepsilon}_{n}$ satisfies~\eqref{eq:condition (c3) eq 3}.

Therefore, conditions (C1) -- (C3) in Lemma~\ref{lem:lemma by kim et al} are satisfied.
\end{proof}

Let $\widehat{g}_{n}^{\textup{LERM}}$ be the $(\beta,\varepsilon_{n},n,\mathcal{P}_{\alpha,\tau,\xi},\mathcal{F}^{*})$-local ERM estimator, and let $\widehat{f}_{n}^{\textup{LERM}}$ be the corresponding local estimator of vector-valued functions (see Definition~\ref{def:formal definition of erm}), where note that without loss of generality we may assume the existence of these estimators (see Remark~\ref{remark:local erm}).
By Theorem~\ref{thm:main result}, there are positive constants $C,C'$, and $C''$ independent of $n$ and $P$ such that
\begin{align}
\label{eq:example theorem eq 2}
\mathcal{R}(\widehat{g}_{n,P}^{\textup{LERM}};P)
\leq C(\log{n}) \mathbb{E}[\mathcal{E}(\widehat{f}_{n,P}^{\textup{LERM}}(U_{1}^{n});P)]^{\frac{1}{\tau}}+\frac{C'\varepsilon_{n}}{\beta}+\frac{C''}{n}.
\end{align}
Let $\widetilde{\varepsilon}_{n}=\varepsilon_{n}\log^{3}{n}$.
By Lemma~\ref{lem:lemma by kim et al}, there are positive universal constants $c_{1},c_{2}$ such that we have
\begin{align}
\label{eq:main theorem conclusion eq 1}
\mathbb{E}[\mathds{1}_{\{\mathcal{E}(\widehat{f}_{n,P}^{\textup{LERM}}(U_{1}^{n});P)\geq 128C_{1}\widetilde{\varepsilon}_{n}\}}]
\leq
c_{1}e^{-128^{2-\tau^{-1}}c_{2}n\widetilde{\varepsilon}_{n}^{2-\tau^{-1}}},
\end{align}
where $\varepsilon_{n}\leq \widetilde{\varepsilon}_{n}$ since $n>2$ by the definition.
Then, we have
\begin{align}
&\mathbb{E}[\mathcal{E}(\widehat{f}_{n,P}^{\textup{LERM}}(U_{1}^{n});P)]\nonumber\\
&\leq 128C_{1}\widetilde{\varepsilon}_{n}+\int_{128C_{1}\widetilde{\varepsilon}_{n}}^{2\vee (128C_{1}\widetilde{\varepsilon}_{n})}\mathbb{E}[\mathds{1}_{\{\mathcal{E}(\widehat{f}_{n,P}^{\textup{LERM}}(U_{1}^{n});P)\geq s\}}]ds\\
\label{eq:main theorem conclusion eq 2}
&\leq 128C_{1}\widetilde{\varepsilon}_{n}+2c_{1}e^{-128^{2-\tau^{-1}}c_{2}n\widetilde{\varepsilon}_{n}^{2-\tau^{-1}}},
\end{align}
where in~\eqref{eq:main theorem conclusion eq 2}, we used~\eqref{eq:main theorem conclusion eq 1}.
By the definition of $\widetilde{\varepsilon}_{n}$, there is a natural number $N$ such that for every $n\geq N$, we have $128C_{1}\widetilde{\varepsilon}_{n}\geq 2c_{1}e^{-128^{2-\tau^{-1}}c_{2}n\widetilde{\varepsilon}_{n}^{2-\tau^{-1}}}$.
Thus, by~\eqref{eq:example theorem eq 2} and \eqref{eq:main theorem conclusion eq 2}, for any $n\geq N$, we have
\begin{align*}
\mathcal{R}(\widehat{g}_{n,P}^{\textup{LERM}};P)\leq 256^{\frac{1}{\tau}}C_{1}^{\frac{1}{\tau}}C \varepsilon_{n}^{\frac{1}{\tau}}(\log{n})^{3\tau^{-1}+1}+\frac{C'}{\beta}\varepsilon_{n}+\frac{C''}{n}.
\end{align*}
Define
\begin{align*}
C^{*}=3\max\{(256C_{1})^{1/\tau}C,C'\beta^{-1},C''\}.
\end{align*}
Since $\varepsilon_{n}=n^{-\tau\alpha/((2\tau-1)\alpha+\tau(K-1))}$ and $\varepsilon_{n}<\frac{1}{2}$, if $n\geq N$, we obtain
\begin{align}
\label{eq:example theorem eq 7}
    \mathcal{R}(\widehat{g}_{n,P}^{\textup{LERM}};P)
    \leq C^{*} n^{-\frac{\alpha}{(2\tau-1)\alpha + \tau(K-1)}}\log^{3\tau^{-1}+1}{n}.
\end{align}
Therefore, we obtain the claim.
\end{proof}

\begin{remark}
\label{remark:local erm}
Consider the setting in the proof of Theorem~\ref{thm:estimation error for deep relu networks}.
Given $(u_{1},\cdots,u_{n})\in (\mathcal{X}^{2}\times\mathcal{Y})^{n}$ and $P\in\mathcal{P}_{\alpha,\tau,\xi}$, suppose that $\widehat{f}_{n,P}^{\textup{LERM}}(u_{1},\cdots,u_{n})$ does not exist in Definition~\ref{def:formal definition of erm}.
In this case, $\widehat{g}_{n,P}^{\textup{LERM}}(u_{1},\cdots,u_{n})$ is not defined.
To address this issue, one may consider to modify the definition of $\mathcal{F}_{L,J,S,M,\bm{d}}^{\Delta^{d}\textup{-NN}}$ so that every entry in $\bm{W}$ or $\bm{b}$ of each ReLU networks $g_{\bm{W},\bm{b}}$ of the modified class always belongs to a finite subset of $\mathbb{R}$, similarly to~\citep[Definition~2.9]{petersen2018optimal}.
Formally, let $\mathcal{W}$ be a finite subset of $\mathbb{R}$ such that for the ReLU networks $g_{\bm{W}^{*},\bm{b}^{*}}$ in~\eqref{eq:example theorem eq 1}, it holds that
\begin{align*}
\bigcup_{i,j_{1},j_{2}}\{W_{i,j_{1},j_{2}}^{*}\}\cup \bigcup_{i,j}\{b_{i,j}^{*}\}\subset \mathcal{W},
\end{align*}
where $\bm{W}^{*}=(W_{1}^{*},\cdots,W_{L^{*}}^{*})$, $W_{i}^{*}=(W_{i,j_{1},j_{2}}^{*})$ for each $i=1,\cdots,L^{*}$, $\bm{b}^{*}=(b_{1}^{*},\cdots,b_{L^{*}}^{*})$, and $b_{i}^{*}=(b_{i,j}^{*})$ for each $i=1,\cdots,L^{*}$.
Note that $\mathcal{W}$ may depend on $n$.
For each $n\in\mathbb{N}\setminus \{1,2\}$, define
\begin{align*}
\mathcal{F}_{L^{*},J^{*},S^{*},M^{*},\bm{d}^{*}}^{\Delta^{d}\textup{-NN},\mathcal{W}}=
\left\{
\begin{array}{@{}l|l@{}}
    \multirow{2}{*}{$f_{\bm{W},\bm{b}}\in\mathcal{F}_{L^{*},J^{*},S^{*},M^{*},\bm{d}^{*}}^{\Delta^{d}\textup{-NN}}$} & \bm{W}=(W_{1},\cdots,W_{L^{*}}) \textup{ and } \bm{b}=(b_{1},\cdots,b_{L^{*}}) \\
     & \textup{satisfy } \bigcup_{i,j_{1},j_{2}}\{W_{i,j_{1},j_{2}}\}\cup \bigcup_{i,j}\{b_{i,j}\}\subset \mathcal{W}
\end{array}
\right\}.
\end{align*}
Note that this definition is a slight generalization of~\citep[Definition~2.9]{petersen2018optimal}.
Note also that in the proof of Proposition~\ref{prop:approximation error bound}, one can take a finite subset $\mathcal{W}\subset \mathbb{R}$ independent of any given distribution $P\in\mathcal{P}_{\alpha,\tau,\xi}$ (see Lemma~\ref{lem:petersen voigtlaender lemma}).
Modifying the proof of Claim~\ref{claim:check condition c1 to c3} slightly, one can see that conditions (C1) -- (C3) in Lemma~\ref{lem:lemma by kim et al} are satisfied for $\mathcal{F}_{L^{*},J^{*},S^{*},M^{*},\bm{d}^{*}}^{\Delta^{d}\textup{-NN},\mathcal{W}}$ with the same constants and sequences as those in Claim~\ref{claim:check condition c1 to c3}.
Since $\mathcal{F}_{L^{*},J^{*},S^{*},M^{*},\bm{d}^{*}}^{\Delta^{d}\textup{-NN},\mathcal{W}}\subset \mathcal{F}_{L^{*},J^{*},S^{*},M^{*},\bm{d}^{*}}^{\Delta^{d}\textup{-NN}}$, the remained part of the proof is almost the same as the original proof.
\end{remark}

\subsection{Proof of Theorem~\ref{thm:minimax lower bound}}
\label{appsubsec:proof of theorem minimax lower bound}

We consider an approach using Assouad's lemma~\citep{assouad1983deux}, which is standard in the context of set estimation (see, e.g.,~\citep{mammen1995asymptotical,mammen1999smooth,tsybakov2004optimal,meyer2023optimal}).
In what follows, we recall a version shown in Lemma~2 of~\citep{yu1997assouad}, where a comment in p.427 of~\citep{yu1997assouad} is combined.
Note that Lemma~2 in~\citep{yu1997assouad} shows the lemma for a family of pseudo-distances, while it is mentioned in~\citep[p.427]{yu1997assouad} that this lemma can be extended to a setting where a family of non-negative, symmetric functions on a product parameter set $\Theta\times\Theta$ is used instead.
Specifically, \citet[Remark (i)]{yu1997assouad} introduces a non-negative symmetric function $\delta:\Theta\times\Theta\to\mathbb{R}$ that satisfies for any $\vartheta,\vartheta',\vartheta''\in\Theta$,
\begin{align}
\label{eq:a variant of triangle inequality}
c_{1}\delta(\vartheta,\vartheta')\leq \delta(\vartheta,\vartheta'')+\delta(\vartheta'',\vartheta'),
\end{align}
for some $c_{1}\in (0,1)$.
The following statement is due to~\citep[p.427]{yu1997assouad}, which is the combination of Lemma~2 and Remark~(i) in~\citep{yu1997assouad}.
\begin{lemma}[{Lemma~2 and Remark~(i) of~\citep{yu1997assouad}}]
\label{citelem:assouad lemma}
Given $t\in\mathbb{N}$, let $\overline{\mathcal{P}}$ be a set of probability measures in a measurable space $\mathcal{A}$ that are absolutely continuous for a given non-negative $\sigma$-finite measure $\nu$ in $\mathcal{A}$ and parameterized by the set $\{0,1\}^{t}$, namely
\begin{align*}
\overline{\mathcal{P}}=\{P_{w}\;|\; w:\{1,\cdots,t\}\to\{0,1\}\}.
\end{align*}
Let $\vartheta:\overline{\mathcal{P}}\to \Theta$ be a map from $\overline{\mathcal{P}}$ to the given parameter set $\Theta$.
Given a family $\{\delta_{I}:\Theta\times\Theta\to\mathbb{R}\;|\; I\in \{1,\cdots,t\}\}$ of non-negative symmetric functions that satisfy~\eqref{eq:a variant of triangle inequality} for some fixed $c_{1}\in (0,1)$, suppose that there is a non-negative number $s=s(t)$ that may depend on $t$ such that for any $I_{0}\in \{1,\cdots,t\}$,
\begin{align}
\label{eq:assouad lemma sufficient condition}
\delta_{I_{0}}(\vartheta(P_{w_{I_{0},1}}),\vartheta(P_{w_{I_{0},0}}))\geq s,
\end{align}
for any functions $w_{I_{0},1},w_{I_{0},0}:\{1,\cdots,t\}\to \{0,1\}$ satisfying $w_{I_{0},1}(I)=w_{I_{0},0}(I)$ for any $I\in \{1,\cdots,t\}\setminus \{I_{0}\}$, $w_{I_{0},1}(I_{0})=1$, and $w_{I_{0},0}(I_{0})=0$.
Then, it holds that
\begin{align*}
&\inf_{\widehat{\vartheta}}\sup_{P_{w}\in\overline{\mathcal{P}}}\mathbb{E}\left[\sum_{I\in \{1,\cdots,t\}}\delta_{I}(\widehat{\vartheta}(U_{1}^{n}),\vartheta(P_{w}))\right]\\
&\geq \frac{c_{1}ts}{2}\min_{\substack{I\in \{1,\cdots,t\},\\ w_{I,1},w_{I,0}}}\int_{\mathcal{A}^{n}} \min\left\{p_{w_{I,1}}^{\otimes n},p_{w_{I,0}}^{\otimes n}\right\}d\nu^{\otimes n},
\end{align*}
where the infimum is taken over all estimators $\widehat{\vartheta}$ in the given set of estimators, $U_{1}^{n}$ is any sequence of i.i.d. random variables drawn from the given $P_{w}$, $p_{w}$ denotes the Radon-Nikodym derivative of $P_{w}\in\overline{\mathcal{P}}$ with respect to $\nu$, $p_{w}^{\otimes n}$ denotes the tensor product of the function $p_{w}$, and $\nu^{\otimes n}$ denotes the product measure.
\end{lemma}

We are now in a position to prove Theorem~\ref{thm:minimax lower bound}.
The proof outline is similar to the standard one considered in the literature~\citep{mammen1995asymptotical,mammen1999smooth,tsybakov2004optimal,meyer2023optimal}.
We would like to emphasize that applying the standard approach directly to our problem setting is not straightforward from the previous results of~\citep{mammen1995asymptotical,mammen1999smooth,tsybakov2004optimal,meyer2023optimal}, as the construction of the subclass is complicated.

\begin{proof}[Proof of Theorem~\ref{thm:minimax lower bound}]
The proof of Theorem~\ref{thm:minimax lower bound} is divided in several steps.

\paragraph{Step 1 (Construction of the subclass $\mathcal{P}_{\alpha,1,\xi,N^{K-1}}$).}
Given $N\in\mathbb{N}$, we define a class $\mathcal{P}_{\alpha,1,\xi,N^{K-1}}$ of Borel probability measures in $\mathcal{X}^{2}\times\mathcal{Y}$ as follows:
\begin{itemize}
    \item Let $p_{X,\textsf{U}}$ be the probability density function of the uniform distribution in $\mathcal{X}$. The Borel probability measure corresponding to $p_{X,\textsf{U}}$ is denoted by $P_{X,\textsf{U}}$. In addition, define $p_{Y,\textsf{U}}(1)=\frac{1}{2}$ and $p_{Y,\textsf{U}}(-1)=\frac{1}{2}$. Note that $p_{Y,\textsf{U}}(1)> \theta_{3}/(1+\theta_{3})$ since $0\leq \theta_{3}< 1$.
    \item Let $h_{\textsf{base}}:\mathbb{R}^{K-1}\to [0,1]$ be an infinitely differentiable function such that $h_{\textsf{base}}(\bm{0})=1$ and $\textup{cl}(\{\widetilde{x}\in\mathbb{R}^{K-1}\;|\; h_{\textsf{base}}(\widetilde{x})\neq 0\})=[-1,1]^{K-1}$, where $\textup{cl}(\cdot)$ denotes the closure of the given set. Let $w:\{1,\cdots,N\}^{K-1}\to \{0,1\}$ be arbitrary.
    Similarly to~\citep[p.3651]{meyer2023optimal} (see also~\citep{mammen1995asymptotical,mammen1999smooth}), define $h_{w}:[0,1]^{K-1}\to \mathbb{R}$ as
    \begin{align}
    \label{eq:thm minimax lower bound eq 0}
        h_{w}(\widetilde{x})=1-\theta_{3}+c_{2}N^{-\alpha}\sum_{I\in \{1,\cdots,N\}^{K-1}} w(I) h_{\textsf{base}}\left(2N\left(\widetilde{x}-\frac{2I-\bm{1}}{2N}\right)\right),
    \end{align}
    where $\bm{1}=(1,\cdots,1)\in\mathbb{R}^{K-1}$, and $c_{2}>0$ is a constant independent of $N$ such that it is small enough to guarantee the following conditions (E1) -- (E3):
    \begin{enumerate}
    \item[(E1)] $\|h_{w}\|_{\mathcal{C}^{\alpha,K-1}}\leq R$. \label{condition:e1}
    \item[(E2)] $|h_{w}(\widetilde{x})|\leq \theta_{3}$ for any $\widetilde{x}\in [0,1]^{K-1}$. \label{condition:e2}
    \item[(E3)] $2^{-K+2}3c_{2}p_{Y,\textsf{U}}(1)\int_{[-1,1]^{K-1}}h_{\textsf{base}}(\widetilde{x})d\widetilde{x}\leq 1$. \label{condition:e3}
    \end{enumerate}
    
    \item Given a function $w:\{1,\cdots,N\}^{K-1}\to \{0,1\}$, define the subset $\mathcal{K}_{w}\subset \mathcal{X}$ as
    \begin{align*}
        \mathcal{K}_{w}=\{x\in\mathcal{X}\;|\; x_{K}< h_{w}(x_{\setminus K})\}.
    \end{align*}
    Here, recall the notation $x_{\setminus K}=(x_{1},\cdots,x_{K-1})$.
    Denote the complement of $\mathcal{K}_{w}$ by $\widetilde{\mathcal{K}}_{w}=\mathcal{X}\setminus \mathcal{K}_{w}$.
    Note that 
    \begin{align}
    \label{eq:thm minimax lower bound eq 1}
    P_{X,\textsf{U}}(\mathcal{K}_{w})=\int_{\mathcal{X}}\mathds{1}_{\mathcal{K}_{w}}(x)p_{X,\textsf{U}}(x)\mu(dx)
    &=\int_{[0,1]^{K-1}}\int_{0}^{h_{w}(x_{\setminus K})}dx_{K} dx_{\setminus K}\nonumber\\
    &=\int_{[0,1]^{K-1}}h_{w}(x_{\setminus K})dx_{\setminus K}\nonumber\\
    &\in [1-\theta_{3},\theta_{3}],
    \end{align}
    where in the second equality we use Fubini's theorem, and in~\eqref{eq:thm minimax lower bound eq 1} we note that $1-\theta_{3}\leq h_{w}(\widetilde{x})\leq \theta_{3}$ for any $\widetilde{x}\in [0,1]^{K-1}$ by condition (E2).
    Here, note also that $dx_{K}$ and $dx_{\setminus K}$ denote the Lebesgue measures in $[0,1]$ and $[0,1]^{K-1}$, respectively.
    By~\eqref{eq:thm minimax lower bound eq 1}, we also have
    \begin{align}
    \label{eq:thm minimax lower bound eq 2}
    P_{X,\textsf{U}}(\widetilde{\mathcal{K}}_{w})\in [1-\theta_{3},\theta_{3}].
    \end{align}
    \item Define the function $q_{w}:\mathcal{X}^{2}\to\mathbb{R}$ as
    \begin{align*}
        q_{w}(x,x')
        &= P_{X,\textsf{U}}(\mathcal{K}_{w})^{-1} p_{X,\textsf{U}}(x)p_{X,\textsf{U}}(x')\mathds{1}_{\mathcal{K}_{w}\times\mathcal{K}_{w}}(x,x')\\
        &\quad + P_{X,\textsf{U}}(\widetilde{\mathcal{K}}_{w})^{-1} p_{X,\textsf{U}}(x)p_{X,\textsf{U}}(x')\mathds{1}_{\widetilde{\mathcal{K}}_{w}\times\widetilde{\mathcal{K}}_{w}}(x,x').
    \end{align*}
    Note that $q_{w}$ is a probability density function in $\mathcal{X}^{2}$ since
    \begin{align*}
        \int_{\mathcal{X}\times\mathcal{X}}q_{w}(x,x')\mu(dx)\mu(dx')=\int_{\mathcal{K}_{w}}p_{X,\textsf{U}}(x)\mu(dx)+\int_{\widetilde{\mathcal{K}}_{w}}p_{X,\textsf{U}}(x)\mu(dx)=1,
    \end{align*}
    where in the first equality we use Fubini's theorem, and in the second equality we utilize $p_{X,\textsf{U}}(x)=1$ for any $x\in\mathcal{X}$.
    Note also that the definition of $q_{w}$ is also an example of the constructions in~\citep[Eq. (1) and (2)]{arora2019theoretical} and \citep[Assumption~3.1]{awasthi2022do}, as in the proof of Proposition~\ref{lem:universal partition}.
    \item Given a function $w:\{1,\cdots,N\}^{K-1}\to \{0,1\}$, define $p_{w}(x,x',y)$ as
    \begin{align*}
        &p_{w}(x,x',y)\\
        &=\mathds{1}_{\{1\}}(y)p_{Y,\textsf{U}}(1) q_{w}(x,x')+\mathds{1}_{\{-1\}}(y)p_{Y,\textsf{U}}(-1)p_{X,\textsf{U}}(x)p_{X,\textsf{U}}(x').
    \end{align*}
    Then, define the set $\mathcal{P}_{\alpha,1,\xi,N^{K-1}}$ as
    \begin{align*}
    \mathcal{P}_{\alpha,1,\xi,N^{K-1}}=
    \left\{
    \begin{array}{@{}l@{\;}|l@{}}
    \multirow{3}{*}{$P_{w}$} & P_{w}\textup{ is a Borel probability measure in }\mathcal{X}^{2}\times\mathcal{Y} \\
     & \textup{whose probability density function is }p_{w}(x,x',y)\\
     & \textup{for a function }w:\{1,\cdots,N\}^{K-1}\to \{0,1\}.
    \end{array}
    \right\}.
    \end{align*}
    Note that $\mathcal{P}_{\alpha,1,\xi,N^{K-1}}$ is a finite set.
\end{itemize}
We need to check the following claims:

\begin{claim}
    \label{claim:thm minimax lower bound claim existence of the constant}
    There is a constant $c_{2}>0$ such that $c_{2}$ is independent of $N$, and the conditions (E1) -- (E3) are satisfied.
    \end{claim}
    \begin{proof}[Proof of Claim~\ref{claim:thm minimax lower bound claim existence of the constant}]
    Recall that for any $I\in\{1,\cdots,N\}^{K-1}$ and any $\widetilde{x}\in [\frac{I_{1}-1}{N},\frac{I_{1}}{N}]\times \cdots \times [\frac{I_{K-1}-1}{N},\frac{I_{K-1}}{N}]$,
    \begin{align}
    \label{eq:thm minimax lower bound claim existence eq 0.0}
    h_{w}(\widetilde{x})=1-\theta_{3}+c_{2}N^{-\alpha}w(I)h_{\textsf{base}}\left(2N\left(\widetilde{x}-\frac{2I-\bm{1}}{2N}\right)\right),
    \end{align}
    where note that the support of the function $\widetilde{x}\mapsto h_{\textsf{base}}(2N\widetilde{x}-2I+\bm{1})$ is the set $[\frac{I_{1}-1}{N},\frac{I_{1}}{N}]\times \cdots \times [\frac{I_{K-1}-1}{N},\frac{I_{K-1}}{N}]$, and $h_{\textsf{base}}$ is continuous, which implies that $h_{\textsf{base}}(2N\widetilde{x}-2I+\bm{1})=0$ if $\widetilde{x}$ belongs to the boundary of $[\frac{I_{1}-1}{N},\frac{I_{1}}{N}]\times \cdots\times [\frac{I_{K-1}-1}{N},\frac{I_{K-1}}{N}]$.
    
    We have
    \begin{align}
    \label{eq:thm minimax lower bound claim existence eq 0.1}
        &\|h_{w}\|_{\mathcal{C}^{\alpha,K-1}}\nonumber\\
        &\leq 1-\theta_{3}+ c_{2}2N^{-\alpha}\max_{I\in\{1,\cdots,N\}^{K-1}}\|h_{\textsf{base}}(2N\widetilde{x}-2I+\bm{1})\|_{\mathcal{C}^{\alpha,K-1}}\\
        \label{eq:thm minimax lower bound claim existence eq 0.2}
        &\leq 1-\theta_{3}+c_{2}2^{\alpha+1}\|h_{\textsf{base}}\|_{\mathcal{C}^{\alpha,K-1}(\mathbb{R}^{K-1})},
    \end{align}
    where $\|\cdot\|_{\mathcal{C}^{\alpha,K-1}(\mathbb{R}^{K-1})}$ denotes the H\"{o}lder norm of functions on $\mathbb{R}^{K-1}$.
    The detailed derivations of the inequalities~\eqref{eq:thm minimax lower bound claim existence eq 0.1} and~\eqref{eq:thm minimax lower bound claim existence eq 0.2} are shown below, for completeness:
    
    (Derivation of~\eqref{eq:thm minimax lower bound claim existence eq 0.1})
    Define $\overline{\mathcal{M}}_{N,I}=[\frac{I_{1}-1}{N},\frac{I_{1}}{N}]\times \cdots\times [\frac{I_{K-1}-1}{N},\frac{I_{K-1}}{N}]$.
    Given any $\bm{s}\in (\mathbb{N}\cup \{0\})^{K-1}$ satisfying $\|\bm{s}\|_{1}= \lceil \alpha-1\rceil$, for any $\widetilde{x},\widetilde{x}'\in [0,1]^{K-1}$ such that $\widetilde{x}\neq \widetilde{x}'$, we have
    \begin{align}
    &\frac{|\partial_{\bm{s}}(\sum_{I}w(I)h_{\textsf{base}}(2N\widetilde{x}-2I+\bm{1}))-\partial_{\bm{s}}(\sum_{I}w(I)h_{\textsf{base}}(2N\widetilde{x}'-2I+\bm{1}))|}{\|\widetilde{x}-\widetilde{x}'\|_{\infty}^{\alpha-\lceil \alpha-1\rceil}}\nonumber\\
    \label{eq:thm minimax lower bound claim existence eq 0.02}
    &\leq
    2\max_{I\in\{1,\cdots,N\}^{K-1}}\frac{|\partial_{\bm{s}}(h_{\textsf{base}}(2N\widetilde{x}-2I+\bm{1}))-\partial_{\bm{s}}(h_{\textsf{base}}(2N\widetilde{x}'-2I+\bm{1}))|}{\|\widetilde{x}-\widetilde{x}'\|_{\infty}^{\alpha-\lceil\alpha-1\rceil}},
    \end{align}
    where in~\eqref{eq:thm minimax lower bound claim existence eq 0.02} we apply the triangle inequality and the property that $w(I)\in\{0,1\}$, and then we use the property that the support of $h_{\textsf{base}}$ is $[-1,1]^{K-1}$, implying that $h_{\textsf{base}}(2N\widetilde{x}-2I'+\bm{1})=0$ for any $\widetilde{x}\in\overline{\mathcal{M}}_{N,I}$ for which $I\neq I'$ is satisfied.
    Similarly, for any $\bm{s}\in (\mathbb{N}\cup\{0\})^{K-1}$ satisfying $\|\bm{s}\|_{1}\leq \lceil\alpha-1\rceil$ and an arbitrary $I_{0}\in\{1,\cdots,N\}^{K-1}$, we have
    \begin{align}
    \label{eq:thm minimax lower bound claim existence eq 0.03}
    &\|\partial_{\bm{s}}(\sum_{I}w(I)h_{\textsf{base}}(2N\widetilde{x}-2I+\bm{1}))\|_{\infty}\nonumber\\
    &\leq \sup_{\widetilde{x}\in [0,1]^{K-1}}\max_{I\in\{1,\cdots,N\}^{K-1}}|\partial_{\bm{s}}(h_{\textsf{base}}(2N\widetilde{x}-2I+\bm{1}))| \nonumber\\
    &= \max_{I\in\{1,\cdots,N\}^{K-1}}\|\partial_{\bm{s}}(h_{\textsf{base}}(2N\widetilde{x}-2I+\bm{1}))\|_{\infty}\nonumber\\
    &= \|\partial_{\bm{s}}(h_{\textsf{base}}(2N\widetilde{x}-2I_{0}+\bm{1}))\|_{\infty},
    \end{align}
    where in the inequality we note that the support of $\widetilde{x}\mapsto h_{\textsf{base}}(2N\widetilde{x}-2I+\bm{1})$ is $\overline{\mathcal{M}}_{N,I}$ for each $I\in\{1,\cdots,N\}^{K-1}$.
    By~\eqref{eq:thm minimax lower bound claim existence eq 0.02} and~\eqref{eq:thm minimax lower bound claim existence eq 0.03}, we have
    \begin{align*}
    &\|\sum_{I}w(I)h_{\textsf{base}}(2N\widetilde{x}-2I+\bm{1})\|_{\mathcal{C}^{\alpha,K-1}}\\
    &\leq 2\max_{I\in\{1,\cdots,N\}^{K-1}}\|h_{\textsf{base}}(2N\widetilde{x}-2I+\bm{1})\|_{\mathcal{C}^{\alpha,K-1}}.
    \end{align*}
    
    (Derivation of~\eqref{eq:thm minimax lower bound claim existence eq 0.2})
    In~\eqref{eq:thm minimax lower bound claim existence eq 0.2}, by differentiating the composite functions, we have
    \begin{align*}
    &\|h_{\textsf{base}}(2N\widetilde{x}-2I+\bm{1})\|_{\mathcal{C}^{\alpha,K-1}}\\
    &=
    \sum_{\bm{s}:\|\bm{s}\|_{1}\leq \lceil\alpha-1\rceil} \|\partial_{\bm{s}}(h_{\textsf{base}}(2N\widetilde{x}-2I+\bm{1}))\|_{\infty}\\
    &\quad +
    \sum_{\bm{s}:\|\bm{s}\|_{1}=\lceil\alpha-1\rceil}\sup_{\widetilde{x}\neq \widetilde{x}'}\frac{|\partial_{\bm{s}}(h_{\textsf{base}}(2N\widetilde{x}-2I+\bm{1}))-\partial_{\bm{s}}(h_{\textsf{base}}(2N\widetilde{x}'-2I+\bm{1}))|}{\|\widetilde{x}-\widetilde{x}'\|_{\infty}^{\alpha-\lceil\alpha-1\rceil}}\\
    &\leq 
    (2N)^{\lceil\alpha-1\rceil} \sum_{\bm{s}:\|\bm{s}\|_{1}\leq \lceil\alpha-1\rceil}\|\partial_{\bm{s}}h_{\textsf{base}}(2N\widetilde{x}-2I+\bm{1})\|_{\infty}\\
    &+(2N)^{\lceil\alpha-1\rceil}\sum_{\substack{\bm{s}:\\ \|\bm{s}\|_{1}=\lceil\alpha-1\rceil}} \sup_{\widetilde{x}\neq\widetilde{x}'}\frac{|\partial_{\bm{s}}h_{\textsf{base}}(2N\widetilde{x}-2I+\bm{1})-\partial_{\bm{s}}h_{\textsf{base}}(2N\widetilde{x}'-2I+\bm{1})|}{\|\widetilde{x}-\widetilde{x}'\|_{\infty}^{\alpha-\lceil\alpha-1\rceil}}\\
    &\leq
    (2N)^{\lceil\alpha-1\rceil}\sum_{\bm{s}:\|\bm{s}\|_{1}\leq \lceil\alpha-1\rceil}\|\partial_{\bm{s}}h_{\textsf{base}}\|_{L^{\infty}(\mathbb{R}^{K-1})}\\
    &\quad +(2N)^{\alpha}\sum_{\bm{s}:\|\bm{s}\|_{1}=\lceil\alpha-1\rceil} \sup_{\widetilde{x}\neq \widetilde{x}'}\frac{|\partial_{\bm{s}}h_{\textsf{base}}(2N\widetilde{x}-2I+\bm{1})-\partial_{\bm{s}}h_{\textsf{base}}(2N\widetilde{x}'-2I+\bm{1})|}{\|(2N\widetilde{x}-2I+\bm{1})-(2N\widetilde{x}'-2I+\bm{1})\|_{\infty}^{\alpha-\lceil\alpha-1\rceil}}\\
    &\leq
    (2N)^{\alpha}\|h_{\textsf{base}}\|_{\mathcal{C}^{\alpha,K-1}(\mathbb{R}^{K-1})}.
    \end{align*}
    Here, note that $\partial_{\bm{s}}(h_{\textsf{base}}(2N\widetilde{x}-2I+\bm{1}))$ denotes the $\bm{s}$-partial derivative of the composite function $\widetilde{x}\mapsto h_{\textsf{base}}(2N\widetilde{x}-2I+\bm{1})$, while $\partial_{\bm{s}}h_{\textsf{base}}(2N\widetilde{x}-2I+\bm{1})$ denotes the value of the derivative $x\mapsto \partial_{\bm{s}}h_{\textsf{base}}(x)$ at $x=2N\widetilde{x}-2I+\bm{1}$.
    
    By~\eqref{eq:thm minimax lower bound claim existence eq 0.2}, the condition (E1) is satisfied for the constant $c_{2,1}$ defined as
    \begin{align*}
    c_{2,1}=(2^{\alpha+1}\|h_{\textsf{base}}\|_{\mathcal{C}^{\alpha,K-1}(\mathbb{R}^{K-1})})^{-1}(R-1+\theta_{3}).
    \end{align*}
    
    For the second condition, note that
    \begin{align*}
    |h_{w}(\widetilde{x})|\leq 1-\theta_{3}+c_{2}N^{-\alpha}\sup_{\widetilde{x}\in \mathbb{R}^{K-1}}|h_{\textsf{base}}(\widetilde{x})|\leq 1-\theta_{3}+c_{2},
    \end{align*}
    where \eqref{eq:thm minimax lower bound claim existence eq 0.0} is used.
    Here, we recall the assumption $\theta_{3}>\frac{1}{2}$.
    Thus, the condition (E2) is satisfied with $c_{2,2}=2\theta_{3}-1$.

    The condition (E3) is satisfied with
    \begin{align*}
    c_{2,3}=2^{K-2}3^{-1}p_{Y,\textsf{U}}(1)^{-1}(\int_{[-1,1]^{K-1}}h_{\textsf{base}}(\widetilde{x})d\widetilde{x})^{-1}.
    \end{align*}

    Therefore, we can take $c_{2}=\min\{c_{2,1},c_{2,2},c_{2,3}\}$ to satisfy all the conditions.
    \end{proof}

\begin{claim}
\label{claim:minimax lower bound claim 1}
We have $\mathcal{P}_{\alpha,1,\xi,N^{K-1}}\subset \mathcal{P}_{\alpha,1,\xi}$.
\end{claim}
\begin{proof}[Proof of Claim~\ref{claim:minimax lower bound claim 1}]
We check whether any $P_{w}\in \mathcal{P}_{\alpha,1,\xi,N^{K-1}}$ satisfies all the conditions (A1) -- (A4) in Definition~\ref{def:main assumption}.
\begin{itemize}
    \item All the conditions in (A1) are satisfied by the definition of $p_{w}(x,x',y)$.
    \item By the definitions of $q_{w}$ and $p_{w}$, we have
    \begin{align*}
        \eta_{w}(x,x')&=p_{w}(y=1|x,x')\\
        &=
        \begin{cases}
            \frac{p_{Y,\textsf{U}}(1)}{(1-P_{X,\textsf{U}}(\mathcal{K}_{w}))p_{Y,\textsf{U}}(1)+P_{X,\textsf{U}}(\mathcal{K}_{w})} &\quad \textup{if }(x,x')\in\mathcal{K}_{w}\times\mathcal{K}_{w},\\
            \frac{p_{Y,\textsf{U}}(1)}{(1-P_{X,\textsf{U}}(\widetilde{\mathcal{K}}_{w}))p_{Y,\textsf{U}}(1)+P_{X,\textsf{U}}(\widetilde{\mathcal{K}}_{w})} &\quad \textup{if }(x,x')\in\widetilde{\mathcal{K}}_{w}\times\widetilde{\mathcal{K}}_{w},\\
            0&\quad \textup{otherwise}.
        \end{cases}
    \end{align*}
    Since $p_{Y,\textsf{U}}(1)\in (\theta_{3}/(1+\theta_{3}),1)$ by the definition, we obtain
    \begin{align}
        \label{eq:minimax lower bound claim 1 eq 1}
        p_{Y,\textsf{U}}(1)>\frac{\theta_{3}}{1+\theta_{3}}\geq \max\left\{\frac{P_{X,\textsf{U}}(\mathcal{K}_{w})}{1+P_{X,\textsf{U}}(\mathcal{K}_{w})},\frac{P_{X,\textsf{U}}(\widetilde{\mathcal{K}}_{w})}{1+P_{X,\textsf{U}}(\widetilde{\mathcal{K}}_{w})}\right\},
    \end{align}
    where the second inequality is due to~\eqref{eq:thm minimax lower bound eq 1}, \eqref{eq:thm minimax lower bound eq 2}, and the monotonicity of the function $s\mapsto s/(1+s)$ on $[0,1]$.
    By~\eqref{eq:minimax lower bound claim 1 eq 1} and the definition of $\eta_{w}$, we have
    \begin{align}
    \label{eq:minimax lower bound claim 1 eq 2}
    \begin{cases}
        \eta_{w}(x,x') >\frac{1}{2}&\quad\textup{if }(x,x')\in(\mathcal{K}_{w}\times\mathcal{K}_{w})\cup (\widetilde{\mathcal{K}}_{w}\times\widetilde{\mathcal{K}}_{w}),\\
        \eta_{w}(x,x')=0&\quad \textup{otherwise}.
    \end{cases}
    \end{align}
    Since $p_{Y,\textsf{U}}(1)=\frac{1}{2}$ and $P_{X,\textsf{U}}(\mathcal{K}_{w})\vee P_{X,\textsf{U}}(\widetilde{\mathcal{K}}_{w})\leq \theta_{3}$ by~\eqref{eq:thm minimax lower bound eq 1} and~\eqref{eq:thm minimax lower bound eq 2}, for any $s\in (0,\frac{1-\theta_{3}}{2(1+\theta_{3})})$, we have
    \begin{align*}
        P_{X,X',w}(\{(x,x')\in\mathcal{X}^{2}\;|\; |2\eta_{w}(x,x')-1|\leq s\})=0,
    \end{align*}
    where $P_{X,X',w}$ denotes the marginal distribution of $P_{w}$ with respect to the space $\mathcal{X}^{2}$.
    This shows that $P_{w}$ satisfies condition (A2).
    \item By the definition of $p_{w}$, it is clear that all the conditions in (A3) except $\|q\|_{L^{\infty}(\mathcal{X}^{2})}\leq \theta_{1}^{2}$ are satisfied immediately. To check the non-trivial part, note that $P_{X,\textsf{U}}(\mathcal{K}_{w})\wedge P_{X,\textsf{U}}(\widetilde{\mathcal{K}}_{w})\geq 1-\theta_{3}$, as shown in~\eqref{eq:thm minimax lower bound eq 1} and in~\eqref{eq:thm minimax lower bound eq 2}. Since $\theta_{1}(1-\theta_{3})^{\frac{1}{2}}\geq 1$ is satisfied, we have $\|p_{X,\textsf{U}}\|_{L^{\infty}(\mathcal{X})}\leq \theta_{1}(1-\theta_{3})^{\frac{1}{2}}$.
    Hence, we have $\|q\|_{L^{\infty}(\mathcal{X}^{2})}\leq (1-\theta_{3})^{-1}(\theta_{1}(1-\theta_{3})^{\frac{1}{2}})^{2}=\theta_{1}^{2}$.
    \item By~\eqref{eq:thm minimax lower bound eq 1} and~\eqref{eq:thm minimax lower bound eq 2}, we note that $\max\{P_{X,\textsf{U}}(\mathcal{K}_{w}),P_{X,\textsf{U}}(\widetilde{\mathcal{K}}_{w})\}\leq \theta_{3}$.
    We also note that for $\mathcal{K}_{3}=\cdots=\mathcal{K}_{d_{1}}=\varnothing$, $\{\mathcal{K}_{w},\widetilde{\mathcal{K}}_{w},\mathcal{K}_{3},\cdots,\mathcal{K}_{d_{1}}\}\in\mathscr{P}_{\alpha,R}^{K,d_{1},E}$ by condition (E1).
    By~\eqref{eq:minimax lower bound claim 1 eq 2},
    \begin{align}
    \label{eq:minimax lower bound claim 1 eq 3}
    \eta_{w}(x,x')\geq \frac{1}{2}\quad\textup{if and only if}\quad (x,x')\in (\mathcal{K}_{w}\times\mathcal{K}_{w})\cup (\widetilde{\mathcal{K}}_{w}\times\widetilde{\mathcal{K}}_{w}).
    \end{align}
    The relationship~\eqref{eq:minimax lower bound claim 1 eq 3} indicates that condition (A4) is satisfied for $P_{w}$.
\end{itemize}
We obtain the claim.
\end{proof}

\paragraph{Step 2 (Lower bound of the minimax risk).}

By Claim~\ref{claim:minimax lower bound claim 1}, we note that
\begin{align}
\label{eq:minimax lower bound step 2 eq 1}
\inf_{\widehat{g}_{n}}\sup_{P\in\mathcal{P}_{\alpha,1,\xi}}\mathcal{R}(\widehat{g}_{n};P)
\geq
\inf_{\widehat{g}_{n}}\sup_{P_{w}\in\mathcal{P}_{\alpha,1,\xi,N^{K-1}}}\mathcal{R}(\widehat{g}_{n};P_{w}),
\end{align}
where the infimum is taken over all the global estimators $\widehat{g}_{n}:(\mathcal{X}^{2}\times\mathcal{Y})^{n}\to\mathcal{G}_{0}$.
In addition, we note that
\begin{align}
\label{eq:minimax lower bound step 2 eq 2}
&\inf_{\widehat{g}_{n}}\sup_{P_{w}\in\mathcal{P}_{\alpha,1,\xi,N^{K-1}}}\mathcal{R}(\widehat{g}_{n};P_{w})\nonumber\\
&=\inf_{\widehat{g}_{n}}\sup_{P_{w}\in\mathcal{P}_{\alpha,1,\xi,N^{K-1}}}\mathbb{E}_{U_{1}^{n}}\left[\sum_{i=1}^{d_{1}}\left\|\widehat{g}_{n,i}(U_{1}^{n})-\mathds{1}_{\mathcal{K}_{i}}\right\|_{L^{2}(\mathcal{X},P_{X,\textsf{U}})}^{2}\right]\nonumber\\
&\geq
\inf_{\widehat{g}_{n,1}}\sup_{P_{w}\in\mathcal{P}_{\alpha,1,\xi,N^{K-1}}}\mathbb{E}_{U_{1}^{n}}\left[\left\|\widehat{g}_{n,1}(U_{1}^{n})-\mathds{1}_{\mathcal{K}_{w}}\right\|_{L^{2}(\mathcal{X},P_{X,\textsf{U}})}^{2}\right],
\end{align}
where for every $P_{w}\in\mathcal{P}_{\alpha,1,\xi,N^{K-1}}$, $U_{1}^{n}=(U_{1},\cdots,U_{n})$ are any sequence of i.i.d. random variables drawn from $P_{w}$, and without loss of generality\footnote{In general, $\mathscr{S}_{P_{w}}=\{\mathcal{K}_{i}\}_{i=1}^{d_{1}}$ with $\mathcal{K}_{\pi(1)}=\mathcal{K}_{w}$, $\mathcal{K}_{\pi(2)}=\widetilde{\mathcal{K}}_{w}$, and $\mathcal{K}_{\pi(3)}=\cdots=\mathcal{K}_{\pi(d_{1})}=\varnothing$ for some permutation $\pi$ on $\{1,\cdots,d_{1}\}$. In this general case, it suffices to replace $\widehat{g}_{n,1}$ with $\widehat{g}_{n,\pi(1)}$ in the remained part of this step.}, we may assume that $\mathscr{S}_{P_{w}}=\{\mathcal{K}_{i}\}_{i=1}^{d_{1}}$ with $\mathcal{K}_{1}=\mathcal{K}_{w}$, $\mathcal{K}_{2}=\widetilde{\mathcal{K}}_{w}$, and $\mathcal{K}_{3}=\cdots=\mathcal{K}_{d_{1}}=\varnothing$.
Note that in~\eqref{eq:minimax lower bound step 2 eq 2}, the infimum is taken over all the estimators $\widehat{g}_{n,1}:(\mathcal{X}^{2}\times\mathcal{Y})^{n}\to \{g_{1}:\mathcal{X}\to [0,1]\;|\; g_{1}\textup{ is measurable}\}$.

For every $I\in\{1,\cdots,N\}^{K-1}$, define
\begin{align}
\label{eq:minimax lower bound partition}
\mathcal{M}_{N,I}=\left[\frac{I_{1}-1}{N},\frac{I_{1}}{N}\right)\times \left[\frac{I_{2}-1}{N},\frac{I_{2}}{N}\right)\times\cdots \times \left[\frac{I_{K-1}-1}{N},\frac{I_{K-1}}{N}\right).
\end{align}
Note that the closure of $\mathcal{M}_{N,I}$ is equal to $\overline{\mathcal{M}}_{N,I}$ defined in Step 1 of this proof.
The right-hand side of the inequality~\eqref{eq:minimax lower bound step 2 eq 2} is calculated as
\begin{align}
\label{eq:minimax lower bound step 2 eq 3}
&\inf_{\widehat{g}_{n,1}}\sup_{P_{w}\in\mathcal{P}_{\alpha,1,\xi,N^{K-1}}}\mathbb{E}_{U_{1}^{n}}\left[\left\|\widehat{g}_{n,1}(U_{1}^{n})-\mathds{1}_{\mathcal{K}_{w}}\right\|_{L^{2}(\mathcal{X},P_{X,\textsf{U}})}^{2}\right]\nonumber\\
&=
\inf_{\widehat{g}_{n,1}}\sup_{P_{w}\in\mathcal{P}_{\alpha,1,\xi,N^{K-1}}}\mathbb{E}_{U_{1}^{n}}\left[\sum_{I\in\{1,\cdots,N\}^{K-1}}\int_{\mathcal{M}_{N,I}\times [0,1]}(\widehat{g}_{n,1}(U_{1}^{n})-\mathds{1}_{\mathcal{K}_{w}})^{2}d\mu\right],
\end{align}
where we note that $dP_{X,\textsf{U}}/d\mu=1$ almost everywhere, and for any $I,I'\in\{1,\cdots,N\}^{K-1}$ such that $I\neq I'$, $\mathcal{M}_{N,I}$ and $\mathcal{M}_{N,I'}$ are disjoint.

Given $I\in\{1,\cdots,N\}^{K-1}$, let $w_{I,0},w_{I,1}:\{1,\cdots,N\}^{K-1}\to \{0,1\}$ be any functions such that $w_{I,1}(I')=w_{I,0}(I')$ for any $I'\in\{1,\cdots,N\}^{K-1}\setminus \{I\}$, $w_{I,1}(I)=1$, and $w_{I,0}(I)=0$, hereafter.
We note the following claim:
\begin{claim}
\label{claim:minimax lower bound claim lower bound of l2 norm}
Define
\begin{align*}
C_{\textsf{\textup{L1}}}=c_{2}2^{-K+1}\int_{[-1,1]^{K-1}}h_{\textsf{\textup{base}}}(\widetilde{x})d\widetilde{x}.
\end{align*}
For every $I\in\{1,\cdots,N\}^{K-1}$ and any $w_{I,1}$ and $w_{I,0}$ defined above, we have
\begin{align*}
\int_{\mathcal{M}_{N,I}\times [0,1]}(\mathds{1}_{\mathcal{K}_{w_{I,1}}}-\mathds{1}_{\mathcal{K}_{w_{I,0}}})^{2}d\mu
=
C_{\textsf{\textup{L1}}}N^{-\alpha-K+1}.
\end{align*}
\end{claim}
\begin{proof}[Proof of Claim~\ref{claim:minimax lower bound claim lower bound of l2 norm}]
We have
\begin{align}
&\int_{\mathcal{M}_{N,I}\times [0,1]}(\mathds{1}_{\mathcal{K}_{w_{I,1}}}-\mathds{1}_{\mathcal{K}_{w_{I,0}}})^{2}d\mu\nonumber\\
&=
\int_{\mathcal{M}_{N,I}\times [0,1]}|\mathds{1}_{\mathcal{K}_{w_{I,1}}}-\mathds{1}_{\mathcal{K}_{w_{I,0}}}|d\mu\nonumber\\
\label{eq:claim lower bound of l2 norm eq 1}
&=
\int_{\mathcal{M}_{N,I}\times [0,1]}\mathds{1}_{\mathcal{K}_{w_{I,1}}\setminus \mathcal{K}_{w_{I,0}}}d\mu\\
&=
\int_{\mathcal{M}_{N,I}\times [0,1]}\mathds{1}_{\{x\in\mathcal{X}\;|\; h_{w_{I,0}}(x_{\setminus K})\leq x_{K}< h_{w_{I,1}}(x_{\setminus K})\}}d\mu\nonumber\\
\label{eq:claim lower bound of l2 norm eq 2}
&=
\int_{\mathcal{M}_{N,I}}\int_{h_{w_{I,0}}(x_{\setminus K})}^{h_{w_{I,1}}(x_{\setminus K})}dx_{K}dx_{\setminus K}\\
\label{eq:claim lower bound of l2 norm eq 3}
&=c_{2}N^{-\alpha}\int_{\mathcal{M}_{N,I}}h_{\textsf{base}}\left(2N\left(x_{\setminus K}-\frac{2I-\bm{1}}{2N}\right)\right)dx_{\setminus K}\\
\label{eq:claim lower bound of l2 norm eq 4}
&=
c_{2}2^{-K+1}N^{-\alpha-K+1}\int_{[-1,1]^{K-1}}h_{\textsf{base}}(x_{\setminus K})dx_{\setminus K}\\
\label{eq:claim lower bound of l2 norm eq 5}
&=
C_{\textsf{L1}}N^{-\alpha-K+1}.
\end{align}
Here, in~\eqref{eq:claim lower bound of l2 norm eq 1}, we note that $\mathcal{K}_{w_{I,0}}\subset \mathcal{K}_{w_{I,1}}$.
In~\eqref{eq:claim lower bound of l2 norm eq 2}, we used Fubini's theorem.
In~\eqref{eq:claim lower bound of l2 norm eq 3}, we directly compute the difference $h_{w_{I,1}}-h_{w_{I,0}}$ using the definitions of $h_{w_{I,1}}$ and $h_{w_{I,0}}$.
In~\eqref{eq:claim lower bound of l2 norm eq 4}, we used the change of variables formula for $\widetilde{x}=2Nx_{\setminus K}-2I+\bm{1}$.
In~\eqref{eq:claim lower bound of l2 norm eq 5}, we used the definition of $C_{\textsf{L1}}$.
We obtain the claim.
\end{proof}
Since the functional $\int_{\mathcal{M}_{N,I}\times [0,1]}(g_{1}-g_{2})^{2}d\mu$ is non-negative and symmetric and satisfies~\eqref{eq:a variant of triangle inequality} with $c_{1}=\frac{1}{2}$, Claim~\ref{claim:minimax lower bound claim lower bound of l2 norm} implies that Lemma~\ref{citelem:assouad lemma} is applicable to our problem setting, where note that $\{1,\cdots,N\}^{K-1}$ is a finite set.
Hence, by Lemma~\ref{citelem:assouad lemma}, we have
\begin{align}
\label{eq:minimax lower bound step 2 eq 4}
&\inf_{\widehat{g}_{n,1}}\sup_{P_{w}\in\mathcal{P}_{\alpha,1,\xi,N^{K-1}}}\mathbb{E}_{U_{1}^{n}}\left[\sum_{I\in\{1,\cdots,N\}^{K-1}}\int_{\mathcal{M}_{N,I}\times [0,1]}(\widehat{g}_{n,1}(U_{1}^{n})-\mathds{1}_{\mathcal{K}_{w}})^{2}d\mu\right]\nonumber\\
&\geq
C_{\textsf{L2}}N^{-\alpha}\min_{I\in\{1,\cdots,N\}^{K-1}, w_{I,1},w_{I,0}}\int_{(\mathcal{X}^{2}\times\mathcal{Y})^{n}}\min\left\{p_{w_{I,1}}^{\otimes n},p_{w_{I,0}}^{\otimes n}\right\}d(\mu\otimes \mu\otimes \chi)^{\otimes n},
\end{align}
where $C_{\textsf{L2}}=4^{-1}C_{\textsf{L1}}$.

Combining~\eqref{eq:minimax lower bound step 2 eq 1}, \eqref{eq:minimax lower bound step 2 eq 2}, \eqref{eq:minimax lower bound step 2 eq 3}, and~\eqref{eq:minimax lower bound step 2 eq 4}, we obtain
\begin{align}
\label{eq:minimax lower bound step 2 eq 5}
&\inf_{\widehat{g}_{n}}\sup_{P\in\mathcal{P}_{\alpha,1,\xi}}\mathcal{R}(\widehat{g}_{n};P)\nonumber\\
&\geq
C_{\textsf{L2}}N^{-\alpha}\min_{I\in\{1,\cdots,N\}^{K-1}, w_{I,1},w_{I,0}}\int_{(\mathcal{X}^{2}\times\mathcal{Y})^{n}}\min\left\{p_{w_{I,1}}^{\otimes n},p_{w_{I,0}}^{\otimes n}\right\}d(\mu\otimes \mu\otimes \chi)^{\otimes n}.
\end{align}

\paragraph{Step 3 (Evaluation of the integral).}

In this step, we derive a lower bound of the integral
\begin{align*}
\min_{I\in \{1,\cdots,N\}^{K-1},w_{I,1},w_{I,0}}\int_{(\mathcal{X}^{2}\times\mathcal{Y})^{n}} \min\left\{p_{w_{I,1}}^{\otimes n},p_{w_{I,0}}^{\otimes n}\right\}d(\mu\otimes \mu\otimes \chi)^{\otimes n}.
\end{align*}

Let $I\in\{1,\cdots,N\}^{K-1}$, $w_{I,1}$, and $w_{I,0}$ be arbitrary and fixed in this step.
By Fubini's theorem and the inequalities $p_{w_{I,1}}(x)\geq \min\{p_{w_{I,1}}(x),p_{w_{I,0}}(x)\}$ and $p_{w_{I,0}}(x)\geq \min\{p_{w_{I,1}}(x),p_{w_{I,0}}(x)\}$, we have
\begin{align}
\label{eq:thm minimax lower bound eq 8}
&\int_{(\mathcal{X}^{2}\times\mathcal{Y})^{n}}\min\left\{p_{w_{I,1}}^{\otimes n},p_{w_{I,0}}^{\otimes n}\right\}d(\mu\otimes \mu\otimes \chi)^{\otimes n}\nonumber \\
&\geq
\left(\int_{\mathcal{X}^{2}\times\mathcal{Y}}\min\left\{p_{w_{I,1}},p_{w_{I,0}}\right\}d(\mu\otimes\mu\otimes \chi)\right)^{n}.
\end{align}
By Scheffe's identity (see, e.g., Lemma~2.1 in~\citep{tsybakov2009introduction}), we have
\begin{align}
\label{eq:thm minimax lower bound eq 9}
&\int_{\mathcal{X}^{2}\times\mathcal{Y}}\min\{p_{w_{I,1}},p_{w_{I,0}}\}d(\mu\otimes\mu\otimes \chi)\nonumber\\
&=
1-\frac{1}{2}\int_{\mathcal{X}^{2}\times\mathcal{Y}}|p_{w_{I,1}}-p_{w_{I,0}}|d(\mu\otimes \mu\otimes \chi).
\end{align}
The second term in the right-hand side of~\eqref{eq:thm minimax lower bound eq 9} is bounded as follows:
\begin{claim}
\label{claim:thm minimax lower bound claim 5}
Define
\begin{align*}
C_{\textsf{\textup{L3}}}=2^{-K+2}3c_{2}p_{Y,\textsf{U}}(1)\int_{[-1,1]^{K-1}}h_{\textsf{\textup{base}}}(\widetilde{x})d\widetilde{x}.
\end{align*}
We have
\begin{align*}
\int_{\mathcal{X}^{2}\times\mathcal{Y}}|p_{w_{I,1}}-p_{w_{I,0}}|d(\mu\otimes \mu\otimes \chi)
\leq
C_{\textsf{\textup{L3}}}N^{-\alpha-K+1}.
\end{align*}
\end{claim}
\begin{proof}[Proof of Claim~\ref{claim:thm minimax lower bound claim 5}]
We have
\begin{align}
\label{eq:thm minimax lower bound eq 10}
&\int_{\mathcal{X}^{2}\times\mathcal{Y}}|p_{w_{I,1}}-p_{w_{I,0}}|d(\mu\otimes \mu\otimes \chi)\nonumber\\
&=
p_{Y,\textsf{U}}(1)\int_{\mathcal{X}^{2}}|q_{w_{I,1}}-q_{w_{I,0}}|d\mu^{\otimes 2}\nonumber\\
&\quad+
p_{Y,\textsf{U}}(-1)\int_{\mathcal{X}^{2}}|p_{X,\textsf{U}}(x)p_{X,\textsf{U}}(x')-p_{X,\textsf{U}}(x)p_{X,\textsf{U}}(x')|d\mu^{\otimes 2}\nonumber\\
&=p_{Y,\textsf{U}}(1)\int_{\mathcal{X}^{2}}|q_{w_{I,1}}-q_{w_{I,0}}|d\mu^{\otimes 2}\\
&=p_{Y,\textsf{U}}(1)\int_{\mathcal{X}^{2}}|P_{X,\textsf{U}}(\mathcal{K}_{w_{I,1}})^{-1}\mathds{1}_{\mathcal{K}_{w_{I,1}}\times\mathcal{K}_{w_{I,1}}}+P_{X,\textsf{U}}(\widetilde{\mathcal{K}}_{w_{I,1}})^{-1}\mathds{1}_{\widetilde{\mathcal{K}}_{w_{I,1}}\times \widetilde{\mathcal{K}}_{w_{I,1}}}\nonumber\\
\label{eq:thm minimax lower bound eq 11}
&\qquad\qquad\quad\; -P_{X,\textsf{U}}(\mathcal{K}_{w_{I,0}})^{-1}\mathds{1}_{\mathcal{K}_{w_{I,0}}\times \mathcal{K}_{w_{I,0}}}-P_{X,\textsf{U}}(\widetilde{\mathcal{K}}_{w_{I,0}})^{-1}\mathds{1}_{\widetilde{\mathcal{K}}_{w_{I,0}}\times\widetilde{\mathcal{K}}_{w_{I,0}}}|d\mu^{\otimes 2}\\
\label{eq:thm minimax lower bound eq 12}
&\leq
p_{Y,\textsf{U}}(1)\int_{\mathcal{X}^{2}}|P_{X,\textsf{U}}(\mathcal{K}_{w_{I,1}})^{-1}\mathds{1}_{\mathcal{K}_{w_{I,1}}\times \mathcal{K}_{w_{I,1}}}-P_{X,\textsf{U}}(\mathcal{K}_{w_{I,0}})^{-1}\mathds{1}_{\mathcal{K}_{w_{I,0}}\times \mathcal{K}_{w_{I,0}}}|d\mu^{\otimes 2}\nonumber\\
&+p_{Y,\textsf{U}}(1)\int_{\mathcal{X}^{2}}|P_{X,\textsf{U}}(\widetilde{\mathcal{K}}_{w_{I,0}})^{-1}\mathds{1}_{\widetilde{\mathcal{K}}_{w_{I,0}}\times \widetilde{\mathcal{K}}_{w_{I,0}}}-P_{X,\textsf{U}}(\widetilde{\mathcal{K}}_{w_{I,1}})^{-1}\mathds{1}_{\widetilde{\mathcal{K}}_{w_{I,1}}\times \widetilde{\mathcal{K}}_{w_{I,1}}}|d\mu^{\otimes 2},
\end{align}
where we note that $p_{X,\textsf{U}}(x)=1$ holds for any $x\in\mathcal{X}$ by the definition in~\eqref{eq:thm minimax lower bound eq 10}, the definitions of $q_{w_{I,1}}$ and $q_{w_{I,0}}$ are used in~\eqref{eq:thm minimax lower bound eq 11}, and the triangle inequality is used in~\eqref{eq:thm minimax lower bound eq 12}.
We note that
\begin{align}
\label{eq:thm minimax lower bound eq 13}
&\int_{\mathcal{X}^{2}}|P_{X,\textsf{U}}(\mathcal{K}_{w_{I,1}})^{-1}\mathds{1}_{\mathcal{K}_{w_{I,1}}\times \mathcal{K}_{w_{I,1}}}-P_{X,\textsf{U}}(\mathcal{K}_{w_{I,0}})^{-1}\mathds{1}_{\mathcal{K}_{w_{I,0}}\times \mathcal{K}_{w_{I,0}}}|d\mu^{\otimes 2}\nonumber\\
&=
\int_{\mathcal{K}_{w_{I,0}}\times \mathcal{K}_{w_{I,0}}}|P_{X,\textsf{U}}(\mathcal{K}_{w_{I,1}})^{-1}\cdot 1-P_{X,\textsf{U}}(\mathcal{K}_{w_{I,0}})^{-1}\cdot 1|d\mu^{\otimes 2}\nonumber\\
&\quad
+
\int_{\mathcal{X}^{2}\setminus (\mathcal{K}_{w_{I,0}}\times \mathcal{K}_{w_{I,0}})}|P_{X,\textsf{U}}(\mathcal{K}_{w_{I,1}})^{-1}\mathds{1}_{\mathcal{K}_{w_{I,1}}\times\mathcal{K}_{w_{I,1}}}-P_{X,\textsf{U}}(\mathcal{K}_{w_{I,0}})^{-1}\cdot 0|d\mu^{\otimes 2}\nonumber\\
&\leq
(P_{X,\textsf{U}}(\mathcal{K}_{w_{I,0}})^{-1}-P_{X,\textsf{U}}(\mathcal{K}_{w_{I,1}})^{-1})\mu(\mathcal{K}_{w_{I,0}})^{2}
+2\mu(\mathcal{K}_{w_{I,1}}\setminus \mathcal{K}_{w_{I,0}}).
\end{align}
Here, in~\eqref{eq:thm minimax lower bound eq 13} we use $\mathcal{K}_{w_{I,0}}\times \mathcal{K}_{w_{I,0}}\subset \mathcal{K}_{w_{I,1}}\times\mathcal{K}_{w_{I,1}}$, $(\mathcal{K}_{w_{I,1}}\times\mathcal{K}_{w_{I,1}})\setminus (\mathcal{K}_{w_{I,0}}\times\mathcal{K}_{w_{I,0}})=((\mathcal{K}_{w_{I,1}}\setminus \mathcal{K}_{w_{I,0}})\times \mathcal{K}_{w_{I,1}})\cup (\mathcal{K}_{w_{I,1}}\times (\mathcal{K}_{w_{I,1}}\setminus \mathcal{K}_{w_{I,0}}))$, and $P_{X,\textsf{U}}(\mathcal{K}_{w_{I,1}})=\int_{\mathcal{K}_{w_{I,1}}}p_{X,\textsf{U}}d\mu=\int_{\mathcal{K}_{w_{I,1}}}d\mu=\mu(\mathcal{K}_{w_{I,1}})$ to obtain
\begin{align*}
&\int_{\mathcal{X}^{2}\setminus (\mathcal{K}_{w_{I,0}}\times\mathcal{K}_{w_{I,0}})}|P_{X,\textsf{U}}(\mathcal{K}_{w_{I,1}})^{-1} \mathds{1}_{\mathcal{K}_{w_{I,1}}\times\mathcal{K}_{w_{I,1}}}-P_{X,\textsf{U}}(\mathcal{K}_{w_{I,0}})^{-1}\cdot 0|d\mu^{\otimes 2}\\
&=
\int_{(\mathcal{K}_{w_{I,1}}\times\mathcal{K}_{w_{I,1}})\setminus (\mathcal{K}_{w_{I,0}}\times\mathcal{K}_{w_{I,0}})}P_{X,\textsf{U}}(\mathcal{K}_{w_{I,1}})^{-1}d\mu^{\otimes 2}\\
&\leq
2\mu(\mathcal{K}_{w_{I,1}}\setminus \mathcal{K}_{w_{I,0}}).
\end{align*}
From~\eqref{eq:thm minimax lower bound eq 13}, we notice
\begin{align}
\label{eq:thm minimax lower bound eq 14}
&(P_{X,\textsf{U}}(\mathcal{K}_{w_{I,0}})^{-1}-P_{X,\textsf{U}}(\mathcal{K}_{w_{I,1}})^{-1})\mu(\mathcal{K}_{w_{I,0}})^{2}\nonumber\\
&=
\frac{P_{X,\textsf{U}}(\mathcal{K}_{w_{I,1}}\setminus \mathcal{K}_{w_{I,0}})}{P_{X,\textsf{U}}(\mathcal{K}_{w_{I,0}})P_{X,\textsf{U}}(\mathcal{K}_{w_{I,1}})}\mu(\mathcal{K}_{w_{I,0}})^{2}\nonumber\\
&=
P_{X,\textsf{U}}(\mathcal{K}_{w_{I,1}}\setminus \mathcal{K}_{w_{I,0}})\frac{\mu(\mathcal{K}_{w_{I,0}})}{\mu(\mathcal{K}_{w_{I,1}})}\nonumber\\
&\leq
\mu(\mathcal{K}_{w_{I,1}}\setminus \mathcal{K}_{w_{I,0}}),
\end{align}
where in~\eqref{eq:thm minimax lower bound eq 14} we used the monotonicity of the Lebesgue measure.
Similarly, we have
\begin{align}
&\int_{\mathcal{X}^{2}}|P_{X,\textsf{U}}(\widetilde{\mathcal{K}}_{w_{I,0}})^{-1}\mathds{1}_{\widetilde{\mathcal{K}}_{w_{I,0}}\times \widetilde{\mathcal{K}}_{w_{I,0}}}-P_{X,\textsf{U}}(\widetilde{\mathcal{K}}_{w_{I,1}})^{-1}\mathds{1}_{\widetilde{\mathcal{K}}_{w_{I,1}}\times\widetilde{\mathcal{K}}_{w_{I,1}}}|d\mu^{\otimes 2}\nonumber\\
\label{eq:thm minimax lower bound eq 15}
&\leq
(P_{X,\textsf{U}}(\widetilde{\mathcal{K}}_{w_{I,1}})^{-1}-P_{X,\textsf{U}}(\widetilde{\mathcal{K}}_{w_{I,0}})^{-1})\mu(\widetilde{\mathcal{K}}_{w_{I,1}})^{2}+2\mu(\widetilde{\mathcal{K}}_{w_{I,0}}\setminus \widetilde{\mathcal{K}}_{w_{I,1}})\\
\label{eq:thm minimax lower bound eq 16}
&\leq
3\mu(\widetilde{\mathcal{K}}_{w_{I,0}}\setminus \widetilde{\mathcal{K}}_{w_{I,1}}),
\end{align}
where in~\eqref{eq:thm minimax lower bound eq 15} and \eqref{eq:thm minimax lower bound eq 16} we use the same arguments as those used in~\eqref{eq:thm minimax lower bound eq 13} and~\eqref{eq:thm minimax lower bound eq 14}, respectively.

Note that
\begin{align}
\label{eq:thm minimax lower bound eq 17}
\mu(\widetilde{\mathcal{K}}_{w_{I,0}}\setminus \widetilde{\mathcal{K}}_{w_{I,1}})
&=\mu(\{x\in\mathcal{X}\;|\; h_{w_{I,0}}(x_{\setminus K})\leq x_{K} < h_{w_{I,1}}(x_{\setminus K})\})\nonumber\\
&=\mu(\mathcal{K}_{w_{I,1}}\setminus \mathcal{K}_{w_{I,0}}).
\end{align}
By~\eqref{eq:thm minimax lower bound eq 12}, \eqref{eq:thm minimax lower bound eq 13}, \eqref{eq:thm minimax lower bound eq 14},~\eqref{eq:thm minimax lower bound eq 16}, and~\eqref{eq:thm minimax lower bound eq 17}, we have
\begin{align}
\label{eq:thm minimax lower bound eq 18}
\int_{\mathcal{X}^{2}\times\mathcal{Y}}|p_{w_{I,1}}-p_{w_{I,0}}|d(\mu\otimes\mu\otimes \chi)\leq
6p_{Y,\textsf{U}}(1)\mu(\mathcal{K}_{w_{I,1}}\setminus \mathcal{K}_{w_{I,0}}).
\end{align}
The quantity $\mu(\mathcal{K}_{w_{I,1}}\setminus \mathcal{K}_{w_{I,0}})$ is calculated as
\begin{align}
\mu(\mathcal{K}_{w_{I,1}}\setminus \mathcal{K}_{w_{I,0}})
&=
\int_{[0,1]^{K-1}}\int_{h_{w_{I,0}}(x_{\setminus K})}^{h_{w_{I,1}}(x_{\setminus K})}dx_{K}dx_{\setminus K}\nonumber\\
&=
c_{2}N^{-\alpha}\int_{[0,1]^{K-1}} h_{\textsf{base}}\left(2N\left(\widetilde{x}-\frac{2I-\bm{1}}{2N}\right)\right)d\widetilde{x}\nonumber\\
\label{eq:thm minimax lower bound eq 19}
&=
c_{2}2^{-K+1}N^{-\alpha-K+1}\int_{\prod_{i=1}^{K-1}[-2I_{i}+1,2N-2I_{i}+1]}h_{\textsf{base}}(\widetilde{x})d\widetilde{x}\\
\label{eq:thm minimax lower bound eq 20}
&=c_{2}2^{-K+1}N^{-\alpha-K+1}\int_{[-1,1]^{K-1}}h_{\textsf{base}}(\widetilde{x})d\widetilde{x}
\end{align}
Here, we used the change of variables formula for $\widetilde{x}'=2N(\widetilde{x}-(2I-\bm{1})/2N)$ in~\eqref{eq:thm minimax lower bound eq 19}.
In~\eqref{eq:thm minimax lower bound eq 20}, we note that the support of $h_{\textsf{base}}$ is $[-1,1]^{K-1}$ by its definition, and $[-1,1]\subset [-2I_{i}+1,2N-2I_{i}+1]$ for any $i\in\{1,\cdots,K-1\}$ since $I_{1},\cdots,I_{K-1}\in \{1,\cdots,N\}$.

By~\eqref{eq:thm minimax lower bound eq 18} and~\eqref{eq:thm minimax lower bound eq 20}, we have
\begin{align*}
\int_{\mathcal{X}^{2}\times\mathcal{Y}}|p_{w_{I,1}}-p_{w_{I,0}}|d(\mu\otimes \mu\otimes \chi)\leq C_{\textsf{L3}} N^{-\alpha-K+1}.
\end{align*}
We obtain the claim.
\end{proof}

Thus, combining~\eqref{eq:thm minimax lower bound eq 8}, \eqref{eq:thm minimax lower bound eq 9}, and Claim~\ref{claim:thm minimax lower bound claim 5}, we have
\begin{align}
\label{eq:thm minimax lower bound eq 21}
\int_{(\mathcal{X}^{2}\times\mathcal{Y})^{n}}\min\left\{p_{w_{I,1}}^{\otimes n},p_{w_{I,0}}^{\otimes n}\right\}d(\mu\otimes \mu\otimes \chi)^{\otimes n}
&\geq 
\left(1-2^{-1}C_{\textsf{L3}}N^{-\alpha-K+1}\right)^{n}\\
\label{eq:thm minimax lower bound eq 22}
&\geq
(1-2^{-1}N^{-\alpha-K+1})^{n}.
\end{align}
where in~\eqref{eq:thm minimax lower bound eq 21} and \eqref{eq:thm minimax lower bound eq 22} we used $0\leq 2^{-1}C_{\textsf{L3}}N^{-\alpha-K+1}\leq \frac{1}{2}$, which is due to the condition (E3), that is, $C_{\textsf{L3}}\leq 1$.

By~\eqref{eq:minimax lower bound step 2 eq 5} and \eqref{eq:thm minimax lower bound eq 22}, we have
\begin{align}
\label{eq:thm minimax lower bound eq 23}
\inf_{\widehat{g}_{n}}\sup_{P\in\mathcal{P}_{\alpha,1,\xi}}\mathcal{R}(\widehat{g}_{n};P)
\geq C_{\textsf{L2}}N^{-\alpha}(1-2^{-1}N^{-\alpha-K+1})^{n}.
\end{align}

\paragraph{Step 4 (Concluding the proof).}

We now set $N=\lfloor n^{\frac{1}{\alpha+K-1}}\rfloor$.
By~\eqref{eq:thm minimax lower bound eq 23}, we have
\begin{align}
\label{eq:thm minimax lower bound eq 24}
\inf_{\widehat{g}_{n}}\sup_{P\in\mathcal{P}_{\alpha,1,\xi}}\mathcal{R}(\widehat{g}_{n};P)
\geq C_{\textsf{L2}}\left(1-\frac{1}{2(n-1)}\right)^{n} n^{-\frac{\alpha}{\alpha+K-1}}.
\end{align}
Since $(1-1/(2(n-1)))^{n}\geq \frac{1}{2}$ for any $n\in\mathbb{N}$, by~\eqref{eq:thm minimax lower bound eq 24} we have
\begin{align}
\label{eq:thm minimax lower bound eq 25}
\inf_{\widehat{g}_{n}}\sup_{P\in\mathcal{P}_{\alpha,1,\xi}}\mathcal{R}(\widehat{g}_{n};P)
\geq C_{\textsf{L4}}n^{-\frac{\alpha}{\alpha+K-1}},
\end{align}
where $C_{\textsf{L4}}=\frac{1}{2}C_{\textsf{L2}}$.
We obtain the claim of the theorem.
\end{proof}

\section{Consequences for Global Estimators}
\label{subsec:an auxiliary result for the global erm}

We show that Theorem~\ref{thm:estimation error for deep relu networks} has some implication to the global estimators.

\subsection{Results}
\label{supp:results of global estimators}

For convenience, given an estimator $\widehat{g}_{n}:(\mathcal{X}^{2}\times\mathcal{Y})^{n}\to\mathcal{G}_{0}$, we define
\begin{align*}
\widehat{\mathcal{R}}(\widehat{g}_{n};P,u_{1}^{n})=\sum_{i=1}^{d_{1}}\|\widehat{g}_{n,i}(u_{1},\cdots,u_{n})-\mathds{1}_{\mathcal{K}_{i}}\|_{L^{2}(\mathcal{X},P_{X})}^{2},
\end{align*}
where $u_{1}^{n}=(u_{1},\cdots,u_{n})\in(\mathcal{X}^{2}\times\mathcal{Y})^{n}$.
We define the global ERM estimator, similarly to Definition~\ref{def:formal definition of erm}.
\begin{definition}[$(n,\mathcal{F})$-global ERM]
\label{def:global empirical risk minimizers}
Given $n\in\mathbb{N}\setminus\{1,2\}$ and $\mathcal{F}\subset\mathcal{F}_{0}$, define $\widehat{f}_{n}^{\textup{ERM}}:(\mathcal{X}^{2}\times\mathcal{Y})^{n}\to\mathcal{F}$ as a map satisfying
\begin{align*}
\widehat{f}_{n}^{\textup{ERM}}(u_{1},\cdots,u_{n})\in\argmin_{f\in\mathcal{F}}\frac{1}{n}\sum_{i=1}^{n}\ell_{f}(u_{i}),
\end{align*}
where $u_{1},\cdots,u_{n}\in\mathcal{X}^{2}\times\mathcal{Y}$.
Then, the $(n,\mathcal{F})$\emph{-global ERM estimator} $\widehat{g}_{n}^{\textup{ERM}}:(\mathcal{X}^{2}\times\mathcal{Y})^{n}\to\mathcal{G}_{0}$ is defined as
\begin{align*}
\widehat{g}_{n}^{\textup{ERM}}=(\widehat{g}_{n,1}^{\textup{ERM}},\cdots,\widehat{g}_{n,d_{1}}^{\textup{ERM}})\textup{ such that }\widehat{f}_{n}^{\textup{ERM}}=\sum_{i=1}^{d_{1}}\widehat{g}_{n,i}^{\textup{ERM}}v_{i}.
\end{align*}
\end{definition}

In the following theorem, we prove an upper bound for a global estimator when $\tau=1$, while its generalization to any $\tau\geq 1$ is straightforward.
We show the case where $\tau=1$, for simplicity.
\begin{theorem}
\label{cor:result for the global erm}
Let $\alpha>0$, $\xi=(R,K,d_{1},E,\theta_{\textup{NC}},\theta_{1},\theta_{2},\theta_{3})\in \Xi$, $\beta\in (0,D_{{\textup{proj}}})$, and $n\in\mathbb{N}\setminus \{1,2\}$ for which $\varepsilon_{n}:=n^{-\frac{\alpha}{\alpha+K-1}}<2^{-1}$ is satisfied.
Given any $P\in\mathcal{P}_{\alpha,1,\xi}$, let $U_{1},\cdots,U_{n}$ be any sequence of i.i.d. random variables drawn from the distribution $P$.
Then, there are
\begin{itemize}
\item constants $C^{*},c_{1},c_{2}>0$ independent of $n$ and $P$,
\item $L^{*}\in\mathbb{N}$, $J^{*},S^{*},M^{*}\geq 0$, and $d^{*}=(K,d_{\textup{NN},1}^{*},\cdots,d_{\textup{NN},L^{*}-1}^{*},d_{1})\in\mathbb{N}^{L^{*}+1}$ depending on $n$, and
\item a global estimator $\widehat{f}_{n}:(\mathcal{X}^{2}\times\mathcal{Y})^{n}\to\mathcal{F}^{*}\subset\mathcal{F}_{0}$ with $\mathcal{F}^{*}:=\mathcal{F}_{L^{*},J^{*},S^{*},M^{*},\bm{d}^{*}}^{\Delta^{d}\textup{-NN}}$,
\end{itemize}
such that for the global estimator $\widehat{g}_{n}:(\mathcal{X}^{2}\times\mathcal{Y})^{n}\to \mathcal{G}_{0}$ satisfying that $\widehat{f}_{n}=\sum_{i=1}^{d_{1}}\widehat{g}_{n,i}v_{i}$ with $\widehat{g}_{n}=(\widehat{g}_{n,1},\cdots,\widehat{g}_{n,d_{1}})$, we have the following inequality:
With probability at least
\begin{align*}
1-c_{1}\exp(-128c_{2}n\varepsilon_{n}\log^{3}{n})-Q(\widehat{f}_{n}(U_{1}^{n})\notin \mathscr{F}_{\beta,\beta^{-1}\varepsilon_{n},P}(\mathcal{F}^{*})),
\end{align*}
we have
\begin{align*}
\widehat{\mathcal{R}}(\widehat{g}_{n};P,U_{1}^{n})\leq C^{*}\varepsilon_{n}\log^{4}{n}.
\end{align*}
Furthermore, we obtain
\begin{align*}
&\sup_{P\in\mathcal{P}_{\alpha,1,\xi}}\mathcal{R}(\widehat{g}_{n};P)\\
&\leq
2C^{*}\varepsilon_{n}\log^{4}{n}+4d_{1}C^{*}\sup_{P\in\mathcal{P}_{\alpha,1,\xi}}Q(\widehat{f}_{n}(U_{1}^{n})\notin \mathscr{F}_{\beta,\beta^{-1}\varepsilon_{n},P}(\mathcal{F}^{*})).
\end{align*}
\end{theorem}
The proof of Theorem~\ref{cor:result for the global erm} is deferred to Appendix~\ref{appsubsec:proof of corollary result for global erm}.
This corollary implies the existence of a global estimator whose convergence rate does not exceed $n^{-\frac{\alpha}{\alpha+K-1}}$.
This observation is similar to the results proven in the literature on nonparametric statistics~\citep{tsybakov2004optimal,kim2021fast,meyer2023optimal,imaizumi2019deep,imaizumi2022advantage}, although we consider a pairwise binary classification setting.
The global estimator used in this theorem is based on the ERM algorithm over the class $\mathcal{F}_{L^{*},J^{*},S^{*},M^{*},\bm{d}^{*}}^{\Delta^{d}\textup{-NN}}$, which does not depend on variable $P$ (see Definition~\ref{def:global empirical risk minimizers}).

In connection with Theorem~\ref{thm:minimax lower bound}, it might be natural to ask the following question:
\begin{question}
\label{conjecture: property of global erm}
In Theorem~\ref{cor:result for the global erm}, is it true that
\begin{align*}
\sup_{P\in\mathcal{P}_{\alpha,1,\xi}} Q(\widehat{f}_{n}^{\textup{ERM}}(U_{1}^{n})\notin \mathscr{F}_{\beta,\beta^{-1}\varepsilon_{n},P}(\mathcal{F}^{*}))\leq n^{-\frac{\alpha}{\alpha+K-1}}\;?
\end{align*}
\end{question}
The current work could not answer whether this hypothesis is true, while it might be intuitively reasonable when $n$ is sufficiently large, as the minimizer of the expected loss is the contrastive function $f^{*}$ (see Proposition~\ref{thm:main result basic case}), and the indicator functions $\mathds{1}_{\mathcal{K}_{1}},\cdots,\mathds{1}_{\mathcal{K}_{d_{1}}}$ with $\{\mathcal{K}_{i}\}_{i=1}^{d_{1}}\in\mathscr{P}_{\alpha,R}^{K,d_{1},E}$ can be approximated using some class $\mathcal{F}_{L,J,S,M,\bm{d}}^{\Delta^{d}\textup{-NN}}$ within error $n^{-\frac{\alpha}{\alpha+K-1}}$ under the $L^{1}(\mathcal{X})$-norm (see Proposition~\ref{prop:approximation error bound} and Proposition~\ref{thm:approximation and concentration}).
A technical obstacle might be that one needs to find some empirical risk minimizer that approximates the true smooth boundaries within an arbitrary error.
While the approximation error of some deep ReLU networks is analyzed in Proposition~\ref{prop:approximation error bound}, the investigation of the approximation property of the empirical risk minimizer seems more complicated than that of the usual deep ReLU networks since the boundaries generated by the minimizer might be influenced by the noise contained in the observed samples.

\subsection{Proof of Theorem~\ref{cor:result for the global erm}}
\label{appsubsec:proof of corollary result for global erm}

\begin{proof}
As in Remark~\ref{remark:local erm}, without loss of generality we may assume the existence of the $(n,\mathcal{F}^{*})$-global ERM estimator $\widehat{g}_{n}^{\textup{ERM}}$.

The proof is almost the same as that of Theorem~\ref{thm:estimation error for deep relu networks}.
Specifically, by Proposition~\ref{prop:simple fact for risks}--(ii) and~\eqref{eq:thm estimation bound eq 3} in the proof of Theorem~\ref{thm:main result}, there are positive constants $C,C',C''$ independent of $n$ and $P$ such that in the event $\{\omega\in\Omega\;|\; \widehat{f}_{n}^{\textup{ERM}}(U_{1}^{n}(\omega))\in \mathscr{F}_{\beta,\beta^{-1}\varepsilon_{n},P}(\mathcal{F}^{*})\}$ we have
\begin{align}
\label{eq:proof of corollary result for global erm eq 2}
&\widehat{\mathcal{R}}(\widehat{g}_{n}^{\textup{ERM}};P,U_{1}^{n}(\omega))\nonumber\\
&\leq
C(\log{n})\mathcal{E}(\widehat{f}_{n}^{\textup{ERM}}(U_{1}^{n}(\omega));P)+\frac{C'\varepsilon_{n}}{\beta}+\frac{C''}{n}.
\end{align}
We then consider to apply Lemma~\ref{lem:lemma by kim et al} due to~\citep{park2009convergence,kim2021fast}.
Using Proposition~\ref{prop:approximation error bound}, we can see that conditions (C1) -- (C3) in Lemma~\ref{lem:lemma by kim et al} are satisfied for $\mathcal{F}^{*}$, as shown in Claim~\ref{claim:check condition c1 to c3}.
Thus, by Lemma~\ref{lem:lemma by kim et al}, there are positive universal constants $c_{1},c_{2}$ such that with probability at least $1-c_{1}\exp(-128c_{2}n\varepsilon_{n}\log^{3}{n})$, we have
\begin{align*}
\mathcal{E}(\widehat{f}_{n}^{\textup{ERM}}(U_{1}^{n});P)\leq 128C_{1}\varepsilon_{n}\log^{3}{n},
\end{align*}
where $C_{1}$ is the constant introduced in Proposition~\ref{prop:approximation property of H}.

Therefore, we obtain the following:
With probability at least
\begin{align*}
1-c_{1}\exp(-128c_{2}n\varepsilon_{n}\log^{3}{n})-Q(\widehat{f}_{n}^{\textup{ERM}}(U_{1}^{n})\notin \mathscr{F}_{\beta,\beta^{-1}\varepsilon_{n},P}(\mathcal{F}^{*})),
\end{align*}
we have
\begin{align}
\label{eq:proof of corollary result for global erm}
\widehat{\mathcal{R}}(\widehat{g}_{n}^{\textup{ERM}};P,U_{1}^{n})
&\leq C^{*} (\varepsilon_{n}\log^{4}{n}+\varepsilon_{n}+\frac{1}{n})\nonumber \\
&\leq 3C^{*}\varepsilon_{n}\log^{4}{n},
\end{align}
where
\begin{align*}
C^{*}=\max\{128C_{1}C,C'\beta^{-1},C''\}.
\end{align*}
This inequality shows the first claim.

The second claim is obtained by evaluating the following integral for a sufficiently large $N\in\mathbb{N}$ and any natural number $n\geq N$:
\begin{align*}
&\mathbb{E}[\widehat{\mathcal{R}}(\widehat{g}_{n}^{\textup{ERM}};P,U_{1}^{n})]\\
&=\int_{0}^{4d_{1}}Q(\widehat{\mathcal{R}}(\widehat{g}_{n}^{\textup{ERM}};P,U_{1}^{n})>s)ds\\
&\leq 6C^{*}\varepsilon_{n}\log^{4}{n}+4d_{1}Q(\widehat{f}_{n}^{\textup{ERM}}(U_{1}^{n})\notin \mathscr{F}_{\beta,\beta^{-1}\varepsilon_{n},P}(\mathcal{F}^{*})).
\end{align*}
Therefore, the claims are proven.
\end{proof}

\section{Application to Multiclass Classification}
\label{subsec:consequence for multiclass classification}

The estimation problem we addressed in the current work is general in the sense that multiple subsets are estimated simultaneously.
To show the usefulness of our analyses, we investigate the following problem:
\begin{center}
\emph{Can one utilize the ERM estimator defined in this work to construct a consistent classifier in multiclass nonparametric classification?}
\end{center}
In this section, we show a consequence of Theorem~\ref{thm:estimation error for deep relu networks} for the non-asymptotic analysis of multiclass nonparametric classification.

\subsection{Results and Discussion}
\label{suppsubsec:results and discussion of application}

\paragraph{Problem setting.}
We briefly introduce the problem setting of multiclass classification.
We consider to predict the class label $z\in\mathcal{Z}_{d_{1}}=\{1,\cdots,d_{1}\}$ of covariate $x$, using data drawn from a given distribution $P\in\mathcal{P}_{\xi}$ characterized by the partition $\mathscr{S}_{P}$.

To do so, we recall a condition of~\citep{kim2021fast}, which controls the behavior of the distribution around the boundaries.
Indeed, while the Tsybakov noise condition is initially introduced in~\citep{mammen1999smooth,tsybakov2004optimal} under the settings where binary variables are considered, some extensions to multiclass cases are also studied in~\citep{tarigan2008moment,kim2021fast,bos2022convergence}.
In this analysis we focus on the version introduced in~\citep{kim2021fast}.

Let $P_{X,Z_{d_{1}}}$ be a Borel probability measure in $\mathcal{X}\times\mathcal{Z}_{d_{1}}$ such that it has the probability density function $p(x,z)$ in $\mathcal{X}\times\mathcal{Z}_{d_{1}}$.
With a slight abuse of notation, we write the marginal distribution of $P_{X,Z_{d_{1}}}$ as $P_{X}$.
Let $\mathcal{T}_{d_{1}}$ be the class defined as
\begin{align*}
\mathcal{T}_{d_{1}}=\{g:\mathcal{X}\to\mathcal{Z}_{d_{1}}\;|\; g\textup{ is measurable}\}.
\end{align*}

It is well known (see, e.g.,~\citep{tarigan2008moment,kim2021fast,mohri2018foundations}) that the Bayes classifier $z^{*}:\mathcal{X}\to\mathcal{Z}_{d_{1}}$ minimizing the risk $P_{X,Z_{d_{1}}}(g(x)\neq z)$ in the class $\mathcal{T}_{d_{1}}$ satisfies
\begin{align*}
z^{*}(x)\in\argmax_{i\in\mathcal{Z}_{d_{1}}}p(z=i|x).
\end{align*}
The condition of~\citep{kim2021fast} assumes that there are constants $\tau\geq 1$, $c>0$, and $t_{0}\in(0,1]$ such that for any Bayes classifier $z^{*}$ of $P_{X,Z_{d_{1}}}$ and every $t\in (0,t_{0}]$,
\begin{align}
\label{citeeq:kim condition}
P_{X}\left(\left\{x\in\mathcal{X}\;\Big|\; \min_{i\in\mathcal{Z}_{d_{1}}\setminus\{z^{*}(x)\}}|p(z=i|x)-p(z=z^{*}(x)|x)|\leq t\right\}\right)\leq c t^{\frac{1}{\tau-1}}.
\end{align}
\begin{definition}
\label{def:multiclass noise condition}
We say that the distribution $P_{X,Z_{d_{1}}}$ satisfies $\tau$-(MNC) if it satisfies the noise condition~\eqref{citeeq:kim condition} due to~\citep{kim2021fast}, with $\tau\geq 1$, $c>0$, and $t_{0}\in (0,1]$.
\end{definition}

\paragraph{Classifiers.}
Following~\citep[Definition~2.1]{arora2019theoretical}, we define a classifier using some vector-valued function in two steps: in the first step, for a function $h:\mathcal{X}\to\mathcal{S}^{d-1}$ we consider a classifier
\begin{align*}
\Upsilon_{0}(h)(\cdot)=\min\{j\in\mathcal{Z}_{d_{1}}\;|\;\langle h(\cdot),v_{j}\rangle=\max_{i=1,\cdots,d_{1}}\langle h(\cdot),v_{i}\rangle\},
\end{align*}
where $d=d_{1}-1$, and $\langle\cdot,\cdot\rangle$ denotes the usual inner product in $\mathbb{R}^{d}$.
The linear classifier $\Upsilon_{0}$ is defined with vector-valued function $h$ and vectors $v_{1},\cdots,v_{d_{1}}$, as in~\citep[Definition~2.1]{arora2019theoretical}.
In the second step, we use this functional to define a plug-in estimator $\Upsilon(\widehat{f}_{n}):(\mathcal{X}^{2}\times\mathcal{Y})^{n}\to \mathcal{T}_{d_{1}}$, where $\widehat{f}_{n}:(\mathcal{X}^{2}\times\mathcal{Y})^{n}\to\mathcal{F}_{0}$ denotes any estimator, and $\Upsilon(\widehat{f}_{n})$ is defined as
\begin{align*}
&\Upsilon(\widehat{f}_{n})(u_{1},\cdots,u_{n})(x)\\
&=
\begin{cases}
\Upsilon_{0}(\frac{\widehat{f}_{n}(u_{1},\cdots,u_{n})}{\|\widehat{f}_{n}(u_{1},\cdots,u_{n})(\cdot)\|_{2}})(x)&\quad \textup{if }\widehat{f}_{n}(u_{1},\cdots,u_{n})(x)\neq \bm{0},\\
1&\quad \textup{if } \widehat{f}_{n}(u_{1},\cdots,u_{n})(x)=\bm{0}.
\end{cases}
\end{align*}
\begin{remark}
\label{rem:remark on the multiclass classifier}
We need to define $\Upsilon$ since it is not necessarily true that $\widehat{f}_{n}(u_{1},\cdots,u_{n})(x)\neq \bm{0}$ for any $u_{1},\cdots,u_{n}\in\mathcal{X}^{2}\times\mathcal{Y}$ and any $x\in\mathcal{X}$.
In other words, the value of classifier $\Upsilon_{0}(\widehat{f}_{n}(u_{1},\cdots,u_{n})(\cdot)/\|\widehat{f}_{n}(u_{1},\cdots,u_{n})(\cdot)\|_{2})(x)$ is not defined if $x\in\mathcal{X}$ satisfies that $\widehat{f}_{n}(u_{1},\cdots,u_{n})(x)=\bm{0}$.
\end{remark}

\paragraph{Analyses.}
Here, for measurable subsets $\mathcal{K},\mathcal{K}'\subset\mathcal{X}$, let $D_{P_{X}}(\mathcal{K},\mathcal{K}')$ denote the volume of the symmetric difference measured by a probability measure $P_{X}$ in $\mathcal{X}$, namely
\begin{align*}
D_{P_{X}}(\mathcal{K},\mathcal{K}')=P_{X}((\mathcal{K}\cup\mathcal{K}')\setminus(\mathcal{K}\cap\mathcal{K}')).
\end{align*}
We note the following fact.
\begin{lemma}
\label{lem:approximation of partitions}
For any $\alpha>0$, $R\geq 1$, $K,d_{1},E\in\mathbb{N}$ for which $2^{E}\geq d_{1}$, and any partition $\mathscr{S}=\{\mathcal{K}_{i}\}_{i=1}^{d_{1}}\in\mathscr{P}_{\alpha,R}^{K,d_{1},E}$, there is a Borel probability measure $P_{X,Z_{d_{1}}}$ in $\mathcal{X}\times\mathcal{Z}_{d_{1}}$ satisfying 1-(MNC) such that for any Bayes classifier $z^{*}$ minimizing $P_{X,Z_{d_{1}}}(g(x)\neq z)$ in $\mathcal{T}_{d_{1}}$, we have $\sum_{i=1}^{d_{1}}D_{P_{X}}((z^{*})^{-1}(i),\mathcal{K}_{i})=0$.
\end{lemma}
The proof is deferred to Appendix~\ref{appsubsec:proof of lemma approximation of partitions}.
Based on this lemma, we define the class
\begin{align*}
\mathscr{U}_{\alpha,\xi}=
\left\{
\begin{array}{c|l}
\multirow{3}{*}{$P_{X,Z_{d_{1}}}$} & P_{X,Z_{d_{1}}}\textup{ is a Borel probability measure in}\\
& \mathcal{X}\times\mathcal{Z}_{d_{1}}\textup{ and satisfies (B1) and (B2)}\\
& \textup{with } \alpha \textup{ and } \xi=(R,K,d_{1},E,\theta_{\textup{NC}},\theta_{1},\theta_{2},\theta_{3})
\end{array}
\right\},
\end{align*}
where conditions (B1) and (B2) are defined as follows:
\begin{enumerate}
    \item[(B1)] $P_{X,Z_{d_{1}}}$ satisfies $1$-(MNC).
    \item[(B2)] There is some $P\in\mathcal{P}_{\alpha,1,\xi}$ such that the marginal distributions of $P$ and $P_{X,Z_{d_{1}}}$ with respect to $\mathcal{X}$ coincide with each other, and for the partition $\mathscr{S}_{P}=\{\mathcal{K}_{i}\}_{i=1}^{d_{1}}$, any Bayes classifier $z^{*}$ of $P_{X,Z_{d_{1}}}$ satisfies that $D_{P_{X}}((z^{*})^{-1}(i),\mathcal{K}_{i})=0$.
\end{enumerate}

Then, we have the following inequality between different risk functions.
\begin{lemma}
\label{thm:application to downstream task}
Given $\alpha>0$, $\xi=(R,K,d_{1},E,\theta_{\textup{NC}},\theta_{1},\theta_{2},\theta_{3})\in \Xi$ and a Borel probability measure $P_{X,Z_{d_{1}}}\in\mathscr{U}_{\alpha,\xi}$, let $P$ be an element of $\mathcal{P}_{\alpha,1,\xi}$ such that it satisfies condition (B2) for $P_{X,Z_{d_{1}}}$.
For any estimator $\widehat{f}_{n}:(\mathcal{X}^{2}\times\mathcal{Y})^{n}\to\mathcal{F}_{0}$, define $\widehat{z}_{n}:(\mathcal{X}^{2}\times\mathcal{Y})^{n}\to\mathcal{T}_{d_{1}}$ as
\begin{align}
\label{eq:multiclass classification estimator}
\widehat{z}_{n}=\Upsilon(\widehat{f}_{n}).
\end{align}
Also, let $\widehat{g}_{n}=(\widehat{g}_{n,1},\cdots,\widehat{g}_{n,d_{1}}):(\mathcal{X}^{2}\times\mathcal{Y})^{n}\to\mathcal{G}_{0}$ denote the estimator satisfying $\widehat{f}_{n}=\sum_{i=1}^{d_{1}}\widehat{g}_{n,i}v_{i}$.
Then, there is a constant $C>0$ independent of $n$ such that for any $\omega\in\Omega$, we have
\begin{align}
\label{eq:multiclass clsssification bound theorem}
P_{X,Z_{d_{1}}}(\widehat{z}_{n}(U_{1}^{n}(\omega))(x)\neq z)-\inf_{z^{*}}P_{X,Z_{d_{1}}}(z^{*}(x)\neq z)\leq C\widehat{\mathcal{R}}(\widehat{g}_{n};P,U_{1}^{n}(\omega)),
\end{align}
where the infimum in the left-hand side is taken over all measurable maps from $\mathcal{X}$ to $\mathcal{Z}_{d_{1}}$, and $U_{1}^{n}$ is any sequence of i.i.d. random variables drawn from $P$.
\end{lemma}
The proof is deferred to Appendix~\ref{appsubsec:proof of theorem application to downstream task modified}.

\paragraph{Result.}
We now show the result for the multiclass classification problem defined above.
\begin{theorem}
\label{cor:consequence to multiclass classification}
Let $\alpha>0$, $\xi=(R,K,d_{1},E,\theta_{\textup{NC}},\theta_{1},\theta_{2},\theta_{3})\in\Xi$, $\beta\in (0,D_{{\textup{proj}}})$, $n\in\mathbb{N}\setminus \{1,2\}$ for which $\varepsilon_{n}:= n^{-\frac{\alpha}{\alpha+K-1}}<2^{-1}$.
Given $P_{X,Z_{d_{1}}}\in\mathscr{U}_{\alpha,\xi}$, let $P\in\mathcal{P}_{\alpha,1,\xi}$ be any probability measure satisfying condition (B2).
Let $U_{1},\cdots,U_{n}$ be any sequence of i.i.d. random variables drawn from $P$.
Then, we have the following claims independent of each other:
\begin{itemize}
\item There are
\begin{itemize}
\item constants $C^{*},c_{1},c_{2}>0$ independent of $n$, and
\item $L^{*}\in\mathbb{N}$, $J^{*},S^{*},M^{*}\geq 0$, and $\bm{d}^{*}=(K,d_{\textup{NN},1}^{*},\cdots,d_{\textup{NN},L^{*}-1}^{*},d_{1})\in\mathbb{N}^{L^{*}+1}$ depending on $n$,
\end{itemize}
such that for the class $\mathcal{F}^{*}=\mathcal{F}_{L^{*},J^{*},S^{*},M^{*},\bm{d}^{*}}^{\Delta^{d}\textup{-NN}}$, the classifier $\widehat{z}_{n}^{\textup{ERM}}$ defined as in~\eqref{eq:multiclass classification estimator} using the $(n,\mathcal{F}^{*})$-global ERM estimator $\widehat{g}_{n}^{\textup{ERM}}$ and the corresponding estimator $\widehat{f}_{n}^{\textup{ERM}}$ of vector-valued functions, we have the following:
With probability at least
\begin{align*}
1-c_{1}e^{-128c_{2}n\varepsilon_{n}\log^{3}{n}}-Q(\widehat{f}_{n}^{\textup{ERM}}(U_{1}^{n})\notin \mathscr{F}_{\beta,\beta^{-1}\varepsilon_{n},P}(\mathcal{F}^{*})),
\end{align*}
we have
\begin{align*}
P_{X,Z_{d_{1}}}(\widehat{z}_{n}^{\textup{ERM}}(x)\neq z)-\inf_{z^{*}}P_{X,Z_{d_{1}}}(z^{*}(x)\neq z)
\leq C^{*}\varepsilon_{n}\log^{4}{n}.
\end{align*}
\item There are
\begin{itemize}
\item constants $C^{*}>0$ and $N\in\mathbb{N}$ independent of $n$, and
\item $L^{*}\in\mathbb{N}$, $J^{*},S^{*},M^{*}\geq 0$, and $\bm{d}^{*}=(K,d_{\textup{NN},1}^{*},\cdots,d_{\textup{NN},L^{*}-1}^{*},d_{1})\in \mathbb{N}^{L^{*}+1}$ depending on $n$,
\end{itemize}
such that for the class $\mathcal{F}^{*}=\mathcal{F}_{L^{*},J^{*},S^{*},M^{*},\bm{d}^{*}}^{\Delta^{d}\textup{-NN}}$ and the classifier $\widehat{z}_{n,P}^{\textup{LERM}}$ defined as in~\eqref{eq:multiclass classification estimator} with the $(\beta,\varepsilon_{n},n,\mathcal{P}_{\alpha,1,\xi},\mathcal{F}^{*})$-local ERM estimator $\widehat{g}_{n}^{\textup{LERM}}$, when $n\geq N$, we have
\begin{align*}
\mathbb{E}[P_{X,Z_{d_{1}}}(\widehat{z}_{n,P}^{\textup{LERM}}(x)\neq z)]-\inf_{z^{*}}P_{X,Z_{d_{1}}}(z^{*}(x)\neq z)\leq C^{*}n^{-\frac{\alpha}{\alpha+K-1}}\log^{4}{n}.
\end{align*}
\end{itemize}
\end{theorem}
\begin{proof}
Combine Lemma~\ref{lem:approximation of partitions}, Lemma~\ref{thm:application to downstream task}, and Theorem~\ref{thm:estimation error for deep relu networks} (or Theorem~\ref{cor:result for the global erm}).
\end{proof}
This result indicates that the general framework of nonparametric boundary estimation studied in the current work can apply to an other learning problem.

\paragraph{Discussion.}
Among the previous work~\citep{tarigan2008moment,kim2021fast,bos2022convergence}, a result shown by~\citet{kim2021fast} is closely related to the above analysis.
In Theorem~5.1 of~\citep{kim2021fast}, it is proven that a multiclass classifier defined with deep ReLU networks attains the convergence rate
$((\log^{3}{n})/n)^{\frac{\tau \alpha}{(2\tau-1)\alpha+(K-1)\tau}}$ under the excess risk,
where $\tau\in[1,\infty)$ represents the parameter of their noise condition, and they consider the case that the decision boundaries are parameterized by some $\alpha$-H\"{o}lder continuous functions.
Thus, the result of~\citet{kim2021fast} implies the convergence rate $n^{-\frac{\alpha}{\alpha+K-1}}$ up to a logarithmic factor when $\tau=1$.
The main difference is that \citet{kim2021fast} consider the conventional supervised learning setting.
On the other hand, we consider a pairwise binary classification setting, which requires a substantially different approach.
Additionally, we use a different estimator.

We discuss some technical differences from the results shown in~\citep{haochen2021provable}, in terms of multiclass classification.
As a special case, \citet{haochen2021provable} show the learnability on multiclass classification in the setting where the label is a function of covariate, by using both a vector-valued function trained via contrastive learning and some linear classifier trained in a supervised downstream task, where the best convergence rate is $O_{\mathbb{P}}(n^{-\frac{1}{2}})$ in the result of~\citep{haochen2021provable} ($O_{\mathbb{P}}(\cdot)$ denotes the rate of convergence in probability).
The main differences to the analysis of~\citep{haochen2021provable} are summarized in three points.
First, we additionally assume the smoothness of the boundaries, following~\citep{imaizumi2022advantage}.
In addition, the condition on the probability distribution $P_{X,Z_{d_{1}}}$ used in our analysis is more general.
Finally, Theorem~\ref{cor:consequence to multiclass classification} implies a faster rate when $\alpha>K-1$ and $n$ is sufficiently large, if the local ERM is used.

Recently, \citet{duan2024unsupervised} investigated transfer learning using contrastive learning and derived a convergence rate for multiclass classification.
Our problem setting, analyses, and results are largely different from those in~\citep{duan2024unsupervised}.
In addition, the purpose of the present work is to study the statistical learnability of smooth boundaries, while \citet{duan2024unsupervised} study transfer learning.

In Theorem~\ref{cor:consequence to multiclass classification}, we do not consider the setting where the marginal distributions of the given $P\in\mathcal{P}_{\xi}$ and $P_{X,Z_{d_{1}}}$ with respect to the variable $X$ can be different.
Since some problem settings are studied in the context of transfer learning~(see, e.g.,~\citep{cai2021transfer,duan2024unsupervised}), the investigation of this point is an interesting future work.
For instance, it might be possible to analyze some transfer learning setting, building on the theoretical analysis in~\citep{haochen2022beyond}.

\subsection{Proof of Lemma~\ref{lem:approximation of partitions}}
\label{appsubsec:proof of lemma approximation of partitions}

\begin{proof}
Given any Borel probability measure $P_{X,Z_{d_{1}}}$ satisfying $1$-(MNC), let $z^{*}$ be any Bayes classifier of $P_{X,Z_{d_{1}}}$.
Recall that the condition 1-(MNC) due to~\citep{kim2021fast} directly implies that there is a constant $t_{0}\in(0,1]$ such that we have
\begin{align*}
\min_{i\in\mathcal{Z}_{d_{1}}\setminus\{z^{*}(x)\}}|p(z=i|x)-p(z=z^{*}(x)|x)|>t_{0}\quad\textup{for }P_{X}\textup{-almost all }x\in\mathcal{X}.
\end{align*}
Hence, for $P_{X}$-almost all $x\in\mathcal{X}$, for every $i\in\mathcal{Z}_{d_{1}}\setminus\{z^{*}(x)\}$ we have
\begin{align*}
p(z=z^{*}(x)|x)>p(z=i|x)+t_{0}.
\end{align*}
Thus, we have
\begin{align}
\label{eq:thm application to downstream task eq 1}
\max_{i\in\mathcal{Z}_{d_{1}}\setminus\{z^{*}(x)\}}p(z=i|x)<p(z=z^{*}(x)|x)\quad\textup{for }P_{X}\textup{-almost all }x\in\mathcal{X}.
\end{align}
This inequality implies that the cardinality of $\argmax_{i=1,\cdots,d_{1}}p(z=i|x)$ is one for $P_{X}$-almost all $x\in\mathcal{X}$.
Therefore, for any Bayes classifiers $z_{1}^{*}$ and $z_{2}^{*}$ of $P_{X,Z_{d_{1}}}$, we have
\begin{align}
\label{eq:lem approximation of partitions eq 1}
\sum_{i=1}^{d_{1}}D_{P_{X}}((z_{1}^{*})^{-1}(i),(z_{2}^{*})^{-1}(i))=0.
\end{align}

We now construct a probability distribution.
Let $\mathscr{S}=\{\mathcal{K}_{i}\}_{i=1}^{d_{1}}\in\mathscr{P}_{\alpha,R}^{K,d_{1},E}$.
Let $P_{X,Z_{d_{1}}}^{(0)}$ be a Borel probability measure in $\mathcal{X}\times\mathcal{Z}_{d_{1}}$ such that it has density $p(x,z)$ in $\mathcal{X}\times\mathcal{Z}_{d_{1}}$ and satisfies
\begin{align*}
p(z=i|x)=
\begin{cases}
1&\quad\textup{if }x\in\mathcal{K}_{i},\\
0&\quad\textup{otherwise}.
\end{cases}
\end{align*}
Then, the function $z_{0}^{*}:\mathcal{X}\to\mathcal{Z}_{d_{1}}$ satisfying $(z_{0}^{*})^{-1}(i)=\mathcal{K}_{i}$ for every $i\in\mathcal{Z}_{d_{1}}$ is a Bayes classifier of $P_{X,Z_{d_{1}}}^{(0)}$.
Note that $P_{X,Z_{d_{1}}}^{(0)}$ satisfies $1$-(MNC) by the definition.
Thus, by~\eqref{eq:lem approximation of partitions eq 1}, for any Bayes classifier $z^{*}$ of $P_{X,Z_{d_{1}}}$ we have
\begin{align*}
\sum_{i=1}^{d_{1}}D_{P_{X}}((z^{*})^{-1}(i),\mathcal{K}_{i})=0.
\end{align*}
We obtain the claim.
\end{proof}

\subsection{Proof of Lemma~\ref{thm:application to downstream task}}
\label{appsubsec:proof of theorem application to downstream task modified}

\begin{proof}
We introduce the subset
\begin{align*}
\mathcal{J}_{i}= \{x\in\mathcal{X}\;|\;p(z=i|x)>\max_{j\in\mathcal{Z}_{d_{1}}\setminus\{i\}}p(z=j|x)\}.
\end{align*}
Let $z^{*}$ be any Bayes classifier of $P_{X,Z_{d_{1}}}$.
Let $\mathcal{J}=\bigcup_{i=1}^{d_{1}}\mathcal{J}_{i}\times\{i\}$.
Also, define
\begin{align*}
\mathcal{J}'=\{x\in\mathcal{X}\;|\;p(z=z^{*}(x)|x)\leq \max_{j\in\mathcal{Z}_{d_{1}}\setminus\{z^{*}(x)\}}p(z=j|x)\}.
\end{align*}
By~\eqref{eq:thm application to downstream task eq 1}, $P_{X}(\mathcal{J}')=0$.
Note that $\mathcal{X}=(\bigcup_{i=1}^{d_{1}}\mathcal{J}_{i})\cup \mathcal{J}'$.
Hence, we have
\begin{align*}
&P_{X,Z_{d_{1}}}((\mathcal{X}\times\mathcal{Z}_{d_{1}})\setminus \mathcal{J})\\
&=P_{X,Z_{d_{1}}}\left(\bigcup_{i=1}^{d_{1}}\mathcal{J}'\times\{i\}\right)
+P_{X,Z_{d_{1}}}\left(\bigcup_{i\neq j}\mathcal{J}_{j}\times\{i\}\right)\\
&\leq P_{X}(\mathcal{J}')+P_{X,Z_{d_{1}}}(z^{*}(x)\neq z)\\
&\leq P_{X,Z_{d_{1}}}(z^{*}(x)\neq z).
\end{align*}
Here, we notice that
\begin{align*}
\mathcal{J}\subset \bigcup_{i=1}^{d_{1}}(z^{*})^{-1}(i)\times \{i\}.
\end{align*}
From now on, we fix $\omega\in\Omega$ and abbreviate as $\widehat{z}_{n}=\widehat{z}_{n}(U_{1}^{n}(\omega))$.
Then, we have
\begin{align*}
&P_{X,Z_{d_{1}}}(\widehat{z}_{n}(x)\neq z)\\
&\leq
\sum_{i=1}^{d_{1}}\int_{(z^{*})^{-1}(i)}\mathds{1}_{\{\widehat{z}_{n}(x)\neq i\}}(x)p(z=i|x)p(x)\mu(dx)
+
P_{X,Z_{d_{1}}}(z^{*}(x)\neq z).
\end{align*}
Let $\varepsilon\geq 0$ be arbitrary.
Let $z_{0}^{*}$ be the Bayes classifier such that we have $P_{X,Z_{d_{1}}}(z_{0}^{*}(x)\neq z)-\inf_{z^{*}}P_{X,Z_{d_{1}}}(z^{*}(x)\neq z)\leq \varepsilon$.
Then, we obtain
\begin{align}
&P_{X,Z_{d_{1}}}(\widehat{z}_{n}(x)\neq z)-\inf_{z^{*}}P_{X,Z_{d_{1}}}(z^{*}(x)\neq z)\nonumber\\
\label{eq:thm application to downstream tasks eq 2}
&\leq
\sum_{i=1}^{d_{1}}\int_{(z_{0}^{*})^{-1}(i)}\mathds{1}_{\{\widehat{z}_{n}(x)\neq i\}}(x)P_{X}(dx)+\varepsilon,
\end{align}
where in~\eqref{eq:thm application to downstream tasks eq 2} we further use the inequality $p(z=i|x)\leq 1$.

Note that $D_{P_{X}}((z_{0}^{*})^{-1}(i),\mathcal{K}_{i})=0$ by condition (B2).
Also, we have
\begin{align*}
(z_{0}^{*})^{-1}(i) \subset \mathcal{K}_{i} \cup ((z_{0}^{*})^{-1}(i)\setminus\mathcal{K}_{i})
\subset \mathcal{K}_{i}\cup (((z_{0}^{*})^{-1}(i)\cup \mathcal{K}_{i})\setminus ((z_{0}^{*})^{-1}(i)\cap \mathcal{K}_{i}))
\end{align*}
Hence, we have
\begin{align}
\int_{(z_{0}^{*})^{-1}(i)}\mathds{1}_{\{\widehat{z}_{n}(x)\neq i\}}(x)P_{X}(dx)
&\leq
\int_{\mathcal{K}_{i}}\mathds{1}_{\{\widehat{z}_{n}(x)\neq i\}}(x)P_{X}(dx)\nonumber\\
\label{eq:application to downstream tasks eq 3}
&=\int_{\mathcal{K}_{i}}(1-\mathds{1}_{\{\widehat{z}_{n}(x)=i\}}(x)) P_{X}(dx).
\end{align}

We also abbreviate as $\widehat{f}_{n}=\widehat{f}_{n}(U_{1}^{n}(\omega))$.
Here note that
\begin{align}
\label{eq:proofs of theorem application eq 3}
\mathcal{D}_{i}:=\left\{x\in\mathcal{X}\;|\; \|\widehat{f}_{n}(x)-v_{i}\|_{2}< D_{\Delta^{d}}/2\right\}\subseteq \{x\in\mathcal{X}\;|\; \widehat{z}_{n}(x)=i\}.
\end{align}
Let
\begin{align*}
    \mathcal{D}=\left\{x\in\mathcal{X}\;|\; \|\widehat{f}_{n}(x)-f^{*}(x)\|_{2}< D_{\Delta^{d}}/2\right\}.
\end{align*}
Note that $\mathcal{D}_{i}\cap\mathcal{K}_{i}=\mathcal{D}\cap\mathcal{K}_{i}$ for any $i=1,\cdots,d_{1}$.
Then, by \eqref{eq:thm application to downstream tasks eq 2} and~\eqref{eq:application to downstream tasks eq 3} we have
\begin{align}
\label{eq:proofs of theorem application eq 4}
&P_{X,Z_{d_{1}}}(\widehat{z}_{n}(x)\neq z)-\inf_{z^{*}}P_{X,Z_{d_{1}}}(z^{*}(x)\neq z)\nonumber\\
&\leq 
\sum_{i=1}^{d_{1}}\int_{\mathcal{K}_{i}}(1-\mathds{1}_{\mathcal{D}_{i}}(x))P_{X}(dx)+\varepsilon\\
\label{eq:proofs of theorem application eq 5}
&=1-\sum_{i=1}^{d_{1}}\int_{\mathcal{K}_{i}}\mathds{1}_{\mathcal{D}}(x)P_{X}(dx)+\varepsilon\\
\label{eq:proofs of theorem application eq 6}
&=1-P_{X}(\mathcal{D})+\varepsilon\\
\label{eq:proofs of theorem application eq 7}
&\leq 4D_{\Delta^{d}}^{-2}\mathbb{E}_{P_{X}}[\|\widehat{f}_{n}(x)-f^{*}(x)\|_{2}^{2}]+\varepsilon,
\end{align}
where in~\eqref{eq:proofs of theorem application eq 4} we use \eqref{eq:thm application to downstream tasks eq 2} and~\eqref{eq:application to downstream tasks eq 3}, in~\eqref{eq:proofs of theorem application eq 5} we note that $f^{*}(x)=v_{i}$ for any $x\in\mathcal{K}_{i}$ by Definition~\ref{def:contrastive representations}, and in \eqref{eq:proofs of theorem application eq 7} we apply Markov's inequality.

Finally, since $\varepsilon\geq 0$ is arbitrary in~\eqref{eq:proofs of theorem application eq 7}, by applying Proposition~\ref{prop:simple fact for risks}--(ii), we obtain the claim.
\end{proof}

\addcontentsline{toc}{section}{References}

\end{document}